\documentclass[11pt, a4paper]{amsart}
\usepackage{amssymb}
\usepackage{amsmath}
\usepackage{amssymb}
\usepackage{amsthm}
\usepackage{bbm}
\usepackage{bm}
\usepackage{mathtools}
\usepackage{mathrsfs}
\usepackage[T1]{fontenc}
\usepackage[usenames,dvipsnames,svgnames,table]{xcolor}
\usepackage{esint}
\usepackage{soul}
\allowdisplaybreaks
\usepackage{combelow}
\usepackage{enumitem}
\setenumerate{label={\rm (\alph{*})}}
\usepackage[colorlinks=true,citecolor=black,urlcolor=black,
linkcolor=black]{hyperref}

\usepackage{graphicx}
\usepackage{tikz}

\usepackage{geometry}
\usepackage{verbatim}

\mathtoolsset{showonlyrefs}

\newtheorem{theorem}{Theorem}[section]
\newtheorem{bigthm}{Theorem}

\newtheorem{lemma}[theorem]{Lemma}
\newtheorem{proposition}[theorem]{Proposition}

\newtheorem{openproblem}[theorem]{Open Problem}

\theoremstyle{remark}
\newtheorem{remark}[theorem]{Remark}

\numberwithin{equation}{section}

\theoremstyle{definition}
\newtheorem{definition}[theorem]{Definition}

\hypersetup{bookmarksdepth=2}

\usepackage{booktabs}

\normalsize

\def\XXint#1#2#3{{\setbox0=\hbox{$#1{#2#3}{\int}$ }
		\vcenter{\hbox{$#2#3$ }}\kern-.6\wd0}}

\newcommand{\mres}{\mathbin{\vrule height 1.6ex depth 0pt width
		0.13ex\vrule height 0.13ex depth 0pt width 1.3ex}}

		\makeatletter
\newcommand*\bigcdot{\mathpalette\bigcdot@{.5}}
\newcommand*\bigcdot@[2]{\mathbin{\vcenter{\hbox{\scalebox{#2}{$\m@th#1\bullet$}}}}}
\makeatother

\newcommand{\bv}{\operatorname{BV}}
\newcommand{\di}{\operatorname{div}}

\newcommand{\curl}{\operatorname{curl}}
\newcommand{\dif}{\operatorname{d}\!}
\newcommand{\spt}{\operatorname{spt}}

\newcommand{\besov}{\operatorname{B}}

\newcommand{\ccinfty}{\operatorname{C}_{\mathrm{c}}^{\infty}}

\renewcommand{\geq}{\geqslant}
\newcommand{\hold}{\operatorname{C}}

\newcommand{\Rea}{\mathrm{Re\,}}
\newcommand{\Ima}{\mathrm{Im\,}}
\newcommand{\ima}{\mathrm{im\,}}

\newcommand{\lebe}{\operatorname{L}}
\newcommand{\lin}{\operatorname{Lin}}
\newcommand{\slin}{\operatorname{SLin}}
\newcommand{\locc}{\operatorname{loc}}
\renewcommand{\leq}{\leqslant}

\newcommand{\proj}{\mathrm{Proj}}
\newcommand{\rank}{\operatorname{rank}}

\newcommand{\rmim}{\mathrm{im\,}}
\newcommand{\sobo}{\operatorname{W}}
\newcommand{\spann}{\operatorname{span}}

\newcommand{\A}{\mathscr{A}}
\newcommand{\B}{\mathscr{B}}

\newcommand{\C}{\mathscr{C}}
\newcommand{\CC}{\mathrm{C}}

\newcommand{\eps}{\varepsilon}

\newcommand{\EE}{\mathscr{E}}
\newcommand{\F}{\mathcal{F}}

\newcommand{\Le}{\mathscr{L}}

\newcommand{\N}{\mathbb{N}}

\newcommand{\R}{\mathbb{R}}

\newcommand{\pp}{A}
\newcommand{\qq}{B}

\makeatletter
\DeclareFontFamily{OMX}{MnSymbolE}{}
\DeclareSymbolFont{MnLargeSymbols}{OMX}{MnSymbolE}{m}{n}
\SetSymbolFont{MnLargeSymbols}{bold}{OMX}{MnSymbolE}{b}{n}
\DeclareFontShape{OMX}{MnSymbolE}{m}{n}{
    <-6>  MnSymbolE5
   <6-7>  MnSymbolE6
   <7-8>  MnSymbolE7
   <8-9>  MnSymbolE8
   <9-10> MnSymbolE9
  <10-12> MnSymbolE10
  <12->   MnSymbolE12
}{}
\DeclareFontShape{OMX}{MnSymbolE}{b}{n}{
    <-6>  MnSymbolE-Bold5
   <6-7>  MnSymbolE-Bold6
   <7-8>  MnSymbolE-Bold7
   <8-9>  MnSymbolE-Bold8
   <9-10> MnSymbolE-Bold9
  <10-12> MnSymbolE-Bold10
  <12->   MnSymbolE-Bold12
}{}

\let\llangle\@undefined
\let\rrangle\@undefined
\DeclareMathDelimiter{\llangle}{\mathopen}%
                     {MnLargeSymbols}{'164}{MnLargeSymbols}{'164}
\DeclareMathDelimiter{\rrangle}{\mathclose}%
                     {MnLargeSymbols}{'171}{MnLargeSymbols}{'171}
\makeatother

\newcommand{\abs}[1]{\left\lvert #1 \right\rvert}
\newcommand{\abss}[1]{\lvert #1 \rvert}
\newcommand{\brac}[1]{\left(#1\right)}

\newcommand{\brangle}[1]{\left\langle#1\right\rangle}
\newcommand{\Brangle}[1]{\langle\kern-2.5pt\langle #1 \rangle\kern-2.5pt\rangle}

\newcommand{\col}{\colon\,}

\providecommand{\eps}{\varepsilon}
\newcommand{\indi}{\mathbbm{1}}

\newcommand{\normm}[1]{\lVert#1\rVert}
\newcommand{\normiii}[1]{{\left\vert\kern-0.25ex\left\vert\kern-0.25ex\left\vert #1 \right\vert\kern-0.25ex\right\vert\kern-0.25ex\right\vert}}

\newcommand{\weaksto}{\overset{*}{\rightharpoonup}}

\newcommand{\dd}{\mathrm{d}}
\usepackage{todonotes}

\newcommand{\rmc}{\mathrm{c}}
\newcommand{\rma}{\mathrm{a}}
\newcommand{\rms}{\mathrm{s}}

\AtEndDocument{\bigskip{\footnotesize%
    \noindent\textsc{Max Planck Institute for Mathematics in the Sciences,\\ Inselstrasse 22, Leipzig, 04103, Germany}\\
    \textit{E-mail address}, Z.~Li: \texttt{zhuolin.li@mis.mpg.de}
		
		\vspace{2.4pt}
		\noindent\textsc{Department of Mathematics and Statistics, Georgetown University,
                  \\3700 Reservoir Road NW, Washington, D.C., 20057, USA }   \\
		\noindent\textit{E-mail address}, C.~Irving: \texttt{ci152@georgetown.edu}; B.~Rai\cb{t}\u{a}: \texttt{br607@georgetown.edu}
}}

\usepackage[
backend=biber,
style=alphabetic,
maxnames=99,
  giveninits=true,
  url=false,
  doi=true
]{biblatex}

\begin{document}
	\title[Regularity for $\A$-quasiconvex integrals]{Partial regularity for $\A$-quasiconvex variational problems of linear growth}
    \author[C.~Irving]{Christopher~Irving}
\author[Z.~Li]{Zhuolin~Li}
\author[B.~Rai\cb{t}\u{a}]{Bogdan~Rai\cb{t}\u{a}}
\subjclass[2020]{Primary: 49N60, Secondary: 35B65,  49J45, 28B05, 35E20}
\keywords{Regularity theory, $\A$-Quasiconvexity, Strong quasiconvexity, Constant rank operators, Partial continuity, Partial regularity, Systems of linear pde, Degenerate variational problems, Variational problems of linear growth, Minimization of multiple integrals in the Calculus of Variations}

\begin{abstract}
We prove that minimizers of variational integrals 
$$
\mathcal E(v)=\int_\Omega f(v)\quad\text{for }v\in\mathcal M(\Omega)\text{ such that } \mathscr{A}
v=0, 
$$
  are partially continuous provided that the integrands $f$ are strongly
$\mathscr{A}$-quasiconvex in a suitable sense. We consider linear growth problems, linear pde operators $\mathscr{A}$ of constant rank, and
variations of the
form $v+\varphi$ with $\mathscr{A}$-free $\varphi\in \mathrm{C}_{\rmc}^\infty(\Omega)$.
Our analysis also covers the ``potentials case'' 
$$ 
\mathcal F(u)=\int_\Omega f( \mathscr{B} u)\quad\text{for }
u\in\mathscr D'(\Omega)\text{ such that }\B u\in \mathcal M(\Omega), 
$$ 
where $\mathscr{B}$ is a different linear pde operator of constant rank. 
Both our main results extend to $x$-dependent integrands.
 \end{abstract}
 \maketitle

\section{Introduction}
We  consider integral functionals of linear growth on bounded open sets $\Omega\subset\R^n$
\begin{align}\label{eq:P_Omega_Afree}
 \mathcal E(v,\Omega)=\int_\Omega f(x, v)\qquad\text{for }v\in\mathcal M(\Omega,V)\text{ such that }\A v=0
\end{align}
and their \textit{local minimizers} with respect to $\A$-free perturbations,
 \begin{align}\label{eq:local_min_A}
\mathcal E(v,\Omega)\leq \mathcal E(v+\varphi,\Omega)\quad\text{for $\varphi\in \hold_c^\infty(\Omega,V)$ with }\A \varphi=0.
\end{align}

 Here $\A$ is a homogeneous vectorial differential operator with constant coefficients defined on $\R^n$ from $V$ to $W$, which are all finite dimensional linear spaces. 

Our main result is the following:
  \begin{bigthm}\label{thm:main_A-free}
Let $\A$ satisfy the constant rank \eqref{eq:CR} condition below, $f\colon \Omega\times V\to\R$ satisfy the  growth, smoothness, and strong $\A$-quasiconvexity  {assumptions} 
\textup{\ref{it:fgrowth}--\ref{it:frecession}} below. Then for any local minimizer $v$ of \eqref{eq:P_Omega_Afree}, there exists an open set $\Omega'\subset\Omega$  such that $\mathscr{L}^n(\Omega\setminus \Omega^{\prime})=0$ and $v\in \hold_{\locc}^{0,\alpha}(\Omega')$ for any $\alpha\in(0,{\tfrac12})$.
\end{bigthm}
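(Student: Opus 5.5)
The plan is to run an excess-decay (blow-up or harmonic-approximation) argument in the spirit of Anzellotti–Giaquinta and Gmeinder–Kristensen for linear growth functionals, adapted to the $\A$-free setting.

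\textbf{Excess.} Work with the excess
$$E(x_0,r)=\fint_{B_r(x_0)} \abs{V(v-(v)_{x_0,r})}\,\dd x,$$
where $V(z)=\sqrt{1+|z|^2}-1$ is the usual auxiliary function of linear growth. For a measure, $\abs{V(\cdot)}$ is understood as the area-type functional of the shifted measure, so that singular parts contribute linearly.

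\textbf{Main iteration lemma.} The core step is: for each $M>0$ and $\alpha<\tfrac12$ there exist $\tau\in(0,1)$ and $\eps_0>0$ such that $|(v)_{x_0,r}|\leq M$ and $E(x_0,r)<\eps_0$ imply
$$E(x_0,\tau r)\leq \tau^{2\alpha}E(x_0,r).$$
Here the exponent $\tfrac12$ reflects that $V$ is quadratic near zero, so $E$ behaves like the square of an $L^2$-type excess.

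\textbf{Proof of the iteration lemma by contradiction and blow-up.}
\begin{enumerate}
\item Take rescaled normalized sequences $w_j=(v-(v)_{r_j})/\lambda_j$ with $\lambda_j^2\sim E_j\to0$.
\item Use the $\A$-quasiconvexity to linearize. Expanding $f$ around $(v)_{r_j}\to z_0$ and using the strong $\A$-quasiconvexity assumption from \ref{it:fgrowth}--\ref{it:frecession} (a G\aa rding-type inequality $\int f(z+\varphi)-f(z)\geq c\int |V(\varphi)|$ for $\A$-free test fields $\varphi$), show that the weak limit $w$ is $\A$-free and satisfies the linearized system: $\int f''(z_0)w\cdot\varphi=0$ for all $\A$-free compactly supported $\varphi$.
\item Use the constant rank condition \eqref{eq:CR}. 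It allows writing $\A$-free fields locally as $\B u$ for a potential operator $\B$ (Raiță's potential theorem), modulo a finite-dimensional/smooth remainder on balls. The limit problem is then a Legendre–Hadamard elliptic system $\B^*(f''(z_0)\B u)=0$ with constant coefficients. Its solutions are smooth and satisfy Campanato decay $\fint_{B_\tau}|w-(w)_\tau|^2\leq C\tau^2\fint_{B_1}|w|^2$.
\item Upgrade weak convergence to strong convergence of the normalized excess. This uses a Caccioppoli-type inequality of the second kind for $v$, proved by comparing with $\A$-free competitors built via a cut-off. Cutting off breaks $\A$-freeness, so I would correct with the constant-rank projection $\mathcal P_\A$ (a Fourier multiplier of order zero). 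The commutator $[\mathcal P_\A,\eta]$ is of order $-1$, hence compact and controlled by lower-order norms. This gives the Caccioppoli bound up to a term like $\normm{v-(v)}_{W^{-1,1}}$, which is handled by the linear-growth analogue of Sobolev–Poincaré for $\A$-free measures.
\end{enumerate}

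\textbf{Conclusion from the iteration.} Iterating yields $E(x_0,\rho)\leq C\rho^{2\alpha}$ near points where
$$\liminf_{r\to0}E(x_0,r)=0 \quad\text{and}\quad \limsup_{r\to0}|(v)_{x_0,r}|<\infty.$$
Both conditions are open, by continuity of the averages in $x_0$, and they hold at $\mathscr L^n$-a.e. point: Lebesgue points of the absolutely continuous part at which the singular part has zero density. Campanato's characterization then gives $v\in\hold^{0,\alpha}$ there. Note $E\gtrsim$ a truncated $L^1$ oscillation, and one passes to $L^2$ using the decay and $V$'s quadratic behavior on small values.

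\textbf{Main obstacle.} I expect the hardest step to be the Caccioppoli inequality together with the strong convergence in the blow-up. The nonlocal projection onto $\A$-free fields destroys locality. Moreover, measures with linear growth give no reflexive compactness, so one needs a careful handling of the singular part, via Reshetnyak-type continuity and recession function assumption \ref{it:frecession}, and of the order $-1$ commutator errors. I would first try proving everything in the potentials formulation $v=\B u$ locally, where cut-offs act on $u$ and the errors are lower-order terms in $u$, which can be absorbed via a Poincaré inequality for $\B$ modulo its finite-dimensional kernel on balls.
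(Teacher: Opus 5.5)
Your overall architecture (excess decay by blow-up, a Caccioppoli inequality, linearisation, and passing locally to $v=\B u+S$) is compatible with the paper, which itself notes that a blow-up argument could replace its $A$-harmonic approximation. The gap is the ingredient you treat as available in step~4 and in your last paragraph: a ``Poincar\'e inequality for $\B$ modulo its finite-dimensional kernel on balls'', or a ``linear-growth Sobolev--Poincar\'e inequality for $\A$-free measures''. As stated, this is false outside the $\mathbb{C}$-elliptic case. For non-elliptic constant-rank $\B$ the kernel on a ball is infinite-dimensional (for $\B=\curl$ it contains every gradient), and even elliptic operators such as $\bar\partial$ have infinite-dimensional kernels.

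Since $v$ determines $u$ only modulo $\ker\B$, the lower-order terms $\sum_{i<k}\int E\bigl(D^i(u-a)/R^{k-i}\bigr)$ produced by cutting off $u$ cannot be bounded by the excess of $\B u$. You would first have to fix a gauge (the paper imposes $\C^\ast u=0$, with $\C$ a potential of $\B$) and then prove the modular estimate
\[
\sum_{j=0}^{k-1}\Bigl(\int_{B_{\theta R}}E\Bigl(\frac{D^j(u-\pi)}{R^{k-j}}\Bigr)^{q}\,\dd x\Bigr)^{1/q}\le c\int_{B_R}E(\B u),\qquad 1<q<\tfrac{n}{n-1},
\]
with $\pi\in\ker(\B,\C^\ast)$ depending linearly on $u$. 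This is Theorem~\ref{thm:main_chris'_inequ}, the paper's main new tool, and it is not routine:
\begin{itemize}
\item $\lebe^1$-Korn fails by Ornstein, so only derivatives of order at most $k-1$ can be controlled.
\item A Luxemburg-norm version is too weak for the nonlinear excess.
\item $\pi$ is produced through H\"ormander's weighted $\lebe^2$-solvability theory, combined with negative-norm estimates for the elliptic pair $(\B,\C^\ast)$ in Orlicz scales and an endpoint Riesz-potential bound for measures.
\end{itemize}
The exponent $q>1$ is also what your strong-convergence step needs. It gives equi-integrability of $\lambda_j^{-2}E\bigl(D^i(u_j-\pi_j)\bigr)$ for the rescaled potentials, so that Vitali's theorem applies; weak-$\ast$ compactness of measures alone does not give this.

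There are two further problems. First, your initial Caccioppoli route, correcting the cut-off field by the projection $\mathcal P_\A$, fails for measures. Nontrivial order-zero multipliers are unbounded on $\lebe^1$ and on $\mathcal M$, so $\int E(\mathcal P_\A(\eta(v-z)))$ is not controlled by $\int E(\eta(v-z))$. Only the potential-level cut-off you list as a fallback is viable.

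Second, you never deal with the $x$-dependence.
\begin{itemize}
\item With no $r$-term in your excess, the rescaled Euler--Lagrange error $\lambda_j^{-1}\bigl(\partial_zf(x_0+r_jy,\cdot)-\partial_zf(x_0,\cdot)\bigr)=O(r_j/\lambda_j)$ need not vanish in the blow-up. The paper adds $R^{2\alpha}$ to the excess for this reason.
\item More importantly, after writing $v=\B u+S$ the integrand $f(x,S(x)+z)$ no longer satisfies the $x$-Lipschitz condition in \ref{it:fcontinuity}, because $\partial_z^2f$ is not globally bounded. The paper keeps $S$ explicit and uses only the bound on $\partial_zf$. It therefore picks up an additive error of order $R\normm{DS}_{\lebe^\infty}$ in the Caccioppoli inequality, and absorbing this into $R^{2\alpha}$ is exactly what forces $\alpha<\tfrac12$.
\end{itemize}
Your explanation of the threshold through the quadratic behaviour of the integrand near $0$ is not correct. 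That behaviour merely converts $\tau^{2\alpha}$-decay of the excess into Campanato exponent $\alpha$. For the potential problem of Theorem~\ref{thm:main}, where no $S$-term appears, every $\alpha<1$ is obtained.
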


We next clarify our assumptions.
The differential operator $\A$ has characteristic polynomial $\A(\xi)\in \lin (V,W)$ for $\xi\in\R^n$ and  will be assumed to have \textit{constant rank}
\begin{align}\tag{CR}\label{eq:CR}
    \rank\A(\xi)=\text{const.}\quad\text{for }\xi\in\R^n\setminus\{0\}.
\end{align}
The map $f\colon \Omega\times V\to\R$ will be assumed continuous of linear growth, so that we can define
\begin{align*}
    \int_\Omega f(x,v)=\int_\Omega f\left(x,\frac{\dif v}{\dif\mathscr L^n}(x)\right)\dif x+\int_\Omega f^\infty \left(x,\frac{\dif v}{\dif |v|}(x)\right)\dif v^s(x),
\end{align*}
where $v=\dfrac{\dif v}{\dif \mathscr L^n}\mathscr L^n\mres \Omega+v^s$ is the Radon--Nikod\'ym decomposition of $v$ with respect to $\mathscr L^n$, so $v^s\perp\mathscr L^n$, and $|v|$ denotes the total variation measure of $v$. Here, we define the \textit{recession function} $f^\infty\colon\Omega\times V\to \R$  as
\begin{align*}
    f^\infty(x,z)=\limsup_{\substack{ x'\to x,\,t\to\infty,\,z'\to z}} t^{-1}{f(x',tz')}.
\end{align*}

Particularly relevant to minimization are quasiconvex integrands \cite{Morrey52}, and more generally \textit{$\A$-quasiconvex} \cite{FonMul99}, i.e., $f\colon V\to \R$ such that for a cube $Q\subset\R^n$,
\begin{align*}
    f\left(\fint_Q v(x)\dif x\right)\leq \fint_Q f(v(x))\dif x\quad\text{for }v\in \hold^\infty_{\mathrm{per}}(Q,V),\text{ with }\A v=0.
\end{align*}
\par We now list our 
assumptions on  integrands $f\col \Omega \times V \to \R$, which lead to regularity: 
    \begin{enumerate}[label=\rm(\textbf{H\theenumi}),leftmargin=3\parindent, itemsep=.2em]
        \item\label{it:fgrowth} $\abss{f(x,z)}\leq L(1+\abss{z})$ for any $(x,z)\in \Omega \times V$, where $L>0$;
        \item\label{it:fcontinuity} For any $x\in \Omega$ the function $f(x,\cdot)$ is $\CC^{2}$, and 
        the functions $f(\cdot, \cdot)$, $\partial_z f(\cdot,\cdot)$ and $\partial_z^2f(\cdot,\cdot)$ are all jointly continuous on $\Omega \times V$ with
            \begin{equation*}
                \abss{\partial_z f(x,z)-\partial_z f(y,z)}\leq L \abss{x-y}
            \end{equation*} for any $z\in V$ and any $x,y\in \Omega$;
        \item\label{it:fqcstrong} The function $f$ is {\it strongly $\A$-quasiconvex} in the sense that $f(x,\,\cdot\,)-\ell E$ is $\A$-quasiconvex with some $\ell>0$ for $\mathscr L^n$-a.e $x\in\Omega$. 
        \item\label{it:frecession} The recession function $f^{\infty}(x,z)$ is continuous in $x$ for each $z \in V$.
\end{enumerate}
Here $E\colon V\to\R$ is the reference integrand $E(z)=\sqrt{1+|z|^2}-1$.

As in \cite{LiRaita24}, we reduce this study to  functionals defined on potential operator level
\begin{align}\label{eq:P_Omega}
 \mathcal F(u,\Omega)=\int_\Omega f(x,\B u)\quad\text{for }u\in \mathscr{D}'(\Omega,U)\text{ such that } \B u\in \mathcal M(\Omega,V),
\end{align}
where $\B$ is a homogeneous differential operator with constant coefficients defined on $\R^n$ from $U$ to $V$.  The notion of \textit{local minimality} is given by
\begin{align}\label{eq:local_min_B'}
    \mathcal{F}(u,\Omega)\leq \mathcal F(u+\phi,\Omega)\quad\text{for }\phi\in \hold_c^\infty(\Omega,U).
\end{align}

We will  use  the  notion of $f\colon V\to\R$ being
 \textit{quasiconvex with respect to $\B$-gradients}, i.e., for a cube $Q\subset \R^n$ and any $z\in V$,
\begin{align*}
    f\left(z\right)\leq \fint_Q f(z+\B u(x))\dif x\quad\text{for any }u\in \hold_c^\infty(Q,U).
\end{align*}
See \cite{Raita19,harkonen2024syzygies} for the connection between this notion and $\A$-quasiconvexity.

In this case, we replace assumption \ref{it:fqcstrong} with
\begin{enumerate}[label=(\rm\textbf{H3}$^{\prime}$),leftmargin=3\parindent, itemsep=.2em]
        \item\label{it:fBqcstrong}  The function $f$ is {\it strongly quasiconvex with respect to $\B$-gradients}, i.e., $f(x,\,\cdot\,)-\ell E$ is quasiconvex with respect to $\B$-gradients  for $\mathscr L^n$-a.e $x\in\Omega$.
    \end{enumerate}

Our main result in this direction is as follows:
\begin{bigthm}\label{thm:main}
   Let $\B$ be a $k$-th order differential operator satisfying \eqref{eq:CR}. Suppose that $f\colon \Omega\times V\to\R$ satisfies the  growth, smoothness, and strong quasiconvexity {assumptions} \textup{\ref{it:fgrowth}}, \textup{\ref{it:fcontinuity}},
   \textup{\ref{it:fBqcstrong}}, and \ref{it:frecession}.
   Then for any local minimizer $\B u $ of \eqref{eq:P_Omega}, there is an open subset $\Omega'\subset\Omega$ such that $\mathscr{L}^n(\Omega\setminus \Omega^{\prime})=0$ and $\B u\in \CC_{\locc}^{0,\alpha}(\Omega')$ for any $\alpha\in(0,1)$. Moreover, for each $\omega\Subset\Omega$, there exists $\tilde u\in \sobo^{k-1,1}(\Omega)$ such that $\B u = \B \tilde{u}$ in $\omega$ and $\tilde u\in \CC_{\locc}^{k,\alpha}(\omega\cap\Omega')$. 
\end{bigthm}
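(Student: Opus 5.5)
The plan is to adapt the classical blow-up/excess-decay strategy for linear growth quasiconvex problems (Anzellotti--Giaquinta, Gmeineder--Kristensen) to $\B$-gradients. The constant rank condition \eqref{eq:CR} supplies the necessary linear estimates. For a ball $B_r(x_0)\Subset\Omega$ and $z\in V$ we consider the excess
\begin{align*}
\Phi(x_0,r,z)=\fint_{B_r(x_0)}E(\B u-z)+\frac{|(\B u)^s|(B_r(x_0))}{|B_r|},
\end{align*}
with the second term really part of $E(\B u - z)$ read as a measure, $E^\infty=|\cdot|$. Write $z_{x_0,r}=(\B u)_{B_r(x_0)}$. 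The main step is an excess-decay lemma. For every $M>0$ there are $\theta\in(0,1)$ and $\eps_0>0$ such that if $|z_{x_0,r}|\leq M$ and $\Phi(x_0,r,z_{x_0,r})+r\leq\eps_0$, then
\begin{align*}
\Phi(x_0,\theta r,z_{x_0,\theta r})\leq C_M\theta^{2\alpha}\big(\Phi(x_0,r,z_{x_0,r})+r^{2\alpha}\big).
\end{align*}
Iterating this in the standard way, with the average staying bounded by $2M$ along the iteration, yields a Campanato estimate. The set $\Omega'$ consists of the points where $\liminf_r\Phi=0$ and $\limsup_r|z_{x_0,r}|<\infty$. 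These conditions are open, and $\Omega\setminus\Omega'$ is Lebesgue-null by Lebesgue differentiation, since $(\B u)^s$ has zero density a.e. Campanato's characterization then gives $\B u\in\CC^{0,\alpha}_{\locc}(\Omega')$.

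To prove the decay lemma, I would proceed by contradiction and blow-up. Take a sequence with $\lambda_j^2:=\Phi_j\to0$ (the $x$-dependence contributes an additive $r$ handled by freezing, using \ref{it:fcontinuity} and \ref{it:frecession}) and rescale
\begin{align*}
v_j=\frac{\B u(x_j+r_j\,\cdot)-z_j}{\lambda_j}\quad\text{on }B_1 .
\end{align*}
First, $v_j$ is bounded in $\lebe^1+\lebe^2$-type spaces, because $E(\lambda w)\gtrsim\lambda^2\min(|w|,|w|^2)$. Second, $v_j=\B w_j$ for suitable potentials; by \eqref{eq:CR} we can choose $\tilde w_j$ with $\B\tilde w_j=v_j$ and $\tilde w_j$ controlled in $\sobo^{k-1,1}$ via the Fourier projection onto $\ima\B(\xi)$. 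This is the same device that produces the potential $\tilde u\in \sobo^{k-1,1}(\Omega)$ in the last assertion. The weak limit $v=\B w$ satisfies the linearized Euler--Lagrange system $\B^*(\partial^2_zf(x_0,z_0)\B w)=0$. The strong $\B$-quasiconvexity \ref{it:fBqcstrong} implies a Legendre--Hadamard-type ellipticity on the wave cone $\bigcup_\xi\ima\B(\xi)$, which together with \eqref{eq:CR} gives interior smooth estimates for this constant coefficient system, so $v$ decays like $\theta^2$.

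The main obstacle is upgrading weak to strong convergence, i.e.\ showing $\lambda_j^{-2}\fint_{B_\theta}E(\lambda_j(v_j-v))\to0$. For this I would use a Caccioppoli inequality of the second kind. One compares $u$ with $u+\phi$, where $\phi$ is a cut-off of the difference of $u$ with the affine-in-$\B$ comparison plus $\lambda_j w$. The cut-off errors coming from derivatives hitting the cut-off must be controlled by lower-order norms of the potential, using the $\sobo^{k-1,1}$ bounds and the compactness of $\sobo^{k-1,1}\hookrightarrow$ lower norms. Strong quasiconvexity then gives a lower bound by $\ell\fint E(\cdot)$. Measures are handled by strict approximation (Reshetnyak continuity, using \ref{it:frecession}). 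The difficulty is that $\B$ has order $k$, so cut-offs generate all intermediate derivatives of the potential. This forces us to normalize potentials by subtracting elements of $\ker\B$ that agree on the relevant jets, for which \eqref{eq:CR} supplies a Poincaré-type inequality $\|D^{k-1}w-\pi w\|_{\lebe^1}\lesssim|\B w|(B)$ with $\pi$ a projection to $\ker\B$-polynomials.

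Once $\B\tilde u\in\CC^{0,\alpha}$ on $\omega\cap\Omega'$, the $\CC^{k,\alpha}$ regularity of $\tilde u$ follows from the constant rank elliptic estimates for the potential constructed above. The exponent range $\alpha<1$ is obtained by bootstrapping the decay, as the linearized problem gives any exponent below $1$.
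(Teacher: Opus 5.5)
Your architecture (linearize at a point, apply a Caccioppoli inequality of the second kind to $u$ corrected by an element of $\ker\B$, prove excess decay, iterate, and use a gauge $\C^{\ast}\tilde u=0$ to pass from $\B\tilde u$ to $\tilde u\in\CC^{k,\alpha}$) is compatible with the paper. The paper uses $A$-harmonic approximation, but remarks that a blow-up would work equally well \emph{once} Theorem~\ref{thm:main_chris'_inequ} is available. The gap is exactly the Poincar\'e-type step you attribute to \eqref{eq:CR}.

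As you state it, that step is false. Take $\B=\curl$ on $\R^3$ (so $k=1$) and $w=\nabla\phi$ with $\phi$ harmonic but not a polynomial. Then $\B w=0$ and even $\di w=0$, yet $w$ is not a polynomial, so no projection onto $\ker\B$-polynomials gives $\|w-\pi w\|_{\lebe^1(B)}\lesssim|\B w|(B)$. For non-elliptic $\B$ (and for elliptic but not $\mathbb{C}$-elliptic $\B$) the relevant kernel is infinite dimensional even after imposing $\C^{\ast}w=0$. The paper also lists the corresponding same-ball estimate, with arbitrary $\pi\in\ker\B$, as an open problem. What can actually be proved (Theorem~\ref{thm:B_poincare2}) requires three things: the gauge $\C^{\ast}w=0$, a shrunken ball $B_{\theta R}$, and a smooth, non-polynomial $\pi\in\ker(\B,\C^{\ast})$ depending linearly on $w$. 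That $\pi$ is built from H\"ormander's $\lebe^2$ solvability theory (Theorem~\ref{thm:pu_replacement}) together with negative-norm estimates for the elliptic pair $(\B,\C^{\ast})$. This is the main technical work of the paper, not a routine consequence of constant rank.

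More seriously, even a correct norm inequality would not close your strong-convergence step. The cut-off errors in the Caccioppoli inequality are $\lambda_j^{-2}\int E\big(\lambda_j D^i(w_j-\pi_j-h)/r^{k-i}\big)$ for $0\leq i\leq k-1$. Since $\lambda^{-2}E(\lambda g)\sim\min\{|g|/\lambda,|g|^2\}$, these are quadratic where $|g|\leq\lambda^{-1}$. Norm estimates and compactness only give $g_j\to0$ in $\lebe^p$ with $p<\frac{n}{n-1}\leq 2$, and that does not make these terms vanish. For example, take $g_j=\lambda_j^{-1}$ on a set of measure $\lambda_j^{p}\eta_j$ with $\eta_j=\lambda_j^{(2-p)/2}$: the $\lebe^p$ norm tends to zero while the modular tends to infinity.

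What you need is the \emph{modular} Poincar\'e--Sobolev inequality \eqref{eq:modular-poincare} with some $q>1$. It bounds $\lambda_j^{-2}E(\lambda_jD^i(w_j-\pi_j))$ in $\lebe^q$, which gives equi-integrability, so Vitali's theorem upgrades a.e.\ convergence to convergence of these modulars. This is exactly how Lemma~\ref{lem:AHapproximation2} is closed. You also need compactness of the top-order term $D^{k-1}w_j$, which $\sobo^{k-1,1}$ bounds do not provide; the paper gets it from the fractional $\sobo^{k-s,p}$ estimate. The paper stresses that norm-level or open-mapping arguments do not yield the modular version. Without it, your decay lemma, and hence the theorem, is not established.
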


There are three major sources of difficulty in establishing Theorems~\ref{thm:main_A-free} and~\ref{thm:main}: 
\begin{enumerate}
    \item\label{itm:difficulty_a} the continuity of minimizers $v$ or $\B u$ is inferred from a non-unique representative $u$, in sharp contrast to the classic case $\B=D$;
    \item\label{itm:difficulty_c} no reduction to the more familiar case when $\B=D^k$ is possible due to Ornstein's non-inequality \cite{ornstein1962non,conti2005new,KK16,kazaniecki2017anisotropic}.
     \item\label{itm:difficulty_b} no basic implementation of known methods of linearization and approximation with solutions of linear elliptic systems can be performed.
\end{enumerate}

Difficulty~\ref{itm:difficulty_a} reflects the complete failure of coercivity of $\mathcal F$ in \eqref{eq:P_Omega} in any variation of the space
$
    \bv^{\B}(\Omega)\coloneqq \{u\in \sobo^{k-1,1}(\Omega,V)\colon \B u\in \mathcal M(\Omega,W)\}
$
\cite{gmeineder2019embeddings,raictua2019critical,raita2020continuity,breit2020trace,gmeineder2021limiting,gmeineder2024boundary}.
Coercivity holds for  \textit{elliptic} $\B$, i.e.,  $\ker \B(\xi)=\{0\}$ for $\xi\in\R^n\setminus\{0\}$, in which case the estimate
\begin{align}\label{eq:intro_ell_est}
\|\phi\|_{\sobo^{k-1,1}(\Omega)}\leq c\|\B \phi\|_{\lebe^1(\Omega)}\qquad\text{for }\phi\in \hold_c^\infty(\Omega,U)
\end{align}
holds for the variations $\phi$ in \eqref{eq:local_min_B'}.
Thus this difficulty is not present in the classic gradient case $\B=D$ in  neither the superlinear growth case \cite{Evans86,AF87,AF89,GiaMod86,marcellini1989regularity,CFM98,diening2012partial} or the linear growth one \cite{GMS79,GmeKri_BV}. In the set-up of general elliptic operators, strengthened versions of \eqref{eq:intro_ell_est} were crucial for contributions for superlinear \cite{CG22,Schiffer24} and linear growth integrands \cite{Federico2023,GK19,Gme20,Gme21,BK22}.

Estimate~\eqref{eq:intro_ell_est} fails brutally for constant rank operators that are not elliptic: there exist nonzero $\phi\in\CC_{\rmc}^\infty(\Omega,U)$ such that $\B \phi=0$. Examples are $\phi=\C \psi$ for nonzero $\psi$ in $\CC_{\rmc}^\infty(\Omega,U)$, where $\C\neq0$ is a potential operator for $\B$ \cite{Raita_pot_new}. In \cite{LiRaita24} this difficulty was overcome for integral functionals \eqref{eq:P_Omega} of superlinear growth by showing that for each subdomain $\omega\Subset\Omega$, one can find a representative $u$ of a local minimizer $\B u$ such that $\C^* u=0$ in $\omega$, making local estimates in the spirit of \eqref{eq:intro_ell_est} available.

In fact, in \cite{LiRaita24} it was possible to derive Korn-type inequalities for  $1<p<\infty$  such as
\begin{align}\label{eq:korn}
    \|D^k\phi\|_{\lebe^p(\Omega)}\leq c\|\B \phi\|_{\lebe^p(\Omega)}\qquad\text{for }\phi\in \hold^\infty_c(\Omega,U)\text{ such that }\C^*\phi=0
\end{align}
and stronger modular formulations \cite[Proposition~5.1]{LiRaita24}. Such estimates are known to fail in the limiting case $p=1$ by Ornstein's non-inequality \cite{KK16}. In particular, not only do we lack the higher integrability available in the superlinear growth case \cite{LiRaita24} and for certain convex problems of linear growth \cite{Gme20,beck2024gradient}, but it is also not possible to reduce to a more familiar situation of functionals defined on $\bv^k$ (Difficulty~\ref{itm:difficulty_c}).

Difficulty~\ref{itm:difficulty_b} refers to the fact that common strategies to establish partial regularity results such as Theorem~\ref{thm:main} involve  local approximation of the minimizers with solutions to linear elliptic systems. These were implemented indirectly by 
 blow-up arguments \cite{Evans86,AF87,CFM98} or using the $A$-harmonic approximation method  \cite{DGG00,DGK05}. 
 The $A$-harmonic approximation lemma can also be proved directly \cite{diening2012partial,Cruz-UribeDiening2019} by using explicit solvability of linearized elliptic systems. In our case, if $\B$ is not elliptic, it is unclear  if an auxiliary  elliptic system as in \cite{GK19,Federico2023} can even be formulated.

Indirect arguments require passing to the limit in the nonlinearity $E(\cdot)$, a difficulty already seen in the subquadratic setting, in which both strategies \cite{CFM98,DGK05} crucially use a Poincar\'e-Sobolev inequality adapted for  similar nonlinear  quantities. 
Such estimates are well understood for $\B=D^k$; in the superlinear growth case, which can be reduced to $\B=D^k$ using Korn-type inequalities \cite[Proposition~5.1]{LiRaita24}. 

Establishing a linear growth analogue is challenging and thus our main new tool:

\begin{bigthm}\label{thm:main_chris'_inequ}
    Suppose that $\B, \C$ are constant rank operators such that $\ima \C(\xi) = \ker \B(\xi)$ for all $\xi \in \mathbb R^n\setminus\{0\}$.
    Let $u \in \mathscr D'(B_R,U)$ such that $\B u \in \mathcal{M}(B_R,V)$ and $\C^{\ast}u = 0$ in $B_R$, and fix $\theta \in (0,1)$. Then there exists $\pi \in \CC^{\infty}(B_{\theta R},U)$ such that $\B \pi = 0$, $\C^{\ast}\pi=0$ in $B_{\theta R}$ and for all $1 \leq p < \frac{n}{n-1}$ we have the modular estimate
\begin{align}\label{eq:main_chris'_inequ}
    \sum_{j=0}^{k-1}  \left(\int_{B_{\theta R}} E\bigg(\frac{D^j(u-\pi)}{R^{k-j}}\bigg)^p \,\dd x\right)^{\frac1p} &\leq c\int_{B_R} E(\B u), 
\end{align} 
where $c=c(n,q,\theta,\B,\C)>0$ and the association $u \mapsto \pi$ is linear.
\end{bigthm}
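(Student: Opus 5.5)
By scaling, the plan reduces to $R=1$. The idea is to exploit the fact that, under $\C^*u=0$, the pair $(\B,\C^*)$ behaves like an elliptic operator. Since $\ima\C(\xi)=\ker\B(\xi)$, the symbol $\B^*(\xi)\B(\xi)+\C(\xi)\C^*(\xi)$ (after homogenizing orders, e.g.\ $(\B^*\B)^{l}+(\C\C^*)^{m}$ with $kl=k'm$, where $k'$ is the order of $\C$) is invertible for $\xi\neq0$. Hence $L:=(\B^*\B)^{l}+(\C\C^*)^{m}$ is elliptic of order $2N$. Because $\C^*u=0$, we get $Lu=(\B^*\B)^{l-1}\B^*(\B u)$. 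I would then localize: fix a cutoff $\eta$ equal to $1$ on $B_{(1+\theta)/2}$ and supported in $B_1$, and split $u=w+h$ on $B_{(1+\theta)/2}$. Here
\[
w:=\Phi*\bigl((\B^*\B)^{l-1}\B^*(\eta\,\B u)\bigr),
\]
where $\Phi$ is the fundamental solution of $L$, so that $Lh=0$ on $B_{(1+\theta)/2}$. In fact the convolution collapses to a kernel $K=D^{2N-k}\Phi$ (homogeneous of degree $k-n$) applied to $\eta\B u$, so $D^jw=K_j*(\eta\B u)$ with $K_j$ homogeneous of degree $k-j-n$. For $j\le k-1$ these are weakly singular kernels, dominated by $|x|^{1-n}$ on bounded sets. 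I would take $\pi$ to be a linear projection of $h$ onto $\ker\B\cap\ker\C^*$ restricted to $B_\theta$, for instance $\pi:=h-\text{(correction)}$. More precisely, I would use the linear map $u\mapsto h$ and check that $\B h$ and $\C^*h$ are controlled. This needs care, since $h$ itself need not be annihilated. So instead I would define $\pi$ as the component of the $L$-harmonic part lying in $\ker\B\cap\ker\C^*$, using that this space is finite-dimensional on balls modulo polynomials of bounded degree. Concretely, I would project onto polynomials in $\ker\B\cap\ker\C^*$ of degree $<k$ (a finite-dimensional space, by constant rank and the exactness), via an $\lebe^2$-projection, and absorb the rest of $h$ into the estimate via interior elliptic estimates for $L$.

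The key modular estimate is this. The function $E$ is convex, satisfies $E(t)\sim\min(t^2,t)$, and $E(\lambda z)\le \max(\lambda,\lambda^2)E(z)$. Write $\B u=\mu_1+\mu_2$, where $\mu_1=\B u\,\indi_{\{|\B u|\le1\}}$ (absolutely continuous part) and $\mu_2$ is the rest, including the singular part. Then $\int|\mu_2|\lesssim\int E(\B u)$ and $\int|\mu_1|^2\lesssim\int E(\B u)$. For the $\mu_2$ piece, $K_j*\mu_2$ lies in weak-$\lebe^{n/(n-1)}$, hence in $\lebe^p(B_1)$ for $p<n/(n-1)$, with norm $\lesssim\|\mu_2\|$, and $E(t)\le t$ bounds the modular. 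For the $\mu_1$ piece, the kernels $K_j$ are integrable on $B_2$, so Young's inequality gives $\|K_j*\mu_1\|_{\lebe^2}\lesssim\|\mu_1\|_{\lebe^2}$ (and $\lebe^{2p}$ via Hardy--Littlewood--Sobolev), and $E(t)\le t^2$ handles it. Combining via the quasi-subadditivity $E(a+b)\le 2(E(a)+E(b))$, and taking the $p$-th power carefully (it is here that one needs $\bigl(\int E(\cdot)^p\bigr)^{1/p}$ rather than $\int E$), yields the bound for $w$.

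The main obstacle is the harmonic part $h-\pi$. Its lower derivatives must be bounded in sup norm on $B_\theta$ by $\int E(\B u)$, in a modular (not merely $\lebe^1$) way. This is essential, because the statement requires linear-in-$E$ control even when $\int E(\B u)$ is small, i.e.\ quadratic regime. My plan is as follows. Since $\B h=\B u-\B w$ and $\C^*h=-\C^*w$ on the middle ball, and both are $L$-harmonic-type, interior estimates give $\sup_{B_\theta}|D^j(h-\pi)|\lesssim \|h-\pi\|_{\lebe^1(B_{(1+\theta)/2}\setminus B_{\ldots})}$ plus Poincar\'e-type control of $h$ modulo $\ker\B\cap\ker\C^*$ by $\B h,\C^*h$ in a weak norm. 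Those quantities are bounded by $\|\B u\|$ and $\|w\|$ in negative norms, which are again estimated modularly by the splitting $\mu_1+\mu_2$ (each contributing linearly or quadratically). Finally, $\sup|X|\lesssim \min(a,\sqrt a)$-type bounds convert to $E(X)\lesssim a$. I expect this Poincar\'e-modulo-kernel step, valid by constant rank plus exactness (finite-dimensionality of the polynomial kernel), to be the delicate part.
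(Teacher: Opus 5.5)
\textbf{There is a genuine gap in the construction of $\pi$.} This is the hard part of the statement, and the proposal does not handle it.

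You assume that $\ker\B\cap\ker\C^\ast$ on a ball is finite-dimensional modulo polynomials of bounded degree, and take $\pi$ to be an $\lebe^2$-projection of $h$ onto kernel polynomials of degree $<k$. Constant rank and exactness give no such finiteness, and the claim is false. Take $\B=\bar\partial$ on $\R^2$ (elliptic, so $\C=0$) and $u=\mathrm{e}^{z}$. Then $\B u=0$, so the right-hand side vanishes. Since $k=1$, the estimate forces $\pi=u$ on $B_{\theta R}$, which is not a polynomial. Letting $u$ range over holomorphic functions shows that no finite-dimensional family of candidates can work. In your construction $w=0$ and $h=u$, so subtracting a projection onto constants leaves $h-\pi\neq0$.

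The same happens for non-elliptic $\B$: for $\B=\curl$ on $\R^3$, every $\nabla p$ with $p$ harmonic lies in $\ker\B\cap\ker\C^\ast$. The paper flags exactly this point right after stating the theorem.

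Moreover, $h=u-w$ has no a priori bound at all, since only $\B u$ and $\C^\ast u$ are controlled. So the step ``interior estimates give $\sup_{B_\theta}|D^j(h-\pi)|\lesssim\lVert h-\pi\rVert_{\lebe^1}$ plus Poincar\'e control modulo the kernel'' presupposes an estimate of $h$ modulo an infinite-dimensional kernel in terms of $\B h$. That is essentially the theorem itself.

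\textbf{What is missing is a quantitative, linear local solvability theorem for the compatible system $(\B,\C^\ast)v=(g,0)$.} The paper gets this from H\"ormander's weighted $\lebe^2$-estimates and Paley--Wiener--Schwarz (Theorem~\ref{thm:pu_replacement} with $\mathscr P=(\B,\C^\ast)$). This yields $v$ on an intermediate ball with $\B v=\B u$, $\C^\ast v=0$ and a bound in $\sobo^{-n-N,2}$; the paper then sets $\pi=u-v$. It upgrades this very negative bound using the Korn-type estimate with negative remainder (Lemma~\ref{lem:negative_norm_estimate}) in the Orlicz scale $\Phi^q$, where $\Phi=E/\lambda$ and $\lambda=\int_{B_1}E(\B u)$, so that all constants are independent of $\lambda$.

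Your treatment of the particular solution is fine, and it is a more elementary substitute for the Orlicz machinery on that piece. Indeed $w=\mathscr F^{-1}(\B^\dagger(\xi)\widehat{\eta\B u})$, so $\C^\ast w=0$ on all of $\R^n$, and splitting $\B u$ at level $1$ gives the modular bound for $w$.

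Your decomposition can be completed, but only with the solvability ingredient above:
\begin{enumerate}
\item In the interior of $\{\eta=1\}$, $\B h$ is smooth with all derivatives bounded by $\lVert\B u\rVert_{\mathcal M(B_1)}$. To see this, take an annihilator $\A$ of $\B$: there $\B h$ is $\A^\dagger$ applied to $\A(\eta\B u)$, which is supported on $\spt\nabla\eta$.
\item Solve $\B v'=\B h$, $\C^\ast v'=0$ on a smaller ball using Theorem~\ref{thm:pu_replacement} together with interior ellipticity of $(\B,\C^\ast)$, and set $\pi=h-v'$.
\item This gives a linear bound $\sup_{B_\theta}|D^jv'|\lesssim\lVert\B u\rVert_{\mathcal M(B_1)}$. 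Jensen turns it into the modular bound, since $E(c\lVert\B u\rVert_{\mathcal M(B_1)})\lesssim\int_{B_1}E(\B u)$.
\end{enumerate}
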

Here the balls are concentric and recall that $E(z)=\sqrt{1+|z|^2}-1$. 
Theorem~\ref{thm:main_chris'_inequ} is new for elliptic operators $\B$ (i.e., when $\C=0$), in which case the kernel of $\B$ may be infinite dimensional  in general, so it is unclear how to construct  $\pi$ such that \eqref{eq:main_chris'_inequ} holds (cf. \cite{breit2020trace,gmeineder2019embeddings}).
This result is to be contrasted with \cite{Federico2023}, where the open mapping theorem is employed via a qualitative solvability result, which gives an analogous norm estimate ({i.e.,} replacing $E$ by $\lvert\,\cdot\,\rvert$).
In our setting, such linear techniques do not apply due to the nonlinear nature of the modular estimate. Instead we employ a  version of the qualitative solvability theory due to H\"ormander \cite{Hormander2000}, adapted to the $\mathrm{H}^s$-scale. Estimates implied by such general theory however are highly suboptimal, so we combine them with the ellipticity of the pair $(\B,\C^{\ast})$. The approach thus differs completely from \cite[Proposition 5.1]{LiRaita24}, as we must allow for lower order terms in arbitrarily negative scales. 

Our results also apply in the superlinear growth case, {substantially} refining \cite[Proposition 5.1]{LiRaita24}: if we assume   $\B u \in \lebe^p(B_R,V)$ for some $1<p<\infty$
in the setting of Theorem~\ref{thm:main_chris'_inequ}, then we obtain $u-\pi \in \sobo^{k,p}(B_{\theta R},V)$  and the  modular estimate
\begin{equation}\label{eq:vp_intro}
    \sum_{j=0}^k  \int_{B_{\theta R}} \left\lvert V_p\left(\frac{D^j(u-\pi)}{R^{k-j}}\right)\right\rvert^2 \,\dd x \leq c \int_{B_R} \lvert V_p(\B u)\rvert^2\,\dd x,
\end{equation}
where $V_p(z)=(1+\lvert z \rvert^2)^{\frac{p-2}4}z$. A precise statement is given in Theorem~\ref{thm:p_korn}.

One would expect by analogy with Poincar\'e's inequality, that  Inequality~\eqref{eq:main_chris'_inequ} also holds for $\theta=1$. This turns out to be an outstanding open problem even for linear $\lebe^p$-estimates:
\begin{openproblem}
    Let $\B$ be a $k$-th order constant rank operator, $1\leq p\leq \infty$, and $j=1,\ldots,k-1$. Does the following estimate hold?
    \begin{align*}
        \inf\left\{\|D^j(u-\pi)\|_{\lebe^p(B)}\colon \pi\in\mathscr D'(B,U)\text{ with }\B\pi=0\right\}\leq c\|\B u\|_{\lebe^p(B)}\quad\text{for }u\in \hold^\infty(\bar B,U).
    \end{align*}
\end{openproblem}
One might expect that this result is classical. 
In fact, all reasonable variants of this question  (such as with $j=k$ in the left-hand-side or with $\lebe^q$-norms on the left-hand-side for $q$ below the Sobolev exponent of $p$) are open in general--even in the class of elliptic operators. Partial results or examples can be found in \cite{gmeineder2019embeddings,arroyo2021new,iwaniec1999nonlinear}. Interestingly, the rank condition is necessary in some instances \cite{guerra2020necessity}.

\subsection*{Further technical remarks}

Theorem~\ref{thm:main_A-free} will follow from Theorem~\ref{thm:main} by using local representations of $\A$-free fields taking the form $v=\B u+S$, where $\B$ is a potential operator of $\A$ (see \cite{Raita19,Raita_pot_new}) and $S$  is a smooth vector field satisfying the elliptic system $\A S=0,\,\B^*S=0$. This idea has origins in \cite{Murat81,FonMul99} and was used recently in \cite{Raita19,GR22,KriRai22,guerra2022compensated,LiRaita24} to prove various properties related to variational integrals defined on $\A$-free vector fields, such as \eqref{eq:P_Omega_Afree}.
We remark that if $\B$ arises as a potential operator of $\A$, then \ref{it:fqcstrong} and \ref{it:fBqcstrong} are equivalent by \cite[Section 3]{Raita19}.

While Theorems~\ref{thm:main_A-free} and \ref{thm:main} are already new in the autonomous case when $f=f(z)$, our treatment of the non-autonomous case is necessitated by the nature of this reduction from $\A$-free fields to $\B$-gradients. 
Indeed writing $v = \B u + S$ we obtain
\begin{equation}\label{eq:autonomous_reduction}
    \int_{\Omega} f(v) = \int_{\Omega} f(S(x)+\B u),
\end{equation}
which reduces the minimization of \eqref{eq:P_Omega_Afree} to that of \eqref{eq:P_Omega}, at the expense of introducing an additional $x$-dependence.
While not our original aim, our treatment of the $x$-dependence  is new in the gradient setting--i.e., Theorem~\ref{thm:main} is new for $\B=D$.

Also, while it is a routine exercise to relax \ref{it:fcontinuity} to allow for integrands which are $\beta$-H\"older continuous in $x$ for any $\beta \in (0,1)$, we have refrained from treating this generality to simplify the exposition.
We also emphasize that the condition \ref{it:frecession} arises only due the $x$-dependence, and is vacuous for autonomous integrands; related conditions can be found in \cite{KriRin10a,ARDPR}.

In contrast to \cite{LiRaita24} where the superlinear growth case is treated, we establish partial regularity using the method of $A$-harmonic approximation. 
This technique was first used in the context of geometric problems \cite{Simon1983,Simon1996}, which was later adapted to the non-parametric setting in \cite{DuzaarGrotowski2000,DGG00,DuSt02,DGK05,diening2012partial}. 
We remark however, that equipped with Theorem~\ref{thm:main_chris'_inequ}, one could equally employ a blow-up argument as in \cite{LiRaita24} to establish Theorems~\ref{thm:main_A-free} and \ref{thm:main}.

Under constant rank constraints however, the method of $A$-harmonic approximation 
requires substantial modifications.
The main difficulty here is twofold, one being the solvability of elliptic systems as \eqref{eq:Aharmonic} under suitable boundary conditions, and the other being the lack of a suitable Poincar\'e inequality in the modular form. These are overcome by an $\lebe^2$-based solvability result (Lemma \ref{lem:l2_linear_solvability}) and Theorem \ref{thm:main_chris'_inequ}, respectively.

It appears that one cannot directly reduce from Theorem~\ref{thm:main_A-free} to Theorem~\ref{thm:main} using \eqref{eq:autonomous_reduction}, unless one additionally imposes that $\partial_z^2f$ is bounded. For this reason in \cite{LiRaita24} a controlled $p$-growth is assumed, however we will show this additional condition is unnecessary for partial regularity.
We found however, that this leads to $\alpha$-H\"older continuity of $v$ in the restricted range $\alpha < \frac12$.

While there has been work in the setting of exact or closed differential forms \cite{Dacorogna_JEMS,SIL3}, the regularity theory has until now been confined to the superlinear growth setting and convex integrands; see for instance \cite{uhlenbeck1977regularity,hamburger1992regularity,beck2013regularity}. We therefore believe that Theorems~\ref{thm:main_A-free} and \ref{thm:main} are new in this setting, even for $\A = \di$ and $\B = \curl$.

Finally, there are many natural extensions which arise in the context of variational problems with constant rank PDE constraints.
Further investigations may include considering the setting of $(p,q)$-growth \cite{Schmidt2008,schmidt2009regularity,GmeinederKristensen2024}, degenerate problems \cite{DM04}, nonlinear potentials \cite{kuusi2016partial,defilippis2022quasiconvexity,defilippis2023singular}, singular set considerations \cite{KriMin07}, Sobolev regularity \cite{Bild03,BS13,}, general growth \cite{Marcellini1996}, etc.
Related works in the ($\mathbb C$-)elliptic setting include \cite{GK19,Gme20,beck2024gradient,EitlerLewintan2025} for Sobolev regularity, and \cite{CG22,Federico2023,BK22,Stephan2024} for partial regularity.

\subsection*{Organization of the paper}
In Section~\ref{sec:prel} we collect preliminaries regarding differential operators, and function spaces relevant for our problems. Section~\ref{sec:estimates} consists of the elaborate proof of Theorem~\ref{thm:main_chris'_inequ}. In Section~\ref{sec:partialreg} we prove the main partial regularity result, and in Section~\ref{sec:proof_of_main_thm_A-free} we will show how Theorems~\ref{thm:main_A-free} and \ref{thm:main} follow by means of a reduction argument.


\section{Preliminaries}\label{sec:prel}

\subsection{Notation}

\par Throughout this paper, we denote by $\Omega$a bounded open subset of $\R^n$ unless otherwise specified. The Lebesgue measure on $\R^n$ is denoted by $\Le^n = \abss{\,\cdot\,}$, and $\indi_{E}$ for any $E\subset \R^n$ is the indicator function of $E$. For a measure $\mu \in \mathcal M(\Omega)$, we will denote its Lebesgue--Radon--Nikod\'ym decomposition as $\mu = \mu^{a} \mathscr L^n + \mu^{s}$. If $\mu = \B u$ is a vectorial measure arising from a potential ({i.e.,} $\B$ is a differential operator), we will write $\mu^a = \B^a u$ and $\mu^s = \B^su$. For any $E\subset \R^n$ measurable with $\Le^n(E)\in (0,\infty)$ and any $f \in \lebe^1(E)$, the average notation is defined as follows
    \[ (f)_E= \fint_{E}f \,\dif x:= \frac{1}{\Le^n(E)}\int_E f\dif x. \] 
In particular, when $E= B_R(x)$ is a ball, we may abbreviate it as $(f)_{x,R}$. This notation also extends to vector-valued functions and measures.

\par We will assume familiarity with standard function spaces such as the Lebesgue and Sobolev spaces, along with the space of smooth and compactly supported functions $\CC^{\infty}_{\rmc}(\Omega)$ and the space of distributions $\mathscr{D}'(\Omega)$.
We will often omit the target of vector valued function spaces if it is clear from context, and write for instance $\lebe^p(\Omega) = \lebe^p(\Omega,V)$.
We will also use $\lebe^p_{\mathrm{loc}}(\Omega)$ for functions locally in $\lebe^p$ and similarly for other scales.

Throughout the text, $c$ will denote constants which change from line to line, whose dependencies may be specified by $c = c(\cdots)$. Sometimes we will write $c_1, c_2$, etc.\,to denote specific constants. We will also write $A \lesssim B$ to mean there exists a constant $c>0$ such that $A \leq cB$, and $A \sim B$ if $A \lesssim B$ and $B \lesssim A$.
\par Unless otherwise stated, $U, V, W$ will denote finite dimensional inner-product spaces over $\mathbb R$. Suppose that $X$ is such a space. For any $x,y \in X$, denote by $x\cdot y$ or $\brangle{x,y}$ the inner product of $x$ and $y$, and the induced norm by $\abss{\cdot}$. 

\par Given any two finite dimensional inner-product spaces $U,V$ as above, denote by $\lin^k(U,V)$ the space of $k$-linear maps from $U^k$ to $V$ (abbreviated as $\lin(U,V)$ if $k=1$). The space of symmetric $k$-linear maps from $U$ to $V$ is denoted by $\mathrm{SLin}^k(U,V)$. In particular, for any $u\in \CC^\infty(U,V)$, we have that $D^ku\in \mathrm{Slin}^k(U,V)$. Moreover, $y\otimes x^{\otimes k} \in \slin^k(U,V)$ is defined such that
    \[ (y\otimes x^{\otimes k})(u^1,\dots,u^k) = \brac{\prod_{i=1}^k \langle x,u^i \rangle}y, \quad \mbox{for any }u^1,\dots,u^k \in U. \]

\par The higher order gradients and exterior products above are connected by Fourier transform, 
which for $f \in \lebe^1(\R^n,V)$ is defined as
    \begin{equation}\label{eq:FT_definition} \mathscr{F} f (\xi) = \hat{f}(\xi):= \int_{\R^n} f(x)\mathrm{e}^{-\mathrm{i}2\pi  x\cdot \xi}\dif x, \quad \xi \in \R^n. \end{equation}
    If $u\in \mathscr S(\R^n,V)$, we have that $\widehat{D^ku}(\xi)=c_k\hat u(\xi)\otimes \xi^{\otimes k}$ for $\xi\in\R^n$ for a complex constant $c_k$ which we will often omit.

\subsection{Differential operators}\label{sec:differential}

\par We will work with homogeneous and constant coefficient differential operators
\begin{align*}
    \A= \sum_{|\alpha|=h} A_\alpha\partial^\alpha, \quad \B= \sum_{|\beta|=k} B_\beta\partial^\beta, 
\end{align*} 
where the multi-indicies $\alpha,\beta$ lie in $\N^n$, and $A_{\alpha} \in \lin(V,W)$, $B_{\beta} \in \lin(U,V)$ are linear maps.
The associated characteristic polynomials of $\A$ and $\B$ are respectively defined as
 
\begin{align*}
    \A(\xi) = \sum_{|\alpha|=h} A_\alpha\xi^\alpha, \quad \B(\xi)= \sum_{|\beta|=k} B_\beta\xi^\beta \quad\mbox{for $\xi \in \mathbb R^n$}, 
\end{align*}
which naturally arise under the Fourier transform.

Our primary assumption is that $\A$ and $\B$ are of constant rank \eqref{eq:CR}. An important subclass is that of \emph{(injectively-)elliptic} operators \cite{van2022injective}, i.e., operators $\B$ satisfying $\ker \B(\xi)=\{0\}$ for $\xi\in\R^n\setminus\{0\}$. Of particular importance to our work will be the following:

\begin{definition}
    Let $\A$ be a homogeneous partial differential with constant coefficients. We say that a homogeneous partial differential operator $\B$ is a \textbf{potential  operator} for $\A$ if 
    $
            \ker \A(\xi)=\rmim\B(\xi)\text{ for all }\xi\in\R^n\setminus\{0\}.
    $
\end{definition}
By \cite[Theorem 1.1]{Raita19}, we have that $\A$ admits a potential operator $\B$ if and only if $\A$ has constant rank, in which case $\B$ also has constant rank. We will also use work with a potential operator of $\B$, which we denote by $\C$. By the construction in \cite{Raita19}, we can assume that $\C$ is defined on $\R^n$ from $U$ to $U$ and is homogeneous of degree $l>k$. The inequality is justified by the fact that if $\C$ is a potential operator for $\B$, then so is $\Delta^m\C$. Therefore if $\B$ is itself a potential operator of $\A$, we obtain the exact sequence
\begin{equation}
    U \xrightarrow{\C(\xi)} U \xrightarrow{\B(\xi)} V \xrightarrow{\A(\xi)} W \quad\mbox{for each } \xi \in \mathbb R^n\setminus\{0\}.
\end{equation}

We define the \textit{wave cone} of $\A$ \cite{Tartar83,Murat81} as
\begin{equation}\label{eq:wave_cone}
    \Lambda_{\mathscr{A}} = \bigcup_{\xi \in \mathbb R^n\setminus\{0\}} \ker \mathscr{A}(\xi),
\end{equation}
which gives the directions of non-ellipticity of $\A$.
Therefore, if $\B$ arises as a potential operator of $\A$, we also have that
\begin{equation}
    \Lambda_{\mathscr{A}} = \bigcup_{\xi \in \mathbb R^n \setminus \{0\}} \ima \mathscr{B}(\xi).
\end{equation}

We will also make use of the \emph{spanning cone condition} \cite{FonMul99,GR22,KriRai22}, i.e., 
\begin{align}\tag{SC}\label{eq:SC}
    V =  \spann \Lambda_\A.
\end{align}

For a matrix $B \in \lin(U,V)$, we  define the Moore--Penrose generalized inverse $B^{\dagger}$ by 
\[ B^{\dagger}:= \left\{
    \begin{aligned}
        &(B\vert_{(\ker B)^{\perp}})^{-1},& &\mbox{on }\rmim B,\\
        &0,& &\mbox{on }(\rmim B)^{\perp}, 
    \end{aligned}\right. \]
which is the unique element in $\lin(V,U)$ satisfying
\begin{equation}
    BB^{\dagger} = \proj_{\ima B}, \quad B^{\dagger}B = \proj_{\ima B^{\ast}},
\end{equation}
where $\proj_M$ denotes the orthogonal projection onto a given subspace $M$, and $B^{\ast}$ denotes the adjoint of $B$.
We will apply this to $\B(\xi)$ in the Fourier space.
The following is a consequence of Decell's formula \cite{Decell1965}, which was used in \cite{Murat81,FonMul99,Raita19}.

\begin{lemma}\label{lem:dagger}
    Let $\B$ be an operator as above of order $k$ with constant rank. Then the mapping $\xi \mapsto \B^{\dagger}(\xi)$ is smooth on $\mathbb R^n \setminus \{0\}$ and is homogeneous of degree $-k$.
\end{lemma}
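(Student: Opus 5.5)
The plan is to realise $\B^{\dagger}(\xi)$ as a rational function on the unit sphere whose denominator never vanishes there, using Decell's formula (equivalently, the Cayley--Hamilton theorem) for the Moore--Penrose inverse.

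First, set $M(\xi):=\B(\xi)^{\ast}\B(\xi)\in\lin(U,U)$. This is a polynomial in $\xi$ of degree $2k$, with $M(t\xi)=t^{2k}M(\xi)$. Its rank equals $\rank\B(\xi)=:r$, which is constant on $\R^n\setminus\{0\}$ by \eqref{eq:CR}. Write the characteristic polynomial as $\det(\lambda-M(\xi))=\sum_{j=0}^{N}a_j(\xi)\lambda^{N-j}$ with $N=\dim U$. Each $a_j$ is a polynomial in $\xi$, homogeneous of degree $2kj$. Since $M(\xi)$ is symmetric positive semidefinite of rank $r$, the coefficient $a_r(\xi)$ is, up to sign, the $r$-th elementary symmetric function of the eigenvalues. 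Exactly $r$ eigenvalues are positive, so $a_r(\xi)=\pm\prod_{\lambda_i>0}\lambda_i\neq0$, and $a_j(\xi)=0$ for $j>r$, for every $\xi\neq0$.

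Decell's formula then gives
\[
\B^{\dagger}(\xi)=-\frac{1}{a_r(\xi)}\sum_{j=0}^{r-1}a_j(\xi)\,M(\xi)^{r-1-j}\,\B(\xi)^{\ast}.
\]
To verify it directly, diagonalise $M(\xi)$. On $\ker M=\ker\B$ and on $(\ima\B)^{\perp}=\ker\B^{\ast}$ both sides vanish. On an eigenvector of $M$ with eigenvalue $\lambda>0$, the relation $p(\lambda)=\lambda^{N-r}q(\lambda)=0$, where $q(\lambda)=\sum_{j\le r}a_j\lambda^{r-j}$, shows that the sum acts as $\lambda^{-1}$. This yields $(\B^{\ast}\B)^{-1}\B^{\ast}$ on the relevant subspaces, which is exactly $\B^{\dagger}$.

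Smoothness and homogeneity now follow. The right-hand side is a polynomial matrix divided by the polynomial $a_r$, which does not vanish on $\R^n\setminus\{0\}$, so it is smooth there. For the degree, the numerator term with index $j$ has degree $2kj+2k(r-1-j)+k=2kr-k$, and the denominator has degree $2kr$. Hence $\B^{\dagger}$ is homogeneous of degree $-k$. One can also see this directly: $\B(t\xi)=t^k\B(\xi)$ has the same kernel and image as $\B(\xi)$, so $\B(t\xi)^{\dagger}=t^{-k}\B(\xi)^{\dagger}$ from the definition. This holds even for $t<0$, since $t^k$ is merely a nonzero scalar.

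The only real point in the argument is the nonvanishing of $a_r$, and this is exactly where \eqref{eq:CR} enters. Without constant rank, the rank of $M$ could drop at some $\xi\neq0$, $a_r$ would vanish there, and $\B^{\dagger}$ would in general be discontinuous.
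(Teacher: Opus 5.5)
Your proof is correct and follows the paper's route: the paper gives no argument beyond attributing the lemma to Decell's formula, and you have written that argument out in full, with \eqref{eq:CR} giving $a_r(\xi)\neq 0$ on $\R^n\setminus\{0\}$ and the degree count giving homogeneity of degree $-k$. One wording fix: instead of saying both sides vanish ``on $\ker M$'', say that both sides take values in $(\ker\B)^{\perp}$, because $\B^{\ast}$ maps $V$ into $(\ker M)^{\perp}$ and the polynomial in $M$ acts as $M^{-1}$ there (on $\ker M$ itself it acts as $-a_{r-1}/a_r$, not $0$).
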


Any differential operator $\A$ as above can be written as $\A v=\di^h L v$, where $L\in \lin(V,\mathrm{SLin}^h(\R^n,W))$. 
Indeed if $v \in \mathscr D'(\R^n,V)$ we have
\begin{equation}\label{eq:div_expression}
    \A v= \sum_{|\alpha|=h} A_\alpha \partial^\alpha v = \sum_{|\alpha|=h} \partial^\alpha (A_\alpha v) =\di^h Lv,
\mbox{ where } Lz=(A_\alpha z)_{|\alpha|=h}\text{ for }z\in V.
\end{equation}
Here $\di^h$ is the formal adjoint operator of $D^h$, and in general the adjoint of $\A$ will be denoted by $\A^{\ast}$.

We will also need the following result concerning averages of $\B$-gradients.
\begin{lemma}\label{lem:B_poly}
    Let $u \in \sobo^{k-1,1}(B_R(x_0))$ such that $\B u \in \mathcal M(B_R(x_0))$.
    Then there exists a homogeneous polynomial $a \colon \mathbb R^n \to U$ of degree $k$ such that $\B a = (\B u)_{x_0,R}$.
\end{lemma}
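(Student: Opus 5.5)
The plan is to reduce the statement to a finite-dimensional linear-algebra fact about the symbol. Set $z := (\B u)_{x_0,R} \in V$; this is a well-defined average because $\B u$ is a finite measure on $B_R(x_0)$. A homogeneous polynomial of degree $k$ has the form
\[
a(x)=\sum_{|\beta|=k}\frac{x^\beta}{\beta!}\,a_\beta, \qquad a_\beta\in U.
\]
For such $a$, the function $\B a$ is the constant $\sum_{|\beta|=k}B_\beta a_\beta$, since each $\partial^\beta$ with $|\beta|=k$ annihilates every monomial $x^\gamma$, $|\gamma|=k$, except $\gamma=\beta$, on which it gives $\beta!$. So the linear map $T\colon (a_\beta)_\beta\mapsto \sum_\beta B_\beta a_\beta$ has image $\sum_{|\beta|=k} \ima B_\beta$, and it suffices to show $z\in \sum_\beta \ima B_\beta$.

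Next I would identify this image with $\spann\bigcup_{\xi}\ima\B(\xi)$. One inclusion is immediate, because $\B(\xi)=\sum_\beta \xi^\beta B_\beta$. For the other, fix $w\in U$ and note that the polynomial map $\xi\mapsto \B(\xi)w$ has coefficients $B_\beta w$. Pairing with any $\eta$ orthogonal to every $\ima\B(\xi)$ gives $\langle \B(\xi)w,\eta\rangle=0$ for all $\xi$, so every coefficient $\langle B_\beta w,\eta\rangle$ vanishes. Hence $\sum_\beta\ima B_\beta \subset \spann\bigcup_\xi \ima \B(\xi)=:\mathcal V_\B$.

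It remains to show $z\in\mathcal V_\B$. Let $P$ denote the orthogonal projection onto $\mathcal V_\B^\perp$. I would show $P\B u=0$ as a distribution, which forces $Pz=0$. For $\eta\in\mathcal V_\B^\perp$ and $\varphi\in\CC^\infty_{\rmc}$, integration by parts gives
\[
\langle \B u,\eta\varphi\rangle=\pm\sum_\beta \int \langle u, B_\beta^*\eta\rangle\,\partial^\beta\varphi .
\]
By the previous step, $B_\beta^*\eta=0$ for each $\beta$, so this pairing vanishes. The point to watch is that $u\in \sobo^{k-1,1}$ only, so the pairing must be read distributionally; that is legitimate because $\B u$ is defined exactly as that distribution, and it is a measure. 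A measure orthogonal to $\eta$ in this sense has $\langle\B u,\eta\rangle=0$ as a measure, and hence its average satisfies $\langle z,\eta\rangle=0$.

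Combining the three steps, $z\in\mathcal V_\B=\ima T$, so we can choose coefficients $a_\beta$ with $Ta=z$; the resulting $a$ is the required homogeneous polynomial. I see no genuine obstacle: the only delicate point is recognising that constant rank is not needed here, and that the whole content is the linear-algebra identity $\sum_\beta\ima B_\beta=\spann\bigcup_\xi\ima\B(\xi)$ together with the distributional vanishing of $P\B u$.
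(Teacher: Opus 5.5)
Your proof is correct, and its skeleton matches the paper's. Both show that $z=(\B u)_{x_0,R}$ lies in the image of the linear map $T$ with $\B a=T(D^ka)$, and then read off a homogeneous degree-$k$ polynomial from a preimage of $z$.

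The difference is in how $z\in\ima T$ is obtained. The paper cites \cite[Lemma~2.4]{LiRaita24} together with a density (mollification) argument. You instead test $\B u$ against constant vectors $\eta$ with $B_\beta^*\eta=0$ for all $\beta$. This shows directly that the measure $\B u$ takes values in $\ima T$, with no approximation step, and it works for any distribution $u$ with $\B u$ a measure.

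Your second step, identifying $\sum_\beta\ima B_\beta$ with the span of $\bigcup_\xi\ima\B(\xi)$, is correct but not needed here. Taking $\eta\perp\sum_\beta\ima B_\beta=\ima T$ already gives $B_\beta^*\eta=0$ for every $\beta$, so the duality argument yields $z\in\ima T$ on its own.
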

\begin{proof}
    Write $\B u = T(D^ku)$ with $T \in \lin(\slin(\mathbb R^n,U),V)$.
    By \cite[Lemma~2.4]{LiRaita24} and by means of a density argument,   
    we have $z_0 \coloneqq (\B u)_{x_0,R} \in \ima T$. 
    Therefore we can find $\phi \in \operatorname{SLin}^k(\mathbb R^n,U)$ such that $T(\phi) = z_0$. We then define
    \begin{equation}
        a(x) = \sum_{\lvert\alpha\rvert=k} \phi_{\alpha} \frac{x^{\alpha}}{\alpha!}
    \end{equation}
    which gives the desired $k$-polynomial, since $\B a = T(D^ka) = T(\phi) = z_0$.
\end{proof}

\subsection{Integrands of linear growth}
\par Define the following reference integrand 
\begin{equation}
    E(z)\coloneqq \sqrt{1+\abss{z}^2}-1
\end{equation}
for any $z$ in any finite dimensional Banach space $(X,\lvert\cdot\rvert)$. We will record several elementary properties of this integrand, which can be found in \cite{GmeKri_BV}.

\begin{proposition}\label{prop:Eprop}
Then the function $E(\cdot)$ satisfies the following properties for any $z,w \in X$ and $t>0$:
\begin{enumerate}
    \item\label{it:Eest} $E(z) \sim \min\{\abss{z},\abss{z}^2\}$;
    \item\label{it:Ecvx} $E(\cdot)$ is convex;
    \item\label{it:Emulti} $E(tz)\lesssim \max\{t,t^2\}E(z)$;
    \item\label{it:Esum} $E(z+w) \lesssim E(z)+E(w)$.
\end{enumerate}
\end{proposition}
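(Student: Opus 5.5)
The plan is to reduce everything to the scalar function $\Phi(t)=\sqrt{1+t^2}-1$, $t\geq 0$, since $E(z)=\Phi(\abss{z})$. Throughout I use that $\Phi$ is increasing on $[0,\infty)$ and that $\Phi(0)=0$.

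For \ref{it:Eest}, I would write $\Phi(t)=\dfrac{t^2}{\sqrt{1+t^2}+1}$. For $t\leq 1$ the denominator lies in $[2,1+\sqrt2]$, so $\Phi(t)\sim t^2=\min\{t,t^2\}$. For $t\geq1$ the denominator lies between $t$ and $(1+\sqrt2)t$, since $\sqrt{1+t^2}+1\leq \sqrt2\,t+t$. Hence $\Phi(t)\sim t=\min\{t,t^2\}$. The constants are absolute.

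For \ref{it:Ecvx}, I would compute $\Phi''(t)=(1+t^2)^{-3/2}>0$ and $\Phi'(0)=0$, so $\Phi$ is convex and nondecreasing on $[0,\infty)$. Then $z\mapsto \Phi(\abss{z})$ is the composition of a convex nondecreasing function with the convex norm, and is therefore convex. A direct alternative is to note that $z\mapsto\sqrt{1+\abss{z}^2}$ is the Euclidean-type norm of $(1,z)$ in $\R\times X$; this is the cleanest route when $X$ carries an inner-product norm.

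For \ref{it:Emulti}, I would use \ref{it:Eest} and the fact that the comparison function $m(t)=\min\{t,t^2\}$ satisfies $m(st)\leq \max\{s,s^2\}\,m(t)$. To check this, separate the cases $s\leq1$ and $s\geq1$. If $s\geq 1$, then $m(st)\leq s^2 t^2$ and $m(st)\leq st\leq s^2t$, so $m(st)\leq s^2m(t)$. If $s\leq1$, then $m(st)\leq st$ and $m(st)\leq s^2t^2\leq st^2$, so $m(st)\leq s\,m(t)$. Transferring through \ref{it:Eest} gives \ref{it:Emulti} up to constants.

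For \ref{it:Esum}, I would combine monotonicity of $\Phi$ with convexity: $E(z+w)\leq \Phi(\abss z+\abss w)\leq \tfrac12\Phi(2\abss z)+\tfrac12\Phi(2\abss w)$. Applying \ref{it:Emulti} with $t=2$ then yields $E(z+w)\leq 2E(z)+2E(w)$.

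The only mildly delicate step is the case analysis in \ref{it:Emulti}; everything else is elementary calculus.
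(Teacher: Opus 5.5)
Your proof is correct, and it takes essentially the paper's route: the paper gives no argument of its own and records these facts from \cite{GmeKri_BV}, where they are verified by the same one-variable computations with $\Phi(t)=\sqrt{1+t^2}-1$ that you carry out. One cosmetic slip: in (d), routing through (c), whose constant comes from (a), gives the constant $2(1+\sqrt2)$ rather than $2$; the constant $2$ is nonetheless true, since $\Phi(st)=s^2t^2/(\sqrt{1+s^2t^2}+1)\leq s^2\Phi(t)$ for $s\geq 1$, and in any case only $\lesssim$ is claimed.
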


\begin{lemma}[Lemma 2.9 in {\cite{GmeKri_BV}}]\label{lem:Eint_est}
    For any $f \in \lebe^1(\Omega)$ with $\Omega \subset \R^n$ bounded, define $e:= \fint_{\Omega} E(\abss{f})\dif x$, then we have
        \begin{equation}
            \fint_{\Omega} \abss{f}\dif x \leq \sqrt{e^2+2e}.
        \end{equation} In particular, if $e\leq 1$, there holds
        \begin{equation}
            \fint_{\Omega} \abss{f}\dif x \leq \sqrt{3e}.
        \end{equation}
\end{lemma}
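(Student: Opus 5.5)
The statement is Lemma~\ref{lem:Eint_est}: with $e=\fint_\Omega E(\abss{f})\dif x$ we want $\fint_\Omega\abss{f}\dif x\leq\sqrt{e^2+2e}$. The plan is to invert $E$ on $[0,\infty)$ and apply Jensen's inequality to the concave inverse.

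Set $g(t)=E(t)=\sqrt{1+t^2}-1$ for $t\geq0$. This is a strictly increasing bijection of $[0,\infty)$ onto itself. Its inverse is obtained by solving $s+1=\sqrt{1+t^2}$, which gives
\[ h(s):=g^{-1}(s)=\sqrt{(s+1)^2-1}=\sqrt{s^2+2s}. \]
Since $g$ is convex and increasing (Proposition~\ref{prop:Eprop}\ref{it:Ecvx}), $h$ is concave and increasing. One can also check concavity directly: $h(s)=\sqrt{s(s+2)}$, and $h''(s)=-(s^2+2s)^{-3/2}<0$.

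Next, write $\abss{f(x)}=h\bigl(E(\abss{f(x)})\bigr)$ pointwise and apply Jensen's inequality for the concave function $h$ with respect to the probability measure $\dif x/\Le^n(\Omega)$ on $\Omega$:
\[ \fint_\Omega\abss{f}\dif x=\fint_\Omega h\bigl(E(\abss{f})\bigr)\dif x\leq h\Bigl(\fint_\Omega E(\abss{f})\dif x\Bigr)=\sqrt{e^2+2e}. \]
Integrability causes no trouble, because $E(\abss{f})\leq\abss{f}\in\lebe^1(\Omega)$, so $e<\infty$.

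For the second claim, if $e\leq1$ then $e^2\leq e$, so $e^2+2e\leq3e$ and the bound becomes $\sqrt{3e}$.

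There is no real obstacle here. The only point to verify is the concavity of $h$, and the explicit second derivative above settles it.
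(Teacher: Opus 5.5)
Your proof is correct: $h(s)=\sqrt{s^2+2s}$ is the concave inverse of $E$ on $[0,\infty)$, Jensen gives $\fint_\Omega\abss{f}\dif x\leq h(e)$, and $e^2\leq e$ handles the case $e\leq 1$. The paper gives no proof of its own and only cites \cite{GmeKri_BV}. The standard argument there, which the paper also uses for the first inequality of Lemma~\ref{lem:jensen_trick}, applies Jensen to the convex $E$, i.e.\ $E\bigl(\fint_\Omega\abss{f}\dif x\bigr)\leq e$, and then inverts the increasing map $E$; this is equivalent to your route.
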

By mollification, the above lemma also holds if we replace $f$ with a measure $\mu \in \mathcal M(\Omega)$, 
where we write
\begin{equation}\label{Emu}
    \int_{\Omega} E(\mu) \coloneqq  \int_{\Omega} E(\mu^{\rma}) \,\dd x + \lvert \mu^{\rms}\rvert(\Omega).
\end{equation}

\par We will also record estimates and results concerning the integrand $f$. Here we only consider the $\A$-free framework (Theorem~\ref{thm:main_A-free}). The analogous auxiliary statements for the $\B$-framework (Theorem~\ref{thm:main}) follow analogously, but are easier to establish. This is so because in the $\B$-setting there is no spanning cone condition \cite[Lemma~2.4]{LiRaita24}.

So let $f\colon\Omega\times V\to\R$ be the integrand in \eqref{eq:P_Omega_Afree}. From \ref{it:fgrowth} and \ref{it:fqcstrong} it follows that
\begin{equation}\label{eq:fz_bounded}
    \lvert \partial_z f(x,z)v\rvert \leq cL\lvert v \rvert \quad\mbox{for all } x \in \Omega, \ z \in V,\ v \in \spann \Lambda_{\A}
\end{equation}
where the constant $c$ depends on $n$ and $\dim V$ only.
Indeed for each $(x,z) \in \Omega \times V$, we can apply \cite[Lemma 2.3]{KK16} to the mapping $v \mapsto f(x,z+v)$ restricted to $(\spann \Lambda_{\A}) \cap B_R(0)$ with $R\geq 1+\lvert z \rvert$.
Note that if \eqref{eq:SC} holds, then we simply have $\lvert \partial_zf(x,z)\rvert \leq cL$.

\par Since $\partial_z^2f(x,z)$ is jointly continuous by \ref{it:fcontinuity}, given any $B_R(x_0) \Subset \Omega$ and $M>0$ there exists $L_{x_0,R,M}>0$ and a modulus of continuity $\omega_{x_0,R,M} \colon \mathbb R_+ \to \mathbb R_+$ for which
\begin{align}
    \label{eq:loc_bound}\lvert \partial_z^2f(x,z) \rvert &\leq L_{x_0,R,M} ,\\
    \label{eq:loc_cont} \lvert \partial_z^2f(x,z)-\partial_z^2f(y,w) \rvert &\leq L_{x_0,R,M}\,\omega_{x_0,R,M}(\lvert x-y\rvert + \lvert z-w\rvert)
\end{align}
hold for all $x,y \in B_R(x_0)$ and $z,w \in V$ such that $\lvert z \rvert, \lvert w \rvert \leq M$.
Moreover, we can choose $\omega_{x_0,R,M}$ to be non-decreasing and concave such that $0 \leq \omega_{x_0,R,M} \leq 1$, $\omega_{x_0,R,M}(0)=0$ and $\omega_{x_0,R,M}(t) = 1$ for all $t>1$.

\par Given an integrand $f \colon \Omega \times V \to \mathbb R$ and $w \in V$, will also introduce the \emph{shifted integrand} which is defined for $x \in \Omega$ and $z \in V$ as \begin{equation}\label{eq:shifted_int} f_w(x,z):= f(x,z+w)-f(x,w)-\partial_z f(x,w)\cdot z.\end{equation} 
We record various properties of $f_w$ in the below two lemmas.

\begin{lemma}\label{lem:shifted_estimates} 
    Suppose that the integrand $f\col \Omega\times V \to \R$ satisfies \ref{it:fgrowth}, \ref{it:fcontinuity}, and \ref{it:fqcstrong}, and fix $M>0$.
    Then given any ball $B_R(x_0) \Subset \Omega$ and $w\in V$ with $\abss{w}\leq M$, consider the shifted integrand $f_w$ defined via \eqref{eq:shifted_int}.
    Then there exists $c_1=c_1(n,V,L,L_{x_0,R,M+1})$ such that for all $x \in B_R(x_0)$ and $z \in V$,
    \begin{equation}\label{eq:pshiftedup}
     \abs{f_w(x,z)\cdot v} \leq c_1E(z) \lvert v \rvert \quad\mbox{for all }v \in \spann \Lambda_{\A},
    \end{equation}
    and there is $c_2=c_2(\ell,M)$ such that for all $x \in B_R(x_0)$ we have
    \begin{align}
        & \partial_z^2 f(x,w)[v,v]\geq \frac{1}{c_2}\abss{v}^2 \quad\mbox{for all }v \in \Lambda_{\A}, \label{eq:ellipticity}\\
        & \int_{B} f_w(x,\varphi(y)) \dif y \geq \frac1{c_2} \int_{B} E(\varphi)\dif y \label{eq:pshiftedqc}
    \end{align} for any ball $B \subset \mathbb R^n$ and any $\varphi \in \CC_{\rmc}^{\infty}(B)$ satisfying $\A \varphi =0$.
    Moreover, by mollification, \eqref{eq:pshiftedqc} holds for all $\A$-free $\varphi \in \mathcal{M}_{\rmc}(B)$.
\end{lemma}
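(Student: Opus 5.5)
The plan is to derive all three estimates from the definition \eqref{eq:shifted_int} of $f_w$. The tools are Taylor expansion, the local bounds \eqref{eq:loc_bound}, the gradient bound \eqref{eq:fz_bounded}, and the strong $\A$-quasiconvexity \ref{it:fqcstrong}. We pass from "a.e. $x$" to "all $x\in B_R(x_0)$" at the end using the joint continuity in \ref{it:fcontinuity}. I read \eqref{eq:pshiftedup} as the bound $\lvert f_w(x,z)\rvert\leq c_1E(z)$, tested along directions of $\spann\Lambda_{\A}$ (i.e.\ for $z\in\spann\Lambda_\A$, or for all $z$ under \eqref{eq:SC}). This is the only form in which \eqref{eq:fz_bounded} is available.

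\emph{Upper bound \eqref{eq:pshiftedup}.} I split into the cases $\lvert z\rvert\leq1$ and $\lvert z\rvert>1$.
\begin{itemize}
\item[(i)] If $\lvert z\rvert\leq 1$, then $\lvert w+tz\rvert\leq M+1$ for $t\in[0,1]$. Taylor's formula with integral remainder and \eqref{eq:loc_bound} with $M+1$ give $\lvert f_w(x,z)\rvert\leq \tfrac12 L_{x_0,R,M+1}\lvert z\rvert^2\lesssim E(z)$, by Proposition~\ref{prop:Eprop}\ref{it:Eest}.
\item[(ii)] If $\lvert z\rvert>1$, I use \ref{it:fgrowth} on the first two terms and \eqref{eq:fz_bounded} on the linear term. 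This gives $\lvert f_w(x,z)\rvert\leq L(2+2M+\lvert z\rvert)+cL\lvert z\rvert\lesssim (1+M)\lvert z\rvert\lesssim E(z)$.
\end{itemize}
The constant then depends only on $n,V,L,M$ and $L_{x_0,R,M+1}$.

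\emph{Ellipticity \eqref{eq:ellipticity}.} Fix $x$ for which $g\coloneqq f(x,\cdot)-\ell E$ is $\A$-quasiconvex. The standard fact (Fonseca--M\"uller) is that $\A$-quasiconvex functions are convex along lines in directions of $\Lambda_\A$. To see it, take $v\in\ker\A(\xi)$ and test with the periodic $\A$-free oscillations $\varphi(y)=v\,h(y\cdot\xi)$. Since $g$ is $\CC^2$, this yields $\partial_z^2g(w)[v,v]\geq0$. Hence
\[
\partial_z^2f(x,w)[v,v]\;\geq\;\ell\,\partial^2E(w)[v,v]\;\geq\;\ell(1+\lvert w\rvert^2)^{-3/2}\lvert v\rvert^2\;\geq\;\ell(1+M^2)^{-3/2}\lvert v\rvert^2 .
\]
Since $\partial_z^2 f$ is jointly continuous, the inequality extends from a.e.\ $x$ to every $x$.

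\emph{Shifted quasiconvexity \eqref{eq:pshiftedqc}.} Let $\varphi\in\CC_{\rmc}^\infty(B)$ with $\A\varphi=0$, and enclose $B$ in a cube $Q$. Extending $\varphi$ by zero and periodically gives an admissible $\A$-free periodic field with mean zero. Because $\int_B\varphi=0$, the linear term in $f_w$ integrates to zero. Quasiconvexity of $g$ on $Q$ (the integrand equals $f(x,w)$ off $B$) then gives
\[
\int_B f_w(x,\varphi)\,\dd y\;\geq\;\ell\int_B E_w(\varphi)\,\dd y ,
\]
where $E_w$ is the shifted reference integrand. It remains to prove the elementary pointwise estimate $E_w(z)\geq c(M)^{-1}E(z)$. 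For $\lvert z\rvert\leq1$ this follows from the uniform convexity of $E$ on $B_{M+1}$. For $\lvert z\rvert>1$ it follows from an explicit lower bound for the strictly convex function $E$ minus its tangent at $w$, which grows linearly with a slope depending only on $M$. Again the conclusion passes to all $x$ by continuity of $f(\cdot,z)$ and dominated convergence. The measure version follows by mollifying $\varphi$, which preserves $\A$-freeness, together with Reshetnyak-type continuity of $\int E(\cdot)$ under strict convergence.

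I expect the main obstacle to be the pointwise bound $E_w\gtrsim_M E$ for large $\lvert z\rvert$. The tangent-plane deficit of $E$ only grows like $\lvert z\rvert\bigl(1-\lvert w\rvert/\sqrt{1+\lvert w\rvert^2}\bigr)$, so the constant must be tracked carefully; it degenerates like $(1+M^2)^{-1}$. A secondary concern is the bookkeeping when \eqref{eq:SC} fails, since the gradient bound is then only available on $\spann\Lambda_\A$.
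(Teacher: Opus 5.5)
Your overall route is the paper's. The paper proves \eqref{eq:pshiftedup} by the same split $\lvert z\rvert\leq 1$ versus $\lvert z\rvert>1$ using \eqref{eq:loc_bound}, with \eqref{eq:fz_bounded} for the linear term. It obtains \eqref{eq:ellipticity}--\eqref{eq:pshiftedqc} from \ref{it:fqcstrong} by the argument of \cite[Lemma~4.1]{GmeKri_BV}, which is what you reconstruct. Your reading of \eqref{eq:pshiftedup} as $\lvert f_w(x,z)\rvert\leq c_1E(z)$ for $z\in\spann\Lambda_{\A}$ is also the form used later (term $I$ in Proposition~\ref{prop:Caccioppoli}).

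One step, however, is false as written: a field $\varphi\in\CC_{\rmc}^\infty(B)$ with $\A\varphi=0$ need not have mean zero. From $\hat\varphi(\xi)\in\ker\A(\xi)=\ker\A(\xi/\lvert\xi\rvert)$ for $\xi\neq0$ and continuity of $\hat\varphi$, you only get $\int\varphi=\hat\varphi(0)\in\bigcap_{\xi\neq0}\ker\A(\xi)$. This intersection can be nontrivial even under \eqref{eq:CR} and \eqref{eq:SC}. Take $n=2$, $V=\R^3$ and $\A v=\partial_1v_1+\partial_2v_2$; then $\varphi=(0,0,\chi)$ with $\int\chi\neq0$ is $\A$-free with nonzero mean. (For $\B$-gradients of compactly supported potentials, which is the only way the lemma is applied later, the mean does vanish.)

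The fix uses only what you have already shown. Set $\bar\varphi=\fint_Q\varphi$, which lies in $\Lambda_{\A}$ by the above. Quasiconvexity gives $\fint_Q g(w+\varphi)\geq g(w+\bar\varphi)$. Convexity of $t\mapsto g(w+t\bar\varphi)$, from your ellipticity step, gives $g(w+\bar\varphi)\geq g(w)+\partial g(w)\cdot\bar\varphi$. Together these yield $\fint_Q\bigl(g(w+\varphi)-g(w)-\partial g(w)\cdot\varphi\bigr)\geq0$, which is exactly $\int_B f_w(x,\varphi)\geq\ell\int_B E_w(\varphi)$.

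Two smaller points. First, the step you flag as the main obstacle is in fact immediate. $E_w$ is convex with $E_w(0)=0$, so for $\lvert z\rvert\geq1$ you have $E_w(z)\geq\lvert z\rvert\, E_w(z/\lvert z\rvert)$. Taylor's formula with $\partial^2E(y)[v,v]\geq(1+\lvert y\rvert^2)^{-3/2}\lvert v\rvert^2$ for $\lvert y\rvert\leq M+1$ then gives $E_w(z/\lvert z\rvert)\geq\tfrac12(1+(M+1)^2)^{-3/2}$.

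Second, in the passage to measures, continuity of $\int E$ only handles the right-hand side. You also need $\int_B f_w(x,\varphi_\eps)\to\int_B f_w(x,\varphi)$; lower semicontinuity is the wrong direction here. Since $f_w(x,\cdot)$ is not convex, this is the area-strict continuity of Lemma~\ref{lem:strict_cont} (Kristensen--Rindler combined with De Philippis--Rindler). That result relies on the $\Lambda_{\A}$-convexity you established in the ellipticity step.
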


\par The estimate \eqref{eq:pshiftedup} can be proved by considering the two cases $\abss{z}>1$ and $\abss{z}\leq 1$ separately with \eqref{eq:loc_bound}. The other two follow from the strong quasiconvexity assumption \ref{it:fqcstrong} and the same proof in \cite[Lemma 4.1]{GmeKri_BV}.

\begin{lemma}\label{lem:shifted_continuity}
  Let $f\col \Omega \times V \to \R$ satisfy \ref{it:fgrowth} and \ref{it:fcontinuity}, fix $M>0$, and let $x_0 \in \Omega$, $R>0$ such that $B_R(x_0) \Subset \Omega$.
  Then for all $x \in B_R(x_0)$ and $w \in V$ such that $\lvert w \rvert \leq M$, the shifted integrand $f_{w}(x,z)$ defined as in \eqref{eq:shifted_int} satisfies
\begin{equation}
    \lvert (\partial_z^2f_{w}(x,z)(z-w) - \partial_zf_{w}(x,z))\cdot v \rvert \leq c \, \omega_{x_0,R,M+1}(\lvert z - w\rvert) \lvert z - w \rvert \lvert v \rvert
\end{equation}
for all $z \in V$ and $v \in \spann \Lambda_{\A}$,
where $c=c(n,V,L,L_{x_0,R,M+1})>0$, using the notation from \eqref{eq:loc_bound}, \eqref{eq:loc_cont}.
\end{lemma}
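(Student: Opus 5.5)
The plan is to treat the quantity as a first-order Taylor remainder expanded around the point $z$, where the Hessian is evaluated. Write $h\coloneqq z-w$. Then, by the fundamental theorem of calculus along the segment $t\mapsto w+th$, $t\in[0,1]$,
\begin{equation*}
\partial_z f(x,z)-\partial_z f(x,w)=\int_0^1 \partial_z^2 f(x,w+th)\,h\,\dd t .
\end{equation*}
Hence, reading the expression as $\partial_z^2 f(x,z)(z-w)-\bigl(\partial_z f(x,z)-\partial_z f(x,w)\bigr)$, the quantity to bound is
\begin{equation*}
\int_0^1\bigl(\partial_z^2 f(x,z)-\partial_z^2 f(x,w+th)\bigr)[h,v]\,\dd t .
\end{equation*}
The shift $f_w$ only contributes affine terms in $z$, which drop out after one derivative. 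So only $\partial_z f$ and $\partial_z^2 f$ of the original integrand enter, and I work with these throughout. I would then split according to the size of $\lvert h\rvert$.

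\emph{Case $\lvert h\rvert\le 1$.} Every point $w+th$ and $z$ itself satisfy $\lvert\cdot\rvert\le M+1$, since $\lvert w\rvert\le M$. Then \eqref{eq:loc_cont} with the constant $L_{x_0,R,M+1}$ gives
\begin{equation*}
\bigl\lvert\partial_z^2 f(x,z)-\partial_z^2 f(x,w+th)\bigr\rvert\le L_{x_0,R,M+1}\,\omega_{x_0,R,M+1}\bigl((1-t)\lvert h\rvert\bigr)\le L_{x_0,R,M+1}\,\omega_{x_0,R,M+1}(\lvert h\rvert),
\end{equation*}
using that $\omega$ is non-decreasing. Integrating in $t$ gives the claimed bound $c\,\omega(\lvert h\rvert)\lvert h\rvert\lvert v\rvert$, with $c=L_{x_0,R,M+1}$.

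\emph{Case $\lvert h\rvert>1$.} Here $\omega_{x_0,R,M+1}(\lvert h\rvert)=1$, so it suffices to show a bound of the form $\lesssim\lvert h\rvert\lvert v\rvert$. The gradient difference is handled by \eqref{eq:fz_bounded}: it gives
\begin{equation*}
\bigl\lvert(\partial_z f(x,z)-\partial_z f(x,w))\cdot v\bigr\rvert\le 2cL\lvert v\rvert\le 2cL\lvert h\rvert\lvert v\rvert
\end{equation*}
for $v\in\spann\Lambda_\A$. It remains to control the Hessian term $\partial_z^2 f(x,z)[h,v]$.

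This is the step I expect to be the main obstacle. The point $z$ is no longer confined to the ball of radius $M+1$, so \eqref{eq:loc_bound} does not apply directly. My first attempt would be to derive a decay bound of the type $\lvert\partial_z^2 f(x,z)[a,v]\rvert\lesssim\lvert a\rvert\lvert v\rvert/(1+\lvert z\rvert)$ for $a,v$ in the relevant cone. The idea is to combine linear growth \ref{it:fgrowth}, the bounded gradient \eqref{eq:fz_bounded}, and the $\Lambda_\A$-convexity implied by \ref{it:fqcstrong}, via a one-dimensional argument along lines in directions of $\spann\Lambda_\A$, in the spirit of \cite[Lemma~2.3]{KK16}. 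A convex function of one variable with bounded derivative has its second derivative controlled in an averaged sense, and the continuity of $\partial_z^2 f$ should upgrade this to a pointwise statement.

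If this decay cannot be obtained in full generality, I would use that a pointwise bound is not needed for all $z$. In the applications the lemma is used with $z$ in a set where $\lvert z\rvert$ is controlled, and there the Hessian is bounded by \eqref{eq:loc_bound} with a constant depending on that bound. I would then record the resulting dependence of $c$ accordingly.
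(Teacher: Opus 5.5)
Your case $\lvert z-w\rvert\le 1$ is correct. The case $\lvert z-w\rvert>1$, however, contains a genuine gap. With the reading you fixed (Hessian evaluated at $z$) it cannot be closed, because the inequality is then false.

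The decay bound you hope for does not follow from \ref{it:fgrowth}--\ref{it:fqcstrong}. For a convex $g$ of one variable with $\lvert g'\rvert\le C$, convexity and a bounded gradient only give the integrated bound $\int_a^b\lvert g''\rvert=g'(b)-g'(a)\le 2C$. Continuity of $\partial_z^2 f$ is only uniform on bounded sets, with moduli $\omega_{x_0,R,M}$ that depend on $M$, so nothing upgrades the integrated bound to a pointwise bound at infinity.

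Here is a concrete counterexample. Let $e\in\Lambda_{\A}$ be a unit vector (if $\Lambda_{\A}=\{0\}$ there is nothing to prove), and let $0\le\eta\in\CC_{\rmc}^\infty((-1,1))$ with $\eta(0)=1$. Let $g\in\CC^\infty(\mathbb R)$ satisfy $g(0)=g'(0)=0$ and $g''(t)=\sum_{j\ge2}2^j\eta(4^j(t-j))$, and put $f(z)=\ell E(z)+g(z\cdot e)$. Since $g''\ge0$ and $\int_{\mathbb R}g''<\infty$, the function $f-\ell E$ is convex (hence $\A$-quasiconvex), $\partial_z f$ is bounded, and \ref{it:fgrowth}--\ref{it:frecession} hold. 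Yet for $w=0$, $z=je$, $v=e$ your quantity is at least $j\,g''(j)-c=j2^j-c$, while the right-hand side is $c\,j$. Your fallback of letting $c$ depend on a bound for $\lvert z\rvert$ changes the statement rather than proving it.

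The missing idea is to linearize at the base point: the Hessian must be taken at $w$. The estimate to prove is
\[
\bigl\lvert\bigl(\partial_z^2 f(x,w)(z-w)-(\partial_z f(x,z)-\partial_z f(x,w))\bigr)\cdot v\bigr\rvert\le c\,\omega_{x_0,R,M+1}(\lvert z-w\rvert)\lvert z-w\rvert\lvert v\rvert .
\]
This is the shifted-integrand statement comparing $\partial_z^2 f_w(x,0)$ applied to $z-w$ with $\partial_z f_w(x,z-w)$, as in \cite[Lemma~4.2]{GmeKri_BV}. It is also exactly what Lemma~\ref{lem:AHalmost_har_S} uses, where $A=\partial_z^2 f(x_0,z_0)$ sits at the base point. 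The paper's first case displays $\partial_z^2 f(x,z)$ just as you do. In its second case, however, it bounds the Hessian term by $L_{x_0,R,M+1}\lvert z-w\rvert$ via \eqref{eq:loc_bound}, which is legitimate only for the Hessian at $w$, where $\lvert w\rvert\le M$.

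With this reading your argument goes through essentially unchanged:
\begin{itemize}
\item For $\lvert z-w\rvert\le1$, the integrand becomes $\partial_z^2 f(x,w)-\partial_z^2 f(x,w+t(z-w))$. By \eqref{eq:loc_cont} it is bounded by $L_{x_0,R,M+1}\,\omega_{x_0,R,M+1}(t\lvert z-w\rvert)\le L_{x_0,R,M+1}\,\omega_{x_0,R,M+1}(\lvert z-w\rvert)$.
\item For $\lvert z-w\rvert>1$, one has $\lvert\partial_z^2 f(x,w)\rvert\le L_{x_0,R,M+1}$. The gradient difference is handled by \eqref{eq:fz_bounded} as you did, and $\omega_{x_0,R,M+1}(\lvert z-w\rvert)=1$.
\end{itemize}
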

\begin{proof}
    Since $v \in \spann \Lambda_{\A}$, we can assume that \eqref{eq:SC} holds to simplify notation. The argument is analogous to \cite[Lemma~4.2]{GmeKri_BV}.
    If $\lvert z - w \rvert \leq 1$, then by the fundamental theorem of calculus and \eqref{eq:loc_cont} we have
    \begin{equation}
    \begin{split}
        \lvert\partial_z^2f_{w}(x,z)(z-w) - \partial_zf_{w}(x,z)\rvert 
        &= \bigg\lvert \int_0^1 (\partial_z^2f(x,z) - \partial_z^2f(x,w+t(z-w)))(z-w) \,\dd t\bigg\rvert\\
        &\leq L_{x_0,R,M+1} \omega_{x_0,R,M+1}(\lvert z - w\rvert) \lvert z - w\rvert,
        \end{split}
    \end{equation}
    If $\lvert z-w\rvert >1$, then using \eqref{eq:fz_bounded} and \eqref{eq:loc_bound} we can estimate
    \begin{equation}
        \begin{split}
            \lvert\partial_z^2f_{w}(x,z)(z-w) - \partial_zf_{w}(x,z)\rvert 
            &\leq L_{x_0,R,M+1} \lvert z-w\rvert + cL \lvert z-w\rvert \\
            &\leq (L_{x_0,R,M+1} + c L)\:\! \omega_{x_0,R,M+1}(\lvert z -w\rvert) \lvert z -w\rvert,
        \end{split}
    \end{equation}
    where we used that $\omega_{x_0,R,M+1}(\lvert z -w\rvert) = 1$.
    By combining the two cases, the result follows.
\end{proof}

The condition \ref{it:frecession} will be used in the following variant of \cite[Theorem 4]{KriRin10a}, adapated to the setting of $\B$-gradients. We note the proof is identical, except we use the $\Lambda_{\A}$-convexity of $f$ and the generalization \cite{dephilippis2016structure} of Alberti's rank-one theorem \cite{Alberti1993}. Related results can be found in \cite{ARDPR}.

\begin{lemma}\label{lem:strict_cont}
    Let $f \colon \Omega \times V \to \mathbb R$ be jointly continuous satisfying \ref{it:fgrowth}, \ref{it:frecession}, and such that $f(x,\cdot)$ is $\Lambda_{\A}$-convex for each $x \in \Omega$.
    Suppose that $(u_k)_k, u$ are in $\mathscr{D}'(\Omega)$ and $\B u_k \to \B u$ area-strictly in $\Omega$, then we have
    \begin{equation}
        \lim_{k \to \infty} \int_{\Omega}f(x,\B u_k) = \int_{\Omega}f(x,\B u).
    \end{equation}
\end{lemma}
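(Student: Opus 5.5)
The statement to prove is Lemma~\ref{lem:strict_cont}: the functional $\B u\mapsto\int_\Omega f(x,\B u)$ is continuous along area-strictly converging sequences of $\B$-gradients. I would follow the proof of Reshetnyak-type continuity in \cite[Theorem~4]{KriRin10a}, adapted to $\B$-gradients. The idea is to lift everything to the perspective integrand on $\Omega\times\R\times V$.

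\textbf{Setup.} For $t\ge0$ and $z\in V$ set $\tilde f(x,t,z)=t f(x,z/t)$ when $t>0$, and $\tilde f(x,0,z)=f^\infty(x,z)$. To each measure $\mu=\B u_k$ or $\B u$ associate the $\R\times V$-valued measure $\tilde\mu=(\mathscr L^n,\mu)$. Then
\begin{align*}
\int_\Omega f(x,\mu)=\int_\Omega \tilde f\Big(x,\frac{\dd\tilde\mu}{\dd|\tilde\mu|}\Big)\dd|\tilde\mu|.
\end{align*}
Area-strict convergence $\B u_k\to\B u$ means exactly that $\tilde\mu_k\to\tilde\mu$ weakly* together with $|\tilde\mu_k|(\Omega)\to|\tilde\mu|(\Omega)$, i.e.\ strict convergence of $\tilde\mu_k$.

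\textbf{Core of the argument.} If $\tilde f$ restricted to $\Omega\times S$ (with $S$ the unit sphere in $\R\times V$) were jointly continuous and bounded, Reshetnyak's continuity theorem would finish the proof. The difficulty is that $f^\infty$ is only a $\limsup$, so it is merely upper semicontinuous jointly in $(x,z)$, and \ref{it:frecession} gives continuity in $x$ only for fixed $z$. I would handle this as follows.

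\begin{enumerate}
\item \emph{Generalized Young measures.} Pass to a subsequence so that $\B u_k$ generates a generalized Young measure $(\nu_x,\lambda,\nu^\infty_x)$. Strict convergence gives $\lambda=|\B^s u|$, and the oscillation and concentration parts average back to $\B u$.
\item \emph{Regular part.} At $\mathscr L^n$-a.e.\ $x$, the measure $\nu_x$ is a homogeneous Young measure generated by $\B$-gradients, i.e.\ $\A$-free fields whose limit has density in $\Lambda_\A$ directions only through oscillations. Strictness forces $\nu_x$ to be a Dirac mass at $\B^au(x)$ and $\nu^\infty_x=0$ wherever $\lambda^a$ vanishes, so the absolutely continuous part converges. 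Here I use $\Lambda_\A$-convexity and strict convexity of the area integrand $\sqrt{1+|\cdot|^2}$: Jensen with equality in the area functional forces degeneracy.
\item \emph{Singular part.} Let $\Lambda$ denote the wave cone relevant to $\B$-gradients. By \cite{dephilippis2016structure}, for $|\B^su|$-a.e.\ $x$ the polar $\frac{\dd\B u}{\dd|\B u|}(x)$ lies in $\Lambda$, which is where $\ima\B(\xi)$ lives. On $\Lambda$-directions, $f^\infty(x,\cdot)$ coincides with the genuine limit $\lim_{t\to\infty}f(x,tz)/t$. This uses $\Lambda_\A$-convexity: $t\mapsto f(x,z_0+tz)$ is convex for $z\in\Lambda$, so the limit exists and the $\limsup$ is attained. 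Together with \ref{it:frecession} and the linear growth \ref{it:fgrowth}, this yields continuity of $\tilde f$ at the relevant points of $\Omega\times S$. Strictness again forces $\nu^\infty_x=\delta_{\text{polar}}$ $\lambda^s$-a.e.
\end{enumerate}

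\textbf{Conclusion.} Combining the two parts, the representation $\lim\int f(x,\B u_k)=\int\langle\nu_x,f\rangle+\int\langle\nu^\infty_x,f^\infty\rangle\dd\lambda$ equals $\int_\Omega f(x,\B u)$. This representation holds because $\tilde f$ is continuous on the support of the limit Young measure, a standard consequence of a Reshetnyak argument that only requires continuity at $\tilde\mu$-a.e.\ point. Since the limit is independent of the subsequence, the full sequence converges.

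\textbf{Main obstacle.} I expect the hardest step to be justifying that $f^\infty$ is continuous in $(x,z)$ at $|\B^su|$-a.e.\ point along $\Lambda$-directions, given only separate continuity in $x$. I would attempt it using the local Lipschitz bound \eqref{eq:fz_bounded} along $\spann\Lambda_\A$, which is uniform in $x$ and makes $f^\infty(x,\cdot)$ uniformly $cL$-Lipschitz on that span. Uniform Lipschitz continuity in $z$ combined with continuity in $x$ for each fixed $z$ then gives joint continuity.
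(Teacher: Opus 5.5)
Your overall architecture matches the paper's. You lift to the perspective integrand, use strictness to make the limit measure elementary, and invoke \cite{dephilippis2016structure} for the polar of $\B^su$. (Steps~1--2 are fine, although $\Lambda_{\A}$-convexity plays no role there; strict convexity of the area integrand suffices.) This reduces everything to a single fact: for $|\B^su|$-a.e.\ $x$, with $z$ the polar, the perspective $\tilde f$ must be continuous at $(x,0,z)$ \emph{when approached from $\{t>0\}$}. Equivalently, the lower recession function $h_f(x,z)=\liminf_{x'\to x,\,t\to\infty,\,z'\to z}t^{-1}f(x',tz')$ must equal $f^\infty(x,z)$. This is precisely the identity $f^\infty=h_f=-h_{-f}$ on $\Omega\times\Lambda_{\A}$ asserted in the paper; since $-h_{-f}=f^\infty$ by definition, its content is $h_f=f^\infty$.

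Your Step~3 does not establish this. Convexity of $t\mapsto f(x,tz)$ and the $z$-Lipschitz bound \eqref{eq:fz_bounded} do give $h_f(x,z)=\lim_{t\to\infty}t^{-1}f(x,tz)$, via monotone difference quotients and continuity of $f(\cdot,Tz)$ for fixed $T$. But $f^\infty(x,z)$ is a $\limsup$ over $x'\to x$ as well. Convexity at the fixed point $x$ gives no control of $t^{-1}f(x',tz)$ for $x'\neq x$, so the claim that ``the $\limsup$ is attained'' is unjustified. Your proposed fix proves joint continuity of $f^\infty$ on $\Omega\times\spann\Lambda_{\A}$. That is continuity of $\tilde f$ only within the slice $\{t=0\}$, which is not what the Reshetnyak argument needs.

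Moreover, the gap cannot be closed using \ref{it:frecession}. Take $n=1$, $\Omega=(-1,1)$, $\B=\frac{\dd}{\dd x}$ on scalars (so $\Lambda_{\A}=\R$), and set $f(x,z)=(|z|-|x|^{-1})^+$ for $x\neq0$, $f(0,z)=0$.
\begin{itemize}
\item This $f$ is jointly continuous, convex and $1$-Lipschitz in $z$, with $|f(x,z)|\le|z|$.
\item $f^\infty(x,z)=|z|$ for every $x$ (at $x=0$ choose $x'=t^{-1/2}$), so \ref{it:frecession} holds.
\item Yet $\lim_{t\to\infty}t^{-1}f(0,t)=0\neq1=f^\infty(0,1)$.
\end{itemize}
Now take $u=\indi_{(0,\infty)}$ and $u_\eps=\rho_\eps\ast u$ with $\sup\rho\le1$. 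Then $\B u_\eps\to\B u=\delta_0$ area-strictly in $\Omega$. Since $\rho_\eps(x)\le\eps^{-1}\le|x|^{-1}$ on $\spt\rho_\eps$, we get $\int_\Omega f(x,\B u_\eps)=0$ for every $\eps$, whereas $\int_\Omega f(x,\B u)=f^\infty(0,1)=1$.

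What is missing is equicontinuity in $x$ of $t^{-1}f(\cdot,tz)$, uniformly in $t$. Wherever the paper applies the lemma, this is supplied by the $x$-Lipschitz bound on $\partial_zf$ in \ref{it:fcontinuity}. That bound gives $|f(x',tz)-f(x,tz)|\le|f(x',0)-f(x,0)|+Lt|x'-x||z|$, hence $f^\infty(x,z)=\lim_{t\to\infty}t^{-1}f(x,tz)=h_f(x,z)$ for $z\in\Lambda_{\A}$. After that, your Reshetnyak argument (or the paper's two-sided bound) closes. You need to add this hypothesis; with \ref{it:frecession} alone, the statement as written does not hold.
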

Notice that the condition \ref{it:fqcstrong} implies that $f(x,\cdot)$ is $\Lambda_{\A}$-convex for each $x\in \Omega$ by \eqref{eq:ellipticity}. Thus, the above lemma applies when $f$ satisfies \ref{it:fgrowth}--\ref{it:frecession}.

\begin{proof}[Sketch of proof]
  The first half of the proof in \cite[Theorem 4]{KriRin10a}, which only assumes the linear growth bound and continuity of $f$, applies in this setting to give
  \begin{equation}\label{eq:astrict_upper_lower}
      \begin{split}
          &\int_{\Omega} f(x,\B^au) \,\dd x + \int_{\Omega} h_f\bigg(x,\frac{\B^su}{\lvert \B^su\rvert}\bigg) \,\dd\lvert \B^su \rvert \\
          &\quad\leq \liminf_{k \to \infty} \int_{\Omega} f(x,\B u_k) 
          \leq \limsup_{k \to \infty} \int_{\Omega} f(x,\B u_k) \\
          &\qquad\qquad\leq \int_{\Omega} f(x,\B^au) \,\dd x - \int_{\Omega} h_{-f}\bigg(x,\frac{\B^su}{\lvert \B^su\rvert}\bigg) \,\dd\lvert \B^su \rvert,
      \end{split}
  \end{equation}
  where
  \begin{equation}
      h_f(x,z) \coloneqq \liminf_{ x'\to x,\,t\to\infty,\,z'\to z} \frac{f(x',tz')}t.
  \end{equation}
  For any $z \in \Lambda_{\A}$, since $t \mapsto f(x,tz)$ is convex and since $\partial_zf(x,z)$ is uniformly bounded by \eqref{eq:fz_bounded}, by a similar reasoning as in \cite[(11)]{KriRin10a} we have
  \begin{equation}
      f^{\infty}(x,z) = h_f(x,z) = -h_{-f}(x,z) \quad\mbox{for all } (x,z) \in \Omega \times \Lambda_{\A}.
  \end{equation}
  Since by \cite[Theorem 1.1]{dephilippis2016structure} we have $\frac{\B^s u}{\lvert \B^s u\rvert}(x) \in \Lambda_{\A}$ for $\lvert \B^su\rvert$-a.e.\,$x\in\Omega$, the upper and lower limits in \eqref{eq:astrict_upper_lower} coincide.
\end{proof}

We also show that in our setting, \ref{it:frecession} is implied by the existence of a strong recession function.

\begin{lemma}\label{lem:recession_lipschitz}
    Let $f$ satisfy \textup{\ref{it:fgrowth}}, \textup{\ref{it:fcontinuity}} and assume the strong recession function exists in that
    \begin{equation}\label{eq:strong_recession}
    f^{\infty}(x,z) = \lim_{
    \substack{t \to \infty,\\ x' \to x, z'\to z}
    } \frac{f(x',tz')}{t}
\end{equation}
    exists for all $(x,z) \in \Omega \times V$.
    Then the recession function $f^{\infty}$ satisfies the Lipschitz estimate
    \begin{equation}
        \lvert f^{\infty}(x,z) - f^{\infty}(y,z) \vert \leq L \lvert x-y \rvert \lvert z \rvert \quad\mbox{for all } x,y \in \Omega, z \in V.
    \end{equation}
    In particular, $f$ satisfies \textup{\ref{it:frecession}}.
\end{lemma}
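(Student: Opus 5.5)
The plan is to combine the Lipschitz bound on $\partial_z f$ in \ref{it:fcontinuity} with the existence of the limit \eqref{eq:strong_recession}. First we compare $f(x,\cdot)$ and $f(y,\cdot)$ at finite points. For fixed $x,y\in\Omega$ and $w\in V$, the fundamental theorem of calculus along the segment $s\mapsto sw$ gives
\begin{equation}
    f(x,w)-f(y,w) = f(x,0)-f(y,0) + \int_0^1 \bigl(\partial_z f(x,sw)-\partial_z f(y,sw)\bigr)\cdot w \,\dd s .
\end{equation}
By \ref{it:fcontinuity} the integrand is bounded by $L\lvert x-y\rvert\lvert w\rvert$, and by \ref{it:fgrowth} we have $\lvert f(x,0)-f(y,0)\rvert\leq 2L$. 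Hence
\begin{equation}
    \lvert f(x,w)-f(y,w)\rvert \leq 2L + L\lvert x-y\rvert \lvert w\rvert \quad\mbox{for all } x,y\in\Omega,\ w\in V.
\end{equation}

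Next we pass to the recession limit. Fix $x,y\in\Omega$ and $z\in V$, and take $t\to\infty$ with $x'=x$, $y'=y$, $z'=z$. Since the strong limit \eqref{eq:strong_recession} exists for every way of approaching $(x,z)$, it exists in particular along the frozen sequence. So $f(x,tz)/t\to f^\infty(x,z)$ and $f(y,tz)/t\to f^\infty(y,z)$. Applying the previous estimate with $w=tz$ and dividing by $t$ gives
\begin{equation}
    \Bigl\lvert \frac{f(x,tz)}{t}-\frac{f(y,tz)}{t}\Bigr\rvert \leq \frac{2L}{t} + L\lvert x-y\rvert\lvert z\rvert ,
\end{equation}
and letting $t\to\infty$ yields the claimed Lipschitz estimate for $f^\infty$.

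Finally, \ref{it:frecession} follows at once: for fixed $z$ the map $x\mapsto f^\infty(x,z)$ is Lipschitz, hence continuous. Here $f^\infty$ as defined in the introduction via $\limsup$ coincides with the strong limit whenever the latter exists. The only point needing care is that we may restrict to the frozen sequence $x'=x$, $z'=z$. This is legitimate precisely because a full limit, as opposed to a $\limsup$, is assumed, so I expect no real obstacle.
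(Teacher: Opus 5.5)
Your proof is correct and is essentially the paper's argument: the fundamental theorem of calculus in $z$, the $x$-Lipschitz bound on $\partial_z f$ from \ref{it:fcontinuity}, and then the limit $t\to\infty$ along the frozen path $x'=x$, $z'=z$, which is where existence of the strong limit is used. The only difference is cosmetic: the paper integrates over the segment from $tz$ to $2tz$ and uses the homogeneity identity $f^\infty(x,z)=f^\infty(x,2z)-f^\infty(x,z)$ to avoid a constant term, whereas you integrate from $0$ and let the term $(f(x,0)-f(y,0))/t$ vanish in the limit.
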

\begin{proof}
    Given $z\in V$ and $x\in \Omega$, by the $1$-homogeneity of $f^{\infty}(x,\cdot)$ we can write 
    \begin{equation}\label{eq:recession_limit} f^{\infty}(x,z) =f^{\infty}(x,2z)-f^{\infty}(x,z) = \lim_{t\to \infty}\frac{1}{t}(f(x,2tz)-f(x,tz)),  \end{equation} where the term in the limit can be written as 
    \[\frac{1}{t}(f(x,2tz)-f(x,tz)) =\int_0^1 \partial_z f(x,tz+stz)\cdot z\dif s. \] For any $x,y \in \Omega$, the Lipschitz continuity of $\partial_z f(\cdot,\cdot)$ in the first argument as assumed in \ref{it:fcontinuity} implies 
    \begin{align} 
    \begin{split}
    &\ \abs{\frac{1}{t}(f(x,2tz)-f(x,tz)) - \frac{1}{t}(f(y,2tz)-f(y,tz))} \\
    &\quad=\ \abs{\int_0^1 (\partial_z f(x,tz+stz)-\partial_z f(y,tz+stz))\cdot z\dif s}\\
    &\quad\leq \int_0^1 L\abss{x-y}\abss{z}\,\dd s = L\abss{x-y} \abss{z}.
    \end{split}
    \end{align} 
    Now sending $t \to 0$ and using \eqref{eq:recession_limit}, we obtain
    \[\abss{f^{\infty}(x,z)-f^{\infty}(y,z)} \leq L\abss{x-y}\abss{z}\]
    as required.
\end{proof}

\begin{remark}
    The existence of local minimisers can be established in the setting of $\A$ satisfying \eqref{eq:CR} and \eqref{eq:SC}, and $f$ satisfying \ref{it:fgrowth}--\ref{it:fqcstrong}, provided we moreover assume the existence of a strong recession function as in \eqref{eq:strong_recession} and the coercivity estimate
    \begin{equation}\label{eq:coerc_x-dependent}
        \int_{\Omega} f(x,v_0+\varphi)\,\dd x \gtrsim \int_{\Omega} \lvert \varphi \rvert \,\dd x -\int_{\Omega} 1 + \lvert v_0\rvert \,\dd x,
    \end{equation}
    valid for all $v_0 \in \lebe^1(\Omega,V)$ and $\varphi \in \CC_{\rmc}^{\infty}(\Omega,V)$ satisfying $\A \varphi = 0$. 
    This follows by a similar argument as in \cite[Section 4]{LiRaita24} by employing the Direct Method, using the lower semicontinuity results in \cite[Proposition~1.1]{KriRai22} (see also \cite{FonMul99,ARDPR}). In the case of autonomous integrands, coercivity estimates  \eqref{eq:coerc_x-dependent} follow from \ref{it:fqcstrong} \cite{ChenKris17,LiRaita24}.
\end{remark}

\subsection{Function spaces}

When working with the reference integrand $E(t)$, we will need several estimates in the modular scales associated to $E(t)$ and $E(t)^q$ with $q>1$.
For this reason it will be convenient in Section~\ref{sec:estimates} to work in the Orlicz scales, so we will recall the relevant notions here.

Briefly, an \emph{$N$-function} $\varphi \colon [0,\infty) \to [0,\infty)$ is an increasing, continuous, convex function for which
\begin{equation}
    \lim_{t \to 0} \frac{\varphi(t)}t = \lim_{t \to \infty} \frac{t}{\varphi(t)} = 0.
\end{equation}
We say $\varphi \in \Delta_2$ if there is $c>0$ such that $\varphi(2t) \leq c\varphi(t)$, and the least such $c$ is denoted by $\Delta_2(\varphi)$. We say $\varphi \in \nabla_2$ if the \emph{conjugate} of $\varphi$,
\begin{equation}
    \varphi^{\ast}(s) = \sup_{t \geq 0}(st - \varphi(t)),
\end{equation}
lies in $\Delta_2$, and write $\nabla_2(\varphi) = \Delta_2(\varphi^{\ast})$.
Given an $N$-function $\varphi \in \Delta_2 \cap \nabla_2$ and an open set $\Omega \subset \mathbb R^n$, the Orlicz space $\lebe^{\varphi}(\Omega)$ denotes the space of locally integrable functions $u$ such that $\varphi(\lvert u \rvert)$ is integrable on $\Omega$, which we equip Luxemburg norm
\begin{equation}\label{eq:luxemburg}
    \lVert u \rVert_{\lebe^{\varphi}(\Omega)} = \inf\bigg\{\lambda>0 : \int_{\Omega} \varphi\bigg(\frac{\lvert u \rvert}{\lambda}\bigg) \,\dd x \leq 1 \bigg\}.
\end{equation}
We also have $\lebe^{\varphi}(\Omega)^{\ast} \cong \lebe^{\varphi^{\ast}}(\Omega)$.

Given a Fourier multiplier $\bm{m} \in \lebe^{\infty}(\mathbb R^n)$, we will also need estimates for mappings of the form $u \mapsto \mathscr{F}^{-1}(\bm{m}(\xi)\hat u(\xi))$.
Here the Fourier transform is viewed as an isomorphism on the space of tempered distributions $\mathscr{S}'(\mathbb R^n)$, which is extended from $\mathscr{S}(\mathbb R^n)$ as defined using \eqref{eq:FT_definition} by duality.
In these scales the H\"ormander--Mikhlin multiplier theorem holds; namely if $\bm{m} \in \lebe^{\infty}(\mathbb R^n)$ satisfies the hypotheses of \cite[Theorem~7.9.5]{Hormander1} 
(for our purposes it suffices that $\bm{m}$ is smooth in $\R^n \setminus \{0\}$ and $\lvert\partial^{\alpha}\bm{m}(\xi)\rvert \leq c \lvert \xi \rvert^{-\lvert\alpha\rvert}$ for all $\lvert\alpha\rvert<n/2+1$),
we know that $u \mapsto \mathscr{F}^{-1}(\bm{m} (\xi)\hat{u}(\xi))$ is bounded on $\lebe^p(\mathbb R^n)$ for all $1<p<\infty$.
By a Marcinkiewicz-type interpolation result, namely by combining \cite[Theorems~11.7, 11.8]{Maligranda1989} and \cite[Theorem~2]{Zygmund1956}, we infer that
\begin{equation}\label{eq:orlicz_hm}
    \lVert \mathscr{F}^{-1}(\bm{m} \hat{u}) \rVert_{\lebe^{\varphi}(\mathbb R^n)} \leq c(n,\bm{m} ,\Delta_2(\varphi),\nabla_2(\varphi)) \lVert u \rVert_{\lebe^{\varphi}(\mathbb R^n)} \quad\mbox{for all } u \in \lebe^{\varphi}(\mathbb R^n).
\end{equation}
The Orlicz-Sobolev spaces $\sobo^{k,\varphi}(\Omega)$, $\dot{\sobo}^{k,\varphi}(\mathbb R^n)$ are defined analogously to the $\lebe^p$ versions for all $k \in \mathbb Z$ and $\varphi \in \Delta_2\cap\nabla_2$; if $k <0$, elements of $\sobo^{k,\varphi}(\Omega)$ are understood as distributions $v = \sum_{\lvert\alpha\rvert \leq -k} \partial^{\alpha}v_{\alpha}$ in $\mathscr{D}'(\Omega)$ with $v_{\alpha} \in \lebe^{\varphi}(\Omega)$, which we equip with the associated norm
\begin{equation}
    \lVert v \rVert_{\sobo^{k,\varphi}(\Omega)} = \inf \sum_{\lvert\alpha\rvert\leq -k} \lVert v_{\alpha} \rVert_{\lebe^{\varphi}(\Omega)},
\end{equation}
where we take the infimum over all such representations.
The definition of $\dot{\sobo}^{k,\varphi}(\mathbb R^n)$ is similar, except that we sum over $\lvert\alpha\rvert=-k$.
With respect to the distributional pairing $\langle \cdot,\cdot\rangle$, these spaces satisfy the duality relations $\sobo_0^{m,\varphi}(\Omega)^{\ast} \cong \sobo^{-m,\varphi^{\ast}}(\Omega)$ and $\dot{\sobo}^{m,\varphi}(\mathbb R^n)^{\ast} \cong \dot{\sobo}^{-m,\varphi^{\ast}}(\mathbb R^n)$ for each $m \in \mathbb N$. 
We also have the following Poincar\'e inequalities:
\begin{alignat}{3}
    \label{eq:orlicz_poincare_1}\lVert u \rVert_{\lebe^{\varphi}(B_1)} &\leq c(n,\Delta_2(\varphi)) \lVert D u \rVert_{\lebe^{\varphi}(B_1)} &&\quad\mbox{for all } u \in \sobo^{1,\varphi}_0(B_1), \\
    \label{eq:orlicz_poincare_2}\lVert u - (u)_{B_1} \rVert_{\lebe^{\varphi}(B_1)} &\leq c(n,\Delta_2(\varphi)) \lVert D u \rVert_{\lebe^{\varphi}(B_1)} &&\quad\mbox{for all } u \in \sobo^{1,\varphi}(B_1),
\end{alignat}
see for instance \cite{BhattacharyaLeonetti1991}. These can be iterated to infer higher order versions.

For $s \in \mathbb R$, we denote by $I_s$ the $s$-Riesz potential, which acts on $u \in\mathscr{S}^{\prime}(\mathbb R^n)$ via $u \mapsto I_s u = \mathscr{F}^{-1}(\lvert\xi\rvert^{-s} \hat{u}(\xi))$.
A consequence of Fourier multipliers (see e.g.\ \cite[Theorem 6.3.1]{BerghLofstrom1976} using \eqref{eq:orlicz_hm}) is that
\begin{equation}\label{eq:negative_sobolev}
    I_{-m} \colon \dot{\sobo}^{m,\varphi}(\mathbb R^n) \xrightarrow{\ \sim\ } \lebe^{\varphi}(\mathbb R^n)
\end{equation}
is an isomorphism for all $m \in \mathbb Z$ and $\varphi \in \Delta_2 \cap \nabla_2$, where the associated constants depend on $n, m, \Delta_2(\varphi), \nabla_2(\varphi)$ only.

\section{A Poincar\'e-Sobolev type estimate}\label{sec:estimates}

\par In this section we will prove Theorem~\ref{thm:main_chris'_inequ}, which as discussed in the introduction will be a crucial ingredient in establishing partial regularity of minimizers.

Throughout this section, $\B$ and $\C$ will be constant rank operators of order $k$ and $l$ respectively, satisfying $l > k$ and $\ker \B(\xi) = \ima \C(\xi)$ for all $\xi \in \mathbb R^n$, following the conventions outlined in Section~\ref{sec:differential}.

\begin{theorem}\label{thm:B_poincare2}
Let $u \in \mathscr D'(B_R)$ such that $\B u \in \mathcal{M}(B_R)$ and $\C^{\ast}u = 0$ in $B_R$, and let $\theta \in (0,1)$. Then there exists $\pi \in \CC^{\infty}(B_{\theta R})$ such that $\B \pi = 0$, $\C^{\ast}\pi=0$ in $B_{\theta R}$ and we have $u-\pi \in \sobo^{k-s,p}(B_{\theta R})$ for all $s \in (0,1)$ and $1 \leq p < \frac{n}{n-s}$ with the estimate
\begin{equation}
    \frac1{R^s}[D^{k-1}(u-\pi)]_{\sobo^{1-s,p}(B_{\theta R})} + \sum_{j=0}^{k-1} \frac1{R^{k-j}} \lVert D^j(u-\pi)\rVert_{\lebe^p(B_{\theta R})} \leq c \int_{B_R} \lvert \B u \rvert.
\end{equation}
where $c=c(n,p,s,\theta,\B,\C)>0$.
Furthermore for all $1 \leq q < \frac{n}{n-1}$ we have the modular estimate
\begin{align}
    \sum_{j=0}^{k-1}  \left(\int_{B_{\theta R}} E\bigg(\frac{D^j(u-\pi)}{R^{k-j}}\bigg)^q \,\dd x\right)^{\frac1q} &\leq c\int_{B_R} E(\B u), \label{eq:modular-poincare}
\end{align}
where $c=c(n,q,\theta,\B,\C)>0$ and the association $u \mapsto \pi$ is linear.
\end{theorem}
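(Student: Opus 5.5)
By scaling we reduce to $R=1$; the radius enters only through the homogeneity of $\B$ and $\C$. The central object is the elliptic operator $\B^*\B+\C\C^*$ (after replacing $\C$ by $\Delta^m\C$, we may assume it is homogeneous of order $2k$). Ellipticity follows from the exactness $\ker\B(\xi)=\ima\C(\xi)$: if $\B(\xi)w=0$ and $\C^*(\xi)w=0$, then $w\in\ima\C(\xi)\cap\ker\C(\xi)^*=\{0\}$. The pair $(\B,\C^*)$ is therefore injectively elliptic.

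Fix $\theta<\theta_1<\theta_2<1$ and a cutoff $\eta\in\CC^\infty_\rmc(B_{\theta_2})$ with $\eta=1$ on $B_{\theta_1}$. Then $\mu:=\B(\eta u)=\eta\B u+[\B,\eta]u$ and $\C^*(\eta u)=[\C^*,\eta]u$. Both commutators involve only derivatives of $u$ of order lower than $k$ and $l$ respectively, supported in the annulus. Let $w:=\mathscr F^{-1}\big((\B^*\B+\C\C^*)(\xi)^{-1}(\B^*(\xi)\hat\mu+\C(\xi)\widehat{\C^*(\eta u)})\big)$. This is the Fourier inversion of $\eta u$, so $\eta u=w$ on $\R^n$. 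The term $\B^*(\xi)\hat\mu\cdot(\ldots)^{-1}$ is a multiplier homogeneous of degree $-k$ applied to a finite measure. Since $D^{k-1}$ of this piece is a Calderón–Zygmund-type kernel of order $-1$ acting on a measure, we get $\lebe^p$ bounds for $p<n/(n-1)$ and fractional $\sobo^{1-s,p}$ bounds for $p<n/(n-s)$. The same holds for the modular $E(\cdot)^q$ version, splitting $\mu$ into a small part (quadratic regime, handled by $\lebe^2$-type multiplier bounds and \eqref{eq:orlicz_hm}) and a large part (linear regime). The commutator terms are the problem: they involve $u$ itself, about which we know nothing quantitative, since $u$ is merely a distribution.

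I would handle those terms by defining $\pi$ as a smooth "harmonic part". On $B_{\theta_1}$ the commutator contributions are generated by data supported in the annulus $\{\theta_1<|x|<\theta_2\}$ and are convolved with a kernel that is smooth away from the origin. Their restriction to $B_\theta$ is therefore smooth, and it lies in the kernel of $(\B,\C^*)$ on $B_\theta$, because $\B$ and $\C^*$ annihilate them there (their sources vanish on $B_{\theta_1}$). I therefore set $\pi:=w-w_\mu$ on $B_\theta$, where $w_\mu$ is the measure piece. This gives $\B\pi=\B u-\B w_\mu$. The remaining difficulty is to make the associated $\B w_\mu$ agree with $\B u$ on $B_\theta$, i.e. to arrange $\B\pi=0$ and $\C^*\pi=0$ there. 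To get this, I would define $w_\mu$ using the exact projection: $\B^\dagger$ from Lemma~\ref{lem:dagger} applied to $\eta\B u$, corrected by a $\B$-free field to fix the compatibility, since $\eta\B u$ need not be a $\B$-gradient. The map $u\mapsto\pi$ is linear by construction.

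The main obstacle is the \emph{quantitative} bound on $D^j(u-\pi)$ in terms of $\int|\B u|$ alone. The commutator pieces depend on $u$, and $\pi$ absorbs them only modulo the incompatibility of $\eta\B u$. The error is of the form $\mathcal A$-type applied to $\nabla\eta\otimes\B u$, i.e. a term controlled in arbitrarily negative Sobolev norms by $\|\B u\|_{\mathcal M}$. To close the argument, I would use a qualitative Hörmander-type solvability result in $\mathrm H^{s}$ with very negative $s$, together with the open mapping theorem, to show that the incompatible remainder can be written as $\B$ of something bounded by $c\|\B u\|_{\mathcal M}$ in some $\mathrm H^{-N}$. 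Interior elliptic regularity for $(\B,\C^*)$ then upgrades the lower-order remainder on $B_\theta$, since it satisfies the homogeneous system modulo smooth data, so negative norms bootstrap to smooth bounds at the cost of shrinking the ball from $\theta_1$ to $\theta$. Finally, the modular estimate \eqref{eq:modular-poincare} follows by applying the linear map $u\mapsto u-\pi$ separately to the parts of $\B u$ where $|\B^a u|\le1$ (using $\lebe^2\cap\lebe^q$ bounds) and where $|\B^au|>1$ or the part is singular (using the $\lebe^1\to\lebe^q$ bound), combined with Proposition~\ref{prop:Eprop}.
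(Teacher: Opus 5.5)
Your route differs from the paper's and can be made to work, but two steps are wrong as written.

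\emph{How the paper proceeds.} The paper never isolates an explicit principal part. It applies Theorem~\ref{thm:pu_replacement} to $(\B,\C^{\ast})$ with the datum $\B u\in\sobo^{-n,2}(B_1)$ itself. This gives $v$ with $\B v=\B u$, $\C^{\ast}v=0$ in $B_{\theta_1}$ and $\lVert v\rVert_{\sobo^{-n-N,2}}\lesssim\lVert\B u\rVert_{\mathcal M}$. It transfers this bound to $\sobo^{\nu,\Phi^q}$ with $\Phi=E/\lambda$ (Lemmas~\ref{lem:jensen_trick} and~\ref{lem:negative_measure}). It then climbs back to $\sobo^{k-1,\Phi^q}(B_\theta)$ with the Korn inequality with arbitrarily negative remainder (Lemma~\ref{lem:negative_norm_estimate}, whose constants do not depend on $\lambda$). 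Choosing $\lambda=\int_{B_1}E(\B u)$ and setting $\pi=u-v$ finishes the proof. You instead peel off $w_\mu=\mathscr F^{-1}(\B^\dagger\widehat{\eta\B u})$ explicitly and want solvability only for what remains.

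\emph{First error: the commutator pieces are not $\B$-free inside.} Your claim that they are annihilated by $\B$ on $B_\theta$ ``because their sources vanish on $B_{\theta_1}$'' is false. The operator $\B(\xi)\B^\dagger(\xi)=\proj_{\ima\B(\xi)}=:P(\xi)$ is a nonlocal zero-order multiplier. Set $\pi_0:=\eta u-w_\mu$. In $B_{\theta_1}$ you do have $\C^{\ast}\pi_0=0$, since $\C^{\ast}(\xi)\B^\dagger(\xi)=0$. But
$\B\pi_0=(I-P)(\eta\B u)=\A^\dagger\big([\A,\eta]\B u\big)$, with $\A$ an annihilator of $\B$ (which exists by constant rank). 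This defect is smooth in $B_{\theta_1}$ with $\CC^m$ bounds $\lesssim\lVert\B u\rVert_{\mathcal M}$. It is compatible, being $\B\pi_0$ with $\C^{\ast}\pi_0=0$.

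What you need is $z$ with $\B z=\B\pi_0$ and $\C^{\ast}z=0$ on a smaller ball, linear in $u$ and bounded by $\lVert\B u\rVert_{\mathcal M}$; then set $\pi:=\pi_0-z$. Producing $z$ is exactly a quantitative linear solvability result such as Theorem~\ref{thm:pu_replacement}. Open mapping alone gives a bound but no linear selection; you must take minimal-norm solutions in a Hilbert scale. ``Correcting $w_\mu$ by a $\B$-free field'' does nothing, since it leaves $\B w_\mu$ unchanged.

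\emph{Second error: the splitting for \eqref{eq:modular-poincare}.} The map $u\mapsto u-\pi=w_\mu+z$ is defined only on $\B$-gradients, because $z$ exists only thanks to compatibility. The pieces $\B^a u\,\indi_{\{|\B^au|\le1\}}$ and $\B u-\B^a u\,\indi_{\{|\B^au|\le1\}}$ are not $\A$-free. Their defects are neither smooth nor compatible, so no $z$ exists for them separately.

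The repair has two parts. Split only inside $w_\mu$, which is a genuine multiplier on arbitrary measures; alternatively use $|D^jw_\mu|\lesssim I_1|\eta\B u|$ and the Jensen argument from the proof of Lemma~\ref{lem:negative_measure}. Treat $z$ through its $\lebe^\infty$ bound:
$E(D^jz)\le E(c\lVert\B u\rVert_{\mathcal M(B_1)})\lesssim E\big(\fint_{B_1}|\B u|\big)\le\fint_{B_1}E(\B u)$, by Proposition~\ref{prop:Eprop} and Jensen. Then combine the two via Proposition~\ref{prop:Eprop}\ref{it:Esum}.

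\emph{What each route buys.} With these repairs your argument proves the theorem. The modular estimate no longer needs the Orlicz-scale Korn inequality or the $\lambda$-normalisation. Solvability is used only on a smooth remainder, where a plain norm bound suffices, so the paper's objection to open-mapping-type norm estimates does not bite. The cost is an annihilator of $\B$ and interior regularity for $(\B,\C^{\ast})$ with smooth data. The paper's route needs no explicit principal part, lets Lemma~\ref{lem:negative_norm_estimate} do all the regularity work in one uniform framework, and adapts directly to Theorem~\ref{thm:p_korn}.
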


In the regime $1 < p < \infty$ we can establish a similar Korn-type inequality, cf.~\eqref{eq:vp_intro}.
While it is not needed in our regularity proof, we include it as it may be of independent interest.

\begin{theorem}\label{thm:p_korn}
    Let $u \in \mathscr D'(B_R)$ such that $\B u \in \lebe^1(B_R)$ and $\C^{\ast}u=0$.
    For each $\theta \in (0,1)$, there exists $\pi \in \CC^{\infty}(B_{\theta R})$ satisfying $\B\pi = 0$, $\C^{\ast}\pi=0$ in $B_{\theta R}$ and such that the following holds: if for any $1<p<\infty$ we have $\B u \in \lebe^p(B_R)$, then $u - \pi \in \sobo^{k,p}(B_{\theta R})$ with the associated estimates
    \begin{equation}\label{eq:p_poincare}
        \lVert u - \pi \rVert_{\sobo^{k,p}(B_{\theta R})} \leq c \lVert \B u \rVert_{\lebe^p(B_R)}
    \end{equation}
   and
    \begin{equation}\label{eq:p_modular_poincare}
        \sum_{j=0}^k \int_{B_{\theta R}} \lvert V_p(D^j(u-\pi))\rvert^2 \,\dd x \leq c \int_{B_R} \lvert V_p(\B u)\rvert^2 \,\dd x,
    \end{equation}
    where $V_p(z) = (1+\lvert z \rvert^2)^{\frac{p-2}4}z$ and $c = c(n,\B,\C,p,\theta,R)>0$. 
\end{theorem}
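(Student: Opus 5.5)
We would normalise to $R=1$ by scaling and keep the \emph{same} linear map $u\mapsto\pi$ constructed in the proof of Theorem~\ref{thm:B_poincare2}, which does not depend on $p$. The structural fact driving everything is that the pair $(\B,\C^{\ast})$ is injectively elliptic, since $\ker\B(\xi)\cap\ker\C^{\ast}(\xi)=\ima\C(\xi)\cap(\ima\C(\xi))^{\perp}=\{0\}$. On the Fourier side this yields the pointwise identity
\begin{equation}
\mathrm{Id}_U=\B^{\dagger}(\xi)\B(\xi)+(\C^{\ast})^{\dagger}(\xi)\C^{\ast}(\xi)\qquad\text{for }\xi\neq0,
\end{equation}
because $\B^\dagger\B=\proj_{(\ker\B(\xi))^\perp}$ and $(\C^*)^\dagger\C^*=\proj_{\ima\C(\xi)}=\proj_{\ker\B(\xi)}$. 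By Lemma~\ref{lem:dagger}, both $\B^{\dagger}$ and $(\C^{\ast})^{\dagger}$ are smooth away from the origin and homogeneous of degrees $-k$ and $-l$, respectively.

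Fix radii $\theta<\theta_1<\dots<\theta_N<1$ and cut-offs $\rho_i$, with $\rho_i=1$ on $B_{\theta_{i}}$ and $\spt\rho_i\subset B_{\theta_{i+1}}$. Set $w=\rho_i(u-\pi)$. Since $\B\pi=0$ and $\C^*(u-\pi)=0$, we have $\B w=\rho_i\B u+[\B,\rho_i](u-\pi)$, and $\C^{\ast}w=[\C^{\ast},\rho_i](u-\pi)$ involves only derivatives of $u-\pi$ of order at most $l-1$, multiplied by derivatives of $\rho_i$. Applying $D^k$ to the identity above gives
\begin{equation}
D^kw=D^k\B^{\dagger}(D)\bigl(\rho_i\B u\bigr)+D^k\B^{\dagger}(D)\bigl([\B,\rho_i](u-\pi)\bigr)+D^k(\C^{\ast})^{\dagger}(D)\bigl([\C^{\ast},\rho_i](u-\pi)\bigr).
\end{equation}
The first term is a zero-order Mikhlin multiplier applied to $\rho_i\B u$, so it is bounded in $\lebe^p$. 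For the commutator terms we write each summand as $\partial^{\gamma}(a\,D^j(u-\pi))$ with $j+|\gamma|\leq k-1$, respectively $\leq l-1$. Counting homogeneities, the composite operator acting on $D^j(u-\pi)$ then has order at most $k-1-j$. Hence these terms are bounded in $\lebe^r$ by $\lVert u-\pi\rVert_{\sobo^{k-1,r}(B_{\theta_{i+1}})}$; to handle negative-order pieces we use \eqref{eq:negative_sobolev}, or alternatively the Poincar\'e inequality \eqref{eq:orlicz_poincare_1} for compactly supported functions.

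This gives a bootstrap. Theorem~\ref{thm:B_poincare2} controls $u-\pi$ in $\sobo^{k-1,q}(B_{\theta_N})$ for some $q<\frac n{n-1}$ by $\int|\B u|\lesssim\lVert\B u\rVert_{\lebe^p}$. The estimate above then gives $D^k(u-\pi)\in\lebe^{\min(p,q)}(B_{\theta_{N-1}})$, and Sobolev embedding improves the lower-order integrability to $\sobo^{k-1,\min(p,q^{\ast})}$. After finitely many steps (their number depends only on $n$ and $p$) we reach exponent $p$ on $B_\theta$, which proves \eqref{eq:p_poincare}. Smoothness of $\pi$ is inherited from Theorem~\ref{thm:B_poincare2}.

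For the modular estimate \eqref{eq:p_modular_poincare}, we run the same decomposition with the $N$-function $\varphi(t)\sim|V_p(t)|^2\sim\min\{t^2,t^p\}$, which lies in $\Delta_2\cap\nabla_2$ for $1<p<\infty$. We use the modular form of \eqref{eq:orlicz_hm}, namely $\int\varphi(|Tf|)\leq c\int\varphi(|f|)$, which is what the Marcinkiewicz-type interpolation of Maligranda and Zygmund actually yields. Applied to $f=\rho_i\B u$, this controls the main term by $\int_{B_1}|V_p(\B u)|^2$. The lower-order terms go through the same bootstrap in the scales $\varphi$ and $E^q$, starting from the modular estimate \eqref{eq:modular-poincare}.

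The main obstacle is this last bootstrap in modular form. Sobolev-type improvement is not linear in the modular, and $\varphi$ behaves quadratically near $0$ but like $t^p$ at infinity. We would first try to split $\B u$ into its parts where $|\B u|\leq1$ and $|\B u|>1$, and use linearity of $u\mapsto u-\pi$ to treat an $\lebe^2$ piece and an $\lebe^p$ piece separately. If that splitting turns out to be incompatible with the constraint $\C^{\ast}u=0$, we would instead accept an $R$-dependent constant, which the statement allows, and pass from the Luxemburg norm back to the modular via the $\Delta_2$ bounds.
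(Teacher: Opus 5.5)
Your route to the norm estimate \eqref{eq:p_poincare} is sound and differs from the paper's. You seed a localized bootstrap with the $\sobo^{k-1,q}$ bound of Theorem~\ref{thm:B_poincare2} and climb to the exponent $p$ by zero-order multipliers and Sobolev embedding. The paper instead reruns Theorem~\ref{thm:pu_replacement} and Lemma~\ref{lem:negative_norm_estimate} with $\varphi(t)=t^p$ and quotes \cite[Proposition~5.1]{LiRaita24} for $D^k$. One technical repair is needed. Do not split $\C^{\ast}w$ into single commutator pieces, since their Fourier transforms need not vanish at $\xi=0$ to the order your negative-order multipliers require. Instead estimate $\C^{\ast}w=\di^{l}(Lw)$ as a whole in $\dot{\sobo}^{-m,r}$ by duality, as in Lemma~\ref{lem:Cstar_term}. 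Note also that for $p\geq 2$ the modular estimate \eqref{eq:p_modular_poincare} follows at once from the norm estimates. In that range $\lvert V_p(t)\rvert^2\sim t^2+t^p$ (it is $\max\{t^2,t^p\}$, not $\min$), and your $\pi$ is the same for the exponents $2$ and $p$.

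The genuine gap is \eqref{eq:p_modular_poincare} for $1<p<2$, where $\lvert V_p(t)\rvert^2\sim\min\{t^2,t^p\}$. Set $\Lambda:=\int_{B_1}\lvert V_p(\B u)\rvert^2$. The norm bounds alone only give $\int_{B_\theta}\lvert V_p(D^j(u-\pi))\rvert^2\lesssim\Lambda+\Lambda^{p/2}$, which is not linear for small $\Lambda$. Modular Mikhlin controls only the principal term $\rho_i\B u$. The commutator terms require modular control of $D^j(u-\pi)$, $j\leq k-1$, in the same scale, and that is part of what is to be proved. As you observe, the cross-scale bootstrap from \eqref{eq:modular-poincare} is not linear in the modular. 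The negative-order and duality steps (\eqref{eq:negative_sobolev}, Lemma~\ref{lem:sobolev_negative_support}) are also Luxemburg-norm statements, so this step is left open, and neither fallback closes it:
\begin{itemize}
\item Truncating $\B u$ at level $1$ produces fields that are in general not $\B$-gradients, so there are no $u_1,u_2$ to which the linearity of $u\mapsto u-\pi$ could be applied. The obstruction is the range of $\B$, not the constraint $\C^{\ast}u=0$.
\item Converting a Luxemburg bound into a modular one through the $\Delta_2$ indices of $\min\{t^2,t^p\}$ only yields $\int\lvert V_p(g)\rvert^2\lesssim\max\{\Lambda^{p/2},\Lambda^{2/p}\}$. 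This loss is in the size of the data, so no $R$-dependent constant can absorb it.
\end{itemize}

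The missing idea is the paper's rescaling. Put $E_p:=\lvert V_p\rvert^2$ and $\Phi_\lambda:=\lambda^{-1}E_p$. The constants $\Delta_2(\Phi_\lambda)$ and $\nabla_2(\Phi_\lambda)$ do not depend on $\lambda$. Hence every linear Orlicz estimate you use (\eqref{eq:orlicz_hm}, the commutator and duality bounds, Lemma~\ref{lem:negative_norm_estimate}) holds in $\lebe^{\Phi_\lambda}$ uniformly in $\lambda$, provided you stay in this single scale. The seed is placed in a very negative Sobolev norm, where the Jensen sandwich \eqref{eq:Vp_jensen} with $p_0=\min\{2,p\}$ makes it $\lambda$-uniform. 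For $v=u-\pi$ one has $\lVert v\rVert_{\sobo^{-n-N,2}}\lesssim\lVert\B u\rVert_{\lebe^{p_0}}\lesssim E_p^{-1}(\lambda)\lVert\B u\rVert_{\lebe^{\Phi_\lambda}}$ and $E_p^{-1}(\lambda)\lVert v\rVert_{\sobo^{\nu,\Phi_\lambda}}\lesssim\lVert v\rVert_{\sobo^{-n-N,2}}$, so the factors $E_p^{-1}(\lambda)$ cancel. Your $\lebe^1$ seed from Theorem~\ref{thm:B_poincare2}, embedded into $\sobo^{-n,2}$, would do equally well. One then climbs one derivative at a time within $\lebe^{\Phi_\lambda}$, which your localized decomposition can carry out. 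Finally take $\lambda=\int_{B_1}E_p(\B u)$, so that $\lVert\B u\rVert_{\lebe^{\Phi_\lambda}}=1$ and the $\lambda$-uniform Luxemburg bound becomes the linear modular bound.
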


\begin{remark}
    A similar result has been established by Franceschini in \cite[Proposition 2.12]{Federico2023}, where a norm inequality of the form
    \begin{equation}
        \inf_{\pi \in \operatorname{ker} \B} \lVert u - \pi \rVert_{\lebe^1(B_{1/2})} \leq c \lVert \B u \rVert_{\lebe^1(B_1)}
    \end{equation}
    is established when $\B$ is $\mathbb R$-elliptic ({i.e.,} $\C^{\ast}=0$). 
    The proof consists of a qualitative argument involving the open mapping theorem and the Ehrenpreis fundamental principle.
    We found such arguments were insufficient to establish a modular estimate of the form \eqref{eq:modular-poincare} that is crucial in our regularity proofs.
    While one can work with the Luxemburg norm associated to 
    $\lebe^{E}$ to establish an estimate of the form
        \begin{equation}
        \inf_{\pi \in \operatorname{ker} (\B,\C^{\ast})} \lVert u - \pi \rVert_{\lebe^E(B_{1/2})} \leq c \lVert \B u \rVert_{\lebe^E(B_1)}
    \end{equation}
    provided $\B$ is constant rank and $\C^{\ast}u=0$, this is strictly weaker than the modular variant we seek.
    We contrast the case to the variable exponent $\lebe^{p(\cdot)}$ spaces, where Poincar\'e-type inequalities can be established with respect to the Luxemburg norm $\lVert \cdot \rVert_{\lebe^{p(\cdot)}(\Omega)}$, however the modular version is known to fail in general (see e.g.\ \cite[Theorem~8.2.4, Example~8.2.7]{DHHR2011} and the references therein).
\end{remark}

\subsection{Negative norm estimates}

The first result we will need is a Korn-type inequality for the pair $(\B,\C^{\ast})$, with a remainder term in arbitrarily negative scales.

\begin{lemma}\label{lem:negative_norm_estimate}
    Let $\nu \in \mathbb Z$ and $\varphi \in \Delta_2\cap\nabla_2$, and suppose $u \in \sobo^{\nu,\varphi}(B_1)$ such that $\B u \in \sobo^{-1,\varphi}(B_1)$ and $\C^{\ast}u \in \sobo^{-m-1,\varphi}(B_1)$ with $m = l-k>0$.
    Then we have $u \in \sobo^{k-1,\varphi}_{\mathrm{loc}}(B_{1})$ and for all $0<t<s<1$ we have the estimate
    \begin{equation}\label{eq:orlicz_lower}
        \lVert u \rVert_{\sobo^{k-1,\varphi}(B_{t})} \leq c \left( \lVert \B u \rVert_{\sobo^{-1,\varphi}(B_s)} + \lVert \C^{\ast}u \rVert_{\sobo^{-m-1,\varphi}(B_s)} + \lVert u \rVert_{\sobo^{\nu,\varphi}(B_s)}\right),
    \end{equation}
    where $c = c(n,\nu,\Delta_2(\varphi),\nabla_2(\varphi),\B,\C,s-t)$.
\end{lemma}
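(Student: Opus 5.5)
The approach is to exploit the ellipticity of the pair $(\B,\C^{\ast})$ on the whole space through a Fourier multiplier identity, and then to localize with cut-offs. At each lower-order step, the commutator terms cost one derivative, and that loss is recovered by induction on the regularity index.

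\emph{Step 1: the multiplier identity.} Since $\ima\C(\xi)=\ker\B(\xi)$, we have $\ker\B(\xi)\cap\ker\C^{\ast}(\xi)=\ima\C(\xi)\cap(\ima\C(\xi))^{\perp}=\{0\}$. Orthogonal projections give, for $\xi\neq0$,
\begin{equation}
    \mathrm{Id}_U=\B^{\dagger}(\xi)\B(\xi)+(\C^{\ast})^{\dagger}(\xi)\C^{\ast}(\xi),
\end{equation}
because $\B^\dagger\B=\proj_{(\ker\B(\xi))^\perp}$ and $(\C^{\ast})^{\dagger}\C^{\ast}=\proj_{\ima\C(\xi)}=\proj_{\ker\B(\xi)}$. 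By Lemma~\ref{lem:dagger} applied to $\B$ and to $\C^{\ast}$, the two coefficients are smooth away from $0$ and homogeneous of degrees $-k$ and $-l$. For $w\in\mathscr{S}'$ with suitable decay, this yields
\begin{equation}
    D^{k-1}w = T_1(\B w)+T_2(\C^{\ast}w),
\end{equation}
where $T_1$ has symbol $\B^{\dagger}(\xi)\otimes\xi^{\otimes(k-1)}$, of degree $-1$, and $T_2$ has symbol of degree $-1-m$. Writing $T_1=M_1 I_1$ and $T_2=M_2I_{1+m}$ with $M_i$ zero-order Hörmander--Mikhlin multipliers, \eqref{eq:orlicz_hm} and \eqref{eq:negative_sobolev} give
\begin{equation}
    \|D^{k-1}w\|_{\lebe^\varphi(\R^n)}\lesssim \|\B w\|_{\dot\sobo^{-1,\varphi}}+\|\C^{\ast}w\|_{\dot\sobo^{-m-1,\varphi}}.
\end{equation}
More generally, the same argument gives $\|D^{j}w\|_{\dot\sobo^{r,\varphi}}\lesssim\|\B w\|_{\dot\sobo^{r+j-k,\varphi}}+\|\C^{\ast}w\|_{\dot\sobo^{r+j-l,\varphi}}$ for every $r\in\mathbb Z$. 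This lets us shift all indices uniformly downward.

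\emph{Step 2: localization and the induction.} Fix radii $t<\rho<s$ and a cut-off $\eta\in\ccinfty(B_\rho)$ with $\eta=1$ on $B_t$, and apply the shifted estimate to $w=\eta u$. By Leibniz' rule, $\B(\eta u)=\eta\B u+\sum_{|\gamma|\le k-1}c_\gamma\,\partial^{\gamma}u$, where the coefficients are derivatives of $\eta$, and similarly for $\C^{\ast}(\eta u)$ with at most $l-1$ derivatives falling on $u$. Suppose $u\in\sobo^{\mu,\varphi}(B_\rho)$. Then the commutator terms lie in $\sobo^{\mu-k+1,\varphi}$ and $\sobo^{\mu-l+1,\varphi}$ respectively. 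Feeding this into the shifted estimate improves the regularity to $\eta u\in\sobo^{\mu+1,\varphi}$, provided $\mu+1\le k-1$. Multiplying by the cut-off is bounded on $\sobo^{-1,\varphi}$ and on $\sobo^{-m-1,\varphi}$ by duality, with constants depending on $s-t$. Starting from $\mu=\nu$ (we may assume $\nu<k-1$) and iterating over $k-1-\nu$ nested radii between $t$ and $s$ gives \eqref{eq:orlicz_lower}. A further gain lemma upgrades homogeneous control plus the lower-order term to the inhomogeneous $\sobo^{k-1,\varphi}$ norm, because $\eta u$ has compact support, so Poincar\'e \eqref{eq:orlicz_poincare_1} and its iterates apply.

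\emph{Main obstacle.} The delicate point is making the passage between local spaces $\sobo^{r,\varphi}(B_\rho)$ with negative $r$ and the homogeneous spaces $\dot\sobo^{r,\varphi}(\R^n)$ rigorous. One has to extend local distributions by multiplying with cut-offs, and control the low frequencies, where homogeneous negative norms of compactly supported distributions can blow up. I would first handle this with inhomogeneous symbols: replace $|\xi|$ by $\langle\xi\rangle$, using $\chi(\xi)$-truncated versions of the identity in Step~1, where $\chi$ vanishes near $0$. The low-frequency part is smoothing, so it is bounded by $\|\eta u\|_{\sobo^{\nu,\varphi}}$ for any $\nu$. This is exactly where the remainder term $\|u\|_{\sobo^{\nu,\varphi}(B_s)}$ enters.
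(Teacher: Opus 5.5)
Your proposal is correct, and its skeleton is the paper's. It uses the same splitting $\widehat{\rho u}=\B^{\dagger}\widehat{\B(\rho u)}+\C^{\ast\dagger}\widehat{\C^{\ast}(\rho u)}$ of a cut-off of $u$, Leibniz control of the commutators, and an induction gaining one derivative per step on nested balls, starting from $\nu<k-1$.

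You differ in how you pass between the local negative spaces on balls and global multiplier estimates. The paper keeps homogeneous symbols and splits according to the sign of $\nu+1$:
\begin{itemize}
\item For $\nu+1>0$ it uses the divergence structure $\B(\rho u)=\di^k(L\rho u)$ together with Lemma~\ref{lem:sobolev_negative_support}\ref{it:negative_sobolev_a} (a polynomial subtraction plus Poincar\'e). This bounds the homogeneous negative norms of the compactly supported $\B(\rho u)$ and $\C^{\ast}(\rho u)$ by local ones.
\item For $\nu+1\le 0$ it argues by duality, testing $\psi_1,\psi_2$ against $v\in\ccinfty(B_2)$ and applying Lemma~\ref{lem:Riesz_domain} to the kernels of $\B^{\ast\dagger}$ and $\C^{\dagger}$.
\end{itemize}

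Your low-frequency cut-off $\chi$ with Bessel symbols treats all $\nu$ uniformly. Since $\chi\B^{\dagger}\langle\xi\rangle^{k}$ and $\chi\C^{\ast\dagger}\langle\xi\rangle^{l}$ are zero-order Mikhlin symbols, \eqref{eq:orlicz_hm} applies directly on the inhomogeneous spaces $\sobo^{r,\varphi}(\R^n)$. Every term in the Leibniz expansion of $\B(\eta u)$ and $\C^{\ast}(\eta u)$ is a cut-off times a local distribution, so its global inhomogeneous norm is bounded by the local one. The remaining low-frequency piece is convolution with a Schwartz kernel, which maps $\sobo^{\mu,\varphi}$ into any $\sobo^{N,\varphi}$.

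This route is shorter and dispenses with both auxiliary lemmas and the case split. The price is a lower-order remainder already at the level of $\eta u$, whereas the paper's \eqref{eq:rhou_negative_estimate} is a remainder-free coercivity estimate for compactly supported fields. The commutators force the remainder $\lVert u\rVert_{\sobo^{\nu,\varphi}(B_s)}$ into \eqref{eq:orlicz_lower} anyway, so nothing is lost for this lemma. Once you adopt the inhomogeneous version, the homogeneous estimates of your Step~1 and the final Poincar\'e upgrade become unnecessary.
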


This result will be applied with $\varphi(t) = t^p$ and $\varphi(t) = \frac1{\lambda} E(t)^p$ for suitable $p \in (1,\infty)$ and $\lambda>0$.
We will establish two preliminary results that will be used in the proof.

\begin{lemma}\label{lem:Riesz_domain}
    Let $m,k \in \mathbb N$ and $\varphi \in \Delta_2 \cap \nabla_2$, and suppose $\mathcal K$ is a kernel such that $\hat{\mathcal K}$ is smooth on $\mathbb R^n\setminus\{0\}$ and is homogeneous of degree $-k$.
    Then if $v \in \sobo^{m,\varphi}_0(B_2)$, we have $\mathcal K \ast v \in \sobo^{m+k,\varphi}(B_2)$ and there is $c = c(n,m,k,\mathcal K,\Delta_2(\varphi),\nabla_2(\varphi)>0$ such that
    \begin{equation}
        \lVert \mathcal K \ast v \rVert_{\sobo^{m+k,\varphi}(B_2)} \leq c\lVert v \rVert_{\sobo^{m,\varphi}(B_2)}
    \end{equation}
\end{lemma}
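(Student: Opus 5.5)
We extend $v$ by zero to $\R^n$, so that $v \in \sobo^{m,\varphi}(\R^n)$ is supported in $\bar B_2$, and write $\mathcal K \ast v = \mathscr F^{-1}(\hat{\mathcal K}\hat v)$ as a tempered distribution. The estimate on $B_2$ is then obtained by splitting the highest-order and lower-order derivatives of $\mathcal K\ast v$. Throughout, $m,k\geq 0$ as in the statement.

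\emph{Top-order derivatives.} For $\lvert\beta\rvert = m+k$ we write $\beta = \beta_1+\beta_2$ with $\lvert\beta_1\rvert = k$ and $\lvert\beta_2\rvert = m$, so that
\begin{equation}
    \partial^{\beta}(\mathcal K \ast v) = \mathscr F^{-1}\big((2\pi\mathrm{i}\xi)^{\beta_1}\hat{\mathcal K}(\xi)\,\widehat{\partial^{\beta_2}v}(\xi)\big).
\end{equation}
The multiplier $\bm m(\xi) = (2\pi \mathrm{i}\xi)^{\beta_1}\hat{\mathcal K}(\xi)$ is smooth on $\R^n\setminus\{0\}$ and homogeneous of degree $0$, so it satisfies the Mikhlin condition $\lvert\partial^\alpha\bm m(\xi)\rvert\lesssim\lvert\xi\rvert^{-\lvert\alpha\rvert}$. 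By \eqref{eq:orlicz_hm},
\begin{equation}
    \lVert D^{m+k}(\mathcal K\ast v)\rVert_{\lebe^{\varphi}(\R^n)} \lesssim \lVert D^m v\rVert_{\lebe^{\varphi}(\R^n)} = \lVert D^m v\rVert_{\lebe^{\varphi}(B_2)}.
\end{equation}

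\emph{Lower-order derivatives on $B_2$.} For $\lvert\beta\rvert = j < m+k$, the multiplier $\xi^{\beta}\hat{\mathcal K}(\xi)$ has negative homogeneity when $j<k$, so $\mathcal K \ast v$ is not controlled globally in $\lebe^\varphi(\R^n)$. We avoid this by using the bounded domain. Once the top-order bound holds, we apply the Poincar\'e inequality \eqref{eq:orlicz_poincare_2}, iterated, on $B_2$ (scaled from $B_1$, which costs constants depending only on $n$). This gives
\begin{equation}
    \lVert \mathcal K\ast v - P\rVert_{\sobo^{m+k,\varphi}(B_2)} \lesssim \lVert D^{m+k}(\mathcal K \ast v)\rVert_{\lebe^\varphi(B_2)}
\end{equation}
for a polynomial $P$ of degree less than $m+k$. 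It then remains to bound $P$, that is, finitely many moments $\int_{B_2}\partial^\gamma(\mathcal K\ast v)\,\dd x$ for $\lvert\gamma\rvert<m+k$, by $\lVert v\rVert_{\sobo^{m,\varphi}}$.

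For these moments, write $\mathcal K = \chi\mathcal K + (1-\chi)\mathcal K$ with $\chi \in \ccinfty$ equal to $1$ near $0$. Since $\hat{\mathcal K}$ is homogeneous of degree $-k$ and smooth away from $0$, $\mathcal K$ is, modulo polynomial and logarithmic terms, a function homogeneous of degree $k-n$, smooth away from the origin, and locally integrable whenever $k\geq 1$; if $k=0$ it is a Calder\'on--Zygmund kernel plus a multiple of $\delta$. Hence $(\chi\mathcal K)\ast \partial^\gamma v$ is controlled on $B_2$ by Young's inequality when $\lvert\gamma\rvert\le m$, and when $\lvert \gamma\rvert > m$ we move the excess derivatives onto the kernel, leaving a locally integrable kernel for $\lvert\gamma\rvert-m \le k-1$. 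The term $((1-\chi)\mathcal K)\ast v$ involves a kernel that is smooth on $B_4$. Since $v$ is supported in $B_2$, all its derivatives on $B_2$ are bounded by $\lVert v\rVert_{\lebe^1(B_2)}\lesssim \lVert v\rVert_{\lebe^{\varphi}(B_2)}$, using $\varphi\in\Delta_2$ and the boundedness of $B_2$. This also shows $\mathcal K\ast v\in \sobo^{m+k,\varphi}(B_2)$, by first approximating $v$ with $\ccinfty(B_2)$ functions and then passing to the limit.

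\emph{Main obstacle.} The expected difficulty is the precise structure of $\mathcal K$ when $k\ge n$, where logarithmic terms and polynomial ambiguities appear because $\hat{\mathcal K}$ is not locally integrable at $0$. If needed, we would remove these by fixing $\mathcal K$ via a homogeneous extension of $\hat{\mathcal K}$ as a tempered distribution. Alternatively, we can avoid kernel analysis altogether by using \eqref{eq:negative_sobolev}: we write $\hat{\mathcal K} = \lvert\xi\rvert^{-k}\bm m$ with $\bm m$ of degree $0$ and handle $I_k$ through the explicit Riesz kernel, which is locally $\lvert x\rvert^{k-n}$ (or $\log$-type when $k-n$ is an even nonnegative integer). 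The lower-order moment bounds then follow by the same local/far splitting.
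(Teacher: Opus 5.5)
Your top-order step is the same multiplier argument as the paper's. The paper runs it for every order $k+j$, $0\le j\le m$, writing $\mathcal K=I_k\ast K$ with $\hat K$ of degree $0$. For the orders $j\le k-1$ the two routes differ. The paper uses that $\mathcal K$ is smooth off the origin and homogeneous of degree $k-n$: it reads the homogeneity of $\hat{\mathcal K}$ distributionally and invokes H\"ormander's Theorems 7.1.16 and 7.1.18, so the logarithmic terms you worry about for $k\ge n$ never enter. From this it gets the pointwise bound $\lvert D^j(\mathcal K\ast v)\rvert\le c\,I_{k-j}\lvert v\rvert$ on $B_2$. It then applies Jensen's inequality with the probability measures $\gamma_{k-j}(x)^{-1}\lvert x-\cdot\rvert^{k-j-n}\mathscr L^n\mres B_2$, which gives directly the modular bound $\int_{B_2}\varphi(\lvert D^j(\mathcal K\ast v)\rvert)\,\dd x\le c\int_{B_2}\varphi(\lvert v\rvert)\,\dd x$. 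No Poincar\'e inequality or polynomial correction is needed. Your Poincar\'e-plus-moments reduction is a legitimate alternative, but as written it does not produce the constant the lemma asserts.

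\textbf{The gap: the constant's dependence on $\varphi$.} The constant $c$ may depend on $\varphi$ only through $\Delta_2(\varphi)$ and $\nabla_2(\varphi)$. This is essential: the lemma enters the proof of Theorem~\ref{thm:B_poincare2}, through Lemma~\ref{lem:negative_norm_estimate}, with Orlicz functions built from $\Phi^q=\lambda^{-q}E^q$ for arbitrary $\lambda>0$. That proof needs constants independent of $\lambda$.

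Now, $\Delta_2$ and $\nabla_2$ are both unchanged under $\varphi\mapsto\varphi/\lambda$, whereas $\lVert f\rVert_{\lebe^{\varphi/\lambda}(B_2)}\to 0$ as $\lambda\to\infty$ for fixed $f$. Two of your steps therefore cannot hold with an admissible constant:
\begin{itemize}
\item your inequality $\lVert v\rVert_{\lebe^1(B_2)}\lesssim\lVert v\rVert_{\lebe^\varphi(B_2)}$ ``using $\varphi\in\Delta_2$'';
\item your reduction ``bound the moments $\int_{B_2}\partial^\gamma(\mathcal K\ast v)\,\dd x$ by $\lVert v\rVert_{\sobo^{m,\varphi}}$'', since those moments do not depend on $\lambda$ at all.
\end{itemize}

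\textbf{The repair.} Never separate the moments from the polynomial they determine. Your local/far kernel splitting in fact gives
$\lvert\fint_{B_2}\partial^\gamma(\mathcal K\ast v)\,\dd x\rvert\le C\sum_{i\le m}\fint_{B_2}\lvert D^iv\rvert\,\dd x$,
hence $\lvert D^\beta P\rvert\le C\sum_{i\le m}\fint_{B_2}\lvert D^iv\rvert\,\dd x$ on $B_2$. Jensen then yields
$\lVert(\fint_{B_2}\lvert f\rvert\,\dd x)\mathbbm{1}_{B_2}\rVert_{\lebe^\varphi(B_2)}\le\lVert f\rVert_{\lebe^\varphi(B_2)}$
uniformly in $\varphi$. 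In other words, the factor $\lVert\mathbbm{1}_{B_2}\rVert_{\lebe^\varphi(B_2)}$ carried by the polynomial compensates exactly. The far-field term $((1-\chi)\mathcal K)\ast v$ is handled the same way, since it is bounded pointwise on $B_2$ by $C\fint_{B_2}\lvert v\rvert\,\dd x$. With this correction your proof closes with the right dependence.

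The same observation makes your Poincar\'e step unnecessary. The Orlicz Young inequality $\lVert g\ast f\rVert_{\lebe^\varphi}\le\lVert g\rVert_{\lebe^1}\lVert f\rVert_{\lebe^\varphi}$ (Jensen once more) can be applied to $D^j(\chi\mathcal K)\ast v$ for $j\le k-1$. Together with the far-field bound, this controls the lower orders directly, which is essentially the paper's argument.
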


Here the convolution in $\mathcal K$ is in general understood as $\mathcal K \ast v = \mathscr{F}^{-1}(\hat{\mathcal K}(\xi)\hat{v}(\xi))$.

\begin{proof}
    Let $\hat K(\xi) = \hat{\mathcal K}(\xi/\lvert \xi\rvert)$, which is smooth away from zero and homogeneous of degree $-k$, so that $\mathcal K = I_k \ast K$.
    Extending by zero, we can also view $v \in \sobo^{m,\varphi}(\mathbb R^n)$.
    Using \eqref{eq:negative_sobolev} and \eqref{eq:orlicz_hm}, for each $0 \leq j \leq m$ we can estimate
    \begin{equation}\label{eq:K_lemma_1}
    \begin{split}
        \lVert D^{k+j}(\mathcal K\ast v)\rVert_{\lebe^{\varphi}(B_2)} 
        &\leq \lVert D^k(I_k (K \ast D^jv))\rVert_{\lebe^{\varphi}(\mathbb R^n)} \\
        &\leq c\lVert K \ast D^jv \rVert_{\lebe^{\varphi}(\mathbb R^n)} \leq c \lVert D^j v\rVert_{\lebe^{\varphi}(\mathbb R^n)} \leq c \lVert v \rVert_{\sobo^{m,\varphi}(B_2)}.
        \end{split}
    \end{equation}
    Now consider $0 \leq j \leq k-1$.
    Since $\mathcal K$ is smooth in $\R^n \setminus \{0\}$ and homogeneous of degree $(k-n)$ (by \cite[Theorems 7.1.16,\!18]{Hormander1}), there exists $c>0$ such that $\lvert D^j(\mathcal K \ast v) \rvert \leq c I_{k-j} \lvert v \rvert$ holds pointwise on $\mathbb R^n \setminus \{0\}$.
    For such $j$ we can estimate
    \begin{equation*}
        \int_{B_2} \varphi(\lvert D^j(\mathcal K \ast v) \rvert) \,\dd x \leq c \int_{B_2} \varphi(\lvert I_{k-j} \lvert v \rvert) \,\dd x \leq c\int_{B_2} \varphi\bigg( \int_{B_2} \lvert x-y\rvert^{k-j-n} \lvert v(y)\rvert \,\dd y \bigg) \,\dd x.
    \end{equation*}
    Setting $\gamma_{k-j}(x) := \int_{B_2} \lvert x- y\rvert^{k-j-n} \,\dd y$, since $k-j\geq 1$ there is $c_{k-j} \geq 1$ such that $c_{k-j}^{-1} \leq \gamma_{k-j}(x)\leq c_{k-j}$ for all $x \in B_2$.
    Then applying Jensen's inequality with respect to the measure $\nu_x :=\gamma_{k-j}^{-1}(x) \lvert x - \cdot \rvert^{k-j-n} \mathscr L^n \mres B_2$ and using that $\varphi \in \Delta_2$, we can estimate
    \begin{equation}
        \int_{B_2} \varphi(\lvert D^j(\mathcal K \ast v )\rvert) \,\dd x \leq c\int_{B_2} \int_{B_2} \lvert x-y\rvert^{k-j-n} \varphi(\lvert v \rvert) \,\dd y \,\dd x \leq c \int_{B_2} \varphi(\lvert v \rvert) \,\dd y.
    \end{equation}
    As the modular estimate implies the norm estimate
    \begin{equation}\label{eq:K_lemma_2}
        \lVert  D^j(\mathcal K \ast v) \rVert_{\lebe^{\varphi}(B_2)} \leq c \lVert v \rVert_{\lebe^{\varphi}(B_2)} \quad\mbox{for each } 0 \leq j \leq k-1,
    \end{equation}
    the result follows by combining \eqref{eq:K_lemma_1} and \eqref{eq:K_lemma_2}. 
\end{proof}

\begin{lemma}\label{lem:sobolev_negative_support}
    Let $m \in \mathbb N$ and $\varphi \in \Delta_2 \cap \nabla_2$.
   \begin{enumerate}
       \item\label{it:negative_sobolev_a} If $u \in \sobo^{-m,\varphi}(B_2)$ takes the form $u = \di^m U$ for some $U \in \mathscr{D}'(\mathbb R^n, \operatorname{SLin}^m(\mathbb R^n;\mathbb R^n))$ which is supported in $B_1$, then $u \in \dot{\sobo}^{-m,\varphi}(\mathbb R^n)$ and we have
       \begin{equation}\label{eq:lemma05_inequality}
           \lVert u \rVert_{\dot\sobo^{-m,\varphi}(\mathbb R^n)} \leq c(n,m,\Delta_2(\varphi),\nabla_2(\varphi)) \lVert u \rVert_{\sobo^{-m,\varphi}(B_2)}.
       \end{equation}
       \item\label{it:negative_sobolev_b} If $u \in \dot{\sobo}^{-m,\varphi}(\mathbb R^n)$,
       then $u \in \sobo^{-m,\varphi}(B_1)$ and we have
    \begin{equation}
        \lVert u \rVert_{\sobo^{-m,\varphi}(B_1)} \leq c(n,m,\Delta_2(\varphi),\nabla_2(\varphi)) \lVert u \rVert_{\dot{\sobo}^{-m,\varphi}(\mathbb R^n)}.
    \end{equation}
    \end{enumerate}
\end{lemma}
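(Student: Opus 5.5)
Both parts are duality statements, and I would prove each by testing against smooth functions and using the Orlicz duality $\dot{\sobo}^{m,\varphi^\ast}(\R^n)^\ast\cong\dot{\sobo}^{-m,\varphi}(\R^n)$, $\sobo_0^{m,\varphi^\ast}(B_2)^\ast\cong\sobo^{-m,\varphi}(B_2)$ recorded above. By \eqref{eq:negative_sobolev} and \eqref{eq:orlicz_hm}, the homogeneous norm can also be written as $\lVert u\rVert_{\dot\sobo^{-m,\varphi}}\sim\lVert I_m u\rVert_{\lebe^\varphi(\R^n)}$.

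\textbf{Part (a).} The obstruction is that $u$ is only controlled in $B_2$, while the homogeneous norm is global. The support of $U$ in $B_1$ is what rescues this. First fix a cutoff $\eta\in\ccinfty(B_2)$ with $\eta\equiv1$ on a neighbourhood of $\overline{B_1}$. For a test function $\psi\in\ccinfty(\R^n)$, the support condition gives
\[
\langle u,\psi\rangle=\langle U,D^m\psi\rangle=\langle U,\eta D^m\psi\rangle .
\]
Next, replace $\psi$ by $\eta(\psi-P)$, where $P$ is the Taylor (averaged) polynomial of $\psi$ of degree $m-1$ on $B_2$. Since $D^mP=0$ and $\eta\equiv1$ near $\spt U$,
\[
\langle u,\psi\rangle=\langle U,D^m(\eta(\psi-P))\rangle=\langle u,\eta(\psi-P)\rangle .
\]
Because $\eta(\psi-P)\in\sobo_0^{m,\varphi^\ast}(B_2)$, this is bounded by $\lVert u\rVert_{\sobo^{-m,\varphi}(B_2)}\lVert\eta(\psi-P)\rVert_{\sobo^{m,\varphi^\ast}(B_2)}$. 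The Leibniz rule together with the iterated Poincaré inequality \eqref{eq:orlicz_poincare_2} (applied to $\varphi^\ast\in\Delta_2$) gives
\[
\lVert\eta(\psi-P)\rVert_{\sobo^{m,\varphi^\ast}(B_2)}\le c\lVert D^m\psi\rVert_{\lebe^{\varphi^\ast}(B_2)}\le c\lVert\psi\rVert_{\dot\sobo^{m,\varphi^\ast}(\R^n)} .
\]
Taking the supremum over $\psi$ and using duality (with density of $\ccinfty$ in $\dot\sobo^{m,\varphi^\ast}$, valid since $\varphi^\ast\in\Delta_2$) yields \eqref{eq:lemma05_inequality}. Note that the higher-order Poincaré inequality is stated on $B_1$, so it should be rescaled to $B_2$ or applied on the ball containing $\spt\eta$; the constant then depends only on $n,m,\Delta_2,\nabla_2$.

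\textbf{Part (b).} This direction is easier. Set $w=I_m u\in\lebe^\varphi(\R^n)$. Then
\[
u=I_{-m}w=\sum_{|\alpha|=m}\partial^\alpha\,\mathscr F^{-1}\!\bigl(c_\alpha\xi^\alpha|\xi|^{-m}\hat w\bigr),
\]
which uses the expansion $|\xi|^{2m}=\sum_{|\alpha|=m}c'_\alpha\xi^{2\alpha}$. The multipliers $\xi^\alpha|\xi|^{-m}$ are Hörmander--Mikhlin, so by \eqref{eq:orlicz_hm} each $v_\alpha=\mathscr F^{-1}(c_\alpha\xi^\alpha|\xi|^{-m}\hat w)$ lies in $\lebe^\varphi(\R^n)$ with $\lVert v_\alpha\rVert\le c\lVert w\rVert\le c\lVert u\rVert_{\dot\sobo^{-m,\varphi}}$. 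Restricting these $v_\alpha$ to $B_1$ gives an admissible representation for the $\sobo^{-m,\varphi}(B_1)$ norm, and the bound follows. Alternatively, one can argue directly from the definition of the homogeneous space, since the infimum over representations on $\R^n$ already restricts to a representation on $B_1$.

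The main obstacle is part (a), namely removing the polynomial ambiguity so that only $D^m\psi$ enters the estimate. The trick is the choice $\eta(\psi-P)$, which relies on $\eta\equiv1$ on the support of $U$. The identity $\langle u,\psi\rangle=\langle u,\eta(\psi-P)\rangle$ requires care at the level of distributions: it holds because $D^m(\eta(\psi-P))=D^m\psi$ on a neighbourhood of $\spt U$.
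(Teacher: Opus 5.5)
Your proof is correct. Part (a) is essentially the paper's argument: a cutoff equal to $1$ near $\spt U$, subtraction of a degree-$(m-1)$ polynomial, the identity $\langle u,\psi\rangle=\langle u,\eta(\psi-P)\rangle$, Leibniz plus iterated Poincaré, and density of $\ccinfty$ in $\dot\sobo^{m,\varphi^\ast}(\R^n)$. One small precision: to iterate \eqref{eq:orlicz_poincare_2} literally, take $P$ with $\int_{B_2}D^\alpha(\psi-P)\,\dd x=0$ for all $|\alpha|\le m-1$, as the paper does. A Sobolev averaged Taylor polynomial only kills weighted averages, so it needs a slightly different Poincaré inequality.

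Part (b) is where you genuinely differ. The paper argues by duality. It tests $u$ against $v\in\sobo_0^{m,\varphi^\ast}(B_1)$ extended by zero and bounds $|\langle u,v\rangle|\le\|u\|_{\dot\sobo^{-m,\varphi}(\R^n)}\|D^mv\|_{\lebe^{\varphi^\ast}(\R^n)}$. It then concludes via $\sobo_0^{m,\varphi^\ast}(B_1)^\ast\cong\sobo^{-m,\varphi}(B_1)$.

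Your "alternatively" remark is the more elementary route. The paper defines $\dot\sobo^{-m,\varphi}(\R^n)$ as an infimum over representations $u=\sum_{|\alpha|=m}\partial^\alpha v_\alpha$. Restricting any such representation to $B_1$ is admissible for $\sobo^{-m,\varphi}(B_1)$, so the estimate holds with constant $1$ and needs no duality at all. Your main route through $w=I_mu$ and the Mikhlin multipliers $\xi^\alpha|\xi|^{-m}$ is also correct. It is only needed if one takes $\|I_mu\|_{\lebe^\varphi}$ as the definition of the homogeneous norm, and it leans on the isomorphism \eqref{eq:negative_sobolev}.

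What the paper's dual argument buys is that it uses only the duality $\dot\sobo^{m,\varphi^\ast}(\R^n)^\ast\cong\dot\sobo^{-m,\varphi}(\R^n)$. It is therefore insensitive to which of the equivalent norms is placed on $\dot\sobo^{-m,\varphi}(\R^n)$, and it mirrors the proof of (a).
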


\begin{proof}
  For \ref{it:negative_sobolev_a}, we can write
  \begin{equation}
    \lVert u \rVert_{\dot{\sobo}^{-m,\varphi}(\mathbb R^n)} = \inf\left\{ \langle u, v \rangle : v \in \sobo^{m,\varphi^{\ast}}_{\mathrm{loc}}(\mathbb R^n), \lVert \nabla^mv \rVert_{\lebe^{\varphi^{\ast}}(\mathbb R^n)} \leq 1 \right\}.
  \end{equation}
  Let $\rho \in \hold^{\infty}_c(B_2)$ such that $\mathbbm{1}_{B_1} \leq \rho \leq \mathbbm{1}_{B_2}$, and $P$ a polynomial of degree $(m-1)$ on $\mathbb R^n$ to be determined.
  Using the form $u = \di^m U$, for $v \in \ccinfty(\mathbb R^n)$ we have
  \begin{equation}
    \begin{split}
      \langle u, v \rangle 
      &= (-1)^m \int_{\mathbb R^n} \langle U , \nabla^m(v - P) \rangle \,\dd x \\
      &= (-1)^m \int_{\mathbb R^n} \langle U , \nabla^m(\rho(v-P) \rangle \,\dd x = \langle u, \rho(v-P) \rangle,
    \end{split}
  \end{equation}
  noting that $u$ is supported in $B_1$, where $\rho \equiv 1$.
  Then since $\rho(v-P) \in \sobo^{m,\varphi^{\ast}}(B_2)$ we have
  \begin{equation}
    \lvert \langle u,v \rangle \rvert \leq \lVert u \rVert_{\sobo^{-m,\varphi}(B_2)} \lVert \rho(v-P) \rVert_{\sobo^{m,\varphi^{\ast}}(B_2)}.
  \end{equation}
  We choose $P$ to satisfy
  \begin{equation}
      \int_{B_2} D^{\alpha}(v-P) \,\dd x = 0 \quad\mbox{for all $\lvert\alpha\rvert \leq m-1$},
  \end{equation}
  so then by the Poincar\'e inequality \eqref{eq:orlicz_poincare_2}, we can estimate
  \begin{equation}
    \lVert \rho(v-P) \rVert_{\sobo^{m,\varphi^{\ast}}(B_2)} 
    \lesssim \lVert v-P \rVert_{\sobo^{m,\varphi^{\ast}}(B_2)} 
      \lesssim \lVert D^mv \rVert_{\lebe^{\varphi^{\ast}}(B_2)} 
      \leq \lVert D^mv \rVert_{\lebe^{\varphi^{\ast}}(\mathbb R^n)},
  \end{equation}
  where the implicit constant depends on $n,m,\Delta_2(\varphi),\nabla_2(\varphi)$.
  Since $\ccinfty(\mathbb R^n)$ is dense in $\dot{\sobo}^{m,\varphi^{\ast}}(\mathbb R^n)$, 
  we can extend the above estimates by density to infer that
  \begin{equation}
    \lvert \langle u,v \rangle \rvert \lesssim \lVert u \rVert_{\sobo^{-m,\varphi}(B_2)}\lVert \nabla^mv \rVert_{\lebe^{\varphi^{\ast}}(\mathbb R^n)}
  \end{equation}
  holds for all $v \in \dot{\sobo}^{m,\varphi^{\ast}}(\mathbb R^n)$, which implies \eqref{eq:lemma05_inequality}.

  For \ref{it:negative_sobolev_b}, let $v \in \sobo^{m,\varphi^{\ast}}_0(B_1)$, which extending by zero lies in $\dot{\sobo}^{m,\varphi^{\ast}}(\mathbb R^n)$.
    Using the duality relation $\dot{\sobo}^{-m,\varphi}(\mathbb R^n)^{\ast} \cong \dot{\sobo}^{m,\varphi^{\ast}}(\mathbb R^n)$ and Poincar\'e inequality \eqref{eq:orlicz_poincare_1}, we have
    \begin{equation}
        \lvert\langle u, v \rangle \rvert \leq \lVert u \rVert_{\dot{\sobo}^{-m,\varphi}(\mathbb R^n)}\lVert D^mv \rVert_{\lebe^{\varphi^{\ast}}(\mathbb R^n)} \leq \lVert u \rVert_{\dot{\sobo}^{-m,\varphi}(\mathbb R^n)}\lVert v \rVert_{\sobo^{m,\varphi^{\ast}}(B_1)},
    \end{equation}
    from which the result follows.
\end{proof}

\begin{proof}[Proof of Lemma~\ref{lem:negative_norm_estimate}]
  We will show for all $\nu < k-1$ and $0 < t< s<1$ that if $u \in \sobo^{\nu,\varphi}(B_s)$ such that $\B u \in \sobo^{-1,\varphi}(B_1)$ and $\C^{\ast}u\in \sobo^{-m-1,\varphi}(B_1)$, then $u \in \sobo^{\nu+1,\varphi}(B_s)$ with the estimate
  \begin{equation}
        \lVert u \rVert_{\sobo^{\nu+1,\varphi}(B_{t})} \leq c \left( \lVert \B u \rVert_{\sobo^{-1,\varphi}(B_s)} + \lVert \C^{\ast}u \rVert_{\sobo^{-m-1,\varphi}(B_s)} + \lVert u \rVert_{\sobo^{\nu,\varphi}(B_s)}\right).
  \end{equation}
  If this holds, \eqref{eq:orlicz_lower} follows by iteratively applying the above for suitable $t,s$.
  
   Let $\rho \in \CC^{\infty}_{\rmc}(B_1)$ such that $\mathbbm{1}_{B_t} \leq \rho \leq \mathbbm{1}_{B_{s}}$ and $\lvert D^j\rho \rvert \leq c_j (s-t)^{-j}$ for all $j \in \mathbb N$.
  We will decompose
  \begin{equation}\label{eq:rhou_fourier_decomp}
      \widehat{\rho u}(\xi) = \B^{\dagger}(\xi)\widehat{\B(\rho u)}(\xi) + \C^{\ast\dagger}(\xi) \widehat{\C^{\ast}(\rho u)}(\xi) =: \hat\psi_1(\xi) + \hat\psi_2(\xi).
  \end{equation}
  Since $u \in \sobo^{\nu,\varphi}(B_s)$, we have
  \begin{align}
      \lVert \B(\rho u) \rVert_{\sobo^{\nu-k+1,\varphi}(B_2)}& \leq c(n,\nu,\B,s-t)\left(\lVert \B u\rVert_{\sobo^{\nu-k+1,\varphi}(B_s)} + \lVert u \rVert_{\sobo^{\nu,\varphi}(B_s)}\right), \label{eq:orlicz_lower_mid1}\\
      \lVert \C^{\ast}(\rho u) \rVert_{\sobo^{\nu-l+1,\varphi}(B_2)} &\leq c(n,\nu,\C,s-t)\left(\lVert\C^{\ast} u\rVert_{\sobo^{\nu-l+1,\varphi}(B_s)} + \lVert u \rVert_{\sobo^{\nu,\varphi}(B_s)}\right),\label{eq:orlicz_lower_mid2}
  \end{align}
  by a similar duality argument as in \cite[Proposition 5.1]{LiRaita24}.

  Next we claim that
  \begin{equation}\label{eq:rhou_negative_estimate}
        \lVert \rho u \rVert_{\sobo^{\nu+1,\varphi}(B_2)} \leq (\lVert \B(\rho u) \rVert_{\sobo^{\nu-k+1,\varphi}(B_2)}+\lVert \C^{\ast}(\rho u) \rVert_{\sobo^{\nu-l+1,\varphi}(B_2)}),
    \end{equation}
    holds for all $\nu$, by considering the two cases which arise depending on the sign of $\nu+1$.
  
  {\it Case $1$}: $\nu+1>0$.
  Since $\B^{\dagger}(\xi)$ is smooth away from the origin and homogeneous of order $-k$ by Lemma \ref{lem:dagger}, we can write $\B^{\dagger} = \abss{\cdot}^{-k} \ast B$, where $\hat{B}(\xi) := \B^{\dagger}(\xi/\lvert\xi\rvert)$ is smooth and homogeneous of degree $0$.
  Then the isomorphism \eqref{eq:negative_sobolev} and the H\"ormander--Mikhlin multiplier theorem \eqref{eq:orlicz_hm} imply
   \begin{align*}
       \lVert \psi_1 \rVert_{\dot{\sobo}^{\nu+1,\varphi}(\mathbb R^n)} 
      &= \lVert I_k ( B \ast \B(\rho u))\rVert_{\dot{\sobo}^{\nu+1,\varphi}(\mathbb R^n)}\\
      &\leq c \lVert B \ast \B(\rho u)\rVert_{\dot{\sobo}^{\nu-k+1,\varphi}(\mathbb R^n)} \\
      &\leq  c\lVert \B(\rho u)\rVert_{\dot{\sobo}^{\nu-k+1,\varphi}(\mathbb R^n)} \leq  c\lVert \B(\rho u)\rVert_{\sobo^{\nu-k+1,\varphi}(B_2)},
    \end{align*} where the last inequality follows from Lemma \ref{lem:sobolev_negative_support}\ref{it:negative_sobolev_a} with $m= k-(\nu+1)>0$, since by \eqref{eq:div_expression} we can write $\B (\rho u)$ in the form $\di^k(L(\rho u))$.
    Similarly, using that $\C^{\ast\dagger}$ is smooth on $\mathbb R^n \setminus\{0\}$ and homogeneous of order $-l$, we can estimate
    \begin{equation}
        \lVert \psi_2 \rVert_{\dot{\sobo}^{\nu+1,\varphi}(\mathbb R^n)} \leq c \lVert \C^{\ast}(\rho u) \rVert_{\sobo^{\nu-l+1,\varphi}(B_2)}.
    \end{equation}
    Therefore since $\rho u = \psi_1 + \psi_2$ is supported in $B_1$, by the Poincar\'e inequality \eqref{eq:orlicz_poincare_1} we have
    \begin{equation}
      \begin{split}
        \lVert \rho u \rVert_{\sobo^{\nu+1,\varphi}(B_2)} 
        \leq c\lVert \rho u \rVert_{\dot{\sobo}^{\nu+1,\varphi}(\mathbb R^n)},
      \end{split}
    \end{equation}
    so \eqref{eq:rhou_negative_estimate} follows by combining the above estimates.

    {\it Case $2$}: $\nu+1 \leq 0$. Let $v \in \CC^{\infty}_{\rmc}(B_2)$ and define $\mathcal K$ by setting $\hat{\mathcal K}(\xi) = \B^{\ast\dagger}(\xi)$. Since $\hat{\mathcal K}$ is smooth for $\xi \neq 0$ and $(-k)$-homogeneous by Lemma \ref{lem:dagger}, we can apply Lemma~\ref{lem:Riesz_domain} with $m=-(\nu+1)\geq 0$, which gives
    \begin{equation}
        \lVert \mathcal K \ast v \rVert_{\sobo^{k-(\nu+1),\varphi^{\ast}}(B_2)} \leq c \lVert v \rVert_{\sobo^{-(\nu+1),\varphi^{\ast}}(B_2)}.
    \end{equation}
    Combining this with Plancherel's theorem we can estimate
    \begin{align*}
        \brangle{\psi_1, v} =&\ \langle \widehat{\B (\rho u)}, \B^{\ast\dagger}(\xi) \hat v\rangle = \langle \B (\rho u), \mathcal K \ast v\rangle  \\
        \leq &\ c\normm{\B (\rho u)}_{\sobo^{\nu-k+1,\varphi}(B_2)} \normm{\mathcal K \ast v}_{\sobo^{k-(\nu+1),\varphi^{\ast}}(B_2)} \\
        \leq &\ c\normm{\B (\rho u)}_{\sobo^{\nu-k+1,\varphi}(B_2)} \normm{v}_{\sobo^{-(\nu+1),\varphi^{\ast}}(B_2)}.
        \end{align*} 
    Similarly by applying Lemma~\ref{lem:Riesz_domain} with $\hat{\mathcal K} = \C^{\dagger}$ we obtain the estimate

    \begin{equation}
        \lVert \psi_2 \rVert_{\sobo^{\nu+1,\varphi}(B_2)} \leq c \lVert \C^{\ast}(\rho u)\rVert_{\sobo^{\nu-l+1,\varphi}(B_2)},
    \end{equation}
    so using that $\rho u = \psi_1 + \psi_2$, the claimed estimate \eqref{eq:rhou_negative_estimate} follows.

  Finally we note that $\rho u = u$ on $B_t$ and combine the estimates \eqref{eq:orlicz_lower_mid1}, \eqref{eq:orlicz_lower_mid2} and \eqref{eq:rhou_negative_estimate} to deduce that
  \begin{equation}
  \begin{split}
      \lVert u \rVert_{{\sobo}^{\nu+1,\varphi}(B_t)} 
      &\leq c \lVert \rho u \rVert_{{\sobo}^{\nu+1,\varphi}(B_2)} \\
      &\leq c \left(\lVert \B u \rVert_{\sobo^{-1,\varphi}(B_s)}+\lVert \C^{\ast} u \rVert_{\sobo^{-m-1,\varphi}(B_s)}+\lVert u \rVert_{\sobo^{\nu,\varphi}(B_s)} \right)
      \end{split}
  \end{equation} with $c=c(n ,\nu,\B,\C, \Delta_2(\varphi), \nabla_2(\varphi), s-t)>0$,
  as required.
\end{proof}

\subsection{Endpoint estimates}\label{sec:endpoint_estimates}

We will combine Lemma~\ref{lem:negative_norm_estimate} with suitable endpoint estimates in the fractional and modular scales.
The modular estimate we will need is the following:

\begin{lemma}\label{lem:negative_measure}
    Let $\lambda >0$ and set $\Phi(t) = \frac1{\lambda} E(t)$. Then if $\mu \in \mathcal M(B_1)$, we have $\mu \in \sobo^{-1,\Phi^q}(B_1)$ for all $1 < q < \frac{n}{n-1}$ with the associated estimate
    \begin{equation}
        \lVert \mu \rVert_{\sobo^{-1,\Phi^q}(B_1)} \leq c(n,q) \lVert \mu \rVert_{\lebe^{\Phi}(B_1)}.
    \end{equation}
\end{lemma}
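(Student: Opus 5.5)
The plan is to reduce to a normalized measure, write it as a divergence of a Newtonian-type potential, and bound that potential separately on the part of the measure where $E$ is linear and the part where $E$ is quadratic.

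\emph{Normalization.} Set $\kappa=\lVert\mu\rVert_{\lebe^{\Phi}(B_1)}$, where the modular of a measure is understood as in \eqref{Emu}. If $\kappa=0$ there is nothing to prove. Otherwise put $\nu=\mu/\kappa$, so that
\[
\int_{B_1}E(\nu)\leq\lambda .
\]
It then suffices to find $G\in\lebe^{\Phi^q}(B_1,\R^n\otimes V)$ with $\di G=\nu$ in $\mathscr D'(B_1)$ and
\[
\int_{B_1}E(G)^q\,\dd x\leq C_0\Big(\int_{B_1}E(\nu)\Big)^q ,
\]
with $C_0=C_0(n,q)$ independent of $\lambda$. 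Indeed, take $s=C'^{-1}\leq1$ with $s^qC_0\leq1$. Convexity of $E$ and $E(0)=0$ give $E(sz)\leq sE(z)$, hence
\[
\int_{B_1}\Phi^q(sG)=\lambda^{-q}\int_{B_1}E(sG)^q\leq\lambda^{-q}s^qC_0\lambda^q\leq1 .
\]
So $\lVert\nu\rVert_{\sobo^{-1,\Phi^q}(B_1)}\leq\lVert G\rVert_{\lebe^{\Phi^q}(B_1)}\leq C'$, and scaling back by $\kappa$ gives the claim. Note that $\Phi^q\in\Delta_2\cap\nabla_2$ because $q>1$ and $E(t)\sim\min\{t,t^2\}$.

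\emph{Construction of $G$.} Extend $\nu$ by zero to $\R^n$ and set $G=\nabla\Gamma\ast\nu$, where $\Gamma$ is the fundamental solution of the Laplacian. Then $\di G=\nu$, and pointwise $\lvert G\rvert\leq c\,I_1\lvert\nu\rvert$ with kernel $\lvert x-y\rvert^{1-n}$. Split
\[
\nu=\nu_1+\nu_2,\qquad \nu_1=\nu^a\indi_{\{\lvert\nu^a\rvert\leq1\}},\qquad \nu_2=\nu^a\indi_{\{\lvert\nu^a\rvert>1\}}\mathscr L^n+\nu^s .
\]
By Proposition~\ref{prop:Eprop}\ref{it:Eest},
\[
a:=\lvert\nu_2\rvert(B_1)\lesssim\int_{B_1}E(\nu),\qquad b:=\int_{B_1}\lvert\nu_1\rvert^2\lesssim\int_{B_1}E(\nu).
\]
Let $G_i=\nabla\Gamma\ast\nu_i$. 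Using Proposition~\ref{prop:Eprop}\ref{it:Esum} together with the $\Delta_2$-property of $t\mapsto t^q$, it suffices to bound $\int_{B_1}E(G_i)^q$ for $i=1,2$ separately.

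\emph{The two estimates.} For $G_2$, use $E(t)\leq t$. The map $I_1$ takes finite measures into weak $\lebe^{n/(n-1)}$, which embeds into $\lebe^q(B_1)$ for $q<\frac n{n-1}$. Hence
\[
\int_{B_1}E(G_2)^q\leq\int_{B_1}\lvert G_2\rvert^q\lesssim a^q .
\]
For $G_1$, use $E(t)\leq t^2$ together with Cauchy--Schwarz against the measure $\lvert x-y\rvert^{1-n}\,\dd y$ on $B_1$, whose total mass is bounded uniformly in $x\in B_1$. This gives
\[
\lvert G_1(x)\rvert^2\lesssim I_1(\lvert\nu_1\rvert^2)(x),
\]
so $E(G_1)\lesssim I_1(\lvert\nu_1\rvert^2)$. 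Applying the same $\lebe^1\to\lebe^q(B_1)$ bound for $I_1$ yields
\[
\int_{B_1}E(G_1)^q\lesssim b^q .
\]
Adding the two bounds gives $\int_{B_1}E(G)^q\leq C_0(a+b)^q\lesssim\big(\int_{B_1}E(\nu)\big)^q$, which is what the normalization step requires.

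\emph{Main obstacle.} The step to get right is keeping the constant independent of $\lambda$ while passing between the modular and the Luxemburg norm. This works because both modulars are controlled by powers of $\int E(\nu)$ and because $E(sz)\leq sE(z)$ for $s\leq1$. The quadratic regime $\nu_1$ is where the Cauchy--Schwarz trick is needed, since $I_1$ alone only bounds $\lvert G_1\rvert$ linearly in $\lvert\nu_1\rvert$ while the modular of $\nu_1$ is quadratic.
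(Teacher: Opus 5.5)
Your proof is correct, but it differs from the paper's at both of its main steps.

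\emph{The modular estimate.} The paper proves $\bigl(\int_{B_1}E(I_1\mu)^q\bigr)^{1/q}\lesssim\int_{B_1}E(\mu)$ with a single application of Jensen's inequality for $E$ against the normalized kernel $\gamma_1(x)^{-1}\lvert x-\cdot\rvert^{1-n}\mathscr L^n\mres B_1$. This gives the pointwise bound $E(I_1f)\lesssim I_1(E(f))$, which is followed by the $\lebe^1\to\lebe^q(B_1)$ bound for $I_1$. General measures are then handled by mollification, area-strict convergence and lower semicontinuity. You instead split $\nu$ into its small absolutely continuous part and the remainder. On the remainder you use $E(t)\le t$ and the weak-type bound for $I_1$. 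On the small part you use $E(t)\le t^2$ plus Cauchy--Schwarz, which is Jensen for $t^2$ with the same kernel.

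\emph{Passing to $\sobo^{-1,\Phi^q}(B_1)$.} The paper argues by duality: it tests against $\psi\in\sobo^{1,\varphi_q^\ast}_0(B_1)$, uses $\lvert\psi\rvert\le I_1\lvert D\psi\rvert$, and invokes $\sobo^{1,\varphi_q^\ast}_0(B_1)^\ast\cong\sobo^{-1,\varphi_q}(B_1)$. You instead exhibit the explicit primitive $G=\nabla\Gamma\ast\nu$ and bound the norm directly from its definition as an infimum over representations $\sum\partial^\alpha v_\alpha$. This costs a harmless factor $n$ from summing over the components of $G$, which your inequality $\lVert\nu\rVert_{\sobo^{-1,\Phi^q}(B_1)}\le\lVert G\rVert_{\lebe^{\Phi^q}(B_1)}$ omits.

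\emph{What each route buys.} Yours is more elementary. It needs no dual identification, and since the singular part only enters through the linear estimate, no approximation of $\mu$ by absolutely continuous measures is needed. The paper's Jensen step is shorter and uses only convexity and the $\Delta_2$ property of $E$. It therefore does not depend on the two-regime structure $E\sim\min\{t,t^2\}$ that your splitting exploits.
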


Here the Luxemburg norm of $\mu$ is understood to be
\begin{equation}
    \lVert \mu \rVert_{\lebe^{\Phi}(B_1)} = \inf\bigg\{ s > 0 : \frac1{\lambda}\int_{B_1} E\bigg(\frac{\mu}{s}\bigg) \leq 1\bigg\},
\end{equation}
where we used \eqref{Emu}.

\begin{proof}
    Extending $\mu$ by zero, we view $\mu \in \mathcal M(\mathbb R^n)$ such that $\spt(\mu) \subset B_1$.
    We first claim for $1 < q < \frac{n}{n-1}$ that
    \begin{equation}\label{eq:riesz_luxemburg}
        \lVert I_1\mu \rVert_{\lebe^{\Phi^q}(B_1)} \leq c(n,q) \lVert \mu \rVert_{\lebe^{\Phi}(B_1)}.
    \end{equation}
    This will follow from the modular estimate
    \begin{equation}\label{eq:riesz_estimate}
        \left(\int_{B_1} E(I_1\mu)^q \,\mathrm{d} x\right)^{\frac1q} \leq c(n,q)\int_{B_1} E(\mu),
    \end{equation}
    applied to $\mu/s$ in place of $\mu$ for suitable $s>0$.
    
    To see this, consider first the case when $\mu = f \mathscr L^n$ with $f \in \lebe^1(B_1)$.
    Observe there is $c_1>1$ such that for each $x \in B_1$, we have  $c_1^{-1} \leq \gamma_1(x):=\int_{B_1} \lvert x - y \rvert^{-(n-1)} \,\mathrm{d}y \leq c_1.$
    Hence by Jensen's inequality applied with $\nu_x\coloneqq \gamma_1(x)^{-1}\lvert x - \cdot \rvert^{-(n-1)} \Le^n \mres B_1$, we obtain the pointwise estimate
    \begin{equation}
        \begin{split}
            E(I_1f(x)) 
            &\leq E\left( \gamma_1(x)^{-1}\int_{B_1} \lvert f(y) \rvert\lvert x - y \rvert^{-(n-1)}  \,\mathrm{d}y\right) \\
            &\leq c_1 \int_{B_1} E(f(y)) \lvert x - y \rvert^{-(n-1)} \,\mathrm{d}y = I_1(E(f))(x)
        \end{split}
    \end{equation}
    for all $x \in B_1$. Now applying \cite[Lemma~7.12]{GT01} gives
    \begin{equation}
        \lVert E(I_1(f)) \rVert_{\lebe^q(B_1)} \leq c_1 \lVert I_1(E(f)) \rVert_{\lebe^q(B_1)} \leq c(n,p) c_1 \lVert E(f) \rVert_{\lebe^q(B_1)},
    \end{equation}
    as required.
    For general $\mu$, by mollification we find $\mu_{\eps} = f_{\eps} \mathscr L^n$ such that $\mu_{\eps} \to \mu$ area-strictly (on a slightly larger ball), which also implies $I_1 \mu_{\varepsilon} \weaksto I_1 \mu$ in the sense of measures. Then we can obtain \eqref{eq:riesz_estimate} with the area-strict convergence and the lower semicontinuity of $\int E^q(\cdot)$.

    Now put $\varphi_q(t) = \Phi^q(t)$, which lies in $\Delta_2\cap\nabla_2$, and let $\psi \in \sobo^{1,\varphi_q^{\ast}}_0(B_1)$. Since $\lvert \psi(x) \rvert \leq I_1(\lvert D\psi \rvert)(x)$ for all $x \in B_1$, we have
    \begin{equation}
      \begin{split}
        \bigg\lvert\int_{B_1} \psi(x) \,\dd\mu(x) \bigg\rvert 
        &\leq \int_{B_1} I_1(\lvert D\psi\rvert) \,\dd \lvert\mu\rvert(x) \\
        &\leq \int_{B_1} I_1(\lvert\mu \rvert)(y) \lvert D \psi(y) \rvert \,\dd y\\
        &\leq \lVert I_1\lvert\mu\rvert \rVert_{\lebe^{\varphi_q}(B_1)} \lVert D\psi \rVert_{\lebe^{\varphi_q^{\ast}}(B_1)}\\
        &\leq c(n,q) \lVert \mu \rVert_{\lebe^{\Phi}(B_1)} \lVert \psi \rVert_{\sobo^{1,\varphi_q^{\ast}}(B_1)},
      \end{split}
    \end{equation}
    where we used \eqref{eq:riesz_luxemburg}, 
    noting that $\Delta_2(\varphi_q^{\ast})$ and $\nabla_2(\varphi_q^{\ast})$ depend on $q$ only.
    Hence we deduce that $\mu \in \sobo^{1,\varphi_q^{\ast}}_0(B_1)^{\ast} \cong \sobo^{-1,\varphi_q}(B_1)$ with the associated estimate
    \begin{equation}
        \lVert \mu \rVert_{\sobo^{-1,\Phi^q}(B_1)} \leq c(n,q) \lVert \mu \rVert_{\lebe^{\Phi}(B_1)},
    \end{equation}
    as required.
\end{proof}

In the fractional scales we will need the following:

\begin{proposition}\label{prop:endpoint_fractional}
  Let $1<p<\frac{n}{n-1}$ and let $u\in \sobo^{k-1,p}(B_1)$ be such that $\B u\in \mathcal M(B_1)$ and $\C^{\ast}u \in \sobo^{-m,p}(B_1)$. 
  Then for all $\sigma \in (0,1)$ such that $\frac{\sigma}n > 1-\frac1p$,
  we have $u \in \sobo^{k-\sigma,p}_{\mathrm{loc}}(B_{1})$ and for all $0<t<s<1$ the estimate
  \begin{equation}\label{eq:fractional_korn}
    \lVert u \rVert_{\sobo^{k-\sigma,p}(B_{t})} \leq c \left( \int_{B_s} \lvert \B u\rvert + \lVert \C^{\ast}u\rVert_{\sobo^{-m,p}(B_s)} + \lVert u \rVert_{\sobo^{k-1,p}(B_s)} \right)
  \end{equation} 
  holds, where $c = c(n,\sigma,p,\B,\C,s-t)>0$.
\end{proposition}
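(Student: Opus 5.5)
We mirror the proof of Lemma~\ref{lem:negative_norm_estimate}: localize with a cutoff, decompose in Fourier space via the Moore--Penrose inverses, and estimate the two pieces separately. The only new ingredient is that the $\B$-part is handled by a fractional endpoint embedding of measures instead of an Orlicz multiplier estimate.

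\textbf{Localization.} Fix $0<t<s<1$ and a cutoff $\rho\in\CC^\infty_{\rmc}(B_s)$ with $\mathbbm 1_{B_t}\le\rho\le \mathbbm 1_{B_s}$ and $\lvert D^j\rho\rvert\lesssim (s-t)^{-j}$. As in \eqref{eq:rhou_fourier_decomp}, write
\[
\rho u=\psi_1+\psi_2,\qquad \hat\psi_1=\B^\dagger(\xi)\widehat{\B(\rho u)},\qquad \hat\psi_2=\C^{\ast\dagger}(\xi)\widehat{\C^\ast(\rho u)}.
\]
By the Leibniz rule, $\B(\rho u)=\rho\B u+\sum_{j<k}(\text{derivatives of }\rho)\,D^ju$. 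The first term is a measure of mass at most $\int_{B_s}\lvert\B u\rvert$. The lower-order terms are $\lebe^p$ functions controlled by $\lVert u\rVert_{\sobo^{k-1,p}(B_s)}$, and on the compact support $\lebe^p\subset\lebe^1$ is also a measure. Hence $\B(\rho u)$ is a compactly supported measure with
\[
\lvert\B(\rho u)\rvert(\mathbb R^n)\lesssim \int_{B_s}\lvert\B u\rvert+\lVert u\rVert_{\sobo^{k-1,p}(B_s)}.
\]
Similarly, arguing as for \eqref{eq:orlicz_lower_mid2}, we get $\lVert\C^\ast(\rho u)\rVert_{\sobo^{-m,p}(B_2)}\lesssim\lVert\C^\ast u\rVert_{\sobo^{-m,p}(B_s)}+\lVert u\rVert_{\sobo^{k-1,p}(B_s)}$.

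\textbf{The $\C^\ast$-part.} Since $\C^{\ast\dagger}$ is $(-l)$-homogeneous and smooth away from $0$ by Lemma~\ref{lem:dagger}, and $l-m=k$, the same argument as Case 1 of the proof of Lemma~\ref{lem:negative_norm_estimate} (with $\varphi(t)=t^p$, via \eqref{eq:negative_sobolev}, \eqref{eq:orlicz_hm} and Lemma~\ref{lem:sobolev_negative_support}) gives $\lVert D^k\psi_2\rVert_{\lebe^p(\mathbb R^n)}\lesssim \lVert\C^\ast(\rho u)\rVert_{\sobo^{-m,p}(B_2)}$, which in particular controls $\psi_2$ in $\dot\sobo^{k-\sigma,p}$ locally. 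Lower-order norms of $\psi_2$ are handled by writing $\psi_2=\rho u-\psi_1$ and using Poincar\'e-type bounds, as before.

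\textbf{The $\B$-part: the main step.} Write $\psi_1=\mathcal K\ast\mu$ with $\mu=\B(\rho u)$ and $\hat{\mathcal K}=\B^\dagger$, which is homogeneous of degree $-k$. Then
\[
D^{k-1}\psi_1=\mathcal K'\ast\mu,
\]
where $\hat{\mathcal K'}$ is homogeneous of degree $-1$, so $\mathcal K'$ is a Calder\'on--Zygmund-type kernel of order one, comparable to $I_1$. We need $I_1\mu\in\sobo^{1-\sigma,p}$, or equivalently $I_\sigma\mu\in\lebe^p$ modulo a zero-order multiplier. Here the multiplier theorem fails at the $\lebe^1$ endpoint. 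To get around this, use the embedding $\mathcal M\hookrightarrow \sobo^{-\sigma,p}$ for compactly supported measures when $\sigma>n(1-1/p)$. This holds because $\lvert x\rvert^{\sigma-n}\in\lebe^p_{\locc}$ exactly when $p(n-\sigma)<n$, which is the stated condition. Concretely, $I_\sigma\mu=\lvert\cdot\rvert^{\sigma-n}\ast\mu$ lies in $\lebe^p(B_2)$ by Minkowski's integral inequality, with norm at most $c\,\lvert\mu\rvert(\mathbb R^n)$. Then $D^{k-1}\psi_1=T\,I_{1-\sigma}(I_\sigma\mu)$, where $T$ is a zero-order multiplier bounded on $\lebe^p$ by \eqref{eq:orlicz_hm}, so $[D^{k-1}\psi_1]_{\dot\sobo^{1-\sigma,p}}\lesssim\lVert I_\sigma\mu\rVert_{\lebe^p}$.

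The delicate points are the low frequencies and the tails, since $I_\sigma\mu$ is only locally in $\lebe^p$ globally. We avoid these by estimating only on $B_t$. Split $\mathcal K'=\chi\mathcal K'+(1-\chi)\mathcal K'$ with $\chi$ a cutoff near the origin. The far part is smooth, so it contributes a smooth function on $B_t$ bounded by the total mass of $\mu$. The near part is handled by the $I_\sigma$ argument above.

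\textbf{Conclusion.} The lower derivatives $D^j\psi_1$ for $j<k-1$ are bounded pointwise by $I_{k-j}\lvert\mu\rvert$, as in Lemma~\ref{lem:Riesz_domain}, and these lie in $\lebe^p(B_2)$ for the same reason. Summing the estimates for $\psi_1$ and $\psi_2$ and using $\rho u=u$ on $B_t$ yields \eqref{eq:fractional_korn}.
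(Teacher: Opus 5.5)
Your overall architecture is the paper's: the same cutoff, the same Moore--Penrose splitting $\rho u=\psi_1+\psi_2$, and your treatment of $\psi_2$ is essentially Lemma~\ref{lem:Cstar_term}. Your lower-order remarks are superfluous, since $\lVert u\rVert_{\sobo^{k-1,p}(B_t)}$ already appears on the right-hand side of \eqref{eq:fractional_korn}; only $[D^{k-1}u]_{\sobo^{1-\sigma,p}(B_t)}$ needs work.

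You differ from the paper on $\psi_1$. The paper uses the endpoint Besov bound \eqref{eq:endpoint_riesz} for $I_1\mu$, which holds for any finite measure, followed by a local embedding into $\sobo^{1-\sigma,p}$. You instead use the elementary fact that $\lvert x\rvert^{\sigma-n}\in\lebe^p_{\locc}$ iff $p(n-\sigma)<n$. That idea is sound and more elementary, but as written the key step does not close.

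\textbf{The gap.} The factorization $D^{k-1}\psi_1=T\,I_{1-\sigma}(I_\sigma\mu)$, with $T$ bounded on $\lebe^p$, gives a bound only if $I_\sigma\mu\in\lebe^p(\R^n)$ globally. You assert this fails, and propose splitting $\mathcal K'=\chi\mathcal K'+(1-\chi)\mathcal K'$, treating the near part ``by the $I_\sigma$ argument''. Truncating the kernel does not localize $I_\sigma\mu$. Indeed $\chi\mathcal K'\ast\mu=(I_{-\sigma}(\chi\mathcal K'))\ast I_\sigma\mu$, and $I_{-\sigma}$ is nonlocal. Equivalently, writing $\chi\mathcal K'=T'I_{1-\sigma}I_\sigma$ with $T'$ a Mikhlin multiplier again requires $I_\sigma\mu\in\lebe^p(\R^n)$. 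So the near-part step is unjustified.

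It is easily repaired in either of two ways.
\begin{itemize}
\item Your worry is unfounded for this particular $\mu$. Since $\mu=\B(\rho u)$ with $\rho u$ compactly supported and $k\geq1$, taking $\eta\equiv1$ near $\spt\rho$ gives $\mu(\R^n)=\langle\rho u,\B^\ast\eta\rangle=0$. Hence $\lvert I_\sigma\mu(x)\rvert\lesssim\lvert x\rvert^{\sigma-n-1}\lvert\mu\rvert(\R^n)$ for $\lvert x\rvert\geq2$, which is $p$-integrable at infinity. Thus $\lVert I_\sigma\mu\rVert_{\lebe^p(\R^n)}\lesssim\lvert\mu\rvert(\R^n)$ and no splitting is needed.
\item Alternatively, Minkowski's inequality gives $[\chi\mathcal K'\ast\mu]_{\sobo^{1-\sigma,p}(\R^n)}\leq\lvert\mu\rvert(\R^n)\,[\chi\mathcal K']_{\sobo^{1-\sigma,p}(\R^n)}$. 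The last seminorm is finite because $\chi\mathcal K'$ is compactly supported with $\lvert\chi\mathcal K'\rvert\lesssim\lvert x\rvert^{1-n}$ and $\lvert D(\chi\mathcal K')\rvert\lesssim\lvert x\rvert^{-n}$, and $p(n-\sigma)<n$.
\end{itemize}

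\textbf{A smaller point.} $I_{1-\sigma}$ maps $\lebe^p$ onto the Riesz-potential space $\dot{\mathrm H}^{1-\sigma,p}$. For $p<2$, which is your case since $p<\frac{n}{n-1}\leq2$, this space is strictly larger than the Gagliardo space $\dot\sobo^{1-\sigma,p}$. So the claim $[D^{k-1}\psi_1]_{\dot\sobo^{1-\sigma,p}}\lesssim\lVert I_\sigma\mu\rVert_{\lebe^p}$ is false as stated. On the multiplier route you must spend a little of the open hypothesis: take $\sigma'\in(n(1-\tfrac1p),\sigma)$ and embed $\dot{\mathrm H}^{1-\sigma',p}$ into $\sobo^{1-\sigma,p}$ locally, as the paper does with its Besov space. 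The Minkowski route avoids this issue altogether.
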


We will need some preparatory estimates.
Recall that we have the isomorphism 
\begin{equation}\label{eq:negative_sobolev2}
    I_m \colon \dot{\sobo}^{-m,p}(\mathbb R^n) \xrightarrow{\ \sim\ } \lebe^p(\mathbb R^n)
\end{equation}
as in \eqref{eq:negative_sobolev} with $\varphi(t)=t^p$, valid for all $m \in \mathbb N$ and $1<p<\infty$.
In the endpoint $p=1$ case, for all $\mu \in \mathcal M(\mathbb R^n)$ we have the replacement estimate
\begin{equation}\label{eq:endpoint_riesz}
      \lVert I_1\mu \rVert_{\dot{\operatorname{B}}^{1-\sigma,\frac{n}{n-\sigma}}_{\infty}(\mathbb R^n)} \lesssim \lvert \mu \rvert(\mathbb R^n)
\end{equation}
valid for all $\sigma \in (0,1)$; see \cite[Proposition~8.22]{VSC13} for an elementary proof.

\begin{lemma}\label{lem:Cstar_term}
  Let $1 < p < \infty$, and suppose $u \in \sobo^{k-1,p}(B_2)$ satisfies $\C^{\ast}u \in \sobo^{-m,p}(B_1)$, where $m = l - k>0$.
  Then given any $s \in (0,1)$ and $\rho \in \ccinfty(B_s)$, if we define $\psi$ as
  \begin{equation}
    \hat{\psi}(\xi) = \C^{\ast\dagger}(\xi) \widehat{\C^{\ast}(\rho u)},
  \end{equation} 
  then $\psi \in \sobo^{k,p}(\mathbb R^n)$ and satisfies the estimate
  \begin{equation}
    \lVert \psi \rVert_{\sobo^{k,p}(\mathbb R^n)} \leq c \lVert \rho \rVert_{\sobo^{l,\infty}(B_s)} \left( \lVert \C^{\ast}u\rVert_{\sobo^{-m,p}(B_1)} + \lVert u \rVert_{\sobo^{k-1,p}(B_s)} \right),
  \end{equation} 
  where $c=c(n,m,p,\B,\C)>0$.
\end{lemma}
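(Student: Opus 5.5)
The plan is to bound $\lVert D^k\psi\rVert_{\lebe^p}$ and $\lVert\psi\rVert_{\lebe^p}$ separately, using the multiplier machinery of Case~1 in the proof of Lemma~\ref{lem:negative_norm_estimate}, with the integer shift taken from $\C^{\ast}$ (order $l$) instead of $\B$.

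\emph{Step 1: control of $\C^{\ast}(\rho u)$.} Expand by Leibniz,
\begin{equation}
  \C^{\ast}(\rho u) = \rho\,\C^{\ast}u + \sum_{1\le|\beta|\le l} \partial^\beta\rho \; T_\beta(D^{l-|\beta|}u),
\end{equation}
where the $T_\beta$ are constant linear maps. I would split the right-hand side into two groups.
\begin{itemize}
  \item The term $\rho\,\C^{\ast}u$ lies in $\sobo^{-m,p}(B_1)$ with norm $\lesssim \lVert\rho\rVert_{\sobo^{m,\infty}}\lVert\C^{\ast}u\rVert_{\sobo^{-m,p}(B_1)}$. This is the duality argument: multiplication by $\rho$ is bounded on $\sobo^{m,p'}_0$.
  \item In each remaining term, $D^{l-|\beta|}u$ has order at most $l-1 = k-1+m$. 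Since $u\in\sobo^{k-1,p}(B_s)$, it lies in $\sobo^{-m,p}$ on $\spt\rho$, again with a bound by $\lVert\rho\rVert_{\sobo^{l,\infty}}\lVert u\rVert_{\sobo^{k-1,p}(B_s)}$. To see this, write $D^{j}u = D^{j-(k-1)}(D^{k-1}u)$ when $j\ge k-1$, which costs at most $m$ derivatives, and dualise, moving the derivatives onto $\partial^\beta\rho\, v$.
\end{itemize}
Altogether, $\C^{\ast}(\rho u)$ is compactly supported in $B_s$ and
\begin{equation}
  \lVert\C^{\ast}(\rho u)\rVert_{\sobo^{-m,p}(B_2)} \lesssim \lVert\rho\rVert_{\sobo^{l,\infty}}\big(\lVert\C^{\ast}u\rVert_{\sobo^{-m,p}(B_1)}+\lVert u\rVert_{\sobo^{k-1,p}(B_s)}\big).
\end{equation}
Since $\C^{\ast}(\rho u)=\di^l(L(\rho u))$ by \eqref{eq:div_expression}, Lemma~\ref{lem:sobolev_negative_support}\ref{it:negative_sobolev_a} upgrades this to the same bound in $\dot\sobo^{-m,p}(\mathbb R^n)$. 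That lemma needs the representation $\di^m U$ with $U$ supported in $B_1$; this follows from $\di^l = \di^m\di^{k}$, taking $U=\di^k L(\rho u)$.

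\emph{Step 2: top-order derivatives.} Write $\hat\psi = |\xi|^{-l}\,\C^{\ast\dagger}(\xi/|\xi|)\,\widehat{\C^{\ast}(\rho u)}$, which is legitimate by Lemma~\ref{lem:dagger}. By the isomorphism \eqref{eq:negative_sobolev} and Hörmander--Mikhlin \eqref{eq:orlicz_hm} with $\varphi(t)=t^p$,
\begin{equation}
  \lVert D^k\psi\rVert_{\lebe^p}\lesssim \lVert\C^{\ast}(\rho u)\rVert_{\dot\sobo^{k-l,p}} = \lVert\C^{\ast}(\rho u)\rVert_{\dot\sobo^{-m,p}},
\end{equation}
which Step~1 controls.

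\emph{Step 3: lower-order terms.} This is the main obstacle, since $\psi$ is not compactly supported and the homogeneous bound does not by itself control $\lVert\psi\rVert_{\lebe^p}$. My first attempt is to observe that $\C^{\ast\dagger}\C^{\ast}$ is the orthogonal projection onto $\ima\C(\xi)$, a bounded multiplier homogeneous of degree $0$. Hence $\psi = \mathscr F^{-1}(\C^{\ast\dagger}\C^{\ast}(\xi)\widehat{\rho u})$, giving
\begin{equation}
  \lVert D^j\psi\rVert_{\lebe^p}\lesssim\lVert D^j(\rho u)\rVert_{\lebe^p}\lesssim \lVert\rho\rVert_{\sobo^{k-1,\infty}}\lVert u\rVert_{\sobo^{k-1,p}(B_s)}\quad\text{for }0\le j\le k-1,
\end{equation}
since $\rho u\in\sobo^{k-1,p}(\mathbb R^n)$. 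Combined with Step~2, this yields $\psi\in\sobo^{k,p}(\mathbb R^n)$ with the asserted estimate. If some subtlety arises in applying the degree-zero multiplier to $\rho u$ in $\lebe^p$, I would fall back to interpolating between $\lVert D^k\psi\rVert_{\lebe^p}$ and $\lVert\psi\rVert_{\lebe^p}$ to reach the intermediate derivatives.
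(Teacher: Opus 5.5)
Your proof is correct. Steps~1--2 are exactly the paper's treatment of the top-order term: the Leibniz/duality bound on $\lVert\C^{\ast}(\rho u)\rVert_{\sobo^{-m,p}(B_2)}$, the passage to $\dot\sobo^{-m,p}(\mathbb R^n)$ via Lemma~\ref{lem:sobolev_negative_support}\ref{it:negative_sobolev_a} using $\di^l=\di^m\di^k$, and H\"ormander--Mikhlin combined with \eqref{eq:negative_sobolev}.

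The difference is in the lower-order terms $0\le j\le k-1$. The paper handles every $j$ by the same mechanism:
\begin{itemize}
\item it writes $\widehat{D^j\psi}=\hat K_j(\xi)\lvert\xi\rvert^{j-l}\widehat{\C^{\ast}(\rho u)}$;
\item it bounds $\lVert D^j\psi\rVert_{\lebe^p}$ by $\lVert\C^{\ast}(\rho u)\rVert_{\dot\sobo^{-(m+k-j),p}(\mathbb R^n)}$;
\item it applies Lemma~\ref{lem:sobolev_negative_support}\ref{it:negative_sobolev_a} again with exponent $m+k-j\le l$;
\item it estimates the local negative norm by moving $m+k-j\ge m+1$ derivatives onto the test function.
\end{itemize}
You instead use that $\C^{\ast\dagger}(\xi)\C^{\ast}(\xi)=\proj_{\ima\C(\xi)}$ is smooth away from the origin and $0$-homogeneous (Lemma~\ref{lem:dagger}). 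Hence $\psi$ is simply the $\ima\C$-component of $\rho u$, and $\lVert D^j\psi\rVert_{\lebe^p}\lesssim\lVert D^j(\rho u)\rVert_{\lebe^p}$ is immediate.

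Your route is shorter and avoids negative norms for these terms. It also shows transparently that the hypothesis on $\C^{\ast}u$ is needed only for $D^k\psi$, with the lower-order terms costing only $\lVert\rho\rVert_{\sobo^{k-1,\infty}}\lVert u\rVert_{\sobo^{k-1,p}(B_s)}$. The paper's route gains only a uniform treatment of all $j$.

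Your interpolation fallback is unnecessary. The multiplier step is legitimate as it stands: $\rho u\in\sobo^{k-1,p}(\mathbb R^n)$ has compact support, so $\widehat{\rho u}$ is smooth and $\hat\psi=\proj_{\ima\C(\xi)}\widehat{\rho u}$ holds pointwise for $\xi\neq0$.
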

\begin{proof}
  Since by Lemma~\ref{lem:dagger} we have $\C^{\ast\dagger}(\xi)$ is homogeneous of degree $-l = -k-m$, we can write
  \begin{equation}
    \widehat{D^j\psi}(\xi) = \hat{K}_j(\xi) \lvert \xi\rvert^{j-l} \widehat{\C^{\ast}(\rho u)}, \quad\mbox{where } \hat{K}_j(\xi)= \C^{\ast\dagger}\left( \frac{\xi}{\lvert \xi\rvert} \right) \otimes \left( \frac{\xi}{\lvert \xi\rvert} \right)^{\otimes j} 
  \end{equation} 
  for each $0\leq j \leq k$ and all $\xi \neq 0$.
  Using the H\"ormander--Mikhlin multiplier theorem and \eqref{eq:negative_sobolev}, we can estimate
  \begin{equation}
    \lVert D^j\psi\rVert_{\lebe^p(\mathbb R^n)} \lesssim \lVert I_{m+(k-j)} \C^{\ast}(\rho u)\rVert_{\lebe^p(\mathbb R^n)} \lesssim \lVert \C^{\ast}(\rho u)\rVert_{\dot{\sobo}^{-m-(k-j),p}(\mathbb R^n)}.
  \end{equation} 
  Furthermore, since $0<m+k-j \leq l$ and since we can write $\C^{\ast}$ in the form $\di^lL$ by \eqref{eq:div_expression}, by Lemma~\ref{lem:sobolev_negative_support}\ref{it:negative_sobolev_a} we have
  \begin{equation}
    \lVert \C^{\ast}(\rho u)\rVert_{\dot{\sobo}^{-m-(k-j),p}(\mathbb R^n)} \lesssim \lVert \C^{\ast}(\rho u)\rVert_{{\sobo}^{-m-(k-j),p}(B_2)}.
  \end{equation} 
  We will first consider the case $j = k$. Given any $\phi \in \sobo_0^{m,p^{\prime}}(B_2)$ we have
  \begin{equation}
    \begin{split}
      \langle \C^{\ast}(\rho u),\phi \rangle  
      &= \langle \C^{\ast}u, \rho\;\!\phi\rangle + \sum_{\lvert \alpha\rvert = l} \sum_{0 \neq \beta \leq \alpha} \langle \C_{\alpha}^{\ast}\,\partial^{\beta}\!\rho\,\partial^{\alpha-\beta}u, \phi \rangle \\
      &\leq \lVert \C^{\ast}u \rVert_{\sobo^{-m,p}(B_1)} \lVert \rho\;\!\phi \rVert_{\sobo^{m,p^{\prime}}(B_s)} \\
      &\quad+ c \lVert u \rVert_{\sobo^{k-1,p}(B_s)} \lVert \rho \rVert_{\sobo^{l,\infty}(B_s)}\lVert \phi \rVert_{\sobo^{m,p}(B_s)},
    \end{split}
  \end{equation} 
  where for the commutator terms we integrate by parts to move (at most) $m$ derivatives on $u$ to the test function $\phi$.
  Thus, we infer that
  \begin{equation}
    \lVert \C^{\ast}(\rho u) \rVert_{\sobo^{-m,p}(B_2)} \leq c(n,m,\C) \lVert \rho \rVert_{\sobo^{l,\infty}(B_s)} \left(\lVert \C^{\ast}u \rVert_{\sobo^{-m,p}(B_1)} +  \lVert u \rVert_{\sobo^{k-1,p}(B_s)}\right),
  \end{equation} 
  so by combining the above estimates we obtain
  \begin{equation}\label{eq:Cstar_term_est1}
    \lVert D^k\psi \rVert_{\lebe^p(\mathbb R^n)} \lesssim  \lVert \rho \rVert_{\sobo^{l,\infty}(B_s)} \left(\lVert \C^{\ast}u \rVert_{\sobo^{-m,p}(B_2)} +  \lVert u \rVert_{\sobo^{k-1,p}(B_s)}\right).
  \end{equation} 
  For $0 \leq j < k$, given $\phi \in \sobo^{m+(k-j),p^{\prime}}(B_2)$ we can more simply estimate
  \begin{equation}
    \langle \C^{\ast}(\rho u), \phi \rangle \leq \lVert \rho u \rVert_{\sobo^{k-1,p}(B_s)} \lVert \phi \rVert_{\sobo^{m+1,p^{\prime}}(B_2)},
  \end{equation} 
  thereby establishing that
  \begin{equation}
    \lVert D^j\psi \rVert_{\lebe^p(\mathbb R^n)} \lesssim \lVert \C^{\ast}(\rho u)\rVert_{{\sobo}^{-m-(k-j),p}(B_2)} \lesssim \lVert \rho \rVert_{\sobo^{l,\infty}(B_s)}  \lVert u \rVert_{\sobo^{k-1,p}(B_s)},
  \end{equation} which with \eqref{eq:Cstar_term_est1} gives the desired estimate.
\end{proof}

\begin{proof}[Proof of Proposition {\ref{prop:endpoint_fractional}}]
  Given $0 < t < s <1$, we choose $\rho \in \ccinfty(B_1)$ satisfying $\mathbbm{1}_{B_t} \leq \rho \leq \mathbbm{1}_{B_s}$ and $\lvert \nabla^j\rho\rvert \leq c_j(s-t)^{-j}$ for all $j=1,\cdots,l$.
  Then we can write
  \begin{equation}
      \widehat{\rho u} =  \B^{\dagger}(\xi)\B(\xi)\widehat{\rho u} + \C^{\ast\dagger}(\xi)\C^{\ast}(\xi)\widehat{\rho u} =: \widehat\psi_1 + \widehat\psi_2,
  \end{equation}
  and in particular
  \begin{equation}
    \widehat{{D}^{k-1}(\rho u)} = (\B^{\dagger}(\xi) \otimes \xi^{\otimes (k-1)}) \widehat{\B(\rho u)} + (\C^{\ast\dagger}(\xi) \otimes \xi^{\otimes (k-1)}) \widehat{\C^{\ast}(\rho u)}.
  \end{equation}
  By Lemma~\ref{lem:Cstar_term}, we can estimate the 
  second term as
  \begin{equation}
    \normm{\psi_2}_{\sobo^{k-\sigma,p}(B_s)} \leq \lVert \psi_2 \rVert_{\sobo^{k,p}(B_s)} \leq \frac{c}{(s-t)^{l}} \left( \lVert \C^{\ast}u\rVert_{\sobo^{-m,p}(B_1)} + \lVert u \rVert_{\sobo^{k-1,p}(B_s)} \right).
  \end{equation} 
  For the first term, since Lemma~\ref{lem:dagger} implies that $\B^{\dagger}$ is homogeneous of degree $-k$, we can write
  \begin{equation}
    D^{k-1}\psi_1 = B \ast I_1 (\B(\rho u)), \quad\mbox{where } \widehat B(\xi) = \B^{\dagger}\left( \frac{\xi}{\lvert\xi\rvert} \right)  \otimes \left( \frac{\xi}{\lvert \xi\rvert} \right)^{\otimes (k-1)}. 
  \end{equation} 
  Since $\hat{B}$ is homogeneous of degree zero and smooth in $\R^n \setminus \{0\}$, by multiplier estimates (see \cite[\S 2.3.7]{Triebel73}) combined with the endpoint estimate \eqref{eq:endpoint_riesz} we have
  \begin{equation}
      \lVert D^{k-1}\psi_1\rVert_{\dot{\besov}^{1-\sigma_0,p}_{\infty}(\mathbb R^n)} \lesssim \lVert I_1 (\B(\rho u))\rVert_{\dot{\besov}^{1-\sigma_0,p}_{\infty}(\mathbb R^n)} \lesssim \int_{B_1} \lvert \B (\rho u)\rvert,
  \end{equation}
  where $\sigma_0 = \frac np(p-1)$.
  Hence for $\sigma < \sigma_0$ (which is equivalent to $p < \frac{n}{n-\sigma}$), using the local embedding $\dot{\besov}^{1-\sigma_0,p}_{\infty,\mathrm{loc}} \hookrightarrow \dot{\sobo}^{1-\sigma,p}_{\mathrm{loc}}$ from \cite[\S 3.3.1]{Triebel73} we have
  \begin{equation}
    [D^{k-1}\psi_1 ]_{\sobo^{1-\sigma,p}(B_1)} \leq c\int_{B_1} \lvert \B(\rho u)\rvert.
  \end{equation} 
  Therefore combining the above estimates and noting that $\rho u = u$ on $B_t$, we have
  \begin{equation}
    \begin{split}
    &\ [D^{k-1}u ]_{\sobo^{1-\sigma,p}(B_t)} \leq [D^{k-1}(\rho u) ]_{\sobo^{1-\sigma,p}(B_s)}\\
    \lesssim&\ [D^{k-1}\psi_1 ]_{\sobo^{1-\sigma,p}(B_s)} + \lVert \psi_2 \rVert_{\sobo^{k,p}(B_s)} \\
    \lesssim &\ \int_{B_1} \lvert \B(\rho u)\rvert + \frac{c}{(s-t)^{l}} \left( \lVert \C^{\ast}u\rVert_{\sobo^{-m,p}(B_1)} + \lVert u \rVert_{\sobo^{k-1,p}(B_s)} \right).
  \end{split}
  \end{equation} 
  Since $\rho u $ vanishes on the boundary of $B_t$, we can estimate the lower order terms as
  \begin{equation}
    \lVert u \rVert_{\sobo^{k-1,p}(B_t)} \leq \lVert \rho u \rVert_{\sobo^{k-1,p}(B_s)} \lesssim [ D^{k-1} (\rho u) ]_{\sobo^{1-\sigma,p}(B_s)},
  \end{equation} 
  from which \eqref{eq:fractional_korn} follows.
\end{proof}

\subsection{A general solvability result}

\begin{theorem}\label{thm:pu_replacement}
    Given any real constant coefficient differential operator $\mathscr P$ on $\mathbb R^n$ from $\mathbb R^{\pp}$ to $\mathbb R^{\qq}$, there exists $N \in \mathbb N$  such that the following holds: for any $s \in \mathbb R$, $r \in (0,1)$ and $u \in \mathscr{D}'(B_1,\mathbb R^{\pp})$ such that $\mathscr P u \in \sobo^{s,2}(B_1,\mathbb R^{\qq})$, there exists $v \in \sobo^{s-N,2}(B_{r},\mathbb R^{\pp})$ such that $\mathscr P v = \mathscr P u$ in $B_1$ and the estimate
    \begin{equation}
        \lVert v \rVert_{\sobo^{s-N,2}(B_{r})} \leq c \rVert \mathscr P u \rVert_{\sobo^{s,2}(B_1)}
    \end{equation}
    holds, where $c = c(n,\mathscr P, s,r)>0$.
\end{theorem}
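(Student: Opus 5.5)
We plan to follow Hörmander's qualitative solvability theory for constant coefficient systems on convex domains, made quantitative in the $\sobo^{s,2}$-scale. The point is that the range of $\mathscr P$ on distributions over a convex open set is characterized by compatibility conditions. Let $\mathscr Q$ be a differential operator generating the syzygies, so that $\ker\mathscr Q(\xi)$ equals the module-theoretic image of $\mathscr P(\xi)$ over polynomials; this exists by Hilbert's syzygy theorem. Then $f\in\mathscr D'(B_1)$ is of the form $\mathscr P w$ iff $\mathscr Q f=0$ (Ehrenpreis–Malgrange–Palamodov fundamental principle). Our data $f:=\mathscr P u$ automatically satisfies $\mathscr Q f=0$. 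So the task is to produce a solution $v$ with a \emph{quantitative} loss of $N$ derivatives. We will not attempt optimal $N$; the theorem only asks for some $N$ depending on $\mathscr P$.

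The key steps are as follows.

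\emph{Step 1 (closed range).} Consider the Banach spaces
\[
X=\{f\in\sobo^{s,2}(B_1,\mathbb R^{\qq}) : \mathscr Q f=0\}
\]
and $Y=\sobo^{s-N,2}(B_r,\mathbb R^{\pp})$. We will show that for each $f\in X$ there is $v\in Y$ with $\mathscr P v=f$ on $B_r$. (The statement's "$\mathscr P v=\mathscr P u$ in $B_1$" should be read with $v$ living on $B_r$; alternatively, solve on an intermediate ball and restrict.) Existence with some finite loss $N$ comes from Hörmander's $\lebe^2$ theory. In \cite{Hormander2000} (Theorem 7.6.13 and the surrounding results on $\mathscr P$-convexity for systems) one has weighted $\lebe^2$ estimates: $\mathscr P w=f$ is solvable with $w\in\lebe^2_{\mathrm{loc}}$ whenever $f\in\lebe^2_{\mathrm{loc}}$ is compatible on a convex set. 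This holds after shifting regularity via $(1-\Delta)^{\pm}$ type operators. To handle arbitrary $s$ uniformly, we first reduce to $s$ very negative or to $\lebe^2$: write $f$ restricted to a slightly smaller ball as $(1-\Delta)^{M}g$ with $g\in\lebe^2$ after cutting off. Note that the cutoff destroys $\mathscr Q f=0$, so instead we apply the cutoff to $u$ itself. Concretely, take $\eta$ equal to $1$ on $B_{r'}$, $r<r'<1$; then $\mathscr P(\eta u)=f$ on $B_{r'}$. The difficulty is that $u$ has no a priori regularity, so this is done only at the level of the compatible data.

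\emph{Step 2 (quantification via open mapping).} Define the map
\[
T\colon \{w\in\sobo^{s-N,2}(B_r):\mathscr P w\in\sobo^{s,2}(B_r)\}\to X_r, \qquad T w=\mathscr P w,
\]
where $X_r$ denotes the restriction of $X$ to $B_r$, with the graph norm on the domain. Step 1 gives surjectivity onto the image of the restriction from $X$. The restriction map $X\to X_r$ is continuous. Apply the open mapping theorem to the quotient of the domain by $\ker T$, or apply it directly to the composite $X\ni f\mapsto$ (class of solutions). This yields a bounded (and, via the quotient, choosable) right inverse with
\[
\|v\|_{\sobo^{s-N,2}(B_r)}\le c\|f\|_{\sobo^{s,2}(B_1)}.
\]
Linearity of $u\mapsto v$ (used later for $\pi$) can be arranged by using the Hilbert structure: pick the minimal-norm solution, i.e.\ project orthogonally onto $(\ker T)^\perp$ in the Hilbert space $\sobo^{s-N,2}$.

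\emph{Step 3 (uniformity in $N$ across $s$).} We need a single $N$ for all $s$. This follows from the translation/Bessel-potential structure: $N$ is determined by the loss in Hörmander's estimate for $\mathscr P$, which is independent of $s$ once one commutes with $(1-\Delta)^{s/2}$ away from the boundary. Only the constant depends on $s$ and $r$.

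The main obstacle is Step 1: obtaining existence in a Sobolev space of \emph{finite} order loss, with $N$ independent of $s$, on the smaller ball, for general non-elliptic systems. We would first try to cite Hörmander's theorem that $\mathscr P\colon \mathscr D'_F(\Omega)\to\{\mathscr Q f=0\}$ is surjective for convex $\Omega$ in the finite-order distribution spaces. We then run a Baire-category argument on the countable union over $N$ of the images of $\sobo^{s-N,2}(B_r)$ to extract one $N$ for which the image is of second category, hence everything. The uniformity in $s$ would come from applying this at $s=0$ and transferring by mollification-commutator estimates.
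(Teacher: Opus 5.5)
Your proposal has a genuine gap: it does not produce a single $N$ valid for all $s$ and $r$, which is what the theorem asserts.

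\textbf{What your argument does prove.} The fundamental principle makes $X=\{f\in\sobo^{s,2}(B_1):\mathscr Qf=0\}$ a closed subspace equal to $\mathscr P(\mathscr D'(B_1))\cap\sobo^{s,2}(B_1)$. Every distribution on $B_1$ restricts to an element of some $\sobo^{-M,2}(B_r)$, because $\bar B_r$ is compact in $B_1$. Hence $X=\bigcup_N X_N$, where $X_N$ is the range of $(f,w)\mapsto f$ on the closed subspace $\{(f,w):\mathscr Pw=f\text{ in }B_r\}$ of $X\times\sobo^{s-N,2}(B_r)$. Baire category and the open mapping theorem then give one $N$ with $X_N=X$ together with the estimate, and the minimal-norm selection gives linearity. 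Your Step 1 regularity claims are not needed for this covering. They are also inaccurate: Hörmander's $\lebe^2$ theory for systems does not give $\lebe^2_{\mathrm{loc}}$ solutions for $\lebe^2_{\mathrm{loc}}$ compatible data, since the division theorem carries a loss $(1+|\zeta|^2)^{-N}$. The problem is that this $N$ depends on the Banach space $X$, hence on $s$, and on $r$, and Baire category gives no relation between different $s$ or $r$. So you prove only a weaker statement with $N=N(s,r)$. That version would suffice for the paper's uses, where $s=-n$ and $r=\theta_1$ are fixed and constants may depend on $\theta$, but it is not the stated theorem.

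\textbf{Why Step 3 does not supply the uniformity.} The solution operator from the open mapping theorem is neither local nor translation-invariant. So it does not commute with $\partial^\alpha$, mollification, or $(1-\Delta)^{s/2}$. Moreover, $(1-\Delta)^{s/2}f$ is not even defined from $f|_{B_1}$ without extending $f$, and any extension or cut-off destroys $\mathscr Qf=0$, as you yourself observe. For $s>0$ the solution would have to gain $s$ derivatives over the $s=0$ case, which no soft argument provides. For $s<0$ you cannot write compatible $\sobo^{s,2}$ data as derivatives of compatible $\lebe^2$ data. The same difficulty arises as $r\to 1$.

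\textbf{The missing idea.} Put the $s$- and $r$-dependence into a weight rather than into the function space. The paper dualizes: it bounds $\langle\mathscr Pu,w\rangle$ for $w\in\ccinfty(B_r)$ by $\lVert\mathscr P^*w\rVert_{\sobo^{-s+N,2}}$. To do so it replaces $w$ by some $w_1$ with $\mathscr P^*w_1=\mathscr P^*w$ and $\spt w_1\subset\bar B_{(1+r)/2}$, so that $\langle\mathscr Pu,w-w_1\rangle=0$. The function $w_1$ comes from Hörmander's weighted $\lebe^2$ division theorem for $\mathscr P^*(\zeta)$. The weight is Lipschitz and plurisubharmonic, and encodes both the support, through $r\lvert\Ima\zeta\rvert$ and Paley--Wiener in both directions, and the Sobolev order, through the $\log(1+\lvert\Rea\zeta\rvert^2)^{1/2}$ term. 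Only the constants depend on $s$ and $r$. The exponent $N$ is the one from the division theorem and depends on $\mathscr P$ alone, so uniformity is automatic. Riesz representation then produces $v$, and neither syzygies nor the fundamental principle on $\mathscr D'$ are used.
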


This largely follows from the results in \cite[\S 7.6]{Hormander2000} and \cite[\S 15.2]{Hormander2}, where the key ingredient is a quantitative version of a solvability result due to Ehrenpreis \cite{Ehrenpreis1960}, Malgrange \cite{Malgrange1962}, and Palamadov \cite{Palamodov1970}.
In what follows, the space of entire (analytic) functions $u \colon \mathbb C^n \to \mathbb C^{\qq}$ will be denoted by $\mathrm{A}(\mathbb C^n,\mathbb C^{\qq})$.

\begin{lemma}[{\cite[Theorem~7.6.11]{Hormander2000}}]\label{lem:algebraic_solvability}
    Let $P \in \mathbb C[\zeta]^{\qq \times \pp}$ be a matrix of polynomials in $\mathbb C^n$, then there exists $N \in \mathbb N$ such that the following holds: 
    Let $\phi \colon \mathbb C^n \to \mathbb R$ be globally Lipschitz and plurisubharmonic in that
    \begin{align}
        \label{eq:phi_lipschitz}\lvert D\phi(\zeta) \rvert \leq L \quad&\mbox{for all } \zeta \in \mathbb C^n, \\
        \label{eq:phi_pluri}\sum_{i,j=1}^n w_i \overline{w}_j \partial_{\zeta_i}\partial_{\overline{\zeta}_j}\phi(\zeta) \geq 0 \quad&\mbox{for all } \zeta, w \in \mathbb C^n,
    \end{align}
    Then given any $U \in \mathrm{A}(\mathbb C^n,\mathbb C^{\pp})$, there exists $V \in \mathrm{A}(\mathbb C^n,\mathbb C^{\pp})$ such that
    \begin{equation}
        P(\zeta) V(\zeta) = P(\zeta) U(\zeta) \quad\mbox{for all $\zeta \in \mathbb C^n$},
    \end{equation}
    and we have the estimate
    \begin{equation}
        \int_{\mathbb C^n} \lvert V(\zeta) \rvert^2 \mathrm{e}^{-2\phi(\zeta)}(1+\lvert \zeta\rvert^2)^{-N} \,\dd\zeta \leq c(P,L) \int_{\mathbb C^n} \lvert P(\zeta)U(\zeta) \rvert^2 \mathrm{e}^{-2\phi(\zeta)} \,\dd\zeta
    \end{equation}
    holds, provided the right-hand side is finite.
\end{lemma}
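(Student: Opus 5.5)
We would follow H\"ormander's $\lebe^2$ approach to the fundamental principle: construct local holomorphic solutions with polynomially controlled bounds, then glue them by solving $\bar\partial$-equations with plurisubharmonic weights. The key observation is that the global Lipschitz bound \eqref{eq:phi_lipschitz} makes $\mathrm{e}^{-2\phi}$ essentially constant on balls of unit radius. Indeed, $\mathrm{e}^{-2\phi(\zeta)}\sim_L \mathrm{e}^{-2\phi(\zeta_0)}$ for $\lvert\zeta-\zeta_0\rvert\leq 2$. So every local estimate on $B_2(\zeta_0)\subset\mathbb C^n$ transfers to the weighted global norm at the cost of a constant depending on $L$ only.

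\emph{Step 1 (algebraic preparation).} Let $M=P\,\mathbb C[\zeta]^{\pp}\subset\mathbb C[\zeta]^{\qq}$. By Hilbert's syzygy theorem we fix a finite free resolution
\begin{equation}
0\to\mathbb C[\zeta]^{\pp_{\mu}}\xrightarrow{P_{\mu}}\cdots\xrightarrow{P_1}\mathbb C[\zeta]^{\pp_0}=\mathbb C[\zeta]^{\pp}\xrightarrow{P}\mathbb C[\zeta]^{\qq}.
\end{equation}
Flatness of $\mathcal O$ over polynomials implies that this sequence remains exact for germs of holomorphic functions. The integer $N$ will be the accumulated polynomial loss along this resolution and depends only on $P$.

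\emph{Step 2 (local division with polynomial bounds).} This is the main obstacle. For every $\zeta_0\in\mathbb C^n$ and every $F$ holomorphic on $B_2(\zeta_0)$ lying in the image of $P$ (and likewise for each $P_j$), we need $G$ holomorphic on $B_1(\zeta_0)$ with $P_jG=F$ and
\begin{equation}
\sup_{B_1(\zeta_0)}\lvert G\rvert\leq c(1+\lvert\zeta_0\rvert)^{N_0}\sup_{B_2(\zeta_0)}\lvert F\rvert .
\end{equation}
We would first try to reduce, via the translation $\zeta\mapsto\zeta_0+\zeta$, to a family of matrices $P(\zeta_0+\cdot)$ whose coefficients are polynomial in $\zeta_0$. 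After normalizing by a suitable power of $1+\lvert\zeta_0\rvert$, the coefficients range over a compact set of polynomial matrices of bounded degree. H\"ormander's closedness and division theorem with bounds (Oka--Cartan with uniform constants, \cite[\S7.6]{Hormander2000}) then yields constants that are uniform over this compact family, and undoing the normalization produces the factor $(1+\lvert\zeta_0\rvert)^{N_0}$. Controlling how the constants depend on the coefficients is the delicate point; this is where compactness of the normalized family has to be exploited carefully.

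\emph{Step 3 (gluing).} Cover $\mathbb C^n$ by balls $B_1(\zeta_\iota)$ with bounded overlap, take a subordinate partition of unity $\chi_\iota$ with $\lvert\bar\partial\chi_\iota\rvert\lesssim1$, and set $F=PU$. Step 2 provides local solutions $G_\iota$ of $PG_\iota=F$ on $B_1(\zeta_\iota)$. On overlaps we have $P(G_\iota-G_\kappa)=0$, so by Step 2 applied to $P_1$ we can write $G_\iota-G_\kappa=P_1H_{\iota\kappa}$ locally, with polynomial bounds. The candidate $\tilde V=\sum_\iota\chi_\iota G_\iota$ satisfies $P\tilde V=F$, but $\bar\partial\tilde V=\sum\bar\partial\chi_\iota\,(G_\iota-G_{\kappa})$ is a $\bar\partial$-closed form valued in $P_1(\cdot)$. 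We then correct $\tilde V$ by $P_1W$, which requires solving $\bar\partial(P_1W)=\bar\partial\tilde V$. This in turn leads to a Čech/$\bar\partial$ descent down the resolution of Step 1, which terminates after $\mu$ steps because the resolution is finite.

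At each stage we solve $\bar\partial w=g$ with H\"ormander's weighted $\lebe^2$ estimate \cite[Theorem~4.4.2]{Hormander1} for the plurisubharmonic weight $2\phi+ N'\log(1+\lvert\zeta\rvert^2)$, which is plurisubharmonic by \eqref{eq:phi_pluri}:
\begin{equation}
\int\lvert w\rvert^2\mathrm{e}^{-2\phi}(1+\lvert\zeta\rvert^2)^{-N'-2}\leq\int\lvert g\rvert^2\mathrm{e}^{-2\phi}(1+\lvert\zeta\rvert^2)^{-N'}.
\end{equation}
Local sup-norms are converted to weighted $\lebe^2$ norms by the mean value property, using the Lipschitz comparability of $\phi$ on unit balls. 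Each stage loses a fixed power of $1+\lvert\zeta\rvert^2$, and summing these losses gives $N$ depending only on $P$. The final $V=\tilde V-P_1W$ is entire, satisfies $PV=PU$, and obeys the stated estimate with $c=c(P,L)$.
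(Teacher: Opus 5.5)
The paper does not prove this lemma; it quotes H\"ormander's Theorem~7.6.11 verbatim. Your outline follows the architecture behind that theorem: a finite free resolution, local division with polynomial bounds, and a \v{C}ech--$\bar\partial$ descent with the plurisubharmonic weights $2\phi+N'\log(1+\lvert\zeta\rvert^2)$, with the Lipschitz bound used to freeze $\mathrm{e}^{-2\phi}$ on unit balls. Steps~1 and~3 are sound in outline, up to bookkeeping. Step~2 turns data on $B_2(\zeta_0)$ into solutions on $B_1(\zeta_0)$, but the overlaps $B_1(\zeta_\iota)\cap B_1(\zeta_\kappa)$ are not of this form, and each level of the descent shrinks the domain again. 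So you need a finite nested family of coverings with comparable radii. The genuine gap is Step~2, which carries essentially all the content: the compactness argument you propose for it does not work.

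The optimal division constant is not upper semicontinuous in the matrix. At limit points the image module can drop, and the constants of nearby matrices blow up. For example, take $Q_t=(w_1,\ t\,w_2)$ with $t\in(0,1]$. The function $w_2$ lies in the image of $Q_t$, yet any holomorphic $G$ with $Q_tG=w_2$ near $0$ satisfies $g_2(0,w_2)=1/t$.

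The same thing happens along translates of a single $P$. Take $P(\zeta)=(\zeta_1,\ \zeta_1\zeta_2-1)$, whose image is everything, and $\zeta_0=(\epsilon,\epsilon^{-1})$. Every $G$ with $PG=1$ satisfies $g_1(\zeta_0)=\epsilon^{-1}\approx\lvert\zeta_0\rvert$. The translates are $P(\zeta_0+w)=(\epsilon+w_1,\ \epsilon^{-1}w_1+\epsilon w_2+w_1w_2)$; after normalizing the second column by $\epsilon$ they converge to $(w_1,\ w_1)$, whose image is only the ideal generated by $w_1$. The division constants of these normalized matrices are still at least $\epsilon^{-1}$, and a scalar normalization that keeps the coefficients bounded only makes this worse.

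So your normalized family has degenerate accumulation points, and no uniform constant over it exists. The claim that the blow-up near these points is at most polynomial in $\lvert\zeta_0\rvert$ is precisely what has to be proved, and qualitative Oka--Cartan theory plus compactness cannot supply a rate. What is missing is an effective local division theorem for polynomial matrices, in which every step of the local algebra carries constants controlled by powers of $1+\lvert\zeta_0\rvert$. One possible route is an inductive Weierstrass-type division after a linear change of variables making the relevant polynomials monic in one variable. If you instead import this step from H\"ormander's \S7.6, you are importing the substance of the lemma itself.
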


We will apply the above result in conjunction with a quantitative version of the Paley--Wiener--Schwarz theorem, which is essentially contained in \cite[\S 15.2]{Hormander2}.
In what follows, given a compactly supported distribution $u$ in $\mathbb R^n$, we will identify its Fourier transform $\hat u$ with its unique analytic continuation to $\mathbb C^n$.

\begin{lemma}\label{lem:paleywiener_estimate}
    Let $s \in \mathbb R$ and $w \in \sobo^{s,2}(\mathbb R^n,\mathbb R^{\pp})$ with $\spt(w) \subset \overline B_r$.
    Then $\hat{w}$ is entire and satisfies the estimate
    \begin{align}
        \label{eq:pw_upper}\int_{\mathbb C^n} \lvert \hat w(\zeta)\rvert^2 \mathrm{e}^{-2r \lvert \Ima \zeta\rvert}  (1 + \lvert \Rea \zeta \rvert^2)^s ( 1 + \lvert \Ima \zeta \rvert^2)^{-\lvert s \rvert - 2n} \,\dd \zeta &\leq c\lVert w \rVert_{\sobo^{s,2}(\mathbb R^n)},
        \end{align}
    where $c=c(n,s)$.
\end{lemma}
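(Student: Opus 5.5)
The plan is to reduce the weighted estimate over $\mathbb C^n$ to a family of estimates over $\mathbb R^n$, one for each fixed imaginary part $\eta=\Ima\zeta$, and then integrate in $\eta$. The right-hand side should be read as $c\lVert w\rVert_{\sobo^{s,2}(\mathbb R^n)}^2$; this is what the argument gives and what is homogeneous in $w$. With the normalisation \eqref{eq:FT_definition}, the natural exponent is $2\pi r\lvert\Ima\zeta\rvert$. I will therefore prove the bound with the weight $\mathrm e^{-4\pi r\lvert\Ima\zeta\rvert}$. This is the classical Paley--Wiener--Schwartz exponent, and the constant in the exponent should be read with that normalisation.

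\emph{Analyticity.} Since $w$ is a compactly supported distribution, $\hat w(\zeta)=\langle w,\mathrm e^{-\mathrm i2\pi x\cdot\zeta}\rangle$ is entire by the Paley--Wiener--Schwartz theorem \cite[Theorem~7.3.1]{Hormander1}. Moreover, for any $\chi\in\ccinfty(\mathbb R^n)$ with $\chi\equiv1$ near $\overline B_r$ we have
\begin{equation*}
\hat w(\xi+\mathrm i\eta)=\mathscr F\big[\chi\,\mathrm e^{2\pi x\cdot\eta}\,w\big](\xi).
\end{equation*}

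\emph{Fixed-$\eta$ estimate.} Fix $\eta\in\mathbb R^n$ and set $\delta=(1+\lvert\eta\rvert)^{-1}$. Take $\chi_\delta\in\ccinfty(B_{r+\delta})$ with $\chi_\delta\equiv1$ near $\overline B_r$ and $\lvert D^j\chi_\delta\rvert\le c_j\delta^{-j}$. Put $g_\eta(x)=\chi_\delta(x)\mathrm e^{2\pi x\cdot\eta}$. By Leibniz's rule, and since $\mathrm e^{2\pi\delta\lvert\eta\rvert}\le\mathrm e^{2\pi}$, we get
\begin{equation*}
\lVert g_\eta\rVert_{\CC^M(\mathbb R^n)}\le c_M\,\mathrm e^{2\pi r\lvert\eta\rvert}(1+\lvert\eta\rvert)^M .
\end{equation*}
Multiplication by a $\CC^M$ function with $M\ge\lceil\lvert s\rvert\rceil$ is bounded on $\sobo^{s,2}(\mathbb R^n)$, with operator norm $\lesssim\lVert g\rVert_{\CC^M}$. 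For $s\ge0$ this follows from Leibniz's rule and interpolation, and for $s<0$ by duality. Taking $M=\lceil\lvert s\rvert\rceil\le\lvert s\rvert+1$ and using Plancherel then gives
\begin{equation*}
\int_{\mathbb R^n}\lvert\hat w(\xi+\mathrm i\eta)\rvert^2(1+\lvert\xi\rvert^2)^s\,\dd\xi\le c(n,s)\,\mathrm e^{4\pi r\lvert\eta\rvert}(1+\lvert\eta\rvert^2)^{\lvert s\rvert+1}\lVert w\rVert_{\sobo^{s,2}}^2 .
\end{equation*}
This step is the main point. The exponent must be exactly $r$, and not $r+\varepsilon$, and the price is only polynomial growth in $\eta$. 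The $\eta$-dependent cutoff scale $\delta$ achieves exactly this trade.

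\emph{Integration in $\eta$.} Multiply the last display by $\mathrm e^{-4\pi r\lvert\eta\rvert}(1+\lvert\eta\rvert^2)^{-\lvert s\rvert-2n}$ and integrate over $\eta\in\mathbb R^n$ (Fubini, with $\dd\zeta=\dd\xi\,\dd\eta$). What remains is $\int_{\mathbb R^n}(1+\lvert\eta\rvert^2)^{1-2n}\,\dd\eta$, which is finite since $2(2n-1)>n$ for $n\ge1$. This yields \eqref{eq:pw_upper} with $c=c(n,s)$.
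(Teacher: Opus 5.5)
Your proof is correct and is essentially the paper's route made explicit. The paper simply invokes \cite[Lemma~15.2.2]{Hormander2} with $k(\xi)=(1+\lvert\xi\rvert^2)^{s/2}$ and $K=\overline B_r$, and that lemma rests on the same Paley--Wiener--Schwartz device you use: write $\hat w(\xi+\mathrm i\eta)$ as the Fourier transform of $\chi_\delta\,\mathrm e^{2\pi x\cdot\eta}w$ with an $\eta$-dependent cutoff scale $\delta\sim(1+\lvert\eta\rvert)^{-1}$, bound this multiplication on $\sobo^{s,2}$ at the cost of only a polynomial factor in $\eta$, and then integrate in $\eta$.

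Your reading of the statement is also the right one. The right-hand side must be $c\lVert w\rVert_{\sobo^{s,2}(\mathbb R^n)}^2$. The weight $\mathrm e^{-2r\lvert\Ima\zeta\rvert}$ reflects H\"ormander's normalisation $\mathrm e^{-\mathrm i x\cdot\xi}$, which is used tacitly throughout that subsection (e.g.\ in Lemma~\ref{lem:fourier_converse}\ref{it:FT_recovery_support}). Under \eqref{eq:FT_definition} it becomes your $\mathrm e^{-4\pi r\lvert\Ima\zeta\rvert}$, and this changes nothing in the later arguments.
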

\begin{proof}
    This follows from \cite[Lemma~15.2.2]{Hormander2} with the choice $k(\xi) = (1+\lvert \xi \rvert^2)^{\frac{s}2}$ and $K = B_r$, noting that $H_K(\eta) = \sup_{x \in B_r} \langle x,\eta \rangle = r\lvert \eta \rvert$.
\end{proof}
\begin{lemma}\label{lem:fourier_converse}
    Let $W \in \mathrm{A}(\mathbb C^n,\mathbb C^{\qq})$ be an entire function.
    \begin{enumerate}[label=\rm(\alph*)]
        \item\label{it:FT_recovery_sobolev} If there exists $s \in \mathbb R$ such that
    \begin{equation}
        \int_{\{\zeta \in \mathbb C^n \,:\, \lvert \Ima \zeta \rvert \leq 1\}} \lvert W(\zeta)\rvert^2 (1 + \lvert \Rea \zeta \rvert^2)^{s} \,\dd \zeta < \infty,
    \end{equation}
    then there exists $w \in \sobo^{s,2}(\mathbb R^n,\mathbb C^{\qq})$ such that $\hat w = W$, and the estimate
    \begin{align}
        \label{eq:pw_lower}\int_{\{ \zeta \in \mathbb C^n \,:\, \lvert \Ima \zeta \rvert \leq 1\}} \lvert \hat w(\zeta)\rvert^2 (1 + \lvert\Rea \zeta\rvert^2)^{s} \,\dd \zeta &\geq c^{-1} \lVert w \rVert_{\sobo^{s,2}(\mathbb R^n)}
    \end{align}
    holds, where $c = c(n,s)$.
    \item\label{it:FT_recovery_support} If there exists $r>0$ and $N \in \mathbb Z$ such that
    \begin{equation}
        \int_{\mathbb C^n} \lvert W(\zeta) \rvert^2 \mathrm{e}^{-2r \lvert \Ima \zeta \rvert} (1+\lvert \zeta\rvert)^{-N} < \infty
    \end{equation}
    then there exists a distribution $w \in \mathscr{D}'(\mathbb R^n,\mathbb C^{\qq})$ such that $\hat{w} = W$ and $\spt(w) \subset \overline B_r$.
    \end{enumerate}
\end{lemma}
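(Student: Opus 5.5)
Both parts go through the tube $T=\{\zeta\in\mathbb C^n : \lvert\Ima\zeta\rvert\le 1\}$. Restricted to real $\xi$, $W$ is smooth, so it defines a tempered distribution once we have polynomial growth. For \ref{it:FT_recovery_sobolev} we take $w := \mathscr F^{-1}(W|_{\mathbb R^n})$ and check that $(1+\lvert\xi\rvert^2)^{s/2}W(\xi)\in\lebe^2(\mathbb R^n)$ with norm controlled by the tube integral. For \ref{it:FT_recovery_support} we use the Paley--Wiener--Schwartz theorem, which needs a pointwise bound $\lvert W(\zeta)\rvert\le c(1+\lvert\zeta\rvert)^{M}\mathrm e^{r\lvert\Ima\zeta\rvert}$. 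In both cases the tool is the sub-mean value property of the plurisubharmonic function $\lvert W\rvert^2$ on polydiscs.

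\emph{Part \ref{it:FT_recovery_sobolev}.} Fix $\xi\in\mathbb R^n$ and let $D(\xi)$ be the polydisc of polyradius $1/\sqrt n$ centred at $\xi$, so that $D(\xi)\subset T$ and $(1+\lvert\Rea\zeta\rvert^2)\sim(1+\lvert\xi\rvert^2)$ on $D(\xi)$, with constants depending only on $n$. Since $\lvert W\rvert^2$ is plurisubharmonic,
\begin{equation}
  \lvert W(\xi)\rvert^2(1+\lvert\xi\rvert^2)^s \le c(n,s)\int_{D(\xi)} \lvert W(\zeta)\rvert^2(1+\lvert\Rea\zeta\rvert^2)^s\,\dd\zeta .
\end{equation}
Integrate in $\xi\in\mathbb R^n$ and use Fubini. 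Each $\zeta\in T$ lies in $D(\xi)$ only for $\xi$ in a set of measure at most $c(n)$, so the left side integrates to at most $c(n,s)$ times the tube integral. The same inequality also gives polynomial growth of $W$ on $\mathbb R^n$. Hence $W|_{\mathbb R^n}$ is tempered and $w:=\mathscr F^{-1}W\in\sobo^{s,2}(\mathbb R^n)$ satisfies \eqref{eq:pw_lower}, with constants dictated by the normalisation of \eqref{eq:FT_definition}. The identity $\hat w = W$ is understood on $\mathbb R^n$. If $w$ also has compact support, $W$ is the analytic continuation of $\hat w$, by uniqueness of continuation.

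\emph{Part \ref{it:FT_recovery_support}.} We apply the same sub-mean value argument on the unit polydisc at an arbitrary $\zeta_0\in\mathbb C^n$. On that polydisc $\mathrm e^{-2r\lvert\Ima\zeta\rvert}\ge \mathrm e^{-2r\sqrt n}\mathrm e^{-2r\lvert\Ima\zeta_0\rvert}$ and $(1+\lvert\zeta\rvert)^{-N}\sim(1+\lvert\zeta_0\rvert)^{-N}$. This yields
\begin{equation}
  \lvert W(\zeta_0)\rvert \le c\,(1+\lvert\zeta_0\rvert)^{N/2}\,\mathrm e^{r\lvert\Ima\zeta_0\rvert}
\end{equation}
with $c$ depending on $n,r,N$ and the finite integral. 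By the Paley--Wiener--Schwartz theorem \cite[Theorem~7.3.1]{Hormander1}, there is a distribution $w$ with $\spt(w)\subset\overline B_r$ whose Fourier--Laplace transform is $W$. The $2\pi$ normalisation is harmless here, because it only rescales $\Ima\zeta$; we either absorb it into the exponent or state the support condition in the matching normalisation.

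\emph{Main obstacle.} The analysis above is routine; the real issue is bookkeeping. First, the polydisc radii must be chosen so that the discs stay inside the tube $\lvert\Ima\zeta\rvert\le 1$. Second, the support radius must come out exactly $r$, with no loss, and this works because the exponential weight only changes by a bounded factor over a polydisc of fixed size. Third, the conventions (the $2\pi$ in \eqref{eq:FT_definition} and vector-valued $W$) have to be tracked consistently. Vector values cause no trouble: we argue componentwise, since $\lvert W\rvert^2=\sum_j\lvert W_j\rvert^2$ is plurisubharmonic.
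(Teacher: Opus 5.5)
Your proposal is correct and follows the paper's proof essentially step by step. In part (a) you use the sub-mean value inequality for $\lvert W\rvert^2$ on polydiscs with locally comparable weights and integrate over $\mathbb R^n$, where your Fubini count plays the role of the paper's locally finite cover; in part (b) you feed the resulting pointwise bound $\lvert W(\zeta)\rvert\lesssim(1+\lvert\zeta\rvert)^{N/2}\mathrm e^{r\lvert\Ima\zeta\rvert}$ into Paley--Wiener--Schwartz, as the paper does. Your choice of polyradius $1/\sqrt n$ is slightly more careful than the paper's unit polydiscs $D_1(\xi)$, which leave the tube $\lvert\Ima\zeta\rvert\le 1$ when $n\ge 2$. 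Also, like the paper's, your argument actually yields \eqref{eq:pw_lower} with $\lVert w\rVert_{\sobo^{s,2}(\mathbb R^n)}^2$ on the right, which is the dimensionally correct form.
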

\begin{proof}
    Let $D_r(\zeta) = \{ z \in \mathbb C^n : \lvert z_i - \zeta_i\rvert < r \ \mbox{for all } 1 \leq i \leq n\}$, then from the analyticity of $W$ we have the mean value inequality
    \begin{equation}\label{eq:mean_value_inequality}
        \sup_{z \in D_{\frac12}(\zeta)} \lvert W(z) \rvert \leq c \bigg(\int_{D_1(\zeta)} \lvert W(z) \rvert^2\,\dd z \bigg)^{\frac12} \quad\mbox{for all $\zeta \in \mathbb C^n$},
    \end{equation}
    using for instance \cite[Theorem~2.2.3]{Hormander2000}, noting the estimate is translation invariant.
    For \ref{it:FT_recovery_sobolev}, using \eqref{eq:mean_value_inequality} we have
    \begin{equation}
        \int_{B_{\frac12}(\xi)} \lvert W(x) \rvert^2 \,\dd x \leq c \int_{D_1(\xi)} \lvert W(z) \rvert^2 \dif z \quad\mbox{for all } \xi\in\mathbb R^n,
    \end{equation}
    where $B_{\frac12}(\xi)$ denotes a ball in $\mathbb R^n$.
    Also there is $c_1=c_1(n,s)>0$ such that for each $\xi \in \mathbb R^n$ we have
    \begin{equation}
        c_1^{-1}(1+\lvert \xi\rvert^2)^s \leq (1+\lvert \Rea \zeta \rvert^2)^s \leq c_1(1+\lvert \xi\rvert^2)^s \quad\mbox{for each $\zeta \in D_1(\xi)$},
    \end{equation}
    from which we infer that
    \begin{equation}
        \int_{B_{\frac12}(\xi)} \lvert W(x) \rvert^2 (1+\lvert x \rvert^2)^s\,\dd x \leq c\,c_1^2 \int_{D_1(\xi)} \lvert W(z) \rvert^2 (1+\lvert \Rea z \rvert^2)^s \,\dd z \quad\mbox{for all } \xi\in\mathbb R^n,
    \end{equation}
    Applying this to a locally finite covering of $\mathbb R$ taking the form $\{B_{1/2}(\xi_i)\}_i$, we obtain the estimate
    \begin{equation}\label{eq:W_real_bound}
        \int_{\mathbb R^n} \lvert W(\xi) \rvert^2 (1 + \lvert \xi \rvert^2)^{s} \,\dd \xi \lesssim \int_{\{\zeta \in \mathbb C^n \,:\, \lvert \Ima \zeta \rvert \leq 1\}} \lvert W(\zeta)\rvert^2 (1 + \lvert \Rea \zeta \rvert^2)^{s} \,\dd \zeta.
    \end{equation}
    Taking the inverse Fourier transform we obtain $w \in \mathscr S'(\mathbb R^n,\mathbb C^{\qq})$ such that $\hat w = W$, and \eqref{eq:W_real_bound} implies that $w \in \sobo^{s,2}(\mathbb R^n,\mathbb C^{\qq})$ with the claimed estimate \eqref{eq:pw_lower}.
    
    For \ref{it:FT_recovery_support}, we only show the claim for $N\in \mathbb N$. If $N \leq0$, then we can simply use that $1 \leq (1+\lvert\zeta\rvert)^{-N}$ to estimate 
    \[ \int_{\mathbb C^n} \abss{W(\zeta)}^2 \mathrm{e}^{-2r\abss{\Ima\zeta}}\dif \zeta \leq \int_{\mathbb C^n} \abss{W(\zeta)}^2 \mathrm{e}^{-2r\abss{\Ima\zeta}}(1+\abss{\zeta})^{-N}\dif \zeta <\infty.\]
    \par If $N >0$ the argument is similar to that that of \ref{it:FT_recovery_sobolev}; there exists $c_2 = c_2(n,N,r)>1$ such that for each $\zeta \in \mathbb C^n$ we have
    \begin{equation}
        c_2^{-1} \mathrm{e}^{r \lvert \Ima \zeta \rvert} (1+\lvert\zeta\rvert)^{\frac{N}2} \leq \mathrm{e}^{r \lvert \Ima z \rvert} (1+\lvert z\rvert)^{\frac{N}2} \leq c_2 \mathrm{e}^{r \lvert \Ima \zeta \rvert} (1+\lvert\zeta\rvert)^{\frac{N}2} \quad\mbox{for each $z \in D_1(\zeta)$,}
    \end{equation}
    so combining with \eqref{eq:mean_value_inequality} gives
    \begin{equation}
        \sup_{z \in D_{\frac12}(\zeta)} \bigg(\lvert W(z) \rvert \mathrm{e}^{-r \lvert \Ima z \rvert} (1+\lvert z\rvert)^{-\frac{N}2}  \bigg) \leq c \, c_2^2 \bigg(\int_{D_1(\zeta)} \lvert W \rvert^2 \mathrm{e}^{-2r \lvert \Ima z \rvert} (1+\lvert z\rvert)^{-N} \,\dd z\bigg)^{\frac12}.
    \end{equation}
    By taking a suitable covering of $\mathbb C^n$ of the form $\{D_{\frac12}(\zeta_i)\}_{i \in \mathbb N}$ and using the assumption in \ref{it:FT_recovery_support}, we infer that
    \begin{equation}\label{eq:FT_recovery_integral}
        \lvert W(\zeta) \rvert \lesssim \mathrm{e}^{r \lvert \Ima \zeta \rvert} (1+\lvert\zeta\rvert)^{\frac{N}{2}} < \infty.
    \end{equation}
    By the Paley--Wiener--Schwarz theorem (see e.g.\ \cite[Theorem 7.3.1]{Hormander1}), we have $W$ is the Fourier transform of a distribution supported in $B_r$.
\end{proof}

\begin{lemma}\label{lem:pluri_modification}
    Given $s \in \mathbb R$ and any $0 < r < R \leq 1$, consider
    \begin{equation}
        \tilde\phi(\zeta) = r\lvert \Ima \zeta \rvert - s \log(1+ \lvert \Rea \zeta\rvert^2)^{\frac12} + (\lvert s \rvert +2n) \log(1 + \lvert \Ima \zeta \rvert^2)^{\frac12}.
    \end{equation}
    Then there exists $c=c(n,s,R-r)>0$ and a globally Lipschitz and plurisubharmonic function $\phi$ satisfying the estimates
    \begin{align}
        \label{eq:psi_lower}\tilde\phi(\zeta) &\leq \phi(\zeta)+ c \\
        \label{eq:psi_support}
        \phi(\zeta) &\leq R\lvert \Ima\zeta \rvert - s \log(1+ \lvert \Rea \zeta \rvert^2)^{\frac12} + c.
    \end{align}
    for all $\zeta \in \mathbb C^n$.
\end{lemma}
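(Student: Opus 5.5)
The plan is to build $\phi$ from $\tilde\phi$ in two moves. First, replace each term of $\tilde\phi$ by a globally Lipschitz plurisubharmonic surrogate, at the cost of bounded errors. Second, absorb the non-plurisubharmonic part $-s\log(1+\lvert\Rea\zeta\rvert^2)^{1/2}$ by spending the margin $(R-r)\lvert\Ima\zeta\rvert$ in the support term.

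The three terms of $\tilde\phi$ behave as follows. The term $r\lvert \Ima\zeta\rvert$ is convex in $\zeta$ viewed as a function on $\mathbb R^{2n}$, hence plurisubharmonic, and it is Lipschitz. Its kink at $\Ima\zeta=0$ can be removed by replacing it with $r(1+\lvert\Ima\zeta\rvert^2)^{1/2}$, which changes it only by a constant. The terms $\log(1+\lvert\Rea\zeta\rvert^2)^{1/2}$ and $\log(1+\lvert\Ima\zeta\rvert^2)^{1/2}$ are smooth and Lipschitz, with gradients bounded by $1$. Their complex Hessians are bounded by $c(1+\lvert\Rea\zeta\rvert^2)^{-1}$ and $c(1+\lvert\Ima\zeta\rvert^2)^{-1}$ respectively, but they need not be nonnegative. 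So $\tilde\phi$ is Lipschitz, and its only failure is plurisubharmonicity in a region of bounded Hessian.

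The key step is to take
\begin{equation}
    \phi(\zeta)=\tilde\phi(\zeta)+\delta\,h(\zeta)+C_0,
\end{equation}
where $h$ is a Lipschitz plurisubharmonic function with $0\le h(\zeta)\le \lvert\Ima\zeta\rvert + c$ and strictly positive complex Hessian compensating the negative parts. A natural choice is $h(\zeta)=\lvert\Ima\zeta\rvert^2$-type behaviour near the real axis, which is not allowed globally since it is not Lipschitz. So I will instead use $h(\zeta)=\chi(\lvert\Ima\zeta\rvert)$ with $\chi$ convex, increasing, quadratic near $0$ and linear with slope $1$ at infinity. For $\lvert\Ima\zeta\rvert=\tau$, the complex Hessian of $\chi(\tau)$ is at least $c\min(\chi''(\tau),\chi'(\tau)/\tau)$ in all directions, since $\lvert\Ima\zeta\rvert$ is convex in $\zeta$ and has full-rank Hessian transverse to the radial direction.

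The main obstacle is that this lower bound decays in $\tau$ (like $1/\tau$), while the negative part of the Hessian of $-s\log(1+\lvert\Rea\zeta\rvert^2)^{1/2}$ decays only in $\lvert\Rea\zeta\rvert$, not in $\tau$. I will try to exploit that this term depends only on $\Rea\zeta$. For a function $g(\Rea\zeta)$, the complex Hessian is $\tfrac14$ of the real Hessian of $g$. Similarly, for $\lvert\Ima\zeta\rvert$ or any function of $\Ima\zeta$ alone, the complex Hessian is $\tfrac14$ of the real Hessian in $\eta=\Ima\zeta$. So it suffices to find $g_1$ on $\mathbb R^n$, within bounded distance of $-s\log(1+\lvert x\rvert^2)^{1/2}$, and $g_2$, within bounded distance of $r\lvert\eta\rvert+(\lvert s\rvert+2n)\log(1+\lvert\eta\rvert^2)^{1/2}$, with
\begin{equation}
    D^2g_1(x)+D^2g_2(\eta)\ge 0 \quad\mbox{for all } x,\eta.
\end{equation}
Since $D^2g_1\ge -K$ uniformly, it is enough to have $D^2g_2\ge K$, which contradicts Lipschitzness of $g_2$.

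The fix is to first convexify $g_1$. I will replace $-s\log(1+\lvert x\rvert^2)^{1/2}$ by its convex envelope in the case $s>0$; for $s\le 0$ the function is already convex, since $\log(1+\lvert x\rvert^2)^{1/2}$ is convex beyond radius $1$ and only needs modification on a compact set. The envelope differs from the original only by a bounded amount, because $\log$ is sublinear at infinity and the function is bounded on compacts. Concretely, I will take $g_1(x)=\lvert s\rvert\,\Theta(\lvert x\rvert)$ with $\Theta$ a convex function within $O(1)$ of $\pm\log(1+t^2)^{1/2}$ on $[0,\infty)$. A decreasing convex function approximating $-\log(1+t^2)^{1/2}$ cannot stay within bounded distance, since $-\log$ is unbounded below.

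So the remaining real obstacle is the case $s>0$. There I must spend the margin $(R-r)\lvert\Ima\zeta\rvert$, which is allowed by \eqref{eq:psi_support} only through the imaginary part, with $c$ depending on $R-r$. I would try the classical Hörmander trick: use $\phi(\zeta)=\sup$ or a regularized maximum of plurisubharmonic functions of the form $\Rea(\ldots)$, namely $-s\log\lvert 1+\langle\zeta,\zeta\rangle\rvert$-type pluriharmonic corrections. Here $\log\lvert 1+\zeta\cdot\zeta\rvert$ is pluriharmonic off its zero set and agrees with $\log(1+\lvert\Rea\zeta\rvert^2)$ up to bounded error when $\lvert\Ima\zeta\rvert$ is small relative to $\lvert\Rea\zeta\rvert$. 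Elsewhere I would use the large term $(R-r)\lvert\Ima\zeta\rvert$ to dominate, gluing the two with a regularized max. Verifying the Lipschitz bound and the two-sided comparisons \eqref{eq:psi_lower}, \eqref{eq:psi_support} across the gluing region, with constants depending on $n,s,R-r$, is the step I expect to be hardest.
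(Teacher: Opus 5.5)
There is a genuine gap. The construction you commit to for $s\le 0$ does not exist, and the case $s>0$, which is where the lemma has content, is only sketched; the sketch cannot close as described.

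\textbf{The separable ansatz fails for every $s\neq 0$.} The claim that $\log(1+|x|^2)^{1/2}$ is convex beyond radius $1$ is false: its radial second derivative there is $(1-|x|^2)(1+|x|^2)^{-2}<0$. More decisively, a convex function on $\mathbb R^n$ within bounded distance of $\pm\log(1+|x|^2)^{1/2}$ is sublinear along every line. It is therefore constant, which is impossible since the logarithm is unbounded. So no such $\Theta$ exists for either sign: the convex envelope of $\log(1+|x|^2)^{1/2}$ is $0$, and that of its negative is $-\infty$. Now take the infimum over $\Ima\zeta$ in your own Hessian condition; that infimum is $\le 0$ in every direction when $g_2$ is Lipschitz. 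This shows that plurisubharmonicity of $g_1(\Rea\zeta)+g_2(\Ima\zeta)$ forces $D^2g_1\ge 0$, so separable $\phi$ are ruled out altogether. For $s<0$ a non-separable choice does work: $\phi=\frac{r+R}{2}|\Ima\zeta|+\frac{|s|}{2}\log(1+|\zeta|^2)$ is plurisubharmonic and Lipschitz. Moreover $\frac{|s|}{2}\log(1+|\zeta|^2)$ lies between $|s|\log(1+|\Rea\zeta|^2)^{1/2}$ and that quantity plus $\frac{|s|}{2}\log(1+|\Ima\zeta|^2)$, and the margin $\frac{R-r}{2}|\Ima\zeta|$ absorbs the difference.

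\textbf{The gluing for $s>0$ does not close.} The function $u_1=A|\Ima\zeta|-\frac{s}{2}\log|1+\zeta\cdot\zeta|$ equals $+\infty$ on $\{|\Ima\zeta|^2=1+|\Rea\zeta|^2,\ \Rea\zeta\perp\Ima\zeta\}$. It is therefore usable only on a cone $|\Ima\zeta|<\epsilon_1(1+|\Rea\zeta|)$, where it equals $A|\Ima\zeta|-s\log(1+|\Rea\zeta|)+O(1)$. A max-gluing with an outer $u_2$, plurisubharmonic on $\{|\Ima\zeta|>\epsilon_2(1+|\Rea\zeta|)\}$ with $\epsilon_2<\epsilon_1$, needs $u_2<u_1$ along the inner edge of the shell and $u_2>u_1$ along the outer edge. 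At fixed $\Ima\zeta$, the inner edge lies at the larger $|\Rea\zeta|$. There $u_1$ is smaller (since $s>0$) by about $s\log(\epsilon_1/\epsilon_2)$, and the angular dependence of $\log|1+\zeta\cdot\zeta|$ only widens this gap. Hence no $u_2$ depending only on $\Ima\zeta$ can work, and that is what ``letting $(R-r)|\Ima\zeta|$ dominate'' amounts to. An admissible $u_2$ would itself have to decrease in $|\Rea\zeta|$, which is exactly the behaviour you are trying to produce.

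\textbf{The missing idea.} The paper takes it from H\"ormander's Lemma~15.2.3: couple $\Rea\zeta$ and $\Ima\zeta$ by averaging the logarithm at a scale that grows with $|\Ima\zeta|$. Set $\sigma=(t^2+|\Ima\zeta|^2)^{1/2}$. The function $-s\int\rho(y)\log(1+|\Rea\zeta+\sigma y|)\,\dd y$ stays within $O(\log\sigma)$ of $-s\log(1+|\Rea\zeta|^2)^{1/2}$. Its full real Hessian, however, is now $O(\sigma^{-2}\log\sigma)$, so its non-plurisubharmonic part decays in $|\Ima\zeta|$ — precisely what your own obstruction analysis showed is needed. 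This deficit is dominated by the Lipschitz convex function $t^{-1/2}\sigma-\sigma^{1/2}$ of $\Ima\zeta$, whose Hessian is $\gtrsim\sigma^{-3/2}$, once $t\ge t_0(n,s)$. Call the sum of these two terms $\psi_t$. Then $\phi=\frac{r+R}{2}|\Ima\zeta|+\psi_t$, with $t$ chosen so that $2t^{-1/4}\le\frac{R-r}{4}$, satisfies \eqref{eq:psi_lower} and \eqref{eq:psi_support}. The errors $O(\log\sigma)+2t^{-1/4}\sigma$ are absorbed by $\frac{R-r}{2}|\Ima\zeta|$.
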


\begin{remark}\label{rmk:pw_abbreviation} 
    The function $\tilde\phi$ arises from Lemma~\ref{lem:paleywiener_estimate}, where if $w \in \sobo^{s,2}(\mathbb R^n,\mathbb R^{\qq})$ satisfies $\spt(w)\subset \overline{B}_r$, then $\int_{\mathbb C^n} \lvert \hat{w}(\zeta)\rvert^2 \mathrm{e}^{-2\lvert\tilde{\phi}(\zeta)\rvert} \,\dd\zeta < \infty$. Moreover, Lemma~\ref{lem:fourier_converse} gives a suitable converse.
    The construction of Lemma~\ref{lem:pluri_modification} allows us to modify $\tilde\phi$ to be plurisubharmonic, at the cost of increasing the support, and this way we can apply Lemma~\ref{lem:algebraic_solvability}. 
\end{remark}

\begin{proof}
    To simplify notation, for $\zeta \in \mathbb C^n$ we will write $\xi = \Rea \zeta$ and $\eta = \Ima \zeta$.
    Let $\rho$ be a standard mollifier on $\mathbb R^n$, and given $t \geq 0$ to be determined we will define
    \begin{equation}\label{eq:psit}
        \psi_t(\zeta) = - s\int_{\mathbb R^n} \rho(y) \log(1 + \lvert \xi + (t^2 + \lvert \eta\rvert^2)^{\frac12}y \rvert) \,\dd y + t^{-\frac12} (t^2 + \lvert \eta\rvert^2)^{\frac12} - (t^2 + \lvert \eta\rvert^2)^{\frac14}.
    \end{equation}
    By \cite[Lemma 15.2.3]{Hormander2}, there is $t_0=t_0(n,s)\geq 1$ such that $\psi_t$ is plurisubharmonic for all $t \geq t_0$. 
    To estimate $\psi_t$, we use \cite[(15.2.7), (15.2.8)]{Hormander2} for the first term in \eqref{eq:psit} along with the bound
    \begin{equation}
        t^{-\frac12} (t^2 + \lvert \eta\rvert^2)^{\frac12} + (t^2 + \lvert \eta\rvert^2)^{\frac14} \leq 2t^{-\frac14} (t^2 + \lvert \eta\rvert^2)^{\frac12},
    \end{equation}
    for the latter two terms noting that $t_0 \geq 1$ to estimate
    \begin{align}
        \lvert -s\log(1+\lvert \xi\rvert^2)^{\frac12} -\psi_t(\zeta) \rvert \leq c_1 \log (t^2 + \lvert \eta \rvert^2)^{\frac12} + 2t^{-\frac14} (t^2 + \lvert \eta\rvert^2)^{\frac12}, 
        \label{eq:psit_perturb} \\
        \lvert D \psi_t(\zeta) \rvert \leq c_2 (t^2+\lvert\eta\rvert^2)^{-\frac12} \log (t^2 + \lvert \eta \rvert^2)^{\frac12} +2 t^{-\frac14}(t^2+\lvert \eta \rvert^2)^{-\frac12} \lvert \eta \rvert,\label{eq:psit_lipschitz}
    \end{align}
    where $c_1, c_2>0$ depend on $n$ and $s$ only. 
    Now we define
    \begin{equation}
        \phi(\zeta) = \frac{r+R}2 \lvert \eta \rvert + \psi_t(\zeta),
    \end{equation}
    which is globally Lipschitz by \eqref{eq:psit_lipschitz} and plurisubharmonic since $\psi_t$ is.
    Moreover, the corresponding Lipschitz constant $L$ depends on $n$ and $s$ only.
    To verify the claimed estimates, using \eqref{eq:psit_perturb} we can bound
    \begin{equation}\label{eq:phi_modification_c_2}
    \begin{split}
        &\ \tilde{\phi}(\zeta) - \phi(\zeta) \\ 
        \leq &\ -\frac{R-r}2 \lvert \eta \rvert + (c_1 + \lvert s \rvert + 2n) \log(t^2+\lvert\eta\rvert^2)^{\frac12} + 2t^{-\frac14} (t^2+\lvert\eta\rvert^2)^{\frac12}  \leq c_2,
    \end{split}
    \end{equation}
    where we choose $t\geq t_0$ large enough such that $\frac{R-r}4 \geq 2 t^{-\frac14}$, which allows us to find $c_2 = c_2(n,s,R-r)$, establishing \eqref{eq:psi_lower}.
    
    For \eqref{eq:psi_support}, we use \eqref{eq:psit_perturb} to obtain an analogous estimate as \eqref{eq:phi_modification_c_2}, namely
    \begin{equation}
    \begin{split}
        &\phi(\zeta) - \left( R \lvert \eta \rvert - s \log(1 + \lvert \xi\rvert^2)^{\frac12}\right)\\
        &\quad\leq - \frac{R-r}2 \lvert \eta \rvert + c_1 \log(t^2+\lvert \eta\rvert^2)^{\frac12} + 2 t^{-\frac14} (t^2+\lvert\eta\rvert^2)^{\frac12} \leq c_3,
    \end{split}
    \end{equation}
    where $c_3 = c_3(n,s,R-r)$ can be found since we chose $t$ so that $\frac{R-r}4 \geq 2t^{-\frac14}$.
\end{proof}

\begin{proof}[Proof of {Theorem~\ref{thm:pu_replacement}}]
    We argue as in \cite[Theorem~7.6.13]{Hormander2000}.
    Set $f := \mathscr{P}u$ and $X = \{ \mathscr P^{\ast}w \colon w \in \CC_{\rmc}^{\infty}(B_{r},\mathbb R^{\qq})\}$, and define a linear functional $T \colon X \to \mathbb R$ by sending
    \begin{equation}
        T(\mathscr P^{\ast} w) = \langle f, w \rangle,
    \end{equation}
    where $\langle\cdot,\cdot\rangle$ is the usual distributional pairing. Observe that this is well-defined, since if $\mathscr{P}^{\ast}w=0$, then since $f = \mathscr{P}u$ and $w$ is compactly supported in $B_1$, we have
    \begin{equation}\label{eq:T_well_defined}
        T(\mathscr P^{\ast}w) = \langle \mathscr P u, w\rangle = \langle u, \mathscr P^{\ast}w\rangle = 0.
    \end{equation}

    We will now show that $T$ is bounded with respect to the norm on $\sobo^{-s+N,2}(B_{r},\mathbb R^{\pp})$, where $N$ is as in Lemma~\ref{lem:algebraic_solvability} applied with $\mathscr P^{\ast}(\zeta)$.
    To do this, let $w \in \CC^{\infty}_{\rmc}(B_1,\mathbb R^{\qq})$ such that $\spt(w) \subset \overline B_r,$ and put $R = \frac{1+r}2$. We will show there exists $w_1 \in \sobo^{s-N,2}(\mathbb R^n,\mathbb R^{\qq})$ satisfying $\mathscr{P}^{\ast}w_1 = \mathscr{P}^{\ast}w$ in $B_R$, $\spt(w_1) \subset B_R$ and
    \begin{equation}
        \lVert w_1 \rVert_{\sobo^{s-N,2}(\mathbb R^n)} \leq c \lVert \mathscr P^{\ast}w\rVert_{\sobo^{s,2}(\mathbb R^n)}.
    \end{equation} 

    To establish the existence of $w_1$, we follow the strategy outlined in Remark~\ref{rmk:pw_abbreviation}.
    Let $\phi$ be the plurisubharmonic function obtained from Lemma~\ref{lem:pluri_modification} with the same $r$, with $R=\frac{1+r}2$ and with $-s+N$ in place of $s$, and fix
    $w \in \CC^{\infty}_{\rmc}(B_{r},\mathbb R^{\qq})$.
    Then by 
    applying Lemma~\ref{lem:paleywiener_estimate} and \eqref{eq:psi_lower} we can estimate
    \begin{equation}
        \int_{\mathbb C^n} \lvert \mathscr{P}^{\ast}(\zeta)\hat w(\zeta)\rvert^2 \mathrm{e}^{-2\phi(\zeta)} \,\dd \zeta \leq c \lVert \mathscr P^{\ast}w \rVert_{\sobo^{-s+N,2}(\mathbb R^n)}.
    \end{equation}
    Now applying Lemma~\ref{lem:algebraic_solvability} with $\mathscr P^{\ast}(\zeta)$ we can find an entire function $W_1 \colon \mathbb C^n \to \mathbb C^{\qq}$ such that $\mathscr P^{\ast}(\zeta)\hat w(\zeta) = \mathscr P^{\ast}(\zeta) W_1(\zeta)$ and the estimate
        \begin{equation}
        \int_{\mathbb C^n} \lvert W_1(\zeta) \rvert^2 \mathrm{e}^{-2\phi(\zeta)}(1+\lvert \zeta\rvert^2)^{-N} \,\dd\zeta \leq c \int_{\mathbb C^n} \lvert \mathscr P^{\ast}(\zeta)\hat w(\zeta) \rvert^2 \mathrm{e}^{-2\phi(\zeta)} \,\dd\zeta
    \end{equation}
    holds.
    Then using that $\phi$ satisfies \eqref{eq:psi_support} (with $-s+N$ in place of $s$) we obtain 
    \begin{equation}\label{eq:W1_pluri_estimate}
    \begin{split}
        &\int_{\mathbb C^n} \lvert W_1(\zeta) \rvert^2 \mathrm{e}^{-2R \lvert \Ima \zeta \rvert}(1+\lvert \Rea \zeta\rvert^2)^{-s}(1+\lvert \Ima \zeta \rvert^2)^{-N} \,\dd\zeta\\
        &\leq c  \int_{\mathbb C^n} \lvert W_1(\zeta) \rvert^2 \mathrm{e}^{-2\phi(\zeta)}(1+\lvert \zeta\rvert^2)^{-N} \,\dd\zeta 
        \leq c \lVert \mathscr P^{\ast}w\rVert_{\sobo^{-s+N,2}(\mathbb R^n)},
        \end{split}
    \end{equation}
    where we used the rudimentary estimate $(1+\lvert\zeta\rvert)^{-N} \geq (1+\lvert \Rea \zeta\rvert)^{-N} (1+\lvert\Ima\zeta\rvert)^{-N}$.
    Now by restricting the integral in \eqref{eq:W1_pluri_estimate} to $\lvert\Ima\zeta\rvert \leq 1$ and by applying Lemma~\ref{lem:fourier_converse}\ref{it:FT_recovery_sobolev}, there exists $w_1 \in \sobo^{-s,2}(\mathbb R^n,\mathbb C^{\qq})$ such that $\hat w_1 = W_1$ and we have the bound
    \begin{equation*}
    \begin{split}
        \lVert w_1 \rVert_{\sobo^{-s,2}(\mathbb R^n)} 
        \leq c  \int_{\{\zeta \in \mathbb C^n \,:\, \lvert \Ima \zeta \rvert \leq 1\}} \lvert W_1(\zeta) \rvert^2 (1+\lvert \Rea \zeta\rvert^2)^{-s} \,\dd\zeta
        \leq c \lVert \mathscr P^{\ast}w\rVert_{\sobo^{-s+N,2}(\mathbb R^n)}.
        \end{split}
    \end{equation*}
    Moreover using \eqref{eq:W1_pluri_estimate} with Lemma~\ref{lem:fourier_converse}\ref{it:FT_recovery_support} and noting that $R=\frac{1+r}2 <1$, we obtain that $\spt(w_1) \subset \overline  B_R \subset B_1$.
    Since $\mathscr{P}$ has real coefficients, we have $\mathscr{P}^{\ast}(\Rea w_1) = \Rea \mathscr{P}^{\ast}w_1 = \mathscr{P}^{\ast}w$, so by replacing $w_1$ by $\Rea w_1$, we can assume that $w_1$ is real valued.

    Since $w$ and $w_1$ are compactly supported in $\overline B_R$, the mollification $(w-w_1) \ast \rho_{\eps}$ lies in $\CC^{\infty}_{\rmc}(B_1,\mathbb C^{\qq})$ for all $\eps < 1-R$. Thus by \eqref{eq:T_well_defined} we have
    \begin{equation}
        \langle f, w - w_1 \rangle = \lim_{\eps \to 0 } \langle f, (w - w_1) \ast \rho_{\eps}\rangle = 0,
    \end{equation}
    so we can estimate
    \begin{equation}
        \lvert \langle f, w \rangle \vert = \lvert \langle f, w_1\rangle \rvert \leq \lVert f \rVert_{\sobo^{s,2}(B_1)} \lVert w_1 \rVert_{\sobo^{-s,2}(B_1)}.
    \end{equation}
    Note that since $w_1$ is compactly supported in $B_1$, we have $\lVert w_1 \rVert_{\sobo^{-s,2}(B_1)} \leq \lVert w_1 \rVert_{\sobo^{-s,2}(\mathbb R^n)}$.
    Hence by chaining the above estimates, we obtain
    \begin{equation}
        \lvert \langle f, w \rangle \vert  \leq \lVert f \rVert_{\sobo^{s,2}(B_1)} \lVert w_1 \rVert_{\sobo^{-s,2}(\mathbb R^n)} \leq c \lVert f \rVert_{\sobo^{s,2}(B_1)}\lVert \mathscr P^{\ast}w\rVert_{\sobo^{-s+N,2}(\mathbb R^n)},
    \end{equation}
    that is,
    \begin{equation}\label{eq:T_bounded}
        \lVert T(\mathscr P^{\ast}w)\rVert \leq c \lVert f \rVert_{\sobo^{s,2}(B_1)}\lVert \mathscr P^{\ast}w\rVert_{\sobo^{-s+N,2}(\mathbb R^n)} \quad\mbox{for all } \mathscr P^{\ast}w \in X.
    \end{equation}
    We will view $X \subset \sobo^{-s+N,2}(\mathbb R^n,\mathbb R^{\pp})$ equipped with the norm $\lVert \cdot \rVert_{\sobo^{-s+N,2}(\mathbb R^n)}$.
    Then by \eqref{eq:T_bounded}, we can extend $T$ to a linear functional $\tilde{T}$ on $\sobo^{-s+N,2}(\mathbb R^n,\mathbb R^{\pp})$ by extending by density to $\overline{X}$ and setting $\tilde{T} = 0$ on the orthogonal complement $X^{\perp}$.
    Using that $\sobo^{-s+N,2}(\mathbb R^n,\mathbb R^{\pp})^{\ast} \cong \sobo^{s-N,2}(\mathbb R^n,\mathbb R^{\pp})$ holds with respect to the distributional pairing, there exists $v \in \sobo^{s-N,2}(\mathbb R^n,\mathbb R^{\pp})$ such that $\tilde{T} = \langle v,\cdot\rangle$.

    Therefore
    \begin{equation}
        \langle f, w \rangle =\tilde T(\mathscr{P}^{\ast}w) = \langle v, \mathscr{P}^{\ast}w \rangle = \langle \mathscr{P}v,w \rangle \quad\mbox{for all $w \in \CC^{\infty}_{\rmc}(B_{r},\mathbb R^{\qq})$},
    \end{equation}
    which precisely asserts that $\mathscr Pv = f = \mathscr P u$ in $B_r$, and $v$ moreover satisfies the estimate
    \begin{equation}
        \lVert v \rVert_{\sobo^{s-N,2}(\mathbb R^n)} = \lVert T \rVert \leq c \lVert \mathscr P u \rVert_{\sobo^{s,2}(\mathbb R^n)}.
    \end{equation}
    Thus the result follows by restricting $v$ to $B_r$.
    Finally, since $v$ is obtained through the mapping
    \begin{equation}
        \sobo^{s,2}(\mathbb R^n,\mathbb R^{\pp}) \xrightarrow{\mathscr P u \,\mapsto\, T} X^{\ast} \xrightarrow{T \,\mapsto\, \tilde T} (\overline{X} \oplus X^{\perp})^{\ast} = \sobo^{-s+N,2}(\mathbb R^n,\mathbb R^{\pp})^{\ast} \xrightarrow{\simeq} \sobo^{s-N,2}(\mathbb R^n,\mathbb R^{\pp}),
    \end{equation}
    the association $u \mapsto v$ is linear.
\end{proof}

\subsection{Proof of Theorem~\texorpdfstring{\ref{thm:B_poincare2}}{3.1}}
We can now combine the ingredients from the previous sections to conclude. We will need the following lemma, which will be a consequence of Jensen's inequality.

\begin{lemma}\label{lem:jensen_trick}
    Suppose $\lambda>0$, and define $\Phi = \frac1{\lambda} E$. Then for any $f\in \lebe^1(B_1)$ it holds that
    \begin{equation}\label{eq:Phi_jensen}
        \frac{c_J^{-1}}{E^{-1}(\lambda)} \lVert f \rVert_{\lebe^1(B_1)} \leq \lVert f \rVert_{\lebe^{\Phi}(B_1)} \leq \frac{c_J}{E^{-1}(\lambda)} \lVert f \rVert_{\lebe^2(B_1)},
    \end{equation}
    where $c_J\geq 1$ only depends on $n$. Moreover the first inequality holds if $f$ is a measure.
\end{lemma}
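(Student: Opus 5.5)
The argument uses only three elementary properties of $E$: convexity (Proposition~\ref{prop:Eprop}\ref{it:Ecvx}), the scaling bound $E(tz)\lesssim \max\{t,t^2\}E(z)$ (Proposition~\ref{prop:Eprop}\ref{it:Emulti}), and the fact that $E$ is $1$-Lipschitz with $E(0)=0$. Write $a:=E^{-1}(\lambda)$, where $E^{-1}$ is the inverse of the increasing bijection $t\mapsto E(t)$ on $[0,\infty)$. Since $E$ is convex with $E(0)=0$, $E^{-1}$ is concave with $E^{-1}(0)=0$. Hence $E^{-1}(\kappa\lambda)\le \kappa E^{-1}(\lambda)$ for every $\kappa\ge 1$, and this is the only place where an $n$-dependent constant (namely $\kappa=\max\{1,|B_1|^{-1}\}$) enters the lower bound.

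\emph{Lower bound.} Let $s=\lVert f\rVert_{\lebe^\Phi(B_1)}$. By the definition \eqref{eq:luxemburg} and monotone convergence, $\int_{B_1}E(f/s)\le\lambda$. Jensen's inequality for the convex function $E$ on the probability measure $|B_1|^{-1}\mathscr L^n\mres B_1$ then gives
\[
E\Big(\frac{\lVert f\rVert_{\lebe^1(B_1)}}{s|B_1|}\Big)\le \fint_{B_1}E\Big(\frac{f}{s}\Big)\,\dd x\le \frac{\lambda}{|B_1|}.
\]
Applying $E^{-1}$ and the concavity bound yields $\lVert f\rVert_{\lebe^1(B_1)}\le |B_1|\max\{1,|B_1|^{-1}\}\,a\,s$, which is the first inequality in \eqref{eq:Phi_jensen}.

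For a measure $\mu$, I use \eqref{Emu} together with the $1$-Lipschitz bound $E(\alpha+\beta)\le E(\alpha)+\beta$ for $\alpha,\beta\ge0$. With the same normalisation (replace $\mu$ by $\mu/s$), this gives
\[
\int_{B_1}E(\mu^a)\,\dd x+|\mu^s|(B_1)\;\ge\; |B_1|\,E\Big(\fint_{B_1}|\mu^a|\,\dd x\Big)+|\mu^s|(B_1)\;\ge\; |B_1|\,E\Big(\frac{|\mu|(B_1)}{|B_1|}\Big).
\]
The same computation as above then applies verbatim.

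\emph{Upper bound.} We may assume $f\neq0$. Set $g=f/\lVert f\rVert_{\lebe^2(B_1)}$, so that $\lVert g\rVert_{\lebe^2(B_1)}=1$ and, by Hölder, $\lVert g\rVert_{\lebe^1(B_1)}\le|B_1|^{1/2}$. It suffices to show that for some $c_J=c_J(n)$ we have $\int_{B_1}E(a g/c_J)\le \lambda=E(a)$, since then $\lVert f\rVert_{\lebe^\Phi}\le c_J\lVert f\rVert_{\lebe^2}/a$ by the definition of the Luxemburg norm. Applying Proposition~\ref{prop:Eprop}\ref{it:Emulti} pointwise with $t=|g(x)|/c_J$ and $z=a$, we get
\[
E\Big(\frac{a g}{c_J}\Big)\le C\Big(\frac{|g|}{c_J}+\frac{|g|^2}{c_J^2}\Big)E(a).
\]
Integrating over $B_1$, and assuming $c_J\ge1$, this gives
\[
\int_{B_1}E\Big(\frac{a g}{c_J}\Big)\le \frac{C}{c_J}\big(|B_1|^{1/2}+1\big)\,\lambda .
\]
Choosing $c_J\ge C(|B_1|^{1/2}+1)$ makes the right-hand side at most $\lambda$, as required.

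I do not expect a real obstacle here. The only points needing care are that the constants are independent of $\lambda$, which is ensured by the concavity of $E^{-1}$ and the homogeneity-type bound on $E$, and the handling of the singular part in the measure case, which is settled by the Lipschitz property of $E$.
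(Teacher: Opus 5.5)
Your proof is correct. The lower bound follows the paper's argument: Jensen for $E$ on the normalized ball, then inverting $E$. The paper compresses the last step into ``rearranging''; your use of the concavity of $E^{-1}$, giving $E^{-1}(\kappa\lambda)\le\kappa E^{-1}(\lambda)$ for $\kappa\geq1$, is exactly what keeps the constant independent of $\lambda$.

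There are two real differences from the paper.

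First, for measures the paper appeals to a density argument. You instead argue directly, using that $E$ is $1$-Lipschitz to absorb $\lvert\mu^s\rvert(B_1)$. This is self-contained and avoids justifying how the Luxemburg norm and the total variation pass to the limit along mollifications.

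Second, for the upper bound the paper applies Jensen to the concave function $t\mapsto E(\sqrt{t})=\sqrt{1+t}-1$. This gives $\fint_{B_1}E(f/s)\lesssim E(\lVert f\rVert_{\lebe^2(B_1)}/s)$, and the paper then takes $s=\lVert f\rVert_{\lebe^2(B_1)}/E^{-1}(\lambda)$, tacitly using $E(z/C)\le E(z)/C$ to absorb the constant. You instead use the pointwise scaling bound of Proposition~\ref{prop:Eprop}\ref{it:Emulti} with $z=E^{-1}(\lambda)$, and control the linear term by H\"older.

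The paper's version mirrors the lower bound (convexity on one side, concavity on the other), and it is this form that is reused in the sketch of Theorem~\ref{thm:p_korn}. Your version needs only the two-sided growth $E(tz)\lesssim\max\{t,t^2\}E(z)$. It adapts just as easily, via $\lvert V_p(tz)\rvert^2\lesssim\max\{t^p,t^2\}\lvert V_p(z)\rvert^2$ together with H\"older between $\lebe^{p_0}$ and $\lebe^{p_1}$.
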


\begin{proof}
    By Jensen's inequality applied with $E(\cdot)$, for $s>0$ we have
    \begin{equation}
        \frac1{\lambda}E\bigg(\fint_{B_1} \lvert f \rvert/s \,\dd x\bigg) \leq \frac1{\lambda} \fint_{B_1} E(f/s)\,\dd x,
    \end{equation}
    and choosing $s = \lVert f \rVert_{\lebe^{\Phi}(B_1)}$ ensures the right-hand side is bounded by a constant, which implies that $ E(\lVert f \rVert_{\lebe^1(B_1)}/\lVert f \rVert_{\lebe^{\Phi}(B_1)} ) \lesssim \lambda$. Rearranging this gives the first inequality, which extends to measures by a density argument.
    For the second inequality we apply Jensen to $E(\sqrt{t}) = \sqrt{t+1}-1$, which is concave, to estimate
    \begin{equation}
        \fint_{B_1} E(f/s)\,\dd x \lesssim E\bigg(\sqrt{\fint_{B_1} \lvert f \rvert^2/s^2\,\dd x }\bigg) = E(\lVert f \rVert_{\lebe^2(B_1)}/s).
    \end{equation}
    Choosing $s$ so that $E(\lVert f \rVert_{\lebe^2(B_1)}/s) = \lambda$, we infer that $\lVert f \rVert_{\lebe^{\Phi}(B_1)} \lesssim s = \lVert f \rVert_{\lebe^2(B_1)} / E^{-1}(\lambda)$, establishing the result.
\end{proof}

\begin{proof}[Proof of Theorem {\ref{thm:B_poincare2}}]
    By rescaling, we can assume that $R=1$. Set $\Phi = \frac1{\lambda} E$ with $\lambda>0$ to be determined.
    Since $\B u \in \mathcal M(B_1)$, we have $\B u \in \sobo^{-n,2}(B_1)$ by Sobolev embedding with the estimate
    \begin{equation}\label{eq:Bu_negative_est}
        \lVert \B u \rVert_{\sobo^{-n,2}(B_1)} \leq c \lVert \B u \rVert_{\mathcal M(B_1)} \leq cE^{-1}(\lambda) \lVert \B u \rVert_{\lebe^{\Phi}(B_1)},
    \end{equation}
    where we used \eqref{eq:Phi_jensen} to obtain the last inequality.
    By applying Theorem~\ref{thm:pu_replacement} with $\mathscr P = (\mathscr B,\mathscr C^{\ast})$, $\theta_1 = \frac{1+\theta}2 \in (0,1)$ in place of $\theta$ and $s=-n$, there is $N \in \mathbb N$ and $v \in \sobo^{-n-N,2}(B_{\theta_1})$ satisfying $\mathscr B v = \mathscr B u$ and $\mathscr C^{\ast}v=0$ in $B_{\theta_1}$, and the estimate
    \begin{equation}\label{eq:v_negative_estimate}
        \lVert v \rVert_{\sobo^{-n-N,2}(B_{\theta_1})} \leq c\lVert \B u \rVert_{\sobo^{-n,2}(B_1)}
    \end{equation}
    holds. We will set $\nu := -(n +N + 1) \in \mathbb Z$,
    then since $\nu+1<0$ and $v \in \sobo^{\nu+1,2}(B_{\theta_1})$, for each $\lvert\alpha\rvert \leq -\nu-1$ we can find $v_{\alpha} \in \lebe^2(B_{\theta_1})$ such that 
    \begin{equation}
        v = \sum_{\lvert \alpha\rvert \leq -\nu-1} D^{\alpha}v_{\alpha}, \ \ \mbox{with}\ \  \lVert v \rVert_{\sobo^{\nu+1,2}(B_{\theta_1})} \leq c \sum_{\lvert \alpha\rvert \leq -\nu-1} \lVert v_{\alpha}\rVert_{\lebe^2(B_{\theta_1})}.
    \end{equation}
    Now using the second estimate in \eqref{eq:Phi_jensen} and applying Lemma~\ref{lem:negative_measure} to each $v_{\alpha}$, we have
    \begin{equation}
        \lVert v_{\alpha}\rVert_{\sobo^{-1,\Phi^q}(B_{\theta_1})} \leq c\lVert v_{\alpha}\rVert_{\lebe^{\Phi}(B_{\theta_1})} \leq \frac{c}{E^{-1}(\lambda)} \lVert v_{\alpha}\rVert_{\lebe^2(B_{\theta_1})}
    \end{equation}
    for each $1 < q < \frac{n}{n-1}$.
    Hence it follows that
    \begin{equation}\label{eq:v_negative_Jensen}
        \lVert v \rVert_{\sobo^{\nu,\Phi^q}(B_{\theta_1})} \leq c\sum_{\lvert \alpha \rvert \leq -\nu-1} \lVert v_{\alpha}\rVert_{\sobo^{-1,\Phi^q}(B_{\theta_1})} \leq \frac{c}{E^{-1}(\lambda)} \lVert v\rVert_{\sobo^{\nu+1,2}(B_{\theta_1})}.
    \end{equation}
    Combining the above estimates we infer that
    \begin{equation}
        \lVert v \rVert_{\sobo^{\nu,\Phi^q}(B_{\theta_1})}\leq c \lVert \B u \rVert_{\lebe^{\Phi}(B_1)},
    \end{equation}
    where the constant $c$ is independent of $\lambda$.
    By applying Lemma~\ref{lem:negative_measure} with $\mu = \B u$, we also have that
    \begin{equation}
         \lVert \B u \rVert_{\sobo^{-1,\Phi^q}(B_1)} \leq c\lVert \B u \rVert_{\lebe^{\Phi}(B_1)}.
    \end{equation}
    Thus by applying Lemma~\ref{lem:negative_norm_estimate} with $\varphi(t) = \Phi^q(t)$, we have $v \in \sobo^{k-1,\Phi^q}(B_{\theta})$ with the associated estimate
    \begin{equation}
        \lVert v \rVert_{\sobo^{k-1,\Phi^q}(B_{\theta})} \leq c \big(\lVert \B v \rVert_{\lebe^{\Phi}(B_1)} +  \lVert v \rVert_{\sobo^{\nu,\Phi^q}(B_{\theta_1})}\big) \leq c \lVert \B u \rVert_{\lebe^{\Phi}(B_1)}.
    \end{equation}
    Now choose $\lambda = \int_{B_1} E(\B u)$, then we have $\normm{\B u}_{\lebe^{\Phi}(B_1)}=1$ and thus $\normm{v}_{\sobo^{k-1,\Phi^q}(B_{\theta})}\leq c$. By the definitions of $\Phi$ and the Orlicz norm, and Proposition \ref{prop:Eprop}\ref{it:Emulti}, we arrive at the desired the modular estimate
    \begin{equation}
        \sum_{j=0}^{k-1} \left(\int_{B_{\theta}} E(D^jv)^q \,\dd x\right)^{\frac1q} \leq c \int_{B_1} E(\B u).
    \end{equation}
    
    For the fractional estimate, let $s \in (0,1)$ and $1 < p < \frac{n}{n-s}$. Then by \eqref{eq:Bu_negative_est}, \eqref{eq:v_negative_estimate} we have
    \begin{equation}
        \lVert v \rVert_{\sobo^{\nu,p}(B_{\theta_1})} \leq c \lVert \B u \rVert_{\sobo^{-n,2}(B_1)} \leq c \lVert \B u \rVert_{\mathcal M(B_1)},
    \end{equation}
    noting that $p \leq 2$.
    Let $\theta_2 =\frac{\theta+\theta_1}2$, so we have $\theta < \theta_2<\theta_1$.
    Then by applying Lemma~\ref{lem:negative_norm_estimate} with $\varphi(t) = t^p$ along with the embedding $\mathcal M(B_1) \hookrightarrow \sobo^{-1,p}(B_1)$, we have $v \in \sobo^{k-1,p}(B_{\theta_2})$ with the associated estimate
    \begin{equation}
        \lVert v \rVert_{\sobo^{k-1,p}(B_{\theta_2})} \leq c (\lVert \B v \rVert_{\sobo^{-1,p}(B_1)} + \lVert v \rVert_{\sobo^{\nu,p}(B_{\theta_1})}) \leq c\lVert \B u \rVert_{\mathcal M(B_1)}.
    \end{equation}
    Then by applying Proposition~\ref{prop:endpoint_fractional} we have $v \in \sobo^{k-s,p}(B_{1/2})$ and the estimate
    \begin{equation}
        \lVert v \rVert_{\sobo^{k-1,p}(B_{\theta})} \leq c( \lVert \B u \rVert_{\mathcal M(B_1)} + \lVert v \rVert_{\sobo^{k-1,p}(B_{\theta_2})}) \leq c\lVert \B u \rVert_{\mathcal M(B_1)},
    \end{equation}
    from which the result follows by taking $\pi = u-v$.
\end{proof}

\begin{proof}[Proof of Theorem \ref{thm:p_korn} (sketch)]
    As in the proof of Theorem \ref{thm:B_poincare2}, we can assume $R=1$. For the modular estimate, we will write $E_p(t) = \lvert V_p(t)\rvert^2$ and put $\Phi(t) = \frac1{\lambda} E_p(t)$ for $\lambda>0$ to be determined.
    We will also set $p_0 = \min\{2,p\}$ and $p_1 = \max\{2,p\}$.
    Then since $\Phi(t^{1/p_0})$ is equivalent to a convex function and $\Phi(t^{1/p_1})$ to a concave function, arguing as in Lemma~\ref{lem:jensen_trick} we have
    \begin{equation}\label{eq:Vp_jensen}
        \frac{c^{-1}}{E_p^{-1}(\lambda)} \lVert f \rVert_{\lebe^{p_0}(B_1)} \leq \lVert f \rVert_{\lebe^{\Phi}(B_1)} \leq \frac{c}{E_p^{-1}(\lambda)} \lVert f \rVert_{\lebe^{p_1}(B_1)}
    \end{equation}
    for all $f \in \lebe^{p_1}(B_1)$.
    Since $\B u \in \lebe^p(B_1)$, by Sobolev embedding \eqref{eq:Vp_jensen}${}_1$ we estimate
    \begin{equation}
        \lVert \B u \rVert_{\sobo^{-n,2}(B_1)} \leq c \lVert \B u \rVert_{\lebe^{p_0}(B_1)} \leq c E_p^{-1}(\lambda) \lVert \B u \rVert_{\lebe^{\Phi}(B_1)}.
    \end{equation}
    Setting $\theta_1=\frac{1+\theta}2$, we can use Theorem~\ref{thm:pu_replacement} there is $N \in \mathbb N$ and $v \in \sobo^{-n-N,2}(B_{\theta_1})$ with $\B v = \B u$ and $\C^{\ast}v=0$ in $B_{\theta_1}$ such that 
    \begin{equation}\lVert v \rVert_{\sobo^{-n-N,2}(B_{\theta_1})} \leq c \rVert \B u \rVert_{\sobo^{-n,2}(B_1)}.\end{equation}
    We will also choose $m_1 \in \mathbb N$ such that $2n > p_0(n-2m_1)$, which ensures that $\sobo^{m_1,2}(B_{\theta_1}) \hookrightarrow \lebe^{p_1}(B_{\theta_1})$, and set $\nu = -(n+N+m_1)$.
    Arguing as in the proof of \eqref{eq:v_negative_Jensen}, and using \eqref{eq:Vp_jensen} and the above estimates give
    \begin{equation}
        E_p^{-1}(\lambda)\lVert v \rVert_{\sobo^{-n-N-m_1,\Phi}(B_{\theta_1})} \leq c \lVert v \rVert_{\sobo^{-n-N,2}(B_{\theta_1})} \leq c E_p^{-1}(\lambda) \lVert \B u \rVert_{\lebe^{\Phi}(B_1)}.
    \end{equation}
    Setting $\theta_2 = \frac{\theta+\theta_1}2$ we can then apply Lemma~\ref{lem:negative_norm_estimate} with this $\Phi$ to infer that
    \begin{equation}
        \lVert v \rVert_{\sobo^{k-1,\Phi}(B_{\theta_2})} \leq c (\lVert v \rVert_{\sobo^{-n-N-1,\Phi}(B_{\theta_1})} + \lVert \B u \rVert_{\lebe^{\Phi}(B_{\theta_1})}) \leq c\lVert \B u \rVert_{\lebe^{\Phi}(B_1)},
    \end{equation}
    where the constant $c$ is independent of $\lambda$.
    Taking $\lambda = \int_{B_1} E_p(\B u) \,\dd x$ we obtain the modular estimate
    \begin{equation}
        \sum_{j=0}^{k-1} \int_{B_{\theta_2}} \lvert V_p(D^jv) \rvert^2 \,\dd x \leq c \int_{B_1} \lvert V_p(\B u)\rvert^2 \,\dd x
    \end{equation}
    and the final derivative can be estimated by applying \cite[Proposition 5.1]{LiRaita24}. Therefore \eqref{eq:p_modular_poincare} following by taking $\pi = u-v$.
    The norm estimate follows by taking $\Phi(t) = t^p$ instead.
\end{proof}

\section{Partial regularity}\label{sec:partialreg}

In this section we will establish the main partial regularity theorem.  
Throughout this section, $\B$ will be a constant rank operator of order $k$ with potential operator $\C$ of order $l>k$. 
By replacing $\C$ with $\Delta^q\C$ for sufficiently large $q$ if necessary, we will moreover assume that $l > k$.
The local minimisers we consider will additionally satisfy $\C^{\ast}u=0$ in $\Omega$; in Section~\ref{subsec:reduction} we will show how to locally reduce to this case, where the proof of Theorem~\ref{thm:main} will be presented.

We will establish the following $\eps$-regularity theorem:

\begin{theorem}\label{thm:partial_reg}
    Let $f$ satisfy \textnormal{\ref{it:fgrowth}}, \textnormal{\ref{it:fcontinuity}}, \textnormal{\ref{it:fBqcstrong}} and \textnormal{\ref{it:frecession}}, and let $u \in \sobo^{k-1,1}(\Omega)$ with $\B u\in \mathcal{M}(\Omega)$ and $\C^{\ast}u=0$ be a local minimiser of \eqref{eq:P_Omega}.
    Then if $\Omega' \Subset \Omega$, $M>0$ and $\alpha \in (0,1)$, there is $\eps>0$ and $R_1>0$ such that if
    $x \in \Omega'$ and $0<R<R_1$ such that
    \begin{equation}
        \lvert (\B u)_{B_R(x_0)}\rvert \leq M, \quad \fint_{B_R(x)} E(\B u - (\B u)_{B_R(x)}) < \eps,
    \end{equation}
    we have $u \in \CC^{k,\alpha}(\overline B_{R/2}(x))$.
\end{theorem}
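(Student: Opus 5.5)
Our plan is to run the $A$-harmonic approximation scheme of \cite{DGK05,GmeKri_BV}. The new inputs are the modular Poincar\'e inequality of Theorem~\ref{thm:B_poincare2} in place of the classical one, and an $\lebe^2$ solvability result for the linearised system in place of explicit elliptic theory. The central object is the excess
\[
\Phi(x_0,r)=\fint_{B_r(x_0)}E\bigl(\B u-(\B u)_{x_0,r}\bigr).
\]
The goal is a decay estimate: there are $\tau\in(0,1/4)$ and $\eps_0>0$ such that $|(\B u)_{x_0,r}|\leq 2M$ and $\Phi(x_0,r)<\eps_0$ imply
\[
\Phi(x_0,\tau r)\leq c_*\tau^{2}\Phi(x_0,r)+c\,r^{2}.
\]
The $r^2$ term comes from the Lipschitz $x$-dependence in \ref{it:fcontinuity}. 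Given this, a standard iteration keeps the averages bounded by $2M$, provided $\eps$ and $R_1$ are small. It then yields $\Phi(x,\rho)\lesssim(\rho/R)^{2\alpha}(\Phi(x_0,R)+R^{2\alpha})$ for all $x\in B_{R/2}(x_0)$ and $\rho<R/2$. By Lemma~\ref{lem:Eint_est} this gives a Campanato estimate $\fint_{B_\rho}|\B u-(\B u)_{x,\rho}|\lesssim\rho^{\alpha}$. In particular $\B u$ has no singular part there and is $\CC^{0,\alpha}$.

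To upgrade to $u\in\CC^{k,\alpha}$, we use $\C^{\ast}u=0$ together with ellipticity of the pair $(\B,\C^{\ast})$. This means $D^k u$ is a zeroth order singular integral of $(\B u,\C^*u)$ modulo smooth terms, via the decomposition $\widehat{\rho u}=\B^\dagger\widehat{\B(\rho u)}+\C^{\ast\dagger}\widehat{\C^\ast(\rho u)}$ used in Lemma~\ref{lem:negative_norm_estimate}. Campanato estimates transfer through such operators.

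For the decay estimate, we fix $x_0,r$ and $z_0=(\B u)_{x_0,r}$, and take the $k$-polynomial $a$ from Lemma~\ref{lem:B_poly} with $\B a=z_0$. By Theorem~\ref{thm:B_poincare2}, applied to $u-a$ (after subtracting the $\C^*$-free part of $a$, or noting $\C^*a$ is harmless by degree since $l>k$), we get $\pi$ and set $w=u-a-\pi$. Then $\B w=\B u-z_0$ and $\C^*w=0$, and
\[
\sum_{j<k}\Bigl(\fint E\bigl(D^jw/r^{k-j}\bigr)^q\Bigr)^{1/q}\lesssim\Phi(x_0,r).
\]

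Step one is approximate $\mathbb A$-harmonicity, with $\mathbb A=\partial_z^2f(x_0,z_0)$. We test the Euler--Lagrange inequality with $\varphi\in\ccinfty$. Lemma~\ref{lem:shifted_continuity}, \eqref{eq:loc_cont} and the $x$-Lipschitz bound give
\[
\Bigl|\fint\mathbb A[\B w,\B\varphi]\Bigr|\lesssim\bigl(\omega(\Phi^{1/2})\Phi^{1/2}+\Phi+r\bigr)\|\B\varphi\|_\infty,
\]
where we split into the regions $|\B w|\leq1$ and $|\B w|>1$ as in \cite{GmeKri_BV}. The Euler--Lagrange equation for measures is justified via Lemma~\ref{lem:strict_cont} and area-strict mollification.

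Step two, the main obstacle, is the $\mathbb A$-harmonic approximation lemma for the operator $\B^*\mathbb A\B$. This operator is not elliptic, but it is Legendre--Hadamard on $\Lambda$ by \eqref{eq:ellipticity}, so $\B^*\mathbb A\B+\C\C^*$ is elliptic. We would prove the lemma by contradiction-free direct construction: solve $\B^*\mathbb A\B h+\C\C^*h=0$ in $B_{r/2}$ with $h=w$ near the boundary (after mollification), using the $\lebe^2$ Dirichlet theory for this elliptic system (the forthcoming Lemma~\ref{lem:l2_linear_solvability}). We then estimate $w-h$ by duality. The catch is that $w$ is only in $\sobo^{k-1,q}$ with modular control. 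We therefore compare in the quantity $E$ using the Lipschitz truncation or Sobolev-dual argument of \cite{diening2012partial}, and this is exactly where the modular inequality \eqref{eq:modular-poincare} replaces Poincar\'e--Sobolev. Interior estimates for the constant-coefficient elliptic $h$ give $\sup_{B_{\tau r}}|\B h-(\B h)_{\tau r}|\lesssim\tau\,\Phi^{1/2}$ (after normalisation).

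Step three is a Caccioppoli inequality of second kind. Comparing $u$ with $u-\eta^k(w-\text{affine correction})$ using \eqref{eq:pshiftedqc} and \eqref{eq:pshiftedup} gives
\[
\fint_{B_{r/2}}E(\B u-z)\lesssim\fint_{B_r}E\Bigl(\sum_j\frac{D^j(u-a_z-\pi)}{r^{k-j}}\Bigr)+r^2.
\]
The lower-order terms are again controlled by Theorem~\ref{thm:B_poincare2}, and $x$-dependence is handled by freezing at $x_0$. Combining steps one to three with $z=(\B h)$-affine data, and choosing $\tau$ then $\eps_0$ small, yields the decay. We expect step two to be where most of the genuine work lies.
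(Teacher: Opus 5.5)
The parts of your proposal outside Step two follow the paper: a Caccioppoli inequality with a $k$-polynomial and hole-filling, almost $\mathbb A$-harmonicity from the Euler--Lagrange system, excess decay and iteration, then $\CC^{k,\alpha}$ from $\C^*u=0$ via multipliers. Carrying $+cr^2$ instead of putting $R^{2\alpha}$ into the excess is cosmetic, provided you normalise with $\gamma^2\gtrsim\Phi+r^2/\delta^2$ so that the $O(r)$ error of Step one is at most $\gamma\delta$.

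The gap is Step two. You correctly identify it as the crux, but the direct construction you sketch does not go through.

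\emph{The auxiliary system.} $\B^*\mathbb A\B+\C\C^*$ has mixed orders $2k<2l$. Its top-order symbol $\C(\xi)\C(\xi)^*$ annihilates $\ker\C(\xi)^*=\ima\B(\xi)^*\neq\{0\}$, so there is no standard Dirichlet theory to invoke. Lemma~\ref{lem:l2_linear_solvability} is also not a Dirichlet solver with prescribed data. It only perturbs an already $A$-harmonic, $\C^*$-free map into an $\tilde A$-harmonic one, with $\lebe^2$ control by $\lvert A-\tilde A\rvert$.

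\emph{The comparison estimate.} This is the more serious problem. You could build $h$ with ``boundary values'' $w_\eps$ variationally in $\overline{\{\B\varphi\}}^{\lebe^2}$, as in that lemma's proof. But such an $h$ is bounded only by $\lVert\B w_\eps\rVert_{\lebe^2}$, which $\Phi$ does not control: $\B w$ is merely a measure and $E$ is linear at infinity. Comparing $w$ and $h$ in the $E$-modular by duality needs dual test functions with $\sup\lvert\B\varphi\rvert$ under control. In the gradient case this comes from $\lebe^p$ theory for the dual Dirichlet problem plus Lipschitz truncation of $D^k\varphi$. Here neither is available:
\begin{itemize}
\item $\lebe^p$ estimates on balls for constant-rank operators are open (see the open problem in the introduction).
\item For compactly supported potentials, $D^k\varphi$ is not controlled by $\B\varphi$, because the gauge $\C^*\varphi=0$ cannot in general be imposed with compact support. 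A truncation would therefore have to act on $\B$-gradients themselves, and no such truncation is known for general constant-rank operators.
\end{itemize}
The good-sphere/trace route of \cite{GmeKri_BV} fails for the same reason, since $D^ku$ need not be a measure. The modular inequality \eqref{eq:modular-poincare} does not substitute for any of this.

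\emph{How the paper proceeds.} It avoids boundary value problems altogether by proving Lemma~\ref{lem:AHapproximation2} by contradiction.
\begin{enumerate}
\item Take a failing sequence $(w_m,A_m,\gamma_m)$ with $A_m\to A$. Subtract $\pi_m$ from Theorem~\ref{thm:B_poincare2} and set $v_m=(w_m-\pi_m)/\gamma_m$.
\item The fractional bound $\lVert w_m-\pi_m\rVert_{\sobo^{k-s,p}(B_{3/4})}\lesssim\gamma_m$ gives compactness in $\sobo^{k-1,p}$ and a.e.\ convergence to an $A$-harmonic, $\C^*$-free limit $h$. This $h$ is smooth with interior estimates by \cite[Theorem~5.2]{LiRaita24}.
\item Lemma~\ref{lem:l2_linear_solvability} is used only to replace $h$ by $A_m$-harmonic maps $\tilde h_m\to h$ in $\sobo^{k,2}(B_{1/2})$.
\item The $E^q$-bound with $q>1$ in \eqref{eq:modular-poincare} makes $\gamma_m^{-2}E(D^i(w_m-\gamma_m h-\pi_m))$ equi-integrable, so Vitali's theorem gives the contradiction.
\end{enumerate}
Compactness, together with the fractional and modular estimates of Theorem~\ref{thm:B_poincare2}, is the idea your Step two is missing.
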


The proof of Theorem~\ref{thm:partial_reg} will involve establishing decay estimates for the \emph{excess energy}, which for fixed $\alpha \in (0,1)$ is defined as 
    \begin{equation} \label{eq:AHexcess}
    \begin{split}
        \EE(u,x_0,R)&\coloneqq  
        R^{2\alpha}+\fint_{B_R(x_0)} E(\B^a u-(\B u)_{x_0,R}) \,\dd x + \fint_{B_R(x_0)} \lvert \B^su\rvert.
        \end{split}
    \end{equation}

\subsection{Caccioppoli-type inequality}
\par The following Caccioppoli-type inequality is an analogue of \cite{Evans86}.
We remark that we do not use the constant rank property in the below proof, so the result holds for any homogeneous and constant-coefficient $\B$ under the corresponding quasiconvexity assumption \ref{it:fBqcstrong}.

\begin{proposition}\label{prop:Caccioppoli}
     Suppose that the integrand $f$ satisfies the  assumptions \textnormal{\ref{it:fgrowth}}, \textnormal{\ref{it:fcontinuity}}, \textnormal{\ref{it:fBqcstrong}} and \textnormal{\ref{it:frecession}}, $u \in \sobo^{k-1,1}(\Omega,U)$ such that $\B u\in \mathcal{M}(\Omega,V)$ is a local minimizer of \eqref{eq:P_Omega}, and the map $a\col \Omega \to V$ is a polynomial of order $k$ with $\abss{\B a}\leq M$ for some $M>0$. 
     Then for any $\tau \in (0,1)$ and $B_R(x_0) \Subset \Omega$, we have the estimate
        \begin{multline}\label{eq:BCaccioppoli}
            \int_{B_{\frac{R}2}(x_0)}E(\B(u-a))   \leq c \sum_{i=0}^{k-1} \int_{B_R(x_0)} E\left(\frac{D^{i}(u-a)}{R^{k-i}}\right) \dif x\\ 
            +CR\sum_{i=0}^{k-1} \int_{B_R(x_0)} \frac{\abss{D^i(u-a)}}{R^{k-i}}\dif x + CR \int_{B_R(x_0)} \abss{\B (u-a)},
        \end{multline} 
        where $c=c(n,V,\B,M,L,\ell,L_{x_0,R,M+1})>0$.
\end{proposition}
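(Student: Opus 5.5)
The plan follows Evans' Caccioppoli argument in the linear-growth form of \cite{GmeKri_BV}, with the shifted integrand $f_w$ from \eqref{eq:shifted_int}, where $w=\B a$ is constant because $a$ is a polynomial of degree $k$. We also freeze the $x$-variable at $x_0$ to reach quasiconvexity. Throughout, write $w=\B a$, so that $|w|\le M$, and set $\tilde u = u-a$. Then $\B\tilde u = \B u - w$.

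\textbf{Step 1: rescaled cutoffs and the test function.} Fix radii $R/2\le s<t\le R$ and a cutoff $\rho\in\CC^\infty_\rmc(B_t)$ with $\indi_{B_s}\le\rho\le 1$ and $|D^j\rho|\lesssim (t-s)^{-j}$. We use $\phi=\rho\tilde u$ (after mollification, with area-strict approximation justified by Lemma~\ref{lem:strict_cont}) both as a competitor and as the quasiconvexity test function. Leibniz' rule gives
\[
\B(\rho\tilde u)=\rho\B\tilde u+\sum_{i=0}^{k-1} D^{k-i}\rho \ast D^i\tilde u ,
\]
with lower-order commutator terms bounded pointwise by $\sum_i |D^i\tilde u|(t-s)^{-(k-i)}$.

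\textbf{Step 2: quasiconvexity at $x_0$.} Applying \eqref{eq:pshiftedqc}, the $\B$-version of Lemma~\ref{lem:shifted_estimates} with $x=x_0$ frozen, to $\varphi=\B(\rho\tilde u)$ gives
\[
\int_{B_s}E(\B\tilde u)\le c\int_{B_t} f_w(x_0,\B\phi).
\]

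\textbf{Step 3: move from $x_0$ to variable $x$.} Next we compare $\int f_w(x_0,\B\phi)$ with $\int f_w(x,\B\phi)$. The difference $f_w(x_0,z)-f_w(x,z)$ is estimated through \ref{it:fcontinuity}. The Lipschitz bound on $\partial_z f$ in $x$ gives
\[
|f(x_0,z+w)-f(x_0,w)-f(x,z+w)+f(x,w)|\le L|x-x_0||z|,
\]
by integrating $\partial_z f$ along the segment, and the linear term contributes $L|x-x_0||z|$ as well. This produces the error $CR\int|\B\phi|$, which by Step 1 is bounded by $CR\int_{B_R}|\B\tilde u| + CR\sum_i\int |D^i\tilde u|/R^{k-i}$ once $t-s\sim R$. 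This error term is the source of the last two terms in \eqref{eq:BCaccioppoli}.

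\textbf{Step 4: minimality.} Local minimality gives $\mathcal F(u)\le\mathcal F(u-\phi)$ on $B_t$. In shifted form, after subtracting the affine part (whose $x$-dependence is again handled by the Lipschitz bound and by $\int\B\phi$ terms, integrating by parts where needed, or else absorbed into the same $CR$ error), this reads
\[
\int_{B_t} f_w(x,\B\tilde u)\le\int_{B_t} f_w(x,\B\tilde u-\B\phi).
\]
On $B_s$ we have $\B\tilde u-\B\phi=0$. On $B_t\setminus B_s$ we have $\B\tilde u-\B\phi=(1-\rho)\B\tilde u-\text{commutators}$. By the upper bound \eqref{eq:pshiftedup}, namely $|f_w(x,z)|\le c_1E(z)$ for all $x\in B_R(x_0)$ (this holds here since the $\B$-setting has no spanning cone restriction), we obtain
\[
\int_{B_t}f_w(x,\B\phi)\le c\int_{B_t\setminus B_s}E(\B\tilde u)+c\sum_i\int_{B_R}E\Big(\frac{D^i\tilde u}{t-s}\Big)^{\!\cdot}+\text{errors},
\]
where we split $f_w(x,\B\phi)$ over $B_s$ and the annulus and use Proposition~\ref{prop:Eprop}\ref{it:Esum}. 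On $B_s$ we have $f_w(x,\B\phi)=f_w(x,\B\tilde u)$, which minimality controls.

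\textbf{Step 5: hole-filling and iteration.} Combining Steps 2–4 gives
\[
\int_{B_s}E(\B\tilde u)\le c\int_{B_t\setminus B_s}E(\B\tilde u)+c\sum_i\int_{B_R}E\big(D^i\tilde u/(t-s)^{k-i}\big)+CR(\cdots).
\]
Widman's hole-filling trick, adding $c\int_{B_s}$ to both sides, yields $\int_{B_s}\le\theta\int_{B_t}+\dots$ with $\theta=c/(c+1)<1$. The standard iteration lemma then gives the claim on $B_{R/2}$. Proposition~\ref{prop:Eprop}\ref{it:Emulti} converts the factors $(t-s)^{-(k-i)}$ into powers that the iteration lemma absorbs, at the cost of polynomial factors $(R/(t-s))^2$.

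\textbf{Main obstacle.} The delicate step is the measure-valued part. Here $\B u$ may have a singular part, so the comparison in Step 4 must be made with the extended functional, whose singular part enters through $f^\infty(x,\cdot)$. Making the mollified test functions legitimate requires Lemma~\ref{lem:strict_cont}, which uses \ref{it:frecession}, together with choosing generic radii $s,t$ such that $|\B u|(\partial B_s\cup\partial B_t)=0$. The $x$-freezing errors must also be kept strictly linear in $|\B\phi|$, rather than in $E$, which is why they appear with the factor $R$ and not inside $E$.
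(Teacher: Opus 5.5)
Your Steps~1--5 are the paper's argument with the $x$-dependence arranged differently. The paper freezes the shifted integrand at $x_0$ throughout and collects the whole freezing error into one term $I\!I+I\!I\!I\le ct\int\lvert\B\varphi\rvert$, using the Lipschitz bound in \ref{it:fcontinuity}. You instead first pass from $f_w(x_0,\cdot)$ to $f_w(x,\cdot)$ and then absorb the $x$-dependence of the affine part. The quasiconvexity test, the bound $\lvert f_w\rvert\lesssim E$ on the annulus, hole filling and iteration are the same.

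\textbf{The gap.} It is the step you flag as the main obstacle. Local minimality \eqref{eq:local_min_B'} only tests $\phi\in\CC^\infty_{\rmc}$, and area-strict approximation does not transfer it to $\phi=\rho(u-a)$. For smooth $\phi_m$ the singular part of $\B(u-\phi_m)$ is always $\B^s u$. By contrast, $\B(u-\phi)$ has singular part $(1-\rho)\B^s u$. Lower semicontinuity of the area functional then shows that if $\B(u-\phi_m)\weaksto\B(u-\phi)$, the area of $\B(u-\phi_m)$ exceeds that of $\B(u-\phi)$ in the limit by at least $2\int\rho\,\dd\lvert\B^s u\rvert$. So Lemma~\ref{lem:strict_cont} gives nothing for the comparison $\mathcal F(u)\le\mathcal F(u-\phi)$ whenever $\B^s u$ charges $\{\rho>0\}$. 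It legitimately handles only your Steps~2 and~3, where you approximate $\phi$ alone inside a fixed integrand.

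\textbf{The estimate fails for $\CC^\infty_{\rmc}$-local minimizers.} Take $n=k=1$, $\B=D$, $\Omega=(-1,1)$ and $f(z)=\sqrt{1+z^2}$, which satisfies \ref{it:fgrowth}--\ref{it:frecession} with $\ell=\tfrac12$. Let $u=h\indi_{(0,1)}$ with $h>0$. Since $\B u=h\delta_0$ is purely singular, $\mathcal F(u+\phi)-\mathcal F(u)=\int_{-1}^{1}(\sqrt{1+\phi'^2}-1)\,\dd x\ge 0$ for all $\phi\in\CC^\infty_{\rmc}(\Omega)$, so $u$ is a local minimizer in the paper's sense. With $x_0=0$ and $a=0$, the left-hand side of \eqref{eq:BCaccioppoli} equals $h$. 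The right-hand side equals $cR\,E(h/R)+2CRh\le ch^2/(2R)+2CRh$, which is $<h$ once $R<1/(4C)$ and $h<R/c$. Placing atoms at a dense set of points even contradicts Theorem~\ref{thm:main}.

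The paper's proof makes exactly the same move. It writes $\int_{B_t}f(x,\B u)\le\int_{B_t}f(x,\B u-\B\varphi)$ with $\varphi=\rho\tilde u$, and its density argument only treats $I\!I+I\!I\!I$. So the reference argument has the same hole.

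\textbf{What is missing.} The hypothesis must be strengthened to minimality against all compactly supported $\phi$ with $\B\phi\in\mathcal M$, the BV-minimizer notion of \cite{GmeKri_BV}. Alternatively, one can assume $\B u\in\lebe^1$; then the extension to $\phi=\rho(u-a)$ follows from $\lebe^1$-approximation of $\B\phi$ and the Lipschitz bound \eqref{eq:fz_bounded}. Under either assumption your argument goes through as written.
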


\begin{proof}
    Fix a ball $B_R \coloneqq B_R(x_0)\Subset \Omega$ and $s,t>0$ such that $\frac{R}{2}<s<t<R$. Take a cut-off function $\rho \in \CC_{\rmc}^{\infty}(B_t)$ with \[ \mathbbm{1}_{B_s} \leq \rho \leq \mathbbm{1}_{B_t}, \quad\abss{D^i\rho}\leq c_i(t-s)^{-i} \ \ \mbox{for all } i=1,\dots,k.\] 
    Given $a$ as in the assumption, let $\tilde{f} = f_{\B a}$ be the shifted integrand as in \eqref{eq:shifted_int}. 
    We will also put $\tilde{u}:=u-a$, $\varphi:= \rho \tilde{u}$ and $\psi := (1-\rho)\tilde{u} = \tilde{u}-\varphi$. 
    By the strong quasiconvexity estimate \eqref{eq:pshiftedqc} from Lemma~\ref{lem:shifted_estimates} applied with $\varphi$ and noting that $\varphi \equiv \tilde u$ on $B_s$, we have
    \begin{align*}
        \int_{B_s} E(\B \tilde{u}) &\leq \int_{B_t} E(\B \varphi) \lesssim_{M,\ell} \int_{B_t} \tilde{f}(\B \varphi) \\
        &=\int_{B_t} \tilde{f}(\B \tilde{u}) + \int_{B_t} (\tilde{f}(\B \tilde{u} -\B \psi)-\tilde{f}(\B \tilde{u})),
    \end{align*} 
    where we used that $\varphi = \tilde u - \psi$ in the last line.
    By minimality of $u$, we can estimate the first term as
    \begin{equation*}\begin{split}
        \int_{B_t} \tilde{f}(\B \tilde{u}) &= \int_{B_t}f(x,\B u) + \int_{B_t} (\tilde{f}(\B \tilde{u}) -f(x,\B u)) \\
        &\leq \int_{B_t}f(x,\B u-\B \varphi) + \int_{B_t} (\tilde{f}(\B \tilde{u}) -f(x,\B u))\\
        &= \int_{B_t} \tilde f(\B \psi) + \int_{B_t} (\tilde f(\B \tilde u) - f(x,\B u)) + \int_{B_t} (f(x,\B\psi + \B a) - \tilde f(\B \psi)),
    \end{split}\end{equation*}
   since $\psi + a = u - \varphi$.

    Using the definition of $\tilde f$, and noting that $\tilde{u}-\psi=\varphi$ is compactly supported in $B_t$, we can write
    \begin{equation*}
        \int_{B_t} (\tilde f(\B \tilde u) - \tilde f(\B \psi)) = \int_{B_t} (f(x_0,\B u)-f(x_0,\B \psi+\B a)) + \partial_zf(x_0,\B a) \cdot\underbrace{\int_{B_t} \B(\tilde u-\psi)}_{=0}.
    \end{equation*}
    Combining the above estimates, we obtain
    \begin{align}
        \int_{B_s} E(\B \tilde{u})
        &\lesssim \left[\int_{B_t} \tilde{f}(\B \psi) + \int_{B_t}(f(x_0,\B u) -f(x_0,\B \psi + \B a))\right. \label{eq:CaccioppoliSupmid1}\notag\\
        &\quad\left.+\int_{B_t}(f(x,\B \psi + \B a) -f(x,\B u)) +\int_{B_t} (\tilde{f}(\B \tilde{u}-\B \psi) -\tilde{f}(\B \tilde{u})\right] \\
        &=:c\:\!(I +I\!I+I\!I\!I+I\!V).\notag
    \end{align} 
    For the first term, we will use \eqref{eq:pshiftedup} from Lemma~\ref{lem:shifted_estimates} and that $\psi \equiv 0$ on $B_s$ to estimate
    \begin{equation} \label{eq:CaccioppoliSupmid2}
        I \leq c \int_{B_t\setminus B_s}E(\B \psi) \leq c \int_{B_t\setminus B_s}E(\B \tilde{u}) +c\sum_{i=0}^{k-1}\int_{B_t}E\brac{\frac{D^{i}\tilde{u})}{(t-s)^{k-i}}} \dif x. 
    \end{equation} 
    For the next two terms, we claim that
    \begin{equation}\label{eq:CaccioppoliSupmid3}
        I\!I+I\!I\!I \leq c\:\!t\int_{B_t} \abss{\B \tilde{u}} \dif x + c\:\!t\sum_{i=0}^{k-1}\int_{B_t} \frac{\abss{D^{i}\tilde{u}}}{(t-s)^{k-i}}\dif x.
    \end{equation}
    We will first prove this assuming $\B u$ has no singular part, and extend the result by density.
    By mollification, we can find a sequence $\{u_m\} \subset \CC^{\infty}(\overline B_t)$ such that $\B u_m \to \B u$ area-strictly in $B_t$ and $u_m \to u$ strongly in $\sobo^{k-1,1}(B_t)$.
    Then using \ref{it:fcontinuity} we estimate
 
    \begin{align*}
    \begin{split}
    &\ \int_{B_t}(f(x_0,\B u_m) -f(x_0,\B \psi_m + \B a))\dif x+\int_{B_t}(f(x,\B \psi_m + \B a) -f(x,\B u_m)) \dif x\\
    =&\ \int_{B_t} \int_0^1(\partial_z f(x_0,\B u_m-\tau \B \varphi_m) -\partial_z f(x,\B u_m-\tau \B \varphi_m))\cdot \B \varphi_m\dif \tau \dif x \\
    \leq&\ c\int_{B_t}\int_0^1 \abss{x-x_0}\abss{\B \varphi_m}\dif x \leq c\:\!t\int_{B_t} \abss{\B \varphi_m}\dif x \\
    \leq&\ c\:\!t\int_{B_t} \abss{\B (u_m-a)} \dif x + c\:\!t\sum_{i=0}^{k-1}\int_{B_t} \frac{\abss{D^{i}(u_m-a)}}{(t-s)^{k-i}}\dif x,
    \end{split}
    \end{align*} 
    where $\varphi_m = \rho (u_m-a)$ and $\psi_m = (1-\rho)(u_m-a)$.
    The passage to the limit follows by applying Lemma~\ref{lem:strict_cont}, which relies on \ref{it:frecession}.
    
 For $I\!V$, notice that the integrand vanishes on $B_s$. 
We consider the absolutely continuous and singular parts separately, and use \eqref{eq:pshiftedup} to obtain
    \begin{equation}\label{eq:CaccioppoliSupmid4} \begin{split}
    I\!V &\leq c\int_{B_t\setminus B_s} E(\B^a\tilde u) + E(\B \psi) \dif x + c\int_{B_t\setminus B_s} \abss{\B^s \tilde{u}}\\
    &\leq c \int_{B_t\setminus B_s} E(\B \tilde{u})  + c\sum_{i=0}^{k-1}\int_{B_t}E\brac{\frac{D^{i}\tilde{u}}{(t-s)^{k-i}}}\dif x. 
    \end{split}
    \end{equation}
    \par Then \eqref{eq:CaccioppoliSupmid1}-\eqref{eq:CaccioppoliSupmid4} together imply 
    \begin{align*}
        \int_{B_s} E(\B \tilde{u}) &\leq c_1\int_{B_t\setminus B_s}E(\B \tilde{u}) + c\sum_{i=0}^{k-1}\int_{B_t}E\brac{\frac{D^{i}\tilde{u}}{(t-s)^{k-i}}}\dif x \\
        &\quad+ c\:\!t\brac{\int_{B_t} \abss{\B \tilde{u}}+\sum_{i=0}^{k-1}\int_{B_t} \frac{\abss{D^{i}\tilde{u}}}{(t-s)^{k-i}}\dif x }, 
    \end{align*} and we add $c_1\int_{B_s} E(\B \tilde{u})$ to both sides to get
    \begin{multline*}
        \int_{B_s} E(\B \tilde{u}) \leq \frac{c_1}{c_1+1}\int_{B_t}E(\B \tilde{u})  \\
        + \frac{c_1}{c_1+1}\sum_{i=0}^{k-1}\int_{B_t}E\brac{\frac{D^{i}\tilde{u}}{(t-s)^{k-i}}}\dif x +\frac{c_1}{c_1+1}t\brac{\int_{B_t} \abss{\B \tilde{u}} +\sum_{i=0}^{k-1}\int_{B_t} \frac{\abss{D^{i}\tilde{u}}}{(t-s)^{k-i}}\dif x } .
    \end{multline*} Now the desired inequality \eqref{eq:BCaccioppoli} follows by iteration; more precisely we can use \cite[Lemma 2.11]{LiRaita24} with 
    \begin{align*}
    &\Phi(r) = \int_{B_s}E(\B\tilde{u}), \quad B = cR\int_{B_R}\abss{\B \tilde{u}}, \\
    &\Psi_{k-i}(t) = \int_{B_R}E\brac{\frac{D^{i}\tilde{u}}{t^{k-i}}}\dif x + R\int_{B_t} \frac{\abss{D^{i}\tilde{u}}}{t^{k-i}}\dif x,
    \end{align*}
    which concludes the proof.
\end{proof}

\subsection{\texorpdfstring{$A$}{A}-harmonic approximation}\label{subsec:AHapproximation}
    \par We will adapt the $A$-harmonic approximation lemma, which first appeared in the context of geometric problems in \cite{Simon1983,Simon1996}, and was adapted to the variational setting in \cite{DuzaarGrotowski2000,DGG00,DuSt02,DGK05}.
    In the constant rank setting this result is new, and relies on Theorem~\ref{thm:main_chris'_inequ} along with an $\lebe^2$-based solvability result.

\begin{lemma}\label{lem:AHapproximation2}
    Let $\eps>0$ and $0 < \lambda \leq \Lambda$, then there exists $K = K(n,\B, \C,\lambda,\Lambda)>0$ and $\delta = \delta(n,\B,\C,\lambda,\Lambda,\eps)>0$ for which the following holds:
    Suppose that $A\col V\times V \to \mathbb R$ is a symmetric bilinear operator with constant coefficients and is uniformly elliptic in the wave cone $\Lambda_{\A}$, in that
    \begin{equation}\label{eq:A_elliptic}
        \lambda \lvert v \rvert^2 \leq A[v,v] \leq \Lambda \lvert v\rvert^2 \quad\mbox{for all } v \in \Lambda_{\A},
    \end{equation}
    for some $\Lambda \geq \lambda > 0$.
    Let $x_0 \in \mathbb R^n$ and $r>0$.
    If $w \in \sobo^{k-1,1}(B_r(x_0))$ satisfies 
    \begin{gather}
        \B w\in \mathcal{M}(B_r(x_0)), \quad \C^{\ast}w =0, \label{eq:AHmeasure}\\
        \fint_{B_r(x_0)} E(\B w) \leq \gamma^2 \leq 1,\label{eq:AHenergy}\\
        \fint_{B_r(x_0)} A[\B w,\B\varphi] \leq \gamma \delta \sup_{B_r(x_0)} \abss{\B \varphi}, \quad \mbox{for any }\varphi \in \CC_{\rmc}^{\infty}(B_r(x_0)),\label{eq:AHalmost_harmonic}
    \end{gather} then there exists a function $h\col B_{r/2}(x_0)\to U$ which is $A$-harmonic in that
        \begin{align}
            &\int_{B_{r/2}(x_0)}A[\B h,\B \varphi]\dif x =0, \quad \mbox{for any }\varphi \in \CC_{\rmc}^{\infty}(B_{r/2}(x_0)), \quad \C^{\ast}h=0,\label{eq:Aharmonic}
        \end{align}
        and satisfies the estimates
        \begin{align}
            &\fint_{B_{r/2}(x_0)}E(\B h) \leq K \quad \mbox{and}\quad \sum_{i=0}^{k-1}\fint_{B_{r/2}(x_0)}E\brac{\frac{D^i(w-\gamma h)}{r^{k-i}}}\dif x \leq \gamma^2 \varepsilon. \label{eq:AHapproximation}
        \end{align}
\end{lemma}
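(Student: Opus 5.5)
By translation and scaling we may take $x_0=0$ and $r=1$: replacing $w$ by $r^{-k}w(r\,\cdot)$ leaves $\B w$ invariant and turns the quantities $D^i w/r^{k-i}$ into unscaled derivatives. The argument is a contradiction/compactness argument in the spirit of \cite{DGK05}, using Theorem~\ref{thm:main_chris'_inequ} for compactness and an $\lebe^2$-solvability result to produce the comparison map $h$.

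We fix $K$ first (determined below by the linear theory) and suppose the lemma fails for this $K$ and some $\eps>0$. Then there are forms $A_j$ satisfying \eqref{eq:A_elliptic}, numbers $\gamma_j\in(0,1]$, and maps $w_j$ satisfying \eqref{eq:AHmeasure}--\eqref{eq:AHalmost_harmonic} with $\delta_j\to0$, such that no $A_j$-harmonic $h$ with $\fint E(\B h)\le K$ satisfies the second estimate in \eqref{eq:AHapproximation}. By compactness of the set of admissible forms we may assume $A_j\to A$, and $A$ still satisfies \eqref{eq:A_elliptic}.

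Step 1 (normalisation and compactness). Apply Theorem~\ref{thm:main_chris'_inequ} on $B_1$ with some $\theta\in(1/2,1)$ and subtract the linear $\pi_j$. This is harmless: $\B\pi_j=0$ and $\C^*\pi_j=0$, so \eqref{eq:AHmeasure}--\eqref{eq:AHalmost_harmonic} are unchanged, and we may add $\pi_j/\gamma_j$ to $h$ at the end since this preserves \eqref{eq:Aharmonic}. After subtraction,
\[\sum_{i<k}\Big(\int_{B_\theta}E(D^iw_j)^q\Big)^{1/q}\le c\gamma_j^2 .\]
Set $v_j=w_j/\gamma_j$. Lemma~\ref{lem:Eint_est} gives $\fint|\B v_j|\le c$, and splitting $E(z)\sim\min\{|z|,|z|^2\}$ gives
\[\int E(\B w_j)/\gamma_j^2\gtrsim\int_{\{|\B w_j|\le1\}}|\B v_j|^2+\gamma_j^{-1}\int_{\{|\B w_j|>1\}}|\B v_j| .\]
So $\B v_j$ splits into a part bounded in $\lebe^2$ and a part with small mass if $\gamma_j\to0$; if $\gamma_j\not\to0$ we only retain bounded mass. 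Likewise $D^iv_j$ is bounded in $\lebe^1(B_\theta)$, and the $q$-modular bound gives $\lebe^1$-equiintegrability together with strong $\lebe^1_{\locc}$ compactness via Theorem~\ref{thm:B_poincare2} (a fractional $\sobo^{k-s,p}$ bound, hence Rellich). Passing to a subsequence, $D^iv_j\to D^iv$ strongly in $\lebe^p(B_\theta)$ for some $p>1$ and $\B v_j\weaksto\B v$. Passing to the limit in \eqref{eq:AHalmost_harmonic} divided by $\gamma_j$ shows $\int A[\B v,\B\varphi]=0$ for all $\varphi$, and also $\C^*v=0$. Lower semicontinuity gives $\B v\in\lebe^2$ when $\gamma_j\to0$, and in general $\B v\in\mathcal M$ is $A$-harmonic; ellipticity of the pair and the Legendre--Hadamard condition on $\Lambda_\A$ then give smoothness of $v$ and the bound $\fint_{B_{1/2}}E(\B v)\le K$. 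Here $K$ comes from interior estimates for constant coefficient systems elliptic modulo $(\B,\C^*)$, via the $\lebe^2$-based solvability (Lemma~\ref{lem:l2_linear_solvability}).

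Step 2 (contradiction). Take $h_j$ to be $A_j$-harmonic approximations of $v$: let $h_j$ solve the $A_j$-system on $B_{1/2}$ with the same data as $v$ using the $\lebe^2$ solvability lemma, so that $h_j\to v$ smoothly on compact subsets. Then
\[\fint E\big(D^i(w_j-\gamma_jh_j)\big)\lesssim\max(\gamma_j,\gamma_j^2)\cdot o(1)\]
using Proposition~\ref{prop:Eprop}\ref{it:Emulti}; the quadratic rate $\gamma_j^2$ needs care on the set where $|D^iv_j|$ is large. Since $D^iv_j\to D^iv$ strongly in $\lebe^p$ with $p>1$, the $E^q$-bound from Theorem~\ref{thm:main_chris'_inequ} controls the large part by $\gamma_j^2$ times a vanishing tail. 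This contradicts the choice of the sequence.

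The main obstacle is Step 1: getting the strong convergence in the modular $E(\cdot)/\gamma^2$ scale rather than just in $\lebe^1$. This is exactly what the $q>1$ gain in \eqref{eq:main_chris'_inequ} buys, through equiintegrability of $E(D^iw_j)/\gamma_j^2$.
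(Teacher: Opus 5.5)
Your proposal is correct and follows essentially the same route as the paper. The common steps are: a contradiction argument with $A_j\to A$; the normalisation $v_j=(w_j-\pi_j)/\gamma_j$ with compactness from the fractional estimate of Theorem~\ref{thm:B_poincare2}; an $A$-harmonic limit with $\C^{\ast}v=0$; perturbation to $A_j$-harmonic maps via Lemma~\ref{lem:l2_linear_solvability}; and Vitali's theorem, using the $\lebe^q$-bound on $\gamma_j^{-2}E(D^i(w_j-\pi_j))$ from \eqref{eq:modular-poincare}, to get the $\gamma_j^2$ rate. Two small corrections: the smoothness and $\lebe^2$-bound for the limit come from the interior elliptic estimates of \cite[Theorem~5.2]{LiRaita24}, not from Lemma~\ref{lem:l2_linear_solvability}; and $K$ must bound $\fint_{B_{1/2}}E(\B h_j)$ rather than $\fint_{B_{1/2}}E(\B v)$, which follows from $h_j\to v$ in $\sobo^{k,2}(B_{1/2})$ once $K$ is enlarged slightly.
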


We will isolate one result on solvability of elliptic systems, which will be used in the proof.

\begin{lemma}\label{lem:l2_linear_solvability}
    Suppose $r>0$ and $0< \lambda \leq \Lambda$, and let $A, \tilde A$ be uniformly $\Lambda_{\A}$-elliptic coefficient fields in the sense that \eqref{eq:A_elliptic} holds.
    If $h \in \sobo^{k-1,2}(B_r)$ with $\B h \in \lebe^2(B_r)$ is $A$-harmonic in that
    \begin{equation}
        \int_{B_r} A[\B h, \B\varphi] \,\dd x = 0 \quad\mbox{for all $\varphi \in \CC^{\infty}_{\rmc}(B_r)$}, \quad \C^{\ast}h=0,
    \end{equation}
    then there exists $\tilde h \in \sobo^{k-1,2}(B_r)$ which is $\tilde A$-harmonic in that
    \begin{equation}
        \int_{B_r} \tilde A[\B \tilde h, \B\varphi] \,\dd x = 0 \quad\mbox{for all $\varphi \in \CC^{\infty}_{\rmc}(B_r)$}, \quad \C^{\ast}\tilde h=0,
    \end{equation}
    and we have the estimate
    \begin{align}\label{eq:solvability_estimate}
        \lVert \tilde h -h \rVert_{\sobo^{k,2}(B_r)} \leq c\lvert \tilde A - A \rvert \lVert \B h \rVert_{\lebe^2(B_r)},
    \end{align}
    where $c = c(n,\B,\C,\lambda,\Lambda)>0$.
\end{lemma}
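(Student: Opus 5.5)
We look for $\tilde h = h + g$, where the correction $g$ solves a boundary value problem with homogeneous data. Specifically, $g$ should satisfy
\begin{equation*}
    \int_{B_r} \tilde A[\B g,\B\varphi]\,\dd x = -\int_{B_r} (\tilde A - A)[\B h,\B\varphi]\,\dd x \quad\mbox{for all } \varphi \in \CC^{\infty}_{\rmc}(B_r), \qquad \C^{\ast}g = 0.
\end{equation*}
This equation is obtained by subtracting the $A$-harmonicity of $h$ from the desired $\tilde A$-harmonicity of $\tilde h$. We will solve it by a Lax--Milgram argument on the closed subspace
\[
X := \{ g \in \sobo^{k,2}_0(B_r,U) : \C^{\ast}g = 0 \}.
\]
Linear constraints are preserved by taking limits, so $X$ is closed.

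Coercivity on $X$ uses the Korn-type estimate for the elliptic pair $(\B,\C^{\ast})$ on compactly supported functions. For $g \in \CC^{\infty}_{\rmc}$ we have, by Plancherel and the decomposition $\hat g = \B^{\dagger}\B(\xi)\hat g + \C^{\ast\dagger}\C^{\ast}(\xi)\hat g$,
\[
\lVert D^kg\rVert_{\lebe^2} \lesssim \lVert \B g\rVert_{\lebe^2} + \lVert I_{l-k}\C^{\ast}g\rVert_{\lebe^2}.
\]
When $\C^{\ast}g=0$ this reduces to $\lVert D^kg\rVert_{\lebe^2}\lesssim\lVert\B g\rVert_{\lebe^2}$, and by density (Case 1 of Lemma~\ref{lem:negative_norm_estimate}, with $\varphi(t)=t^2$) it extends to $X$. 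Poincaré's inequality then controls the full $\sobo^{k,2}$ norm.

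We also need coercivity of the form itself, namely $\tilde A[\B g,\B g] \gtrsim \lvert\B g\rvert^2$ after integration, and this is the main obstacle. Ellipticity is only assumed on $\Lambda_{\A}$, the union of the images of $\B(\xi)$. The standard route is Plancherel: $\widehat{\B g}(\xi) = \B(\xi)\hat g(\xi) \in \ima\B(\xi)\otimes\mathbb C \subset \Lambda_{\A}+i\Lambda_{\A}$. Since $\tilde A$ is real and symmetric, the Hermitian extension gives $\tilde A[a+ib,\overline{a+ib}] = \tilde A[a,a]+\tilde A[b,b]$. The real and imaginary parts $a,b$ both lie in the linear space $\ima\B(\xi)\subset\Lambda_{\A}$. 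Hence
\[
\int_{B_r} \tilde A[\B g,\B g]\,\dd x = \int_{\R^n}\tilde A[\widehat{\B g},\overline{\widehat{\B g}}]\,\dd\xi \geq \lambda\lVert\B g\rVert_{\lebe^2}^2.
\]
Combined with the Korn estimate, this yields coercivity of the form on $X$.

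The right-hand side is a bounded functional on $X$, with norm at most $\lvert\tilde A - A\rvert\,\lVert\B h\rVert_{\lebe^2(B_r)}$. Lax--Milgram therefore gives $g\in X$ satisfying the weak equation for all test functions $\varphi \in X$, with
\[
\lVert g\rVert_{\sobo^{k,2}} \leq c\,\lvert\tilde A-A\rvert\,\lVert\B h\rVert_{\lebe^2}.
\]
This is exactly \eqref{eq:solvability_estimate}, with $c$ depending on $n,\B,\C,\lambda,\Lambda$ (the constant is scale-invariant after rescaling to $B_1$).

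The remaining step is to pass from test functions in $X$ to arbitrary $\varphi\in\CC^{\infty}_{\rmc}(B_r)$. For this, project $\varphi$ onto $\C^{\ast}$-free fields while keeping $\B\varphi$ fixed. In the whole space, $\varphi_1 := \mathscr F^{-1}(\B^{\dagger}\B(\xi)\hat\varphi)$ satisfies $\C^{\ast}\varphi_1 = 0$ (since $\C^{\ast}\B^{\dagger}=0$) and $\B\varphi_1=\B\varphi$, but it is not compactly supported. To handle this, first note that, by the weak equation and density, the functional $\varphi\mapsto\int(\tilde A[\B g,\B\varphi]+(\tilde A-A)[\B h,\B\varphi])$ depends only on $\B\varphi$. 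It vanishes on $X$, and also trivially on fields $\varphi = \C\psi$, since these have $\B\varphi=0$. The gap is to show that every $\varphi\in\CC^{\infty}_{\rmc}(B_r)$ splits as an element of $X$ plus some $\C\psi$ with $\psi$ compactly supported.

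If this fails, the fallback is to minimize $\int\tilde A[\B(h+g),\B(h+g)]$ over $g$ in the closure of $\{g\in\CC^{\infty}_{\rmc}: \C^{\ast}g=0\}$, and to read off the Euler--Lagrange equation only along $\C^{\ast}$-free directions. One then argues, as in \cite{LiRaita24}, that for $\B$-type integrals testing with $\C^{\ast}$-free variations suffices. Finally, set $\tilde h=h+g$; then $\C^{\ast}\tilde h=0$ and $\tilde h\in\sobo^{k-1,2}(B_r)$.
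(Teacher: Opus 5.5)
Your coercivity and Lax--Milgram steps on $X$ are fine, but the gap you flag at the end is real and cannot be closed, because the ansatz $g\in X=\{g\in\sobo^{k,2}_0(B_r,U):\C^{\ast}g=0\}$ is too restrictive. Take $n=3$, $\B=\curl$, $\C=\C^{\ast}=\nabla\di$, so $X$ consists of the divergence-free fields in $\sobo^{1,2}_0(B_r)$. Integrating by parts, every $g\in\sobo^{1,2}_0(B_r)$ satisfies $\int_{B_r}x\times\curl g\,\dd x=2\int_{B_r}g\,\dd x$. For $g\in X$ the right-hand side vanishes, since $\int g_i=-\int x_i\di g$. Thus $\curl\varphi\notin\curl(X)$ whenever $\int_{B_r}\varphi\,\dd x\neq0$, and since this moment is $\lebe^2$-continuous, not even approximately. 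So the splitting you need is false.

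Worse, in general no $g\in X$ makes $h+g$ $\tilde A$-harmonic. Take $\tilde A=I$, $A^{-1}=\mathrm{diag}(a_1,a_2,a_3)$ with $a_1\neq a_2$, and $h=(0,0,\tfrac{a_1}{2}x_2^2-\tfrac{a_2}{2}x_1^2)$. This $h$ is divergence-free and $A$-harmonic, since $A\curl h=\nabla(x_1x_2)$. Full $\tilde A$-harmonicity of $h+g$ forces $\curl g=-\Pi\curl h$, where $\Pi$ is the $\lebe^2$-projection onto $\overline{\{\curl\varphi:\varphi\in\ccinfty(B_r)\}}$. Now $\Pi$ only removes gradients, and $\int_{B_r}x\times\nabla\pi\,\dd x=0$ on a ball. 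Hence the third component of $\int x\times\curl g$ would be $-\int_{B_r}(a_2x_1^2-a_1x_2^2)\,\dd x\neq0$, contradicting the vanishing above. In PDE terms, your $h+g$ only solves $\curl(\tilde A\curl(h+g))=\nabla p$ with a nonconstant harmonic Lagrange multiplier $p$. This is the pressure of the Stokes problem, which is exactly your problem when $\tilde A=I$. The fallback has the same defect: minimising over $\C^{\ast}$-free compactly supported fields yields the same constrained Euler--Lagrange equation. The principle that $\C^{\ast}$-free variations suffice rests on the whole-space projection $\B^{\dagger}(\xi)\B(\xi)$, which destroys compact support.

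The missing idea is to drop the boundary condition on the correction and solve at the level of $\B$-gradients, as the paper does. Let $Z=\overline{\{\B\varphi:\varphi\in\ccinfty(B_r)\}}^{\lebe^2(B_r)}$. Your Plancherel argument shows that $\int_{B_r}\tilde A[\cdot,\cdot]$ is an inner product on $Z$ equivalent to the $\lebe^2$ one. Riesz then gives $V\in Z$ with $\int\tilde A[V,\Phi]=-\int\tilde A[\B h,\Phi]$ for all $\Phi\in Z$. Extend $V$ by zero and define $v$ on $\R^n$ by $\hat v=\B^{\dagger}(\xi)\hat V$. Then:
\begin{enumerate}
\item $\B v=V$, since $\hat V(\xi)\in\ima\B(\xi)$;
\item $\C^{\ast}v=0$, since $\ima\B^{\dagger}(\xi)=\ima\B(\xi)^{\ast}=\ker\C^{\ast}(\xi)$;
\item Fourier multiplier bounds control $v$ in $\sobo^{k,2}(B_r)$ by $\lVert V\rVert_{\lebe^2}$.
\end{enumerate}
The $A$-harmonicity of $h$, extended to $\Phi\in Z$ by density, gives $\int\tilde A[V,V]=\int(A-\tilde A)[\B h,V]$, hence the estimate. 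Finally $\tilde h=h+v$ is $\tilde A$-harmonic against every $\varphi\in\ccinfty(B_r)$ because $\B\varphi\in Z$. The correction $v$ is $\C^{\ast}$-free but not compactly supported, which is exactly the freedom your $X$ forbids; the Korn inequality on $X$ is then no longer needed.
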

\begin{proof}
    Consider the space
    \begin{equation}
        X = \overline{\{\B \varphi : \varphi \in \ccinfty(B_r)\}}^{\,\lebe^2(B_r)} \subset \lebe^2(B_r).
    \end{equation}
    We equip $X$ with the inner product $\langle V, W \rangle_{\tilde A} = \int_{B_r} \tilde A[V,W]$, which by Parseval's theorem and the $\Lambda_{\A}$-ellipticity condition \eqref{eq:A_elliptic} is equivalent to the $\lebe^2$ inner product.
    Then by the Riesz representation theorem, we can find $V \in X$ such that
    \begin{equation}\label{eq:V_riesz_def}
        \int_{B_r} \tilde A[V,\Phi] \,\dif x = -\int_{B_r} \tilde A[\B h, \Phi] \,\dif x \quad\mbox{for all $\Phi \in X$}.
    \end{equation}
    Extending $V$ by zero to $\mathbb R^n$, we define $v \in \mathcal S'(\mathbb R^n)$ to satisfy
    \begin{equation}
           \hat v(\xi) = \B^{\dagger}(\xi)\hat V(\xi) \quad\mbox{for all $\xi \in \mathbb R^n$},
    \end{equation}
    so in particular $\B v = V$.
    Since $V \in \lebe^2(\mathbb R^n)$, by Fourier multiplier estimates we have
    \begin{equation}\label{eq;V_riesz_potential}
        \lVert v \rVert_{\sobo^{k,2}(B_r)} = \lVert \check{\B}^{\dagger} \ast \B v \rVert_{\sobo^{k,2}(\R^n)} \leq c \lVert \B v \rVert_{\lebe^2(\R^n)} =c \lVert \B v \rVert_{\lebe^2(B_r)}.
    \end{equation}
    Now using the $A$-harmonicity of $h$, \eqref{eq:V_riesz_def} and that $\B v = V \in X$, we have
    \begin{equation}
    \begin{split}
        \int_{B_{r}} \lvert \B v \rvert^2 \,\dd x 
        &\lesssim \int_{B_{r}} \tilde A[V,V] \,\dd x \\
        &= -\int_{B_{r}} \tilde A[\B h,V] \,\dd x + \int_{B_{r}} A[\B h, V] \,\dd x\\
        &= \int_{B_{r}} (A - \tilde A)[\B h,\B v] \,\dd x \\
        &\lesssim \lvert A - \tilde A \rvert \left( \int_{B_{r}} \lvert \B h \rvert^2 \right)^{\frac12} \left( \int_{B_{r}} \lvert \B v \rvert^2 \,\dd x \right)^{\frac12},
    \end{split}
    \end{equation}
    so combining with \eqref{eq;V_riesz_potential} we infer that
    \begin{equation}
        \lVert v \rVert_{\sobo^{k,2}(B_r)} \leq c \lvert A -\tilde A \rvert \lVert \B h \rVert_{\lebe^2(B_r)}.
    \end{equation}
    Therefore the result follows by taking $\tilde h = h+v$.
\end{proof}
\begin{proof}[Proof of {Lemma~\ref{lem:AHapproximation2}}]
    By means of the following reduction, it suffices to consider the case of the unit ball $B_r(x_0) = B_1(0)$: we will rescale $w$ by considering $w_r(y) = w(x_0+ry)/r^k$.
    To verify the scaling for $\B w$, choose $s \in (0,r)$ such that $\lvert \B w \rvert(\partial B_s(x_0)) = 0$.
    Then the mollification $w^{\eta} = w \ast \rho_{\eta}$ satisfies $(w^{\eta})_{\rho} = (w_{\rho})^{\eta/\rho}$, and we have $\B(w^{\eta}(x_0+r\,\cdot)) \to \B(w(x_0+r\,\cdot)$ area-strictly in $B_{s/r}$, verifying that
    \begin{equation}
        \B(w(x_0+r\,\cdot)/r^k) = (\B w)(x_0+r\,\cdot) \quad\mbox{in }B_{s/r}.
    \end{equation}
    Since this holds for almost all $s \in (0,r)$, we infer the correct scaling.
         
    We proceed by contradiction, so let $K>0$ to be determined and suppose the result fails. Then there exists $\eps>0$ and for each $m \in \mathbb N$ there exists $w_m \in \sobo^{k-1,1}(B_1)$, coefficient fields $A_m$ satisfying \eqref{eq:A_elliptic} and $\gamma_m \in (0,1]$ satisfying
        \begin{gather}
        \B w_m\in \mathcal{M}(B_1), \quad \C^{\ast}w_m =0, \label{eq:AHmeasure2}\\
        \int_{B_1} E(\B w_m) \leq \gamma_m^2 \leq 1,\label{eq:AHenergy2}\\
        \int_{B_1} A_m[\B w_m,\B\varphi] \leq \frac{\gamma_m}{m} \sup_{B_1} \abss{\B \varphi} \quad \mbox{for any }\varphi \in \CC_{\rmc}^{\infty}(B_1),\label{eq:AHalmost_harmonic2}
    \end{gather}
    and for any $h$ that is $A_m$-harmonic on $B_{1/2}$ in that \eqref{eq:Aharmonic} holds and satisfies the bound $\fint_{B_{1/2}} E(\B h) \leq K$, we have
    \begin{equation}\label{eq:AHenergy_contra}
        \sum_{i=0}^{k-1}\int_{B_{1/2}} \frac1{\gamma_m^2}E(D^i(w_m-\gamma_m h))\dif x >  \varepsilon.
    \end{equation}
    \par By Lemma~\ref{lem:Eint_est} we can estimate
    \begin{equation}
        \int_{B_1} \lvert \B w_m \rvert \leq \lvert B_1\rvert \bigg(3 \fint_{B_1} E(\B w_m)\bigg)^{\frac12} \leq c(n)\gamma_m.
    \end{equation}
    We will apply Theorem~\ref{thm:B_poincare2}, which gives $\pi_m \in \sobo^{k-1,1}(B_{3/4})$ such that $(\B,\C^{\ast})\pi_m =0$ and the estimates
    \begin{align}
        \lVert w_m - \pi_m \rVert_{\sobo^{k-s,p}(B_{3/4})} &\leq c \int_{B_1}\lvert \B w_m\rvert \leq c\;\! \gamma_m, \label{eq:ah_fractional}\\
        \sum_{i=0}^{k-1} \brac{\int_{B_{3/4}} E(D^i(w_m-\pi_m))^p \,\dd x}^{\frac1p} &\leq c \int_{B_1} E(\B w_m),\label{eq:ah_modular}
    \end{align}
    hold for $s \in (0,1)$ and $1 < p < \frac{n}{n-s}$ and all $m \in \mathbb N$.
    We then set
    \begin{equation}
        v_m = \frac{w_m-\pi_m}{\gamma_m},
    \end{equation}
    which by \eqref{eq:ah_fractional} satisfies $\lVert v_m \rVert_{\sobo^{k-s,p}(B_{3/4})}$.
    Then by passing to a subsequence, we can find a limit map $h$ in $B_{3/4}$ satisfying
    \begin{equation}\label{eq:vm_convergence}
    \begin{cases}
        v_m \to h &\mbox{in }\sobo^{k-1,p}(B_{3/4})\\
        D^i v_m \to D^i h & \mbox{a.e. in } B_{3/4} \mbox{ for all } i=0,\dots,k-1\\
        \B v_m \weaksto \B h & \mbox{in } \mathcal{M}(B_{3/4}).
    \end{cases}
    \end{equation} 
    Moreover, by passing to a further subsequence if necessary, we can assume that $A_m \to A$, where the limit field also satisfies \eqref{eq:A_elliptic}.
    Passing to the distributional limit in $\mbox{\eqref{eq:AHmeasure2}}_2$ and \eqref{eq:AHalmost_harmonic}, we have $\C^{\ast}h=0$ in $B_{3/4}$ and
    \begin{equation}
        \int_{B_{3/4}} A[\B h, \B \varphi] =0, \quad \mbox{for any }\varphi \in \CC_{\rmc}^{\infty}(B_{3/4}).
    \end{equation}

    \par By the weak${}^{\ast}$ convergence of $\B w_m$ and convexity of $E(\cdot)$, we have
    \begin{gather}
        \int_{B_{3/4}} \lvert \B h \rvert \leq \liminf_{m \to \infty} \int_{B_{3/4}} \lvert \B v_m \rvert \leq c,\\
        \int_{B_{3/4}}E(\B h) \leq \liminf_{m\to \infty} \int_{B_{3/4}} E(\B v_m)\leq \liminf_{m\to \infty} \int_{B_1} \gamma_m^{-2} E(\B v_m)\leq 1. 
    \end{gather}
    Also using the $A$-harmonicity of $h$, \cite[Theorem 5.2]{LiRaita24}, \eqref{eq:ah_fractional} and \eqref{eq:vm_convergence}, we have $h \in \CC^{\infty}(B_{7/8})$ and the following estimate
    \begin{equation}\label{eq:AH_limit_l2}
    \begin{split}
         \normm{\B h}_{\lebe^2(B_{1/2})} \leq c \int_{B_{3/4}} \lvert D^{k-1} h \rvert \dif x 
        =\ \lim_{m\to \infty} c \int_{B_{3/4}} \lvert D^{k-1} v_m \rvert \dif x\leq c(n,\B,\C,\lambda,\Lambda).
    \end{split}
    \end{equation}
    We wish to modify $h$ to be $A_m$-harmonic, for which we apply Lemma~\ref{lem:l2_linear_solvability} on $B_{1/2}$ to each $h = h_m$ and $\tilde A = A_m$ which gives maps $\tilde h_m \in \sobo^{k-1,2}(B_{1/2})$ that are $A_m$-harmonic and satisfies the estimates
    \begin{equation}\label{eq:vm_strong_conv}
        \lVert \tilde h_m - h \rVert_{\sobo^{k,2}(B_{1/2})} \lesssim c \abss{A_m-A} \to 0
    \end{equation}
    as $m \to \infty$,
    where we used \eqref{eq:AH_limit_l2} and that $A_m \to A$.
 
    Using \eqref{eq:AH_limit_l2} and \eqref{eq:vm_strong_conv} with Proposition \ref{prop:Eprop}\ref{it:Eest} and \ref{it:Esum}, we can estimate
    \begin{equation}
        \fint_{B_{1/2}} E(\B \tilde h_m) \,\dd x \leq  c \fint_{B_{1/2}} \lvert \B (\tilde h_m -h)\rvert^2 + \lvert \B h \rvert \,\dd x \leq c_1,
    \end{equation}
    and we choose $K = c_1$.
    Therefore we can apply \eqref{eq:AHenergy_contra} with the $A_m$-harmonic function $ \tilde h_m + \pi_m/\gamma_m$ to obtain
    \begin{equation}\label{eq:AH_twocontradict2}
        \sum_{i=0}^{k-1} I_{i,m} := \sum_{i=0}^{k-1} \int_{B_{1/2}} \frac1{\gamma_m^2} E(D^i(w_m-\gamma_m\tilde h_m - \pi_m)) \,\dd x \geq \eps.
    \end{equation}
    We will show that $I_{i,m} \to 0$ as $m \to \infty$ for each $0 \leq i \leq k-1$, which will give the desired contradiction.
    For each such $i$, using Proposition~\ref{prop:Eprop}\ref{it:Esum} we can estimate
    \begin{equation}
        I_{i,m} \lesssim \int_{B_{1/2}} \frac1{\gamma_m^2} E(D^i(w_m-\gamma_m h - \pi_m)) \,\dd x + \int_{B_{1/2}} \frac1{\gamma_m^2}E(\gamma_m D^i(h-\tilde h_m)) \,\dd x.
    \end{equation}
    Since $\tilde h_m \to h$ strongly in $\sobo^{k,2}(B_{1/2})$ by \eqref{eq:vm_strong_conv}, combined with Proposition~\ref{prop:Eprop}\ref{it:Eest} we have
    \begin{equation}
        \limsup_{m \to \infty} \int_{B_{1/2}} \frac1{\gamma_m^2}E(\gamma_m D^i(h-\tilde h_m)) \,\dd x \leq c \int_{B_{1/2}} \lvert D^i(h-\tilde h_m)\rvert^2 \,\dd x  =0.
    \end{equation}
    For the first term, using \eqref{eq:ah_modular} we have
    \begin{equation}\label{eq:em_bound_1}
        \sum_{i=0}^{k-1} \bigg( \int_{B_{3/4}} \frac1{\gamma_m^{2p}}E( D^{i}(w_m-\pi_m))^p \,\dd x \bigg)^{\frac1p} \quad\leq \frac{c}{\gamma_m^2}\int_{B_1} E( \B w_m )  \leq c
    \end{equation}
    for $1 < p < \frac{n}{n-1}$.
    Also since each $D^ih$ is bounded on $B_{1/2}$ for $i=0,\ldots,k-1$, using Proposition~\ref{prop:Eprop}\ref{it:Eest} we have
    \begin{equation}\label{eq:em_bound_2}
        \frac1{\gamma_m^{2p}} \int_{B_{1/2}} E(\gamma_m D^ih)^p \,\dd x \leq \int_{B_{1/2}} \lvert D^ih\rvert^p \,\dd x \leq c.
    \end{equation}
    Therefore setting
    \begin{equation}
        e_m := \frac1{\gamma_m^2} E(D^i(w_m-\gamma_m h-\pi_m)) \lesssim \frac1{\gamma_m^2} E(D^i(w_m-\pi_m)) + \frac1{\gamma_m^2} E(\gamma_m D^ih),
    \end{equation}
    which we have estimated using Proposition~\ref{prop:Eprop}\ref{it:Esum}, by \eqref{eq:em_bound_1} and \eqref{eq:em_bound_2} we have $e_m$ is uniformly bounded in $\lebe^p(B_{1/2})$,
    and hence is uniformly integrable on $B_{1/2}$.
    Moreover by \eqref{eq:vm_convergence}$_2$, we have $D^i(w_m-\gamma_m h-\pi_m) = \gamma_m D^i(v_m-h) \to 0$ a.e.\,in $B_{1/2}$, and so by Proposition~\ref{prop:Eprop}\ref{it:Eest} we have
    \begin{equation}
    \begin{split}
         \limsup_{m \to \infty} e_m
        \leq &\ c \lim_{m \to \infty} \frac{\lvert D^i(w_m-\gamma h-\pi_m)\rvert^2}{\gamma_m^2} = 0 \quad\mbox{a.e.\,in } B_{1/2}.
    \end{split}
    \end{equation}
    Therefore, by Vitali's convergence theorem we have
    \begin{equation}
        \lim_{m \to \infty} \int_{B_{1/2}} \frac1{\gamma_m^2} E(D^i(w_m-\gamma_mh-\pi_m)) \,\dd x = 0,
    \end{equation}
    so we that infer each $I_{i,m} \to 0$ as $m \to \infty$. This contradicts \eqref{eq:AH_twocontradict2}, thereby completing the proof.
\end{proof}

\subsection{Almost \texorpdfstring{$A$}{A}-harmonicity}\label{subsec:almost_aharmonic}

The following lemma asserts that minimizers $u$ of \eqref{eq:P_Omega} are locally \emph{almost $A$-harmonic} in the sense that \eqref{eq:AHalmost_harmonic} holds for a suitable $A$, allowing us to apply Lemma~\ref{lem:AHapproximation2}.
It turns out that we will only use the extremality of $u$, namely that $u$ satisfies the Euler-Lagrange system
\begin{equation}\label{eq:EL_system}
    \int_{\Omega} \partial_zf(x,\B^au) \cdot \B \varphi \,\dd x = 0 \quad\mbox{for all }\varphi\in\ccinfty(\Omega).
\end{equation}
This follows from local minimality, which implies the criticality of the mapping 
\begin{equation}t \mapsto \int_{\Omega} f(x,\B u + t \B\varphi) = \int_{\Omega} f(x,\B^au+t\B\varphi) \,\dd x + \int_{\Omega} f^{\infty}(x,\B^su) \quad\mbox{at } t= 0.\end{equation} 
Indeed differentiating at $t=0$ we infer \eqref{eq:EL_system}, noting that, since $\B^s\varphi=0$, the singular part is constant along such variations.

\begin{lemma}\label{lem:AHalmost_har}
    Let $f$ satisfy \textnormal{\ref{it:fgrowth}}, \textnormal{\ref{it:fcontinuity}} and \textnormal{\ref{it:fBqcstrong}}, let $u \in \sobo^{k-1,1}(\Omega)$ be a local minimiser of \eqref{eq:P_Omega}, and fix $M>0$ and $\alpha\in(0,1)$.
    Then for any ball $B_R(x_0)\Subset \Omega$ with $\abss{(\B u)_{x_0,R}}<M$, setting $A := \partial_z^2f(x_0,(\B u)_{x_0,R})$, we have $u$ satisfies
        \begin{equation}\label{eq:AHmin_almost_har}
            \fint_{B_R(x_0)} A[\B u,\B\varphi] \leq c_1(\EE^{\frac{1}{2\alpha}}(R)+ \EE(R)+\omega^{\frac{1}{2}}(c\EE^{\frac{1}{2}}(R)) \EE^{\frac{1}{2}}(R)) \sup_{B_R(x_0)} \abss{\B \varphi} 
        \end{equation} for any $\varphi \in \CC_{\rmc}^{\infty}(B_R(x_0))$, where $\mathscr{E}(R) := \mathscr{E}(u,x_0,R)$ is the excess from \eqref{eq:AHexcess}, $\omega := \omega_{x_0,R,M+1}$ is as in \eqref{eq:loc_cont} and $c_1=c_1(n,V,L,L_{x_0,R,M+1})>0$.
\end{lemma}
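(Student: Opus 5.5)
We start from the Euler--Lagrange system \eqref{eq:EL_system}. Write $z_0 := (\B u)_{x_0,R}$, so $A = \partial_z^2 f(x_0,z_0)$, and note $\int_{B_R}\B\varphi = 0$ for $\varphi\in\ccinfty(B_R(x_0))$. Then $\int_{B_R} A[\B u,\B\varphi] = \int_{B_R} A[\B u - z_0,\B\varphi]$. Splitting $\B u = \B^a u\,\mathscr L^n + \B^s u$ and inserting \eqref{eq:EL_system}, we decompose
\begin{align*}
\fint_{B_R} A[\B u,\B\varphi] &= \fint_{B_R} \big(\partial_z^2 f(x_0,z_0)(\B^a u - z_0) - (\partial_z f(x_0,\B^a u) - \partial_z f(x_0,z_0))\big)\cdot\B\varphi\,\dd x\\
&\quad+\fint_{B_R}\big(\partial_z f(x_0,\B^a u)-\partial_z f(x,\B^a u)\big)\cdot \B\varphi\,\dd x + \frac{1}{|B_R|}\int_{B_R} A[\B^s u,\B\varphi].
\end{align*}
Here we used $\fint \partial_z f(x_0,z_0)\cdot\B\varphi = 0$. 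Since $\B\varphi$ takes values in $\spann\Lambda_\A$, all the pointwise bounds from Section~2 (which are stated for test directions in $\spann\Lambda_\A$) apply.

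\textbf{The singular and $x$-dependence terms.} The singular term is bounded by $|A|\fint|\B^s u|\sup|\B\varphi| \le L_{x_0,R,M+1}\,\EE(R)\sup|\B\varphi|$ using \eqref{eq:loc_bound}. For the $x$-term, \ref{it:fcontinuity} gives $|\partial_zf(x_0,\cdot)-\partial_zf(x,\cdot)|\le LR$. Since $R\le (R^{2\alpha})^{1/(2\alpha)}\le \EE^{1/(2\alpha)}(R)$, this term is $\le c\,\EE^{1/(2\alpha)}(R)\sup|\B\varphi|$.

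\textbf{The main term.} It is exactly $-(\partial_z^2 f_{z_0}(x_0,\cdot)\ldots)$ in the form of Lemma~\ref{lem:shifted_continuity}. More precisely, the integrand equals $(\partial_z^2 f(x_0,z_0)(\B^au-z_0) - \partial_z f_{z_0}(x_0,\B^a u - z_0))\cdot \B\varphi$, up to the identification of the shift. Lemma~\ref{lem:shifted_continuity} (or its proof: Taylor's formula with \eqref{eq:loc_cont} when $|\B^au - z_0|\le1$, and \eqref{eq:fz_bounded}, \eqref{eq:loc_bound} otherwise) bounds this pointwise by $c\,\omega(|\B^a u - z_0|)\,|\B^a u - z_0|\,|\B\varphi|$. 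We then split the ball into $\{|\B^au-z_0|\le 1\}$ and its complement.

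On the complement, $|\B^a u - z_0|\lesssim E(\B^a u - z_0)$, giving a contribution $\le c\,\EE(R)$.

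On the small set, we use Cauchy--Schwarz together with concavity of $\omega$ (Jensen):
\begin{align*}
\fint \omega(|\B^au-z_0|)\,|\B^au-z_0| \le \Big(\fint\omega(|\B^au-z_0|)\Big)^{1/2}\Big(\fint \omega\,|\B^au-z_0|^2\Big)^{1/2} \le \omega\Big(\fint|\B^au-z_0|\Big)^{1/2}\,(c\,\EE(R))^{1/2}.
\end{align*}
The last step uses $\omega\le 1$ and $|z|^2\lesssim E(z)$ for $|z|\le1$. Finally, Lemma~\ref{lem:Eint_est} (applied to the measure $\B u - z_0$) gives $\fint|\B^au-z_0|\le c\,\EE^{1/2}(R)$ when $\EE\le1$, and in general $\le c(\EE+\EE^{1/2})$. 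Monotonicity of $\omega$, with $\omega=1$ beyond $1$, handles the case $\EE>1$ by absorbing it into the $\EE(R)$ term. This yields $\omega^{1/2}(c\EE^{1/2}(R))\,\EE^{1/2}(R)$.

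\textbf{Main obstacle.} The delicate point is this last estimate: organizing the Cauchy--Schwarz/Jensen step so that the modulus appears as $\omega^{1/2}(c\EE^{1/2})$ rather than a weaker quantity. One must also keep $|z_0|\le M$ so that the constants $L_{x_0,R,M+1}$ apply; this holds because the relevant points $z_0+t(\B^au-z_0)$ on the small set lie in $B_{M+1}$.

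Summing the four contributions gives \eqref{eq:AHmin_almost_har}.
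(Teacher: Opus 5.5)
Your proposal is correct and follows the paper's proof. It uses the same splitting into the $x$-dependence term (bounded by $LR\le L\EE^{1/(2\alpha)}$), the singular term, and the linearization error on $\{|\B^a u-(\B u)_{x_0,R}|\ge 1\}$ and its complement, with Cauchy--Schwarz and concavity of $\omega$ on the small set. The only cosmetic difference is on the small set: the paper first bounds $|\B^a u-(\B u)_{x_0,R}|\le c\sqrt{E(\B^a u-(\B u)_{x_0,R})}$ pointwise and then applies Jensen to $\omega(c\sqrt{\cdot})$, which gives $\omega^{1/2}(c\EE^{1/2})$ directly, without your appeal to Lemma~\ref{lem:Eint_est} and the separate case $\EE>1$.
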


\begin{proof}
    Writing $B_R := B_R(x_0)$, let $\varphi \in \CC_{\rmc}^{\infty}(B_R(x_0))$. Then we can write Euler-Lagrange system \eqref{eq:EL_system} as
    \begin{multline}
        0=\fint_{B_R} \partial_z f(x,\B^a u)\cdot \B\varphi \dif x = \fint_{B_R} (\partial_z f(x,\B^a u)-\partial_z f(x_0,\B^a u))\cdot \B\varphi \dif x \\ +\fint_{B_R} (\partial_z f(x_0,\B^a u)-\partial_z f(x_0,(\B u)_R))\cdot \B\varphi \dif x =: \fint_{B_R} (A_1+A_2) \dif x,
    \end{multline}
    noting that $\fint_{B_R} \B\varphi \,\dd x=0$.
    By definition of the excess we have $\mathscr{E}(R) \geq R^{2\alpha}$, and combined with the
    Lipschitz continuity of $\partial_z f(\cdot,z)$ as in \ref{it:fcontinuity} we can estimate
    \begin{equation}\label{eq:Aharm_A1term}
        \abs{\fint_{B_R} A_1 \dif x }\leq LR \,\sup_{B_R}\,\abss{\B \varphi} \leq L \EE^{\frac1{2\alpha}}(R) \,\sup_{B_R}\,\abss{\B \varphi}.
    \end{equation} 
    \par Now consider
    \[ \fint_{B_R}A[\B u,\B \varphi]  = \fint_{B_R} A[\B^a u-(\B u)_R,\B\varphi]\dif x + \fint_{B_R} A[\B^s u,\B\varphi], \]
    recalling that $A = \partial_z^2f(x_0,(\B u)_{R})$.
    Define the sets
    \[ E^+:= \{x\in B_R\col \abss{\B^a u-(\B u)_R}\geq 1\}, \quad E^-:= \{x\in B_R\col \abss{\B^a u-(\B u)_R}<1\},
    \] then we have
    \begin{equation*}\begin{split} 
    \fint_{B_R}A[\B u,\B \varphi] -\fint_{B_R} A_2\dif x &= \frac{1}{\abss{B_R}} \brac{\int_{E^+}+\int_{E^-}} (A[\B^a u-(\B u)_R,\B \varphi] - A_2 )\dif x  \notag\\
    & \hspace{3cm}+ \fint_{B_R} A [\B^s u,\B\varphi] =: J_1+J_2+J_3. 
    \end{split}\end{equation*} 
    We will abbreviate $\tilde L := L_{x_0,R,M+1}$ and $\omega := \omega_{x_0,R,M+1}$ from \eqref{eq:loc_bound} and \eqref{eq:loc_cont} in what follows.
    Since $\lvert A \rvert \leq \tilde L$ we can estimate the third term as
    \[ \abs{J_3} \leq \tilde L\,\sup_{B_R}\,\abss{\B \varphi} \fint_{B_R}\abss{\B^s u} \leq \tilde L \,\sup_{B_R}\,\abss{\B \varphi} \EE(R). \] 
    \par To estimate $J_1$, we need to control the size of $E^+$ using Proposition~\ref{prop:Eprop}\ref{it:Eest} as
    \[ \abss{E^+} \leq \int_{E^+}\abss{\B^a u-(\B u)_R} \dif x \leq c\int_{B_R} E(\B^a u-(\B u)_R)\dif x, \] which together with the boundedness of $\partial_z f(x,z)$ from \eqref{eq:fz_bounded} implies
    \begin{align}
    \begin{split}
        \abs{J_1} &\leq \frac{1}{\abss{B_R}} \sup_{B_R}\,\abss{\B \varphi} \brac{ \tilde L\int_{E^+} \abss{\B^a u-(\B u)_R}\dif x +c(n,V,L) \abss{E^+}} \\
        &\leq c(n,V,L,\tilde L)\sup_{B_R}\,\abss{\B \varphi} \fint_{B_R}E(\B^a u-(\B u)_R) \dif x \leq c \sup_{B_R}\,\lvert\B \varphi\rvert\,\EE(R).
    \end{split}
    \end{align}
    \par For $J_2$, we will use the fundamental theorem of calculus, \eqref{eq:loc_cont} and Proposition \ref{prop:Eprop}\ref{it:Eest} to estimate
    \begin{align*} 
    &\abs{J_2} \leq \frac{\tilde L}{\abss{B_R}} \int_{E^-} \omega(\abss{\B^a u-(\B u)_R})\abss{\B^a u-(\B u)_R}\abss{\B\varphi} \dif x \\
    &\leq \frac{\tilde L\sqrt{\omega(1)}}{\abss{B_R}} \sup_{B_R}\,\abss{\B \varphi} \brac{\int_{E^-}\omega(c\sqrt{E(\B^a u-(\B u)_R)})\dif x}^{\frac{1}{2}}\brac{\int_{E^-} \abss{\B^a u-(\B u)_R}^2 \dif x}^{\frac{1}{2}} \notag\\
    &\leq c \sup_{B_R}\,\abss{\B \varphi} \,\omega^{\frac{1}{2}}\brac{c\brac{\fint_{B_R} E(\B^a u-(\B u)_R)\dif x}^{1/2}}\brac{\fint_{B_R} E(\B^a u-(\B u)_R)\dif x}^{1/2} \notag\\
    &\leq c\sup_{B_R}\,\abss{\B \varphi} \,\omega^{\frac{1}{2}}(c\EE^{\frac{1}{2}}(R)) \EE^{\frac{1}{2}}(R), \notag
    \end{align*} where the penultimate line follows from Jensen's inequality, and $c=c(n,V,\tilde L)>0$.
    \par Combining the above, we arrive at the desired estimate
    \begin{align*}
        \fint_{B_R} A[\B u,\B \varphi] &= -\fint_{B_R} A_1 \,\dd x +
        \fint_{B_R} (A[\B u,\B\varphi] - A_2)\\
        &\leq c(\EE^{\frac{1}{2\alpha}}(R)+ \EE(R)+\omega^{\frac{1}{2}}(c\EE^{\frac{1}{2}}(R)) \EE^{\frac{1}{2}}(R)) \sup_{B_R}\abss{\B \varphi}. \qedhere
    \end{align*}
\end{proof}

\subsection{Excess decay estimate}

Combining the previous results, we obtain the following decay estimate for the excess energy. 

\begin{proposition}\label{prop:EDEAH}
    Let $f$ satisfy \textnormal{\ref{it:fgrowth}}, \textnormal{\ref{it:fcontinuity}}, \textnormal{\ref{it:fBqcstrong}} and \textnormal{\ref{it:frecession}}, let $u \in \sobo^{k-1,1}(\Omega)$ be a local minimiser of \eqref{eq:P_Omega} satisfying $\C^{\ast}u=0$, and fix $B_{2R_0}(x_0) \Subset \Omega$ and $M>0$.
    Then for each $\tau \in (0,\frac1{16})$, there exists $\eps_{\mathrm{exc}} \in (0,1)$ such that the following holds:
    Suppose that $x \in B_{R_0}(x_0)$ and $0 < R < R_0$ such that
    \begin{equation}\label{eq:excess_smallness}
        \lvert (\B u)_{B_R(x)} \rvert \leq M, \quad \EE(u,x,R) < \eps_{\mathrm{exc}}, 
    \end{equation}
     then it holds that
    \begin{equation}\label{eq:excess_decay}
        \EE(u,x,\tau R) \leq 2\tau^{2\alpha}\EE(u,x,R).
    \end{equation}
\end{proposition}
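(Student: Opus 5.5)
We follow the standard $A$-harmonic approximation scheme. Fix $x$ and $R$ as in \eqref{eq:excess_smallness}, write $\EE=\EE(u,x,R)$, $z_0=(\B u)_{x,R}$, and use Lemma~\ref{lem:B_poly} to pick a degree-$k$ polynomial $a_0$ with $\B a_0=z_0$; replacing $a_0$ by $a_0-$(something in $\ker(\B,\C^\ast)$) we may assume $\C^\ast a_0=0$, since $\C^\ast a_0$ is a constant-coefficient polynomial of degree $k-l<0$ and hence vanishes. Set $A=\partial_z^2 f(x,z_0)$. By \eqref{eq:ellipticity} and \eqref{eq:loc_bound}, $A$ satisfies \eqref{eq:A_elliptic} with $\lambda=1/c_2(\ell,M)$ and $\Lambda=L_{x_0,2R_0,M+1}$. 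Let $w=u-a_0$. Then $\B w=\B u-z_0$ and $\C^\ast w=0$, and \eqref{eq:Aharmonic}-type testing is unchanged because $\int A[z_0,\B\varphi]=0$. Put $\gamma=\sqrt{c_*\EE}$ with $c_*$ chosen so that $\fint E(\B w)\le\gamma^2$; this uses $E(\B^s)\le|\B^s|$. By Lemma~\ref{lem:AHalmost_har}, the almost-harmonicity \eqref{eq:AHalmost_harmonic} holds with $\gamma\delta$ replaced by $c(\EE^{1/(2\alpha)}+\EE+\omega^{1/2}(c\EE^{1/2})\EE^{1/2})$. Since $\alpha<1$ gives $\EE^{1/(2\alpha)}\le\EE^{1/2}\EE^{(1-\alpha)/(2\alpha)}$, this quantity is at most $\gamma\delta$ once $\eps_{\rm exc}$ is small enough in terms of $\delta$. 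Here $\delta$ is the value from Lemma~\ref{lem:AHapproximation2} for an $\eps$ to be fixed depending on $\tau$.

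Lemma~\ref{lem:AHapproximation2} then yields an $A$-harmonic $h$ on $B_{R/2}(x)$ with $\C^\ast h=0$, $\fint E(\B h)\le K$, and $\sum_i\fint_{B_{R/2}} E(D^i(w-\gamma h)/R^{k-i})\le\gamma^2\eps$. By the regularity of $A$-harmonic maps (\cite[Theorem 5.2]{LiRaita24}, as used in \eqref{eq:AH_limit_l2}), we get $\sup_{B_{R/4}}|D^{k+1}h|\le cR^{-1}$. Let $a_1$ be the $k$-th order Taylor polynomial of $h$ at $x$, so that $|D^i(h-a_1)|\le c(\tau R)^{k+1-i}R^{-1}$ on $B_{2\tau R}$. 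We now apply the Caccioppoli inequality (Proposition~\ref{prop:Caccioppoli}) on $B_{2\tau R}(x)$ with polynomial $a=a_0+\gamma a_1$; note $|\B a|\le M+c\gamma\le M+1$. Its right-hand side is controlled as follows. For the $E$-terms, Proposition~\ref{prop:Eprop}\ref{it:Esum},\ref{it:Emulti} splits $D^i(u-a)=D^i(w-\gamma h)+\gamma D^i(h-a_1)$. The first piece gives at most $c\tau^{-n-2k}\gamma^2\eps$ after normalizing averages. The second gives $c\gamma^2\tau^2$, using $E(t)\lesssim t^2$. The linear terms $R\int|D^i(u-a)|/R^{k-i}$ are bounded by $c\tau^{-n-k}R\gamma\le c\tau^{-n-k}R^{\alpha}\EE^{1/2}\cdot R^{1-\alpha}$, which is $\lesssim\EE$ times a factor that is small once $R_0$ is small. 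For this we use Lemma~\ref{lem:Eint_est} to pass from $E$ to $L^1$ norms. This gives $\fint_{B_{\tau R}}E(\B(u-a))\le c(\tau^2+\tau^{-n-2k}\eps)\EE$ plus a small multiple of $\EE$. The singular part is included because $E$ of a measure counts $|\B^s u|$.

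It remains to replace $\B a$ by the average $(\B u)_{x,\tau R}$ in the excess. By Lemma~\ref{lem:Eint_est}, $|(\B u)_{\tau R}-\B a|\le\fint|\B(u-a)|\lesssim(\fint E(\B(u-a)))^{1/2}$. Jensen and Proposition~\ref{prop:Eprop} then show that switching to the true average costs only a constant factor. We also have $(\tau R)^{2\alpha}=\tau^{2\alpha}R^{2\alpha}\le\tau^{2\alpha}\EE$. So $\EE(\tau R)\le \tau^{2\alpha}\EE+c_3(\tau^2+\tau^{-n-2k}\eps)\EE$. We choose $\tau$-dependent quantities so that $c_3\tau^2\le\tfrac12\tau^{2\alpha}$; this is possible for $\tau$ small, and otherwise one first shrinks $\tau$ and uses monotonicity. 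We then take $\eps=\tau^{n+2k+2\alpha}/(2c_3)$, which determines $\delta$ and hence $\eps_{\rm exc}$.

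The main obstacle is the term $\gamma^2\tau^2$ versus $\tau^{2\alpha}$. The constant $c_3$ depends on $K$ and on $\lambda,\Lambda$, and these must be uniform in $\tau$; they are, since they depend only on $M$ and the ball $B_{2R_0}(x_0)$. A further point is checking that the $R$-linear terms in \eqref{eq:BCaccioppoli} are genuinely absorbed by the $R^{2\alpha}$ part of the excess. This requires $R^{1-\alpha}\gamma\lesssim\EE^{1/2}\cdot\EE^{1/2}$-type bookkeeping, using $\gamma\sim\EE^{1/2}$ and $R\le\EE^{1/(2\alpha)}$, which gives $R\gamma\lesssim\EE^{1/2+1/(2\alpha)}\le\EE$.
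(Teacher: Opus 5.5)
There is a genuine gap at the step where you claim $\sup_{B_{R/4}}\lvert D^{k+1}h\rvert\le cR^{-1}$ for the map $h$ produced by Lemma~\ref{lem:AHapproximation2}.

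That lemma only gives $\fint_{B_{R/2}}E(\B h)\le K$. It says nothing about $D^jh$ for $j<k$. The interior estimates of \cite[Theorem 5.2]{LiRaita24} control $D^kh$ and $D^{k+1}h$ through $\fint\lvert D^{k-1}h\rvert$, not through $\B h$.

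No bound in terms of $\B h$ alone is possible. For $\pi\in\ker(\B,\C^\ast)$, the map $h+t\pi$ is again $A$-harmonic and $\C^\ast$-free with the same $\B$-gradient, while $D^{k+1}(h+t\pi)$ blows up as $t\to\infty$ whenever $D^{k+1}\pi\neq0$. This already happens for the trace-free symmetric gradient in $n\ge3$, since conformal Killing fields have quadratic terms. For non-$\mathbb C$-elliptic $\B$ the kernel is even infinite dimensional.

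This is not hypothetical for the lemma's output: in its proof the approximant is $\tilde h_m+\pi_m/\gamma_m$, with $\pi_m/\gamma_m$ uncontrolled. Your appeal to \eqref{eq:AH_limit_l2} does not transfer, because there $h$ is the limit of $(w_m-\pi_m)/\gamma_m$, whose $D^{k-1}$ is bounded by the fractional estimate. Hence the Taylor bound $\lvert D^i(h-a_1)\rvert\le c(\tau R)^{k+1-i}R^{-1}$ and the resulting $c\gamma^2\tau^2$ term are unjustified as written.

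The missing idea is to remove the kernel part of $h$ before using elliptic regularity, and to compensate on the side of the minimizer. The steps are:

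\begin{enumerate}
\item Apply Theorem~\ref{thm:B_poincare2} to $h$ on $B_{R/2}$. Together with Lemma~\ref{lem:Eint_est}, this gives $\pi\in\ker(\B,\C^\ast)$ with $R^{-1}\fint_{B_{R/4}}\lvert D^{k-1}(h-\pi)\rvert\lesssim\fint_{B_{R/2}}\lvert\B h\rvert\lesssim\sqrt{K(K+1)}$.
\item Since $h-\pi$ is still $A$-harmonic and $\C^\ast$-free, the elliptic estimates give $\sup_{B_{2\tau R}}\lvert D^k(h-\pi)\rvert+R\sup_{B_{2\tau R}}\lvert D^{k+1}(h-\pi)\rvert\le c$.
\item Take $a_1$ to be the Taylor (or average-matching) polynomial of $h-\pi$. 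This also gives $\lvert\B a\rvert\le M+1$ after shrinking $\eps_{\mathrm{exc}}$.
\item Apply the Caccioppoli inequality to $u-\gamma\pi$ instead of $u$. This is needed because the right-hand side of Proposition~\ref{prop:Caccioppoli} involves $D^i(u-a)$ for $i<k$, which depend on the representative and not only on $\B u$. It is allowed because $\B(u-\gamma\pi)=\B u$ on $B_{R/4}$, so $u-\gamma\pi$ is still a local minimizer there.
\end{enumerate}

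Now $D^i(u-\gamma\pi-a_0-\gamma a_1)=D^i(w-\gamma h)+\gamma D^i(h-\pi-a_1)$, and the rest of your bookkeeping goes through and matches the paper. That includes the $E$-splitting and Jensen to pass to $(\B u)_{x,\tau R}$. It also includes absorbing the $R$-linear terms via $R\le\EE^{1/(2\alpha)}$, using smallness of $\eps_{\mathrm{exc}}$ rather than of $R_0$, which is fixed.

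A minor point: the $c_3\tau^2$ term forces $\tau$ to be small in terms of $c_3$. Since the excess is not monotone in the radius, your ``shrink $\tau$ and use monotonicity'' does not recover every $\tau\in(0,\frac1{16})$. The paper's argument carries the same implicit smallness restriction on $\tau$, which is all the iteration needs.
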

In what follows, we will abbreviate $\EE(u,x,R) = \EE(R)$ when $u$ and $x$ are clear from context.
  
\begin{proof}
    Fix $\eps_{\mathrm{exc}}, \eps_{\mathrm{har}}\in(0,1)$ to be determined.
    By \eqref{eq:loc_bound}, \eqref{eq:ellipticity} we know there exists $0 < \lambda_M < \Lambda_{x_0,R,M}$ such that for all $x \in B_{R_0}(x_0)$ and $z \in V$ with $\lvert z \rvert \leq M$,
    \begin{equation}\label{eq:D2f_elliptic}
        \lambda_{M} \lvert v \rvert^2 \leq \partial^2_zf(x,z)[v,v] \leq \Lambda_{x_0,R_0,M} \lvert v \rvert^2 \quad\mbox{for all } v \in \Lambda_{\A}.
    \end{equation}
    Then corresponding to $\eps = \eps_{\mathrm{har}}$, there is $\delta_{\mathrm{har}}>0$ such that Lemma~\ref{lem:AHapproximation2} holds with $\lambda=\lambda_M$ and $\Lambda=\Lambda_{x_0,R,M}$.

    Fix $x \in B_{R_0}(x_0)$ and $0<R<R_0$ for which \eqref{eq:excess_smallness} holds.
    Suppressing the $x$-dependence, by applying Lemma~\ref{lem:AHalmost_har} we have
    \begin{equation}
        \fint_{B_R} A_{x,R}[\B u,\B\varphi] \leq c_1 (\EE^{\frac{1}{2\alpha}}(R) + \EE(R)+\omega^{\frac{1}{2}}(c\EE^{\frac{1}{2}}(R)) \EE^{\frac{1}{2}}(R)) \sup_{B_R} \abss{\B \varphi} 
    \end{equation} for any $\varphi \in \CC_{\rmc}^{\infty}(B_R)$, where $A_{x,R} = \partial_z^2 f(x,(\B u)_R)$, $c_1 = c_1(n,V,L,L_{x_0,2R_0,M+1})$, and $\omega = \omega_{x_0,2R_0,M+1}$ is as in \eqref{eq:loc_cont}.
    We then choose $\eps_{\mathrm{exc}}$ so that
    \begin{equation}
        c_1 ( \eps_{\mathrm{exc}}^{\frac{1-\alpha}{2\alpha}} + \eps_{\mathrm{exc}}^{\frac12}+  \omega^{\frac12}(c\;\! \eps_{\mathrm{exc}}^{\frac12})) < \delta_{\mathrm{har}}.
    \end{equation}
    Using Lemma~\ref{lem:B_poly}, let $a$ be a homogeneous polynomial of order $k$ such that $\B a = (\B u)_{R}$, 
   and note that $\C^{\ast}a=0$ since the order of $\C^{\ast}$ is strictly greater than $k$. 

   Then by applying Lemma~\ref{lem:AHapproximation2} with the choice $\eps=\eps_{\mathrm{har}}$ and $\gamma^2 := \mathscr{E}(R)$, we obtain $h \in \CC^{\infty}(B_{R/2})$ which is $A_{x,R}$-harmonic and satisfies
    \begin{equation}\label{eq:exc_aharmonic}
        \fint_{B_{R/2}} E(\B h) \leq K, \quad \sum_{i=1}^{k-1} \fint_{B_{R/2}} E\brac{\frac{D^i(u-a-\gamma h)}{R^{k-i}}} \dif x \leq \gamma^2 \eps_{\mathrm{har}},
    \end{equation} where $K=K(n,\B, \C,\lambda,\Lambda)>0$ is as in the statement. 
    \par We will collect several estimates for $h$. 
    Applying Theorem~\ref{thm:B_poincare2} to $h$, we can find $\pi \in \ker(\B,\C^{\ast})$ on $B_{R/2}$ such that
    \begin{align}\label{eq:harmonic_poincare}
        \frac1R\fint_{B_{R/4}} \abss{D^{k-1} (h-\pi)}\dif x &\leq c \fint_{B_{R/2}} \abss{\B h}\dif x,
    \end{align}
    which combined with Lemma~\ref{lem:Eint_est} and \eqref{eq:exc_aharmonic} gives
    \begin{equation}\label{eq:harmonic_poincare2}
        \frac1R\fint_{B_{R/4}} \abss{D^{k-1} (h-\pi)}\dif x \leq c \sqrt{K(K+1)}.
    \end{equation}
    It is clear that $h-\pi$ is $A_{x,R}$-harmonic, so applying the elliptic estimates from \cite[Theorem 5.2]{LiRaita24} we have
    \begin{equation}\label{eq:exc_elliptic_est}
        R\sup_{B_{2\tau R}} \lvert D^{k}(h-\pi) \rvert + R^2\sup_{B_{2\tau R}} \lvert D^{k+1}(h-\pi) \rvert \leq c\fint_{B_{R/4}} \lvert D^{k-1}(h-\pi)\rvert \,\dd x,
    \end{equation}
    noting that $2\tau R < R/8$.
    Now fix a polynomial $b$ of order $k$ such that\[ (D^i b)_{2\tau R} = \gamma(D^i (h-\pi))_{2\tau R}, \quad \mbox{for all } i=0,\dots, k, \]
    then using \eqref{eq:exc_elliptic_est} and \eqref{eq:harmonic_poincare2} we can estimate
    \begin{equation*}
        \begin{split}
            \lvert \B(a+b) \rvert &\leq \lvert (\B u)_{B_{R}(x)} \rvert + \gamma\lvert (\B (h-\pi))_{B_{2\tau R}(x)} \rvert \\
            &\leq \lvert (\B u)_{B_R(x)} \rvert + c\;\! \gamma \sup_{B_{2\tau R}} \abss{D^k(h-\pi)}\\
            &\leq M + c\;\!\gamma \frac{1}{R} \fint_{B_{R/4}} \abss{D^{k-1}(h-\pi)}\dif x\\
            & \leq M + c\;\!\varepsilon_{\mathrm{exc}} \sqrt{K(K+1)}.
        \end{split}
    \end{equation*}
    By shrinking $\eps_{\mathrm{exc}}$ as necessary, we can ensure that $\lvert \B(a+b)\rvert \leq M+1$.
    Now observe that, since $\B(u + \gamma\pi) = \B u$ in $B_{R/4}$, we have $u+\gamma\pi$ is also a local minimiser of $\mathcal F$ in $B_{R/4}$.
   We will now apply the Caccioppoli-type inequality (Proposition~\ref{prop:Caccioppoli}) to $u+\gamma\pi$ on $B_{\tau R}$ using the polynomial $a+b$ to obtain
       \begin{equation}\label{eq:main_excess_split}\begin{split}
        &\fint_{B_{\tau R}} E(\B u-(\B u)_{\tau R}) 
        \leq 4\fint_{B_{\tau R}} E(\B (u+\gamma\pi-a-b))  \\
        &\quad\leq c \sum_{i=0}^{k-1} \fint_{B_{2\tau R}} E\brac{\frac{D^i(\tilde{u}+\gamma\pi-b)}{(2\tau R)^{k-i}}}\dif x + c\tau R \sum_{i=0}^{k-1}\fint_{B_{2\tau R}} \frac{\abss{D^i(\tilde{u}+\gamma\pi-b)}}{(2\tau R)^{k-i}}\dif x \\
        &\qquad+ c\tau R \fint_{B_{2\tau R}} \abss{\B \tilde{u}-\B b} =: I+I\!I+I\!I\!I,
    \end{split}\end{equation}
    where we abbreviate $\tilde u = u-a$, and used the almost minimality of $(\B u)_{\tau R}$ in the first line (see \cite[Lemma~2.8]{GmeKri_BV}).
    To estimate $I$, we split the integrand using Proposition~\ref{prop:Eprop}\ref{it:Esum} as
    \begin{equation}\begin{split}
        I &\leq c\sum_{i=0}^{k-1}\fint_{B_{2\tau R}} E\brac{\frac{D^i(\tilde{u}-\gamma h)}{(2\tau R)^{k-i}}}\dif x + c\sum_{i=0}^{k-1}\fint_{B_{2\tau R}} E\brac{\frac{D^i(\gamma (h-\pi)-a)}{(2\tau R)^{k-i}}}\dif x\\
        &\coloneqq I_1 + \sum_{i=0}^{k-1} J_i. 
    \end{split}\end{equation} 
    Using Proposition~\ref{prop:Eprop}\ref{it:Emulti} and $\mbox{\eqref{eq:exc_aharmonic}}_2$, we can estimate the first term as
    \begin{equation}
        I_1 \leq c \sum_{i=0}^{k-1} \frac1{\tau^{2(k-i)+n}} \fint_{B_{R/2}} E\bigg(\frac{D^i(\tilde u -\gamma h)}{R^{k-i}}\bigg) \,\dd x \leq \frac{c \;\!\eps_{\mathrm{har}}}{\tau^{2k+n}} \EE(R).
    \end{equation}

    For the second term, combining \eqref{eq:exc_elliptic_est}, the fact that $E(z) \lesssim \lvert z \rvert^2$ and the Poincar\'e inequality, for each $i = 0,\ldots,k-1$ we can estimate
    \begin{equation} 
    \begin{split}
        J_i &\leq c\fint_{B_{2\tau R}} \frac{\lvert D^i(\gamma (h-\pi) - b)\rvert^2}{(\tau R)^{2(k-i)}} \,\dif x 
        \leq c \fint_{B_{2\tau R}} \lvert D^k(\gamma (h-\pi) - b)\rvert^2 \,\dd x \\
        &\quad\leq c\;\! \tau^2 \gamma^2 \sup_{B_{2\tau R}}\,\lvert D^{k+1}(h-\pi)\rvert^2 
        \leq c\;\! \tau^2 \gamma^2 \bigg(\fint_{B_{R/4}} \frac{\lvert D^{k-1} (h-\pi)\rvert}R \,\dd x\bigg)^2.
    \end{split}
    \end{equation}
    Now using \eqref{eq:harmonic_poincare}, $\mathrm{\eqref{eq:exc_aharmonic}}_1$ and Lemma~\ref{lem:Eint_est}, we deduce that
    \begin{equation}
        J_i \leq c\;\! \tau^2 \gamma^2 \bigg( \fint_{B_{R/2}} \lvert \B h \rvert \,\dd x\bigg)^{2} \leq c\;\! \tau^2 \gamma^2  \fint_{B_{R/2}} E(\B h) \,\dd x  \leq c \tau^2 \EE(R).
    \end{equation}
    Hence combining the above estimates gives
    \[ I \leq I_1 + \sum_{i=0}^{k-1} J_i \leq c_2 \left(\frac{\eps_{\mathrm{har}}}{\tau^{2k+n}} + \tau^2 \right)\EE(R).\]
    Since $\eps_{\mathrm{har}} < 1$, by choosing $\eps_{\mathrm{exc}} > \EE(R)$ small enough so that $c_2 ( \tau^{-2k-n} + \tau^2) \eps_{\mathrm{exc}} \leq 1$, we can use Lemma~\ref{lem:Eint_est} to bound
     \[
     \fint_{B_{2\tau R}} \frac{\abss{D^i(\tilde{u}+\gamma\pi-b)}}{(2\tau R)^{k-i}}\dif x \leq c \brac{\fint_{B_{2\tau R}} E\brac{\frac{D^i(\tilde{u}+\gamma\pi-b)}{(2\tau R)^{k-i}}}\dif x }^{1/2}
    \] for each $i = 0,\ldots,k-1$.
    This implies that
    \[I\!I \leq c\tau R I^{1/2} \leq c\tau R \big(\frac{\sqrt{\eps_{\mathrm{har}}}}{\tau^{k+n/2}}+c\tau\big)\sqrt{\EE(R)} \leq c \bigg( \frac{\sqrt{\eps_{\mathrm{har}}}}{\tau^{k+n/2}} + \tau^2\bigg) \EE(R),\]
        recalling that $R \leq \EE(R)^{\frac1{2\alpha}} \leq \sqrt{\EE(R)}$.
    For $I\!I\!I$, noting that $\EE(R) < \eps_{\mathrm{exc}} \leq 1$ and using again Lemma~\ref{lem:Eint_est} and that $R \leq \EE(R)^{\frac1{2\alpha}}$, we have
    \begin{align}
    \begin{split}
        I\!I\!I &\leq c \tau R \fint_{B_{2\tau R}} \abss{\B \tilde{u}} + c\tau R\fint_{B_{2\tau R}} \gamma\abss{\B h}\,\dd x \\
        &\leq c\tau R \tau^{-n}\brac{ \EE(R)^{\frac12} + \gamma \brac{\fint_{B_{R/2}} E(\B h)\dif x}^{1/2}} \\
        &\leq c \tau^{1-n} R\gamma \leq c \tau^{1-n} \EE^{\frac1{2\alpha}+\frac12},
    \end{split}
    \end{align}
    where the penultimate inequality follows from \eqref{eq:exc_aharmonic}$_1$.
    Therefore combining our estimates from \eqref{eq:main_excess_split} onwards, we obtain that
    \begin{equation}
        \EE(\tau R) - (\tau R)^{2\alpha} \leq c_3 \left( \frac{\eps_{\mathrm{har}}}{\tau^{2k+n}} + \frac{\sqrt{\eps_{\mathrm{har}}}}{\tau^{k+n/2}} + \tau^2 + \tau^{1-n} \eps_{\mathrm{exc}}^{\frac1{2\alpha}-\frac{1}{2}} \right) \EE(R).
    \end{equation}
    By choosing $\eps_{\mathrm{har}}$ and $\eps_{\mathrm{exc}}$ to be sufficiently small, the claimed estimate \eqref{eq:excess_decay} follows.
\end{proof}

\subsection{Iteration and H\"{o}lder regularity}\label{subsec:iteration}

Once the decay estimate \eqref{eq:excess_decay} is established, a routine iteration argument establishes the H\"older continuity of $\B u$. We will briefly record the details.

\begin{proof}[Proof of Theorem {\ref{thm:partial_reg}}]
    Let $B_{2R_0}(x_0) \Subset \Omega$, $M>0$, $\alpha \in (0,1)$ and $\tau \in (0,1)$ to be determined.
    Then letting $\beta \in (\alpha,1)$ by applying Proposition~\ref{prop:EDEAH} using $M+1$ and $\beta$ in place of $M$ and $\alpha$, there is $\eps_0 \in (0,1)$ such that if $x\in B_{R_0}(x_0))$ and $0<R<R_0$ such that
    \begin{equation}
        \lvert (\B u)_{B_R(x)} \rvert \leq M+1, \quad \EE(u,x,R) = \fint_{B_R(x)} E(\B u - (\B u)_{B_R(x)}) + R^{2\beta} < \eps_0,
    \end{equation}
    implies that
    \begin{equation}
        \EE(u,x,\tau R) \leq 2\tau^{2\beta}\EE(u,x,R).
    \end{equation}

    We now choose $\tau \in (0,1)$ such that $2 \tau^{2(\beta-\alpha)} <1$.
    Now letting $0<\eps_1 <\min\{2^{-n-3}\eps_0, 2^{-n}/3\}$ to be determined and putting $R_1 = (\eps_0/2)^{\frac1{2\beta}}$, suppose that $x \in B_{R_0}(x_0)$ and $0<R<R_1$ such that
    \begin{equation}
        \lvert (\B u)_{B_R(x)} \rvert \leq M, \quad \fint_{B_R(x)} E(\B u - (\B u)_{B_R(x)}) < \eps_1.
    \end{equation}
    Then for all $y \in B_{R/2}(x)$, we have $\lvert (\B u)_{B_{R/2}(y)}\rvert \leq M+ 2^n \sqrt{3\varepsilon_1}\leq M+1$ and $\EE(u,y,R/2) < 2^{n+2}\eps_1 + R^{2\beta} \leq \eps_0$ by our choice of $\eps_1$ and $R_1$, so $\EE(u,y,\tau R/2) \leq \tau^{2\alpha} \EE(u,y,R/2)$ by our choice of $\tau$.
    Arguing similarly as in \cite[Proposition~4.8]{GmeKri_BV} can apply this inductively to infer that
    \begin{equation}
      \begin{split}
        \EE(u,y,\tau^kR/2) &\leq \tau^{2k\alpha} \EE(u,y,R/2), \\
        \lvert (\B u)_{B_{\tau^k R/2}(y)}\rvert &\leq \lvert (\B u)_{B_{R}(y)}\rvert +  \frac{\sqrt{3\eps_1}}{\tau^n(1-\tau^{\alpha})} \leq M+1,
      \end{split}
    \end{equation}
    by choosing $\eps_1$ to be sufficiently small.
    Therefore if $0<r<R/2$, we can find $j \in \mathbb N$ such that $\tau^{j-1} \leq r/R \leq \tau^j$, so it follows that
    \begin{equation}
        \EE(u,y,r) \leq \frac{4}{\tau^n} \EE(u,y,\tau^j R/2) \leq \frac{4}{\tau^n} \tau^{2\alpha j} \EE(u,y,R/2) \leq \frac{2^{n+4}}{\tau^{n+2\alpha}} \left(\frac{r}{R}\right)^{2\alpha} \EE(u,x,R). 
    \end{equation}
    Since this holds for all $y \in B_{R/2}(x)$ and $r \in (0,R/2)$, it follows that $\B u$ is $\alpha$-H\"older continuous on $B_{R/2}(x)$.
    Since $\C^{\ast}u=0$ in addition, using Fourier multiplier estimates (see \cite[Theorem~7.9.6]{Hormander1} it follows that $u$ is $\CC^{k,\alpha}$ in $B_{R/2}(x)$.
\end{proof}

\section{Reduction and proof of Theorems \texorpdfstring{\ref{thm:main_A-free}}{A} and \texorpdfstring{\ref{thm:main}}{B}}\label{sec:proof_of_main_thm_A-free}

\par In this section, we show how to conclude the proof of the main partial regularity theorems. 
For Theorem~\ref{thm:main_A-free}, this will involve a reduction from the $\A$-free formulation to the potential formulation with $\B$; more precisely, we will show that any local minimizer $v$ of \eqref{eq:P_Omega_Afree} can be locally represented by $S(x)+\B u$, where $S$ is smooth and $\B u$ is a local minimizer of the functional
    \begin{equation}\label{eq:reduced_problem}
    \widetilde{\mathcal F}(u) = \int f(x,S(x) + \B u).
    \end{equation}
This will be carried out in Section~\ref{subsec:reduction}, where we will also establish Theorem~\ref{thm:main} by a similar reduction argument. More precisely, we will locally modify minimizers $u$ of \eqref{eq:P_Omega} to be $\C^{\ast}$-free in addition, where $\C$ is a potential operator for $\B$, to which Theorem~\ref{thm:partial_reg} applies.

Unless we impose additional growth conditions on the integrand $f$ however, we cannot directly apply the results from Section~\ref{sec:partialreg} to the modified functional \eqref{eq:reduced_problem}. Instead, we can adapt the proof of said theorem 
by keeping track of the additional term $S$, which is carried out in Section \ref{sec:reduction_regularity}.

\subsection{Reduction to the potential formulation}\label{subsec:reduction}

We will assume we are in the setting of Theorem~\ref{thm:main_A-free}; that is, $\A$ is a homogeneous differential operator satisfying \eqref{eq:CR}, $\B$ is a potential operator for $\A$ in that $\ker \A(\xi) = \rmim \B(\xi)$ for all $\xi \in \mathbb R^n\setminus\{0\}$, and $\C$ is a potential operator for $\B$.

\begin{proposition}\label{prop:reduction}
    Suppose that $f\colon \Omega\times V\to\R$ satisfies \ref{it:fgrowth}.
    Let $v\in \mathcal M(\Omega,V)$ such that $\A v=0$ is a local minimizer of \eqref{eq:P_Omega_Afree}, meaning that \eqref{eq:local_min_A} holds. 
    Then if $\omega \Subset\Omega$ is an open set, there exists $u\in \sobo^{k-1,1}(\Omega,U)$ such that $\B u\in\mathcal M(\omega,V)$, $\C^{\ast}u=0$ in $\omega$, and $S \in \CC^\infty(\bar\omega,V)$ such that
    \begin{equation}
        v = \B u + S \quad\mbox{ in }\omega.
    \end{equation}
    Moreover $u$ satisfies
    \begin{align}\label{eq:local_min_B}
    \int_\omega f(x,S(x)+\B u(x))\leq \int_\omega \tilde f(x,S(x)+\B( u+\phi)(x))\quad\text{for }\phi\in \hold_c^\infty(\omega,U)
    \end{align}
\end{proposition}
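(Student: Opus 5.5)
We localize $v$, split the result in Fourier space into a $\B$-gradient part and an elliptic remainder, and then read off minimality for $u$ from minimality of $v$. Fix open sets $\omega\Subset\omega_1\Subset\Omega$ and a cut-off $\rho\in\CC^\infty_{\rmc}(\omega_1)$ with $\rho\equiv1$ on $\omega$. The localized field $\rho v\in\mathcal M_{\rmc}(\R^n,V)$ satisfies $\A(\rho v)=\A(\rho v)-\rho\A v$. This is a commutator involving derivatives of $\rho$ of order at least one, so it vanishes on $\omega$ and is compactly supported.

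Using the exactness $\ima\B(\xi)=\ker\A(\xi)$ and Lemma~\ref{lem:dagger}, we have the orthogonal decomposition $\mathrm{Id}=\B(\xi)\B^\dagger(\xi)+\A^\dagger(\xi)\A(\xi)$ for $\xi\neq0$. Set
\[
\hat w(\xi)=\B^\dagger(\xi)\widehat{\rho v}(\xi),\qquad \hat S(\xi)=\A^\dagger(\xi)\widehat{\A(\rho v)}(\xi),
\]
so that $\rho v=\B w+S$ on $\R^n$, up to a constant from the origin, which we absorb into $S$. Several properties follow.

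First, $\C^*w=0$ on all of $\R^n$, because $\C^*(\xi)\B^\dagger(\xi)=0$: indeed $\ima\B^\dagger(\xi)=(\ker\B(\xi))^\perp=(\ima\C(\xi))^\perp=\ker\C^*(\xi)$.

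Second, $S$ is smooth on $\omega$. On $\omega$ the field $S$ is obtained from the compactly supported distribution $\A(\rho v)$, which vanishes on $\omega$, through a kernel that is homogeneous and smooth away from the origin. The kernel is therefore smooth on the complement of the support, so $S\in\CC^\infty(\bar\omega)$.

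Third, $w$ has the required integrability. Since $\B^\dagger$ is homogeneous of degree $-k$, the kernel of $w$ is $(k-n)$-homogeneous. Applying the pointwise Riesz-potential bounds of Lemma~\ref{lem:Riesz_domain} to $D^j w$, $j\le k-1$, with the finite measure $\rho v$ gives $w\in\sobo^{k-1,1}_{\locc}$. Multiplying by a cut-off and modifying outside $\omega$ then yields $u\in\sobo^{k-1,1}(\Omega,U)$ with $u=w$ near $\bar\omega$.

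Fourth, $\B u=v-S$ is a measure on $\omega$.

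Because of the low-frequency singularity we treat $w$ modulo polynomials, as in Lemma~\ref{lem:sobolev_negative_support}. We also check that the identity $\B w=\rho v-S$ holds as distributions; this is standard, since $\B(\xi)\B^\dagger(\xi)$ is the projection onto $\ima\B(\xi)=\ker\A(\xi)$.

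For the minimality statement, take $\phi\in\CC^\infty_{\rmc}(\omega,U)$. Then $\varphi:=\B\phi$ is smooth, compactly supported in $\omega$ and $\A$-free, since $\A\B=0$. By \eqref{eq:local_min_A},
\[
\int_\Omega f(x,v)\le\int_\Omega f(x,v+\B\phi).
\]
The two integrands agree off $\omega$ because $\spt\varphi\subset\omega$, so we may restrict to $\omega$. There we substitute $v=S+\B u$ and obtain \eqref{eq:local_min_B}, reading $\tilde f$ as $f$. The singular parts are unaffected because $\B\phi$ is absolutely continuous, and \ref{it:fgrowth} ensures that all integrals are finite.

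The main obstacle is the rigorous bookkeeping at low frequencies. The multipliers $\B^\dagger$ and $\A^\dagger$ are singular at $\xi=0$, so we must verify that $w$ is a well-defined tempered distribution whose derivatives of order at most $k-1$ lie in $\lebe^1_{\locc}$. If the direct Fourier definition misbehaves, we would first replace $\B^\dagger$ by its kernel convolved with a compactly supported measure, and treat the homogeneous kernels of degree $k-n$ (or logarithmic ones when $k\ge n$) via their explicit form, accepting a polynomial ambiguity that does not affect $\B w$ or $\C^* w$.
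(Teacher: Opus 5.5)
Your argument is correct and follows the paper's route. It uses the same Moore--Penrose splitting of $\widehat{\rho v}$: your $\A^\dagger(\xi)\A(\xi)$ and the paper's $\A^*(\xi)\A^{*\dagger}(\xi)$ are both the projection onto $\ima\A^*(\xi)$, so your $S$ is the paper's $S$. It also gets $\C^*u=0$ from $\ima\B^\dagger(\xi)=\ker\C^*(\xi)$, $\sobo^{k-1,1}$-integrability from the homogeneous kernel, and minimality by testing with $\B\phi$. The only real difference is how you show $S$ is smooth. You use pseudo-locality of $S=K\ast\A(\rho v)$ with $\A(\rho v)$ supported away from $\omega$, whereas the paper observes that $S$ solves the elliptic system $(\A,\B^*)S=0$ near $\bar\omega$. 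Your treatment of the low-frequency singularity of $\B^\dagger$ is in fact more careful than the paper's.

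One small fix is needed. Take $\rho\equiv1$ on a neighbourhood of $\bar\omega$, not just on $\omega$. Otherwise $\spt\A(\rho v)$ may touch $\partial\omega$, and you only get $S\in\CC^\infty(\omega)$. That does not ensure $S$ is bounded on $\omega$, so it does not guarantee that $\B u=v-S$ is a finite measure there.
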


\begin{proof}
    Consider a cut-off function $\rho\in \CC_{\rmc}^\infty(\Omega)$ such that $0 \leq \rho \leq 1$ in $\Omega$ and $\rho=1$ in an open neighborhood of $\overline\omega$. We write
    $$
    \widehat{\rho v}(\xi)=\B(\xi)\B^\dagger(\xi)\widehat{\rho v}(\xi)+\A^*(\xi)\A^{*\dagger}(\xi)\widehat{\rho v}(\xi),
    $$
    which follows by the definition of the Moore--Penrose inverse and the relation between $\A$ and $\B$. Define
    $$
   \hat u(\xi)\coloneqq \B^\dagger(\xi)\widehat{\rho v}(\xi)
   \quad\text{and}\quad   
   \hat w(\xi)\coloneqq\A^{*\dagger}(\xi)\widehat{\rho v}(\xi),
    $$
    so that $\rho v=\B u+\A^* w$. Write $S\coloneqq\A^*w$, so $\B^* S=0$ in $\R^n$. Since $\A \B u=0$ on $\R^n$ and $v=\B u + S$ in $\{x \in \Omega\col \rho(x)=1\}$, we also have $\A S=0$, so that $S$ solves the elliptic system $(\A,\B^{\ast})S=0$ in an open neighborhood of $\omega$. In particular, $S\in \CC^\infty(\bar\omega,V)$ and hence $\B u \in \mathcal M(\omega,V)$. Notice that $\ima \B^{\dagger}(\xi) = \ima \B^{\ast}(\xi)$ for any $\xi \neq 0$, and thus $\C^{\ast}u=0$. Using Lemma~\ref{lem:dagger} we can also write
    \begin{equation}
        D^{k-1}u = \mathcal K \ast (\rho v), \ \ \mbox{where} \ \ \hat{\mathcal K}(\xi) = \B^\dagger\left(\frac{\xi}{|\xi|}\right)\frac{\widehat{\rho v}(\xi)}{|\xi|}\otimes \left(\dfrac{\xi}{|\xi|}\right)^{\otimes{(k-1)}},
    \end{equation}
    which satisfies $\lvert \mathcal K(x) \rvert \leq c \lvert x \rvert^{1-n}$ on $\mathbb R^n\setminus\{0\}$, and hence is locally integrable. Thus since $\Omega$ is bounded, we can estimate
    \begin{equation}
        \lVert D^{k-1}u \rVert_{\lebe^1(\Omega)} =\lVert \mathcal K \ast (\rho v) \rVert_{\lebe^1(\Omega)}   \lesssim \lVert \rho v \rVert_{\mathcal M(\Omega)} \leq \lVert v \rVert_{\mathcal M(\Omega)},
    \end{equation}
     which also implies that $u\in \sobo^{k-1,1}(\Omega,U)$.
    Finally, the minimality property \eqref{eq:local_min_B} follows from the decomposition.
\end{proof}

A similar argument allows us to reduce to the $\C^{\ast}$-free setting in the context of $\B$-gradients, which allows us to establish Theorem~\ref{thm:main}.

\begin{lemma}\label{lem:Cast_reduction}
    Let $u \in \mathscr{D}'(\Omega,U)$ such that $\B u \in \mathcal M(\Omega,V)$.
    Then if $\omega \Subset \Omega$ is an open set, there exists $\tilde u \in \sobo^{k-1,1}(\omega,U)$ such that $\B \tilde u = \B u$ and $\C^{\ast}\tilde u = 0$.
\end{lemma}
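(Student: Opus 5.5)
The plan is to mimic the proof of Proposition~\ref{prop:reduction}, replacing the pair $(\A,\B)$ by the pair $(\B,\C)$. Fix a cut-off $\rho\in\CC^\infty_{\rmc}(\Omega)$ with $0\le\rho\le1$ and $\rho\equiv1$ on an open neighbourhood $\omega'$ of $\overline\omega$. Set $\mu:=\rho\,\B u\in\mathcal M(\mathbb R^n,V)$, which is compactly supported, and define $\tilde u$ through the Fourier transform by
\[
\hat{\tilde u}(\xi):=\B^{\dagger}(\xi)\hat\mu(\xi).
\]
Since $\ima\B^\dagger(\xi)=\ima\B^\ast(\xi)=(\ker\B(\xi))^\perp=(\ima\C(\xi))^\perp=\ker\C^\ast(\xi)$, we immediately get $\C^\ast\tilde u=0$ on $\mathbb R^n$.

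For the regularity of $\tilde u$, I would argue exactly as in Proposition~\ref{prop:reduction}. By Lemma~\ref{lem:dagger}, $D^{k-1}\tilde u=\mathcal K\ast\mu$, where $\hat{\mathcal K}(\xi)=\B^\dagger(\xi)\otimes\xi^{\otimes(k-1)}$ is smooth away from $0$ and homogeneous of degree $-1$, so $|\mathcal K(x)|\lesssim|x|^{1-n}$. Hence $D^{k-1}\tilde u\in\lebe^1_{\locc}$ with $\|D^{k-1}\tilde u\|_{\lebe^1(\omega)}\lesssim|\mu|(\mathbb R^n)$. The lower derivatives $D^j\tilde u$, $j<k-1$, are handled in the same way, since their kernels are homogeneous of degree $j-k+1-n>-n$ and are therefore locally integrable. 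This gives $\tilde u\in\sobo^{k-1,1}(\omega,U)$.

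The main point is to show $\B\tilde u=\B u$ in $\omega$. Fourier-side, $\B(\xi)\B^\dagger(\xi)=\proj_{\ima\B(\xi)}$, so $\B\tilde u=\mu-S$ with $\hat S=(\mathrm{I}-\proj_{\ima\B(\xi)})\hat\mu$. It therefore suffices to show $S=0$ on $\omega$, and this is the step I expect to be the obstacle. Unlike in the $\A$-free case, the term $S$ is not automatically zero: $\mu$ is only locally a $\B$-gradient. What I would try is to write $\rho\B u=\B(\rho u)-[\B,\rho]u$. Since $u$ is merely a distribution, I would first use the local structure of distributions to write $u=\sum_{|\alpha|\le N}\partial^\alpha g_\alpha$ near $\operatorname{spt}\rho$, so that $\rho u$ is a compactly supported distribution and $\B(\rho u)$ makes sense globally. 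The projection $(\mathrm{I}-\proj_{\ima\B})$ annihilates $\widehat{\B(\rho u)}$, so
\[
\hat S=-(\mathrm{I}-\proj_{\ima\B(\xi)})\widehat{[\B,\rho]u}.
\]
The commutator $[\B,\rho]u$ is a compactly supported distribution that vanishes on $\omega'$.

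The remaining task is to show that $S$ is smooth on $\omega$. This is not yet the same as vanishing, so I would then \emph{correct} $\tilde u$. In fact $S$ satisfies $\B^\ast S=0$ (since $\hat S\perp\ima\B(\xi)$) on $\mathbb R^n$, and on $\omega'$ we have $S=\mu-\B\tilde u=\B(u-\tilde u)$, so $\A S=0$ there for a potential $\A$ of $\B$... Rather than pursue this, a cleaner route avoids $S$ entirely. Define $\hat w:=\proj_{\ker\C^\ast(\xi)}\widehat{\rho u}=\B^\dagger(\xi)\B(\xi)\widehat{\rho u}=\B^\dagger(\xi)\widehat{\B(\rho u)}$. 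Then $\B w=\B(\rho u)=\B u$ on $\omega'$ and $\C^\ast w=0$. Moreover
\[
w=\B^\dagger\bigl(\rho\B u\bigr)+\B^\dagger\bigl([\B,\rho]u\bigr).
\]
The first term lies in $\sobo^{k-1,1}_{\locc}$ by the kernel estimate above. The second term is the convolution of a kernel that is smooth off the origin with a distribution supported away from $\omega'$, so it is $\CC^\infty$ on $\omega$. Hence $\tilde u:=w|_{\omega}\in\sobo^{k-1,1}(\omega,U)$, which completes the argument; the only delicate point is justifying this pseudo-local smoothness, which I would take from Hörmander's results on homogeneous kernels.
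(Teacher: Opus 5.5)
Your final construction, $\hat w=\B^{\dagger}(\xi)\B(\xi)\widehat{\rho u}=\proj_{\ker\C^{\ast}(\xi)}\widehat{\rho u}$, is exactly the paper's. You were right to abandon the first attempt $\B^{\dagger}(\xi)\widehat{\rho\B u}$, which only gives $\B\tilde u=\B u-S$ with $S$ smooth but in general nonzero on $\omega$. So the two proofs differ only in how the $\sobo^{k-1,1}$-regularity is obtained.

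The paper deduces it from the ellipticity of the pair $(\B,\C^{\ast})$. Since $\hat w$ is polynomially bounded, $w$ lies a priori in some very negative Sobolev space. $\B w=\B u$ is a measure on $\omega'$, hence lies in a space $\sobo^{-1,\varphi}$ (Lemma~\ref{lem:negative_measure}, or simply $\sobo^{-1,p}$ for $p<\frac{n}{n-1}$), and $\C^{\ast}w=0$. Lemma~\ref{lem:negative_norm_estimate}, applied on finitely many balls covering $\overline\omega$, then bootstraps $w$ up to $\sobo^{k-1,p}$. That route needs no explicit kernels and comes with quantitative local estimates, but it leans on the machinery of Section~\ref{sec:estimates}.

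You instead argue as in Proposition~\ref{prop:reduction}: split $\B(\rho u)=\rho\B u+[\B,\rho]u$, control the measure part by homogeneous kernel bounds, and dispose of the commutator, supported in $\spt\rho\setminus\omega'$, by pseudo-locality. This is more elementary and self-contained, but two details need repair.
\begin{enumerate}
\item The kernel of $\B^{\dagger}(\xi)\otimes\xi^{\otimes j}$ has homogeneity $k-j-n$, not $j-k+1-n$.
\item When $k\geq n$, the summands $\B^{\dagger}(\xi)\widehat{\rho\B u}$ and $\B^{\dagger}(\xi)\widehat{[\B,\rho]u}$ are not separately locally integrable at $\xi=0$; only their sum, which is bounded there, is. The standard fix is to remove low frequencies first. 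For $\chi\in\ccinfty(\mathbb R^n)$ equal to $1$ near $0$, $\mathscr{F}^{-1}(\chi\hat w)$ is real-analytic. Your splitting applied to $(1-\chi)\hat w$ then involves only kernels that are smooth off the origin and rapidly decaying at infinity. Near $0$ they are bounded by $|x|^{k-j-n}$, or by a logarithm or a constant when $k-j\geq n$, which is all you need.
\end{enumerate}
Finally, $\rho u\in\mathscr{E}'(\mathbb R^n)$ automatically, so the structure theorem for distributions is unnecessary.
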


\begin{proof}
    The argument is similar to that of Proposition~\ref{prop:reduction}; take a cutoff $\rho \in \ccinfty(\Omega)$ such that $0 \leq \rho \leq 1$ in $\Omega$ and $\rho =1$ in a neighborhood of $\overline{\omega}$, and define $\tilde u$ by
    \begin{equation}
        \hat{\tilde u}(\xi) = \B^{\dagger}(\xi) \B(\xi)\widehat{\rho u}(\xi).
    \end{equation}
    Then it follows that $\B \tilde u = \B(\rho u) = \B u$ in a neighborhood of $\overline\omega$, and that $\C^{\ast}\tilde u =0$.
    Then the $\sobo^{k-1,1}$-regularity follows from ellipticity of the pair $(\B,\C^{\ast})$; we can for instance apply Lemmas~\ref{lem:negative_norm_estimate} and \ref{lem:negative_measure}.
\end{proof}

\begin{proof}[Proof of Theorem~\ref{thm:main}]
    Given $\omega \Subset \Omega$, by applying Lemma~\ref{lem:Cast_reduction} we can find $\tilde u \in \sobo^{k-1,1}(\omega,U)$ such that $\B \tilde u = \B u$ and $\C^{\ast}\tilde u =0$.
    Then since $\tilde u$ is a local minimiser to the problem \eqref{eq:P_Omega} in $\omega$, from the $\eps$-regularity result Theorem~\ref{thm:partial_reg}, we can find $\omega_0 \subset \omega$ such that $\mathscr{L}^n(\omega \setminus \omega_0)=0$ and $\tilde u \in \CC^{k,\alpha}_{\locc}(\omega_0,U)$ for each $\alpha \in (0,1)$.
    Since $\B u = \B \tilde u$, we have $\B u$ is also partially $C^{0,\alpha}$ in $\omega$.
    Finally since $\omega \Subset\Omega$ was arbitrary, the claimed partial regularity result follows.
\end{proof}

\begin{proposition}\label{prop:reduction_integrand}
    Suppose $f \colon \omega \times V \to \mathbb R$ satisfies \ref{it:fgrowth}--\ref{it:frecession}, and let $S \in \sobo^{1,\infty}(\omega,V)$.
    Then if we define the modified integrand
    \begin{equation}
        \tilde f(x,z) = f(x,S(x)+z) \quad\mbox{for all } x \in \omega, z \in V,
    \end{equation}
    we have $\tilde f$ satisfies \ref{it:fgrowth}, \ref{it:fqcstrong} and \ref{it:frecession}.
    Furtheremore, $\tilde f$ satisfies \ref{it:fcontinuity} if we additionally assume that
    \begin{equation}\label{eq:H5}
        \lvert\partial_z^2f(x,z)\rvert\leq L_0 \quad\mbox{for all }(x,z) \in\omega \times V.
    \end{equation}
\end{proposition}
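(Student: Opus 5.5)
We verify each hypothesis for $\tilde f(x,z)=f(x,S(x)+z)$ directly, using that $S$ is bounded and Lipschitz on $\omega$, say $\lvert S\rvert\leq \lVert S\rVert_\infty$ and $\lvert S(x)-S(y)\rvert\leq \lVert DS\rVert_\infty\lvert x-y\rvert$.

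For \ref{it:fgrowth}, the bound $\lvert\tilde f(x,z)\rvert\leq L(1+\lvert S(x)\rvert+\lvert z\rvert)\leq L(1+\lVert S\rVert_\infty)(1+\lvert z\rvert)$ holds with a new constant.

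For \ref{it:fqcstrong}, fix $x$ such that $f(x,\cdot)-\ell E$ is $\A$-quasiconvex. Translation invariance of $\A$-quasiconvexity (replace $v$ by $S(x)+v$, which is still $\A$-free and periodic) shows that $z\mapsto f(x,S(x)+z)-\ell E(S(x)+z)$ is $\A$-quasiconvex. We then need $\tilde f(x,\cdot)-\ell' E$ to be $\A$-quasiconvex for some $\ell'>0$. Here I would write $\ell E(S(x)+z)-\frac{\ell}{c}E(z)$ and argue it is convex for a suitable $c=c(\lVert S\rVert_\infty)$. Since convex functions are $\A$-quasiconvex and sums of $\A$-quasiconvex functions with convex ones remain $\A$-quasiconvex (by Jensen), this suffices. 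The convexity claim is the delicate point. The Hessian of $E$ at $w$ is comparable to $(1+\lvert w\rvert^2)^{-3/2}$ in the worst direction. Thus $D^2E(S+z)\geq c(\lVert S\rVert_\infty)^{-1}D^2E(z)$, because $(1+\lvert S+z\rvert^2)\leq C(1+\lVert S\rVert_\infty^2)(1+\lvert z\rvert^2)$; more precisely, one compares the eigenvalues $(1+\lvert w\rvert^2)^{-1/2}$ and $(1+\lvert w\rvert^2)^{-3/2}$ on both sides. This gives convexity of the difference with $\ell'=\ell/c$. This Hessian comparison is the main step, but it is elementary.

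For \ref{it:frecession}, the limit is $\tilde f^\infty(x,z)=\limsup t^{-1}f(x',S(x')+tz')$. Since $S(x')/t\to0$ and $S(x')\to S(x)$, we may rewrite $S(x')+tz'=t(z'+S(x')/t)$ with $z'+S(x')/t\to z$. Hence $\tilde f^\infty=f^\infty$, which is continuous in $x$.

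For \ref{it:fcontinuity} under \eqref{eq:H5}, $\C^2$ regularity in $z$ and joint continuity of $\tilde f,\partial_z\tilde f,\partial_z^2\tilde f$ follow by composition with the continuous map $(x,z)\mapsto(x,S(x)+z)$. For the Lipschitz bound we split
\[
\lvert\partial_zf(x,S(x)+z)-\partial_zf(y,S(y)+z)\rvert\leq L\lvert x-y\rvert+\lvert\partial_zf(y,S(x)+z)-\partial_zf(y,S(y)+z)\rvert ,
\]
and bound the second term by $L_0\lVert DS\rVert_\infty\lvert x-y\rvert$ via the mean value theorem and \eqref{eq:H5}. This is exactly where \eqref{eq:H5} is needed: without a global Hessian bound, the $z$-Lipschitz constant of $\partial_zf$ is not uniform in $z$.
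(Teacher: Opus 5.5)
Your verification of \ref{it:fgrowth}, of \ref{it:fcontinuity} under \eqref{eq:H5}, and of \ref{it:frecession} is the same as the paper's. For \ref{it:fqcstrong} your route differs, and it is the more careful one. The paper sets $F=f-\ell E$ and $\tilde F=\tilde f-\ell E$, then asserts $\tilde F(x_0,z)=F(x_0,S(x_0)+z)$. That identity is false, because $E$ is not translation invariant. What the paper's Jensen chain (tested on $z+\B\phi$) actually proves is that $\tilde f(x_0,\cdot)-\ell E(S(x_0)+\cdot)$ is quasiconvex, which is exactly your first step. Your additional step — that $\ell E(S(x)+\cdot)-\tfrac{\ell}{c}E$ is convex with $c=c(\lVert S\rVert_\infty)$, and that adding a convex function preserves $\A$-quasiconvexity — is what returns you to the form required in \ref{it:fqcstrong}. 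The only cost is that the constant degrades from $\ell$ to $\ell/c$, which is harmless. You also work with $\A$-free periodic fields directly. The paper instead tests on $\B$-gradients and so implicitly relies on the equivalence of \ref{it:fqcstrong} and \ref{it:fBqcstrong}.

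The one point to tighten is the justification of $D^2E(S+z)\ge c^{-1}D^2E(z)$. Comparing eigenvalues alone does not give it, because the eigenvectors of the two Hessians are not aligned. The smallest eigenvalue of $D^2E(S+z)$ is of order $\lvert z\rvert^{-3}$, while the largest eigenvalue of $D^2E(z)$ is of order $\lvert z\rvert^{-1}$. So the naive comparison fails for large $\lvert z\rvert$. You have to show that the small tilt between the directions of $z$ and $S+z$ costs only a factor depending on $\lvert S\rvert$. A clean way is the identity
\[
D^2E(w)[v,v]=\frac{\lvert v\rvert^2\bigl(1+\operatorname{dist}(w,\R v)^2\bigr)}{(1+\lvert w\rvert^2)^{3/2}} .
\]
Since $\operatorname{dist}(\cdot,\R v)$ is $1$-Lipschitz, you get $1+\operatorname{dist}(z,\R v)^2\le 2(1+\lvert S\rvert^2)\bigl(1+\operatorname{dist}(S+z,\R v)^2\bigr)$. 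You also have $1+\lvert S+z\rvert^2\le 2(1+\lvert S\rvert^2)(1+\lvert z\rvert^2)$. Together these give $D^2E(S+z)\ge \bigl(2(1+\lvert S\rvert^2)\bigr)^{-5/2}D^2E(z)$. Hence $c=\bigl(2(1+\lVert S\rVert_\infty^2)\bigr)^{5/2}$ works in your argument.
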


\begin{proof}
    We check the required assumptions:
    \begin{align*}
        |\tilde f(x,z)|=| f(x,S(x)+z)|\leq L(1+|S(x)+z|)\leq L(1+\|S\|_{\lebe^\infty(\omega)})(1+|z|)
    \end{align*}
    for all $(x,z)\in\omega\times V$, establishing the linear growth of $\tilde f$. The smoothness assumptions are all obvious, and for the Lipschitz bound we estimate
    \begin{align*}
        |\partial_z\tilde f(x,z)-\partial_z\tilde f(y,z)|&=
        |\partial_z f(x,S(x)+z)-\partial_z f(y,S(y)+z)|\\
        &\leq |\partial_z f(x,S(x)+z)-\partial_z f(y,S(x)+z)|\\
        &\quad+|\partial_z f(y,S(x)+z)-\partial_z f(y,S(y)+z)| \\
        &\leq (L + L_0 \lVert S \rVert_{\sobo^{1,\infty}(\omega)}) \lvert x - y \rvert,
    \end{align*}
    where we used \ref{it:fgrowth} and \eqref{eq:H5}.
    The quasiconvexity is easy to transport: write $F=f-\ell E$ and $\tilde F=\tilde f-\ell E$, so that $F$ is $\A$-quasiconvex. Consider a cube $Q\supset \omega$. Then any vector field $v=z+B\phi$ for $\phi\in \CC_{\rmc}^\infty(\omega,U)$ has a  $Q$-periodic extension which is $\A$-free. Therefore for fixed $x_0\in\omega$
    \begin{align*}
    \tilde F(x_0,z)&= F(x_0,S(x_0)+z)= F\left(x_0,\fint_Q S(x_0)+ v\dif x\right)\\
    &\leq \fint_Q F(x_0,S(x_0)+v)\dif x=\fint_Q \tilde F(x_0,v)\dif x=\fint_Q \tilde F(x_0,z+\B\phi)\dif x,
    \end{align*}
    which establishes the appropriate quasiconvexity of $\tilde f$.
    Finally, for \ref{it:frecession}, we simply observe that $f^{\infty}(x,z) = {\tilde f}^{\infty}(x,z)$.
\end{proof}

Therefore if we additionally assume that $\partial_z^2f$ is bounded, by Proposition~\ref{prop:reduction} we obtain a decomposition $v = \B u + S$ in $\omega$, so setting $\tilde f(x,z) = f(x,S(x)+z)$ we have $u$ locally minimizes the integrand $w \mapsto \int_{\omega} \tilde f(x,\B w)$. Applying Theorem~\ref{thm:main}, noting that $\tilde f$ satisfies the necessary assumptions by Proposition~\ref{prop:reduction_integrand}, we infer that $u$ and hence $v$ is partially regular in $\omega$, establishing Theorem~\ref{thm:main_A-free} under this added assumption \eqref{eq:H5}.

In the absence of the second derivative bound, we cannot directly apply Theorem~\ref{thm:main}. Nevertheless, we will show in the next section that partial regularity still holds.

\subsection{Partial regularity: proof of Theorem \texorpdfstring{\ref{thm:main_A-free}}{A}}\label{sec:reduction_regularity}
We will sketch how to establish partial regularity if our functional takes the form
\begin{equation}\label{eq:tildeE}
    \widetilde{\mathcal F}(u) = \int_{\Omega} f(x,S(x) + \B u),
\end{equation}
arising from Proposition~\ref{prop:reduction}.

\begin{theorem}\label{thm:A_regularity}
    Let $\B$ be a differential operator satisfying \eqref{eq:CR}, suppose $f \colon \Omega \times V \to \mathbb R$ satisfies \ref{it:fgrowth}--\ref{it:frecession}, and let $S \in \sobo^{1,\infty}(\Omega,V)$.
    Then for all $M>0$ and $\alpha \in (0,\frac12)$, there exists $\eps>0$ such that the following holds; if $u \in \sobo^{k-1,1}(\Omega,U)$ is a local minimizer of \eqref{eq:tildeE} satisfying $\C^{\ast}u=0$, then for any $B_R(x_0) \Subset \Omega$ for which
    \begin{equation}
        \lvert (\B u)_{B_R(x_0)}\rvert \leq M, \quad \fint_{B_R(x_0)} E(\B u - (\B u)_{B_R(x_0)}) < \eps,
    \end{equation}
    we have $u \in \CC^{k,\alpha}(\overline B_{R/2}(x_0),U)$.
\end{theorem}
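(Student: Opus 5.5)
We follow the scheme of Section~\ref{sec:partialreg} (Caccioppoli inequality, almost $A$-harmonicity, $A$-harmonic approximation, excess decay, iteration), now applied to $\tilde f(x,z)=f(x,S(x)+z)$. By Proposition~\ref{prop:reduction_integrand}, $\tilde f$ satisfies \ref{it:fgrowth}, \ref{it:fqcstrong}, \ref{it:frecession}. What fails is the Lipschitz bound on $\partial_z\tilde f$ in $x$, since \eqref{eq:H5} is unavailable. The key observation is that we only ever need this bound at bounded arguments. The $x$-dependence then enters through the shift $S(x)$, and we can control it using $\partial_z^2 f$ on compact sets, i.e.\ via \eqref{eq:loc_bound}. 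Concretely, when $\lvert z\rvert\le M+1$ and $\lvert x-y\rvert\le 1$,
\begin{equation*}
\lvert\partial_z\tilde f(x,z)-\partial_z\tilde f(y,z)\rvert\le \bigl(L+L_{x_0,R,M'}\lVert S\rVert_{\sobo^{1,\infty}}\bigr)\lvert x-y\rvert, \qquad M'=M+1+\lVert S\rVert_{\lebe^\infty}.
\end{equation*}
For unbounded $z$ we only have the global bound $\lvert\partial_z\tilde f\rvert\le cL$ on $\spann\Lambda_{\A}$ from \eqref{eq:fz_bounded}, which gives the difference bound $2cL$ with no gain in $\lvert x-y\rvert$. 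We therefore split every term where the Lipschitz assumption was used into a region where $\lvert \B^a u-(\B u)_R\rvert<1$ and a region where it is $\ge1$.

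\textbf{Step 1 (almost $A$-harmonicity).} In Lemma~\ref{lem:AHalmost_har}, the term $A_1$ is split as above. On $E^-$ we get $cR\sup\lvert\B\varphi\rvert$. On $E^+$ we use $\lvert E^+\rvert\lesssim\int E(\B^au-(\B u)_R)$, which gives $c\,\EE(R)\sup\lvert\B\varphi\rvert$. The conclusion \eqref{eq:AHmin_almost_har} survives unchanged, with $A=\partial_z^2f(x_0,S(x_0)+(\B u)_R)$, which is $\Lambda_{\A}$-elliptic by \eqref{eq:ellipticity}.

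\textbf{Step 2 (Caccioppoli).} In Proposition~\ref{prop:Caccioppoli}, the term $I\!I+I\!I\!I$ involves $\partial_z\tilde f$ at the points $\B u_m-\tau\B\varphi_m$. These are no longer close to the shift $\B a$, so the $x$-Lipschitz bound applies only on a subset. The first attempt is to bound the difference pointwise by $c\min\{t,1\}\lvert\B\varphi_m\rvert$, using the local Lipschitz bound where $\lvert \B u_m-\tau\B\varphi_m-\B a\rvert\le1$ and the crude bound elsewhere. On the bad set, $\lvert \B\varphi_m\rvert\lesssim\lvert\B\tilde u\rvert+\text{lower order}$, and this is comparable to $E(\B\tilde u)+E(\text{lower order})$ because the quantities there are at least of unit size. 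This yields the same inequality \eqref{eq:BCaccioppoli}, but with an additional term of the form $\int_{\{\lvert\B\tilde u\rvert\gtrsim1\}}E(\cdot)$ on the right-hand side. We absorb this term into the hole-filling iteration in the same way as the annulus terms. The error $tR\int\lvert\B\tilde u\rvert$ is kept as before.

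\textbf{Step 3 (decay and iteration).} We rerun Proposition~\ref{prop:EDEAH} with $A_{x,R}=\partial^2_zf(x,S(x)+(\B u)_R)$, using Lemma~\ref{lem:AHapproximation2}, Theorem~\ref{thm:B_poincare2} and the elliptic estimates unchanged. The main obstacle, and the source of the restriction $\alpha<\tfrac12$, is the residual $x$-dependence, which now enters through the excess term $R^{2\alpha}$. The error terms from Steps~1 and~2 scale like $R\,\EE^{1/2}$, or like $R$ measured against $\gamma=\EE^{1/2}$. In the final decay estimate we therefore need
\begin{equation*}
R\lesssim \EE(R)^{1/(2\alpha)}\quad\text{to dominate}\quad \EE(R)\,\EE(R)^{\frac1{2\alpha}-\frac12}.
\end{equation*}
In the $J$-terms of the decay argument, the coefficient oscillation $\lvert A_{x,R}-A_{y,\tau R}\rvert\lesssim\lvert S(x)-S(y)\rvert+\dots$ is only controlled by $\omega$ of the mean oscillation. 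We expect that the bookkeeping closes exactly when $\tfrac1{2\alpha}-\tfrac12>\tfrac12$, i.e.\ $\alpha<\tfrac12$. With $\alpha<\tfrac12$ fixed, we choose $\eps_{\mathrm{har}}$ and then $\eps_{\mathrm{exc}}$ small, and obtain $\EE(\tau R)\le2\tau^{2\alpha}\EE(R)$.

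Finally, the iteration of Section~\ref{subsec:iteration} applies verbatim and gives $\B u\in\CC^{0,\alpha}(\overline B_{R/2}(x_0))$. Since $\C^\ast u=0$, Fourier multiplier estimates then yield $u\in\CC^{k,\alpha}(\overline B_{R/2}(x_0))$.
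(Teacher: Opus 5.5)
Step~1 works; the paper books the oscillation of $S$ in the linearisation term via Lemma~\ref{lem:shifted_continuity} instead, but your good/bad split is also fine. Step~2, however, has a genuine gap. On the inner ball $B_s$ the cut-off equals $1$, so the intermediate points are $\B u_m-\sigma\B\varphi_m=\B a+(1-\sigma)\B\tilde u_m$ for $\sigma\in(0,1)$. Hence your bad set meets $B_s$ wherever $\lvert\B\tilde u\rvert\gtrsim1$.

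There the piece $\partial_z f(x,S(x_0)+w)-\partial_z f(x,S(x)+w)$ admits only the crude bound $2cL$, with neither a factor $t$ nor a small constant. Nothing better is available, since $\partial_z^2f$ may be unbounded at infinity. The resulting term $C\int_{B_s\cap\{\lvert\B\tilde u\rvert\gtrsim1\}}E(\B\tilde u)$ has $C$ of order $cL$ times the constant in \eqref{eq:pshiftedqc}. It is of the same type as the left-hand side and lives on the inner ball, not on the annulus.

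Hole-filling cannot absorb it: adding $c_1\int_{B_s}E(\B\tilde u)$ to both sides gives the factor $(c_1+C)/(c_1+1)$, which is $<1$ only if $C<1$. Nor can you carry it into the decay step as an error. There it is of the same size as the quantity being estimated (bounded only by $c\tau^{-n}\EE(R)$), not a small multiple of $\tau^{2\alpha}\EE(R)$.

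The missing idea (Proposition~\ref{prop:Caccioppoli_S}) is to freeze $S$ at zeroth order, in $f$ rather than in $\partial_z f$, so that the oscillation of $S$ is never multiplied by $\B\varphi$. Shift the original $f(x_0,\cdot)$ around $z_0=S(x_0)+\B a$. In the two comparison terms, replace $f(x,S(x)+\cdot)$ by $f(x,S(x_0)+\cdot)$. By \eqref{eq:fz_bounded}, $\lvert f(x,S(x)+z)-f(x,S(x_0)+z)\rvert\le cL\lVert DS\rVert_{\lebe^\infty}\lvert x-x_0\rvert$ uniformly in $z$. So this costs only an additive $c\lVert DS\rVert_{\lebe^\infty}t^{n+1}$, independent of $u$. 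What remains is exactly the old $I\!I+I\!I\!I$ for the integrand $f(x,S(x_0)+\cdot)$, which has the global $x$-Lipschitz bound from \ref{it:fcontinuity}.

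This $u$-independent error is the real source of $\alpha<\frac12$. It leads to $\EE(\tau R)-(\tau R)^{2\alpha}\le\tau^{2\alpha}\EE(R)+c\lVert S\rVert_{\sobo^{1,\infty}}\tau R$. The bound $R\le R_0^{1-2\alpha}R^{2\alpha}\le R_0^{1-2\alpha}\EE(R)$ then absorbs the last term once $R_0$ is small.

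Your Step~3 does not establish the restriction. It is phrased as an expectation, and no comparison of $A_{x,R}$ with $A_{y,\tau R}$ enters the one-step decay. Moreover, errors of size $R\,\EE^{1/2}\le\EE^{\frac1{2\alpha}+\frac12}$, as you describe them, are $o(\EE)$ for every $\alpha<1$, so your own bookkeeping would not produce $\alpha<\frac12$.
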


The proof is similar to that of Theorem~\ref{thm:partial_reg}, except that we keep track of the additional $S(x)$ term in the integrand, whose presence also prevents us from reducing to a setting where \eqref{eq:SC} holds. This will follow by establishing analogues of the Caccioppoli inequality (Proposition~\ref{prop:Caccioppoli}) and the almost $A$-harmonicity estimate (Lemma~\ref{lem:AHalmost_har}); from here the argument is entirely analogous to obtain an excess decay estimate, where the $A$-harmonic approximation lemma requires no modification, except that we apply it with a different choice of $A$.

\begin{proposition}\label{prop:Caccioppoli_S}
     Suppose that the integrand $f$ satisfies the  assumptions \textnormal{\ref{it:fgrowth}}, \textnormal{\ref{it:fcontinuity}}, \textnormal{\ref{it:fBqcstrong}} and \textnormal{\ref{it:frecession}}, $S \in \sobo^{1,\infty}(\Omega,V)$, $u \in \sobo^{k-1,1}(\Omega,U)$ such that $\B u\in \mathcal{M}(\Omega,V)$ is a local minimizer of \eqref{eq:tildeE}, and the map $a\col \Omega \to V$ is a polynomial of order $k$.
     Then for any $M>0$, $\tau \in (0,1)$ and $B_R(x_0) \Subset \Omega$ such that $\lvert S(x_0) \rvert + \lvert \B a \rvert \leq M$, we have the estimate
    \begin{equation}
    \begin{split}
        \int_{B_{R/2}(x_0)} E(\B (u-a)) 
        &\leq c \sum_{i=0}^{k-1} \int_{B_R(x_0)} E\bigg(\frac{D^{i}(u-a)}{R^{k-i}}\bigg) \,\dd x \\
        &\quad +c R \int_{B_R(x_0)} \left[ \lvert \B (u-a) \rvert + \sum_{i=0}^{k-1} \frac{\lvert D^{i}(u-a)\rvert}{R^{k-i}}\right] \,\dd x\\
        &\quad + c R^{n+1} \lVert S \rVert_{\lebe^{\infty}(B_R(x_0))}.
    \end{split}
    \end{equation} 
    where $c=c(n,V,\B,M,L,\ell,L_{x_0,R,M+1})>0$.
\end{proposition}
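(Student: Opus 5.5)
We follow the proof of Proposition~\ref{prop:Caccioppoli}, but freeze the shift at the centre. Put $w_0 := S(x_0)+\B a$, so $\lvert w_0\rvert\leq M$, and let $\tilde f := f_{w_0}$ be the shifted integrand from \eqref{eq:shifted_int}. Take $\frac R2<s<t<R$, a cut-off $\rho$ as before, and set $\tilde u = u-a$, $\varphi=\rho\tilde u$, $\psi=(1-\rho)\tilde u$.

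The lower bound $\int_{B_s}E(\B\tilde u)\lesssim \int_{B_t}\tilde f(x,\B\varphi)$ comes from \eqref{eq:pshiftedqc}, which only uses strong quasiconvexity at the frozen point $w_0$. We then split $\int_{B_t}\tilde f(x,\B\varphi)$ as in the proof of Proposition~\ref{prop:Caccioppoli}. The term $\int\tilde f(\B\tilde u)$ is rewritten in terms of $f(x,S(x)+\B u)$. Minimality of $u$ for \eqref{eq:tildeE} is then used with the competitor $u-\varphi$, whose argument is $S(x)+\B a+\B\psi$. The linear parts $\partial_z f(x_0,w_0)\cdot\int\B\varphi$ vanish because $\varphi$ has compact support.

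This produces the terms $I$ and $IV$ exactly as before. They are bounded via \eqref{eq:pshiftedup} by $c\int_{B_t\setminus B_s}E(\B\tilde u)$ plus lower-order $E$-terms, and these pass to the hole-filling step unchanged.

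The new feature is the analogue of $I\!I+I\!I\!I$. It now compares $f(x_0,w_0+\cdot)$ with $f(x,S(x)+\B a+\cdot)$, evaluated at $\B\tilde u$ and at $\B\psi$. As before we regularize by area-strict approximation and pass to the limit with Lemma~\ref{lem:strict_cont}, which requires \ref{it:frecession}. The difference is then written as $\int_0^1[\partial_z f(x_0,w_0+\cdot)-\partial_z f(x,S(x)+\B a+\cdot)]\cdot\B\varphi_m\,\dd\tau$, and the bracket is split into two pieces.

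The first piece is the $x$-variation, $\partial_z f(x_0,\cdot)-\partial_z f(x,\cdot)$ at the same $z$. By \ref{it:fcontinuity} it is bounded by $L\lvert x-x_0\rvert\leq Lt$, which gives $ct\int\lvert\B\varphi\rvert$ and hence the $cR\int[\dots]$ terms as before.

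The second piece is the shift variation, $\partial_z f(x,z+S(x_0))-\partial_z f(x,z+S(x))$. This is the main obstacle, since $\partial_z^2 f$ is not assumed bounded and we cannot Taylor expand. We avoid expanding and use instead the uniform bound \eqref{eq:fz_bounded}, which holds since we are in the $\B$-setting where \eqref{eq:SC} may be assumed. It gives that the integrand of this piece is at most $2cL\lvert\B\varphi\rvert$.

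That bound alone does not gain a factor of $R$. So we do not pair $\partial_z f$ with $\B\varphi$; we go back to the function values themselves. Applying \ref{it:fgrowth} through the mean value theorem in the variable $S$ with \eqref{eq:fz_bounded}, we get $\lvert f(x,S(x)+z)-f(x,S(x_0)+z)\rvert\leq cL\lvert S(x)-S(x_0)\rvert$. Integrating over $B_t$ yields at most $cR^n\lvert S(x)-S(x_0)\rvert\lesssim R^{n+1}\lVert S\rVert_{\sobo^{1,\infty}}$, which is the stated last term; we read the norm in the statement as the Lipschitz seminorm.

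Concretely, we insert $\pm f(x,S(x_0)+\cdot)$ in $I\!I+I\!I\!I$. The $S$-change costs $cR^{n+1}$ once for each of the two arguments, and the remaining pure $x$-change is handled by \ref{it:fcontinuity} as above.

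Collecting terms gives
\[
\int_{B_s}E(\B\tilde u)\leq c_1\int_{B_t\setminus B_s}E(\B\tilde u)+(\text{lower order in }t-s)+cR^{n+1}\lVert S\rVert .
\]
We then fill the hole and iterate with \cite[Lemma 2.11]{LiRaita24}, treating the constant $cR^{n+1}\lVert S\rVert$ as part of the term $B$ there. This concludes.
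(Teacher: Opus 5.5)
Your argument is essentially the paper's proof. You freeze at $x_0$ and $z_0=S(x_0)+\B a$, insert $\pm f(x,S(x_0)+\,\cdot\,)$ into $\widetilde{I\!I}+\widetilde{I\!I\!I}$ so that each of the two $S$-changes costs $cL\lVert DS\rVert_{\lebe^{\infty}}t^{n+1}$ by \eqref{eq:fz_bounded}, treat the remaining pure $x$-change exactly as in \eqref{eq:CaccioppoliSupmid3}, and finish by hole-filling and iteration; you are also right that the last term should carry $\lVert DS\rVert_{\lebe^{\infty}}$ rather than $\lVert S\rVert_{\lebe^{\infty}}$.

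One caveat, which the paper shares: \eqref{eq:fz_bounded} only bounds $\partial_z f$ along $\spann\Lambda_{\A}$. The paper itself remarks that the $S$-term prevents reducing to \eqref{eq:SC}, so your justification that \eqref{eq:SC} ``may be assumed'' does not hold once $S$ is present. The bound in the direction $S(x)-S(x_0)$ silently requires that direction to lie in $\spann\Lambda_{\A}$, which is automatic under \eqref{eq:SC}.
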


\begin{proof}
    We will proceed as in the proof of Proposition~\ref{prop:Caccioppoli} and only point out the necessary modifications. Let $\rho$ be a cutoff such that $\mathbbm{1}_{B_s} \leq \rho \leq \mathbbm{1}_{B_t}$ and put $\tilde u = u-a$, $\varphi = \rho \tilde u$ and $\psi = (1-\rho)\tilde u$.
    Also by letting $z_0 \coloneqq S(x_0)+\B a$, we consider the shifted functional $\tilde f = f_{x_0,z_0}$ from \eqref{eq:shifted_int}.
    By the strong quasiconvexity estimate applied with $\varphi$ and local minimiality of $u$ as in \eqref{eq:CaccioppoliSupmid1}, we can estimate
    \begin{equation}
        \begin{split}
        \int_{B_s} E(\B \varphi) 
        &\lesssim \int_{B_t} \tilde f(\B \psi) + \int_{B_t} (\tilde f(\B \tilde u) - f(x,S+\B u)) \\
        &\quad+ \int_{B_t} (f(x,S + \B \psi + \B a)-\tilde f(\B \psi)) + \int_{B_t} (\tilde f(\B u - \B \psi)-\tilde f(\B \tilde u)) \\
        &=: I + \widetilde{I\!I} + \widetilde{I\!I\!I} + I\!V,
        \end{split}
    \end{equation}
    noting we have grouped the terms differently compared to \eqref{eq:CaccioppoliSupmid1}.
    The main difference lies in estimating $\widetilde{I\!I} + \widetilde{I\!I\!I}$; using \eqref{eq:fz_bounded} we can estimate
    \begin{equation}
    \begin{split}
        \widetilde{I\!I} &= \int_{B_t} (\tilde f(\B \tilde u) - f(x,S(x_0)+\B u)) + \int_{B_t} (f(x,S(x)+\B u)- f(x,S(x_0)+\B u))\\
        &\leq \int_{B_t} (\tilde f(\B \tilde u) - f(x,S(x_0)+\B u)) + cL \lVert DS \rVert_{\lebe^{\infty}(B_t)}\:\! t^{n+1},
        \end{split}
    \end{equation}
    and similarly
    \begin{equation}
        \widetilde{I\!I\!I} \leq  \int_{B_t} (f(x,S(x_0)+\B \psi+\B a) - \tilde f(\B \psi) ) + cL \lVert DS \rVert_{\lebe^{\infty}(B_t)} \:\!t^{n+1}.
    \end{equation}
    Hence arguing exactly as in \eqref{eq:CaccioppoliSupmid3} we have
    \begin{equation}
        \widetilde{I\!I} + \widetilde{I\!I\!I} \leq c\:\!t\left[\int_{B_t} \abss{\B \tilde{u}}  + \sum_{i=1}^{k}\int_{B_t} \frac{\abss{D^{k-i}\tilde{u}}}{(t-s)^{i}}\dif x + t^n\lVert D S \rVert_{\lebe^{\infty}(B_t)}\right].
    \end{equation}
    The terms $I$ and $I\!V$ are then estimated as in \eqref{eq:CaccioppoliSupmid2} and \eqref{eq:CaccioppoliSupmid4} respectively, which combines to give
    \begin{equation}
      \begin{split}
        \int_{B_s} E(\B \tilde u) 
        &\leq c_1 \int_{B_t \setminus B_s} E(\B \tilde u) + c \sum_{i=0}^{k-1} \int_{B_t} E\bigg(\frac{D^{i}\tilde u}{(t-s)^{k-i}}\bigg) \,\dd x \\
        &\quad +c t \left[ \int_{B_t} \lvert \B \tilde u \rvert + \sum_{i=0}^{k-1} \int_{B_t} \frac{\lvert D^{i}\tilde u\rvert}{(t-s)^{k-i}} \,\dd x + t^n\lVert DS \rVert_{\lebe^{\infty}(B_t)}\right],
        \end{split}
    \end{equation}
    from which the desired inequality follows by the hole filling trick and iteration.
\end{proof}

\begin{lemma}\label{lem:AHalmost_har_S}
    Let $f$ satisfy \textnormal{\ref{it:fgrowth}}, \textnormal{\ref{it:fcontinuity}} and \textnormal{\ref{it:fBqcstrong}}, $S \in \sobo^{k,\infty}(\Omega)$, $u \in \sobo^{k-1,1}(\Omega)$ be a local minimiser of \eqref{eq:tildeE}, and fix $M>0$ and $\alpha\in(0,1)$.
    Then for any ball $B_R(x_0)\Subset \Omega$ with $\lvert S(x_0)\rvert+\abss{(\B u)_{x_0,R}}<M$, setting $A := \partial_z^2f(x_0,S(x_0)+(\B u)_{x_0,R})$, we have $u$ satisfies
    \begin{equation}
        \fint_{B_R(x_0)} A[\B u,\B \varphi] \leq c_1(\EE^{\frac{1}{2\alpha}}(R)+ \EE(R)+\omega^{\frac{1}{2}}(c_1\EE^{\frac{1}{2}}(R)) \EE^{\frac{1}{2}}(R)) \sup_{B_R(x_0)}\abss{\B \varphi},
    \end{equation}
    for any $\varphi \in \CC_{\rmc}^{\infty}(B_R(x_0))$, where $\mathscr{E}(R) := \mathscr{E}(u,x_0,R)$ is the excess from \eqref{eq:AHexcess}, $\omega := \omega_{x_0,R,M+1}$ and $c_1 = c_1(n,V,L,L_{x_0,R,M+1},\lVert S \rVert_{\sobo^{1,\infty}(B_R)})>0$.
\end{lemma}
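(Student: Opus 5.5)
We follow the proof of Lemma~\ref{lem:AHalmost_har}, now starting from the Euler--Lagrange system of \eqref{eq:tildeE}:
\begin{equation*}
\int_\Omega \partial_z f(x,S(x)+\B^a u)\cdot\B\varphi\,\dd x=0 \quad\mbox{for all }\varphi\in\ccinfty(\Omega).
\end{equation*}
This holds because $S$ is bounded, so only the absolutely continuous part feels variations $t\B\varphi$, exactly as in \eqref{eq:EL_system}. Write $z_0:=S(x_0)+(\B u)_{x_0,R}$ with $\lvert z_0\rvert<M$. Since $\fint_{B_R}\B\varphi=0$, we subtract the constant $\partial_zf(x_0,z_0)$ and split
\begin{equation*}
0=\fint_{B_R}(A_1+A_1'+A_2)\cdot \B\varphi\,\dd x .
\end{equation*}
Here $A_1=\partial_zf(x,S(x)+\B^au)-\partial_zf(x_0,S(x)+\B^au)$, $A_1'=\partial_zf(x_0,S(x)+\B^au)-\partial_zf(x_0,S(x_0)+\B^au)$ and $A_2=\partial_zf(x_0,S(x_0)+\B^au)-\partial_zf(x_0,z_0)$.

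The term $A_1$ is bounded by $LR\sup\lvert\B\varphi\rvert\le L\EE^{1/(2\alpha)}(R)\sup\lvert\B\varphi\rvert$ using \ref{it:fcontinuity}, exactly as in \eqref{eq:Aharm_A1term}.

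The new term is $A_1'$, and it is the main obstacle. Without a global bound on $\partial_z^2f$ we cannot simply use the Lipschitz continuity of $z\mapsto\partial_zf(x_0,z)$ everywhere. We split according to the sets $E^\pm$ of Lemma~\ref{lem:AHalmost_har}, based on whether $\lvert\B^au-(\B u)_R\rvert\ge1$.
\begin{itemize}
\item[] On $E^-$, both arguments $S(x)+\B^au$ and $S(x_0)+\B^au$ have norm at most $M+1$ once $R\lVert DS\rVert_\infty\le 1$. We may assume this, since otherwise $R\gtrsim1$ and the estimate holds trivially by enlarging $c_1$, using \eqref{eq:fz_bounded}. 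Then \eqref{eq:loc_bound} gives $\lvert A_1'\rvert\le L_{x_0,R,M+1}\lVert DS\rVert_\infty R$, which is again $\lesssim \EE^{1/(2\alpha)}(R)$.
\item[] On $E^+$, we do not have \eqref{eq:SC} in the reduced setting. However, $\B\varphi$ takes values in $\spann\Lambda_\A$ (it is $\A$-free, with values in $\operatorname{span}\bigcup_\xi\ima\B(\xi)$), so \eqref{eq:fz_bounded} still bounds $\lvert A_1'\cdot\B\varphi\rvert\le 2cL\lvert\B\varphi\rvert$. Together with $\lvert E^+\rvert\lesssim\int_{B_R}E(\B^au-(\B u)_R)$, this contributes $\lesssim \EE(R)\sup\lvert\B\varphi\rvert$.
\end{itemize}
This is the only place where the dependence of $c_1$ on $\lVert S\rVert_{\sobo^{1,\infty}}$ enters.

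Next we write $\fint A[\B u,\B\varphi]=\fint A[\B^au-(\B u)_R,\B\varphi]+\fint A[\B^su,\B\varphi]$ and subtract $\fint A_2\cdot\B\varphi$. The resulting terms $J_1,J_2,J_3$ are handled verbatim as in Lemma~\ref{lem:AHalmost_har}, with the base point $(\B u)_R$ replaced by $z_0$:
\begin{itemize}
\item[] $J_3\le \tilde L\,\EE(R)\sup\lvert\B\varphi\rvert$, since $\lvert A\rvert\le\tilde L$.
\item[] $J_1$ on $E^+$ uses \eqref{eq:fz_bounded}, again valid because $\B\varphi\in\spann\Lambda_\A$, together with the measure bound on $E^+$.
\item[] $J_2$ on $E^-$ uses the fundamental theorem of calculus along the segment from $z_0$ to $S(x_0)+\B^au$, which stays in $\{\lvert z\rvert\le M+1\}$. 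Then \eqref{eq:loc_cont}, Jensen's inequality for the concave $\omega$, and Proposition~\ref{prop:Eprop}\ref{it:Eest} give the bound $\omega^{1/2}(c\EE^{1/2})\EE^{1/2}$.
\end{itemize}
Summing these bounds with those for $A_1$ and $A_1'$ yields the claimed estimate.
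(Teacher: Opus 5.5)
Your argument is correct up to one small slip, noted below, but it handles the $S$-dependence differently from the paper.

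\textbf{The paper's route.} It keeps $Z=S(x)+\B^a u$ intact. After freezing $x$ (its $J_0$, your $A_1$), it linearizes $\partial_z f(x_0,Z)$ around $z_0$ in a single step via Lemma~\ref{lem:shifted_continuity}, so the oscillation of $S$ enters through $\lvert Z-z_0\rvert\le \lVert DS\rVert_\infty R+\lvert\B^a u-(\B u)_R\rvert$. This gives a term $\omega^{1/2}(c(R+\EE^{1/2}))(R+\EE^{1/2})$, which is absorbed using $R\le R^\alpha\le\EE^{1/2}(R)$ for $R\le 1$. The leftover $A[S(x)-S(x_0),\B\varphi]$ coming from its splitting is implicitly an $O(R)$ term.

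\textbf{Your route.} You insert the intermediate freezing $S(x)\to S(x_0)$ as a separate term $A_1'$ and show it is $O(R)$. On $E^-$ this uses Lipschitz continuity in $z$; on $E^+$, or when $R\lVert DS\rVert_\infty>1$, it uses \eqref{eq:fz_bounded}, since $2cL\le 2cL\lVert DS\rVert_\infty R$. So $A_1'$ lands in the $\EE^{1/(2\alpha)}$ term. The remaining terms are then literally those of Lemma~\ref{lem:AHalmost_har} for the frozen integrand $z\mapsto f(x_0,S(x_0)+z)$ with base point $z_0$. This avoids Lemma~\ref{lem:shifted_continuity} and the restriction $R\le 1$, and it makes the $S$-contribution explicit rather than hidden inside the $\omega$-term.

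\textbf{The slip.} Your route needs a bound on $\partial_z^2 f(x_0,\cdot)$ along the segment from $S(x_0)+\B^a u$ to $S(x)+\B^a u$. Your claim that both endpoints lie in $\{\lvert z\rvert\le M+1\}$ once $R\lVert DS\rVert_\infty\le 1$ is not right. On $E^-$ you only have $\lvert S(x_0)+\B^a u\rvert<M+1$, hence $\lvert S(x)+\B^a u\rvert<M+1+R\lVert DS\rVert_\infty\le M+2$.

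This is harmless and can be fixed in either of two ways:
\begin{enumerate}
\item Use $L_{x_0,R,M+2}$ in place of $L_{x_0,R,M+1}$; this only changes the dependence of the constant.
\item Keep $L_{x_0,R,M+1}$ by defining $E^\pm$ with threshold $\frac12$ and reducing to $R\lVert DS\rVert_\infty\le\frac12$. The bound $\lvert E^+\rvert\lesssim\int_{B_R}E(\B^a u-(\B u)_R)$ still holds, since $E(z)\gtrsim\lvert z\rvert$ for $\lvert z\rvert\ge\frac12$.
\end{enumerate}
The paper's route avoids this issue automatically, because Lemma~\ref{lem:shifted_continuity} only requires the base point $z_0$ to lie in $\{\lvert z\rvert\le M\}$ and handles arbitrary $Z$.
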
 

We note the estimate is identical to that in Lemma~\ref{lem:AHalmost_har}, except the associated constants now depend on $\lVert S \rVert_{\sobo^{1,\infty}(B_R)}$.

\begin{proof}
    Writing $B_R =B_R(x_0)$, let $z_0 : = S(x_0) + (\B u)_{R}$ and $A = \partial_z^2f(x_0,z_0)$.
    The Euler-Lagrange system reads as
    \begin{equation}\label{eq:EL_system_S}
        \fint_{B_R} \partial_zf(x,S(x)+\B^{\rma}u) \cdot \B \varphi \,\dd x = 0 \quad\mbox{for all }\varphi \in \CC^{\infty}_{\rmc}(B_R).
    \end{equation}
    Set $Z(x) = S(x)+\B^{\rma}u(x)$.
    We will perform an analogous argument as in the proof of Lemma~\ref{lem:AHalmost_har}.
    We consider the sets
    \begin{equation}
        E^+ := \{x \in B_R : \lvert \B^a u - (\B u)_R\rvert \geq 1\}, \quad E^- := \{x \in B_R : \lvert \B^a u - (\B u)_R\rvert < 1\},
    \end{equation}
    and given any $\varphi \in \CC^{\infty}_{\rmc}(B_R)$ we will consider the splitting
    \begin{equation*}
        \begin{split}
            &\fint_{B_R} A[\B u,\B \varphi] \,\dd x 
            = \fint_{B_R} (\partial_zf(x,Z)-\partial_zf(x_0,Z))\cdot \B \varphi \,\dd x\\
            &\quad+ \frac1{\lvert B_R\rvert} \int_{E^+} ( A[\B^{\rma}u - (\B u)_{R},\B\varphi] + (\partial_zf(x_0,Z)-\partial_zf(x_0,S(x_0)+(\B u)_R))\cdot \B \varphi)\\            
            &\quad+ \frac1{\lvert B_R\rvert} \int_{E^-} ( A[\B^{\rma}u - (\B u)_{R},\B\varphi] + (\partial_zf(x_0,Z)-\partial_zf(x_0,S(x_0)+(\B u)_R))\cdot \B \varphi)\\
            &\quad+\fint_{B_R} A[\B^{\rms}u,\B\varphi] =: J_0 + J_1 + J_2 + J_3.
        \end{split}
    \end{equation*}
    We will estimate the terms separately; for this let $M > \lvert z_0 \rvert$ and abbreviate $\tilde L = L_{x_0,R,M=1}$, $\omega = \omega_{x_0,R,M+1}$, using the notation from \eqref{eq:loc_bound}, \eqref{eq:loc_cont}.
    Then using \ref{it:fcontinuity} we have
    \begin{equation}
        \lvert J_0 \rvert \leq L \mathscr{E}^{\frac1{2\alpha}}(R) \sup_{B_R} \,\lvert\B\varphi\rvert.
    \end{equation}
    By \eqref{eq:fz_bounded} we have
    \begin{equation}
        \lvert J_1 \rvert \leq c (\int_{E^+} \lvert \B^{\rma}u - (\B^{\rma}u)_R \vert \dif x + \lvert E^+\rvert) \leq c \mathscr{E}(R) \sup_{B_R}\, \lvert\B \varphi\rvert,
    \end{equation}
    which is obtained in a similar way to Lemma \ref{lem:AHalmost_har}.
    Since $\lvert A \rvert \leq \tilde L$ we have
    \begin{equation}
        \lvert J_3 \rvert \leq \tilde L \mathscr{E}(R) \,\sup_{B_R} \lvert \B \varphi\rvert
    \end{equation}
    For the remaining term, using Lemma~\ref{lem:shifted_continuity}, the concavity of $\omega$ and the fact $\abss{Z-z_0} \leq \normm{DS}_{\lebe^{\infty}(B_R)} R + \abss{\B^a u - (\B u)_R}$, we obtain
    \begin{equation}
        \begin{split}
        \lvert J_2 \rvert 
        &\leq c \int_{E^-} \omega(\lvert Z - z_0 \rvert) \lvert Z-z_0\rvert \lvert \B \varphi \rvert \,\dd x \\
        &\leq c \sup_{B_R}\abss{\B \varphi} \,\omega^{\frac12}(c(R+\mathscr{E}^{\frac12}(R)))(R+\mathscr{E}^{\frac12}(R)).
        \end{split}
    \end{equation}
    If $R \leq 1$, then we have $R \leq R^{\alpha} \leq \mathscr{E}^{\frac12}(R)$.
    Hence by combining the above estimates we obtain
    \begin{equation}
        \fint_{B_R} A[\B u,\B \varphi] \leq c(\EE^{\frac{1}{2\alpha}}(R)+ \EE(R)+\omega^{\frac{1}{2}}(c\EE^{\frac{1}{2}}(R)) \EE^{\frac{1}{2}}(R)) \sup_{B_R}\abss{\B \varphi},
    \end{equation}
    as required.
\end{proof}

\begin{proof}[Proof of Theorem~\ref{thm:A_regularity}]
  By arguing as in Proposition~\ref{prop:EDEAH}, for each $M>0$, $\alpha \in (0,\frac12)$ and $\tau \in (0,1)$ we can find $\eps_{\mathrm{exc}}\in(0,1)$ such that if $B_{2R_0}(x_0) \Subset \Omega$ we have
  \begin{equation}
      \lVert S \rVert_{\sobo^{1,\infty}(B_{2R_0}(x_0))} + \lvert (\B u)_{B_R(x)}\rvert \leq M, \quad\EE(u,x,R) < \eps_{\mathrm{exc}}
  \end{equation}
  for each $x \in B_{R_0}(x_0)$ and $0<R<R_0$, then we obtain the estimate
  \begin{equation}\label{eq:excess_decay_one}
      \EE(u,x,\tau R) - (\tau R)^{2\alpha} \leq \tau^{2\alpha} \EE(u,x,R) + c \lVert S \rVert_{\sobo^{1,\infty}(B_{2R}(x_0)} \tau R.
  \end{equation}
  Indeed we apply Lemma~\ref{lem:AHapproximation2} with $A_{x,R} = \partial_z^2f(x,S(x)+(\B u)_{x,R})$, using Lemma~\ref{lem:AHalmost_har_S} to verify the hypothesis \eqref{eq:AHalmost_harmonic}.
  We then chain this with the Caccioppoli inequality from Proposition~\ref{prop:Caccioppoli_S}, however we obtain an extra term of the form $c \lVert S \rVert_{\sobo^{1,\infty}(B_{2R}(x_0))} \tau R$ in the estimate \eqref{eq:main_excess_split}.
  Keeping this extra term we arrive at \eqref{eq:excess_decay_one}.

  Now since $\alpha < \frac12$, for $R \leq R_0$ we have $R \leq R_0^{1-2\alpha}R^{2\alpha}$, so shrinking $R_0$ as necessary we obtain the estimate
  \begin{equation}
      \EE(u,x,\tau R) - (\tau R)^{2\alpha} \leq 2\tau^{2\alpha} \EE(u,x,R).
  \end{equation}
  Then by iteration as in Section \ref{subsec:iteration} we infer that $u \in \CC^{k,\alpha}(B_{R/2}(x))$ for any $\alpha \in (0,1/2)$.
\end{proof}

Finally we conclude with:
\begin{proof}[Proof of Theorem~\ref{thm:main_A-free}]
    Let $\omega \Subset \Omega$ and apply Proposition~\ref{prop:reduction} to decompose $v = \B u +S$ in $\omega$, where $u \in \sobo^{k-1,1}(\omega,U)$, $\B u \in \mathcal M(\omega,V)$, $\C^{\ast}u=0$ in $\omega$ and $S \in \CC^{\infty}(\overline\omega,V)$.
    Furthermore $u$ locally minimizes the functional
    \begin{equation}
        \tilde \F(u) = \int_{\omega} f(x,S(x)+\B u),
    \end{equation}
    in that \eqref{eq:local_min_B} holds.
    Therefore by the $\eps$-regularity theorem of Theorem~\ref{thm:A_regularity}, there exists $\omega_0 \subset \omega$ such that $\mathscr L^n(\omega\setminus\omega_0) = 0$ and that $u \in \CC^{k,\alpha}(\omega_0)$ for all $\alpha \in (0,1/2)$.
    Since $\omega \Subset \Omega$ was arbitrary, the claimed partial regularity statement follows.
\end{proof}

\begin{remark}
    In the case of $p$-growth with $p>1$, by a similar argument one can also remove the second derivative bound $\lvert \partial_z^2f(x,z)\rvert \lesssim 1 + \lvert z \rvert^{p-2}$ imposed in \cite[Theorem A]{LiRaita24}.
\end{remark}

\printbibliography

@article{AF87,
	author = {Acerbi, E. and Fusco, N.},
	title = {A regularity theorem for minimizers of quasiconvex integrals},
	journal = {Arch. Rational Mech. Anal.},
	number = {3},
	volume = {99},
	pages = {261-281},
	year = {1987},
	doi = {10.1007/BF00284509}}

@article{AF89,
	title = {Regularity for minimizers of nonquadratic functionals: the case $1<p<2$},
	author = {Acerbi, E. and Fusco, N.},
	journal = {J. Math. Anal. Appl.},
	volume = {140},
	number = {1},
	pages = {115-135},
	year = {1989},
	doi = {10.1016/0022-247X(89)90098-X}}

@article{Alberti1993,
  title = {Rank One Property for Derivatives of Functions with Bounded Variation},
  author = {Alberti, Giovanni},
  date = {1993-01},
  journaltitle = {Proceedings of the Royal Society of Edinburgh Section A: Mathematics},
  volume = {123},
  number = {2},
  pages = {239--274},
  issn = {1473-7124, 0308-2105},
  doi = {10.1017/S030821050002566X}
}

@book{BerghLofstrom1976,
  title = {Interpolation Spaces},
  author = {Bergh, J{\"o}ran and L{\"o}fstr{\"o}m, J{\"o}rgen},
  year = {1976},
  series = {Grundlehren Der Mathematischen Wissenschaften},
  volume = {223},
  publisher = {Springer Berlin Heidelberg},
  address = {Berlin, Heidelberg},
  doi = {10.1007/978-3-642-66451-9},
  isbn = {978-3-642-66453-3}}

@book{Bild03,
	author = {Bildhauer, M.},
	title = {Convex variational problems. Linear, nearly linear and anisotropic growth conditions},
	publisher = {Springer-Verlag},
	address = {Berlin},
	series = {Lecture notes in Mathematics},
	volume = {1818},
	year = {2003},
	doi = {10.1007/b12308}}

@article{BS13,
	author = {Beck, L. and Schmidt, T.},
	title = {On the Dirichlet problem for variational integrals in {$BV$}},
	journal = {J. Reine Angew. Math.},
	volume = {674},
	pages = {113-194},
	year = {2013},
	doi = {10.1515/CRELLE.2011.188}}

@article{ChenKris17,
	title = {On coercive variational integrals},
	author = {Chen, C.-Y. and Kristensen, J.},
	journal = {Nonlinear analysis},
	volume = {153},
	pages = {213-229},
	year = {2017},
	doi = {10.1016/j.na.2016.09.011}}

@article{CFM98,
	author = {Carozza, M. and Fusco, N. and Mingione, G.},
	title = {Partial regularity of minimizers of quasiconvex integrals with subquadratic growth},
	journal = {Ann. Mat. Pura Appl. (4)},
	volume = {175},
	pages = {141-164},
	year = {1998},
	doi = {10.1007/BF01783679}}

@book{DHHR2011,
  title = {Lebesgue and {{Sobolev Spaces}} with {{Variable Exponents}}},
  author = {Diening, Lars and Harjulehto, Petteri and Hästö, Peter and Ruzicka, Michael},
  date = {2011},
  series = {Lecture {{Notes}} in {{Mathematics}}},
  volume = {2017},
  publisher = {Springer},
  location = {Berlin, Heidelberg},
  doi = {10.1007/978-3-642-18363-8},
  url = {https://link.springer.com/10.1007/978-3-642-18363-8},
  urldate = {2025-11-28},
  isbn = {978-3-642-18362-1},
  langid = {english}
}

@article{DuzaarGrotowski2000,
  title = {Optimal Interior Partial Regularity for Nonlinear Elliptic Systems: The Method of {{A-harmonic}} Approximation},
  shorttitle = {Optimal Interior Partial Regularity for Nonlinear Elliptic Systems},
  author = {Duzaar, Frank and Grotowski, Joseph F.},
  date = {2000-11-01},
  journaltitle = {manuscripta mathematica},
  shortjournal = {manuscripta math.},
  volume = {103},
  number = {3},
  pages = {267--298},
  issn = {1432-1785},
  doi = {10.1007/s002290070007},
  url = {https://doi.org/10.1007/s002290070007}
}

@article{DGG00,
	author = {Duzaar, F. and Gastel, A. and Grotowski, J. F.},
	title = {Partial regularity for almost minimizers of quasi-convex integrals},
	journal = {SIAM J. Math. Anal.},
	number = {3},
	volume = {32},
	pages = {665-687},
	year = {2000},
	doi = {10.1137/S0036141099374536}}

@article{DGK05,
	author = {Duzaar, F. and Grotowski, J. F. and Kronz, M.},
	title = {Regularity of almost minimizers of quasi-convex variational integrals with subquadratic growth},
	journal = {Ann. Mat. Pura Appl. (4)},
	number = {4},
	volume = {184},
	pages = {421-448},
	year = {2005},
	doi = {10.1007/s10231-004-0117-5}}

@article{Cruz-UribeDiening2019,
  title = {Sharp $\mathcal A$--Harmonic Approximation},
  author = {Cruz-Uribe, David and Diening, Lars},
  date = {2019-01-25},
  journaltitle = {Applicable Analysis},
  volume = {98},
  number = {1--2},
  pages = {374--380},
  publisher = {Taylor \& Francis},
  issn = {0003-6811},
  doi = {10.1080/00036811.2017.1422729},
  url = {https://doi.org/10.1080/00036811.2017.1422729},
  urldate = {2022-05-16}
}

@article{DM04,
	author = {Duzaar, F. and Mingione, G.},
	title = {Regularity for degenerate elliptic problems},
	journal = {Ann. Inst. H. Poincar{\'e} Anal. Non Lin{\'e}aire},
	volume = {21},
	number = {5},
	pages = {735-766},
	year = {2004},
	doi = {10.1016/J.ANIHPC.2003.09.003}}

@article{DuSt02,
	author = {Duzaar, F. and Steffen, K.},
	title = {Optimal interior and boundary regularity for almost minimizers to elliptic variational integrals},
	journal = {J. Reine Angew. Math.},
	volume = {546},
	pages = {73-138},
	year = {2002},
	doi = {10.1515/crll.2002.046}}

@online{EitlerLewintan2025,
  title = {On $\mathrm{BV}^{\mathbb A}$-{{Minimisers}} in Two {{Dimensions}}},
  author = {Eitler, Ferdinand and Lewintan, Peter},
  date = {2025-08-14},
  eprint = {2508.10508},
  eprinttype = {arXiv},
  eprintclass = {math}
}

@article{Evans86,
	author = {Evans, L. C.},
	title = {Quasiconvexity and partial regularity in the calculus of variations},
	journal = {Arch. Rational Mech. Anal.},
	number = {3},
	volume = {95},
	pages = {227-252},
	year = {1986},
	doi = {10.1007/BF00251360}}

@misc{Federico2023,
      title={Partial regularity for $BV^\mathcal{B}$ minimizers}, 
      author={Federico Franceschini},
      year={2023},
      eprint={2310.20002},
      archivePrefix={arXiv},
      primaryClass={math.AP},
%      url={https://arxiv.org/abs/2310.20002}, 
}

@article{FonMul99,
	author = {Fonseca, I. and M\"{u}ller, S.},
	title = {${\mathcal{A}}$-quasiconvexity, lower semicontinuity, and {Y}oung measures},
	journal = {SIAM J. Math. Anal.},
	number = {6},
	volume = {30},
	pages = {1355-1390},
	year = {1999},
	doi = {10.1137/S0036141098339885}}

@article{GiaMod86,
	title = {Partial regularity of minimizers of quasiconvex integrals},
	author = {Giaquinta, M. and Modica, G.},
	journal = {Ann. Inst. H. Poincar\'{e} Anal. Non Lin\'{e}aire},
	volume = {3},
	number = {3},
	pages = {185-208},
	year = {1986}}

@article{GK19,
	author = {Gmeineder, F. and Kristensen, J.},
	title = {Sobolev regularity for convex functionals on {$BD$}},
	journal = {Calc. Var. Partial Differential Equations},
	volume = {58},
	number = {56},
	year = {2019},
	doi = {10.1007/s00526-019-1491-6}}

@article{Gme20,
	title = {The regularity of minima for the {Dirichlet} problem on {BD}},
	author = {Gmeineder, F.},
	journal = {Arch. Rational Mech. Anal.},
	volume = {237},
	pages = {1099-1171},
	year = {2020},
	doi = {10.1007/s00205-020-01507-5}}

@article{Gme21,
	title = {Partial regularity for symmetric quasiconvex functionals on {BD}},
	author = {Gmeineder, F.},
	journal = {J. Math. Pures Appl.},
	volume = {145},
	pages = {83-129},
	year = {2021},
	doi = {10.1016/j.matpur.2020.09.005}}

@article{GMS79,
	author = {Giaquinta, M. and Modica, G. and Sou$\check{c}$ek, J.},
	title = {Functionals with linear growth in the calculus of variations. {I, II}},
	journal = {Comment. Math. Univ. Carolin.},
	number = {1},
	volume = {20},
	pages = {143-156,157-172},
	year = {1979},
	Zbl = {0409.49006/0409.49007}}

@article{GR22,
	author = {Guerra, A. and Rai\cb{t}\u{a}, B.},
	title = {Quasiconvexity, null {L}agrangians, and {H}ardy space integrability under constant rank constraints},
	journal = {Arch. Ration. Mech. Anal.},
	volume = {245},
	number = {1},
	pages = {279-320},
	year = {2022},
	doi = {10.1007/s00205-022-01775-3}}

@book{GT01,
	author = {Gilbarg, D. and Trudinger, N. S.},
	title = {Elliptic partial differential equations of second order},
	series = {Classics in Mathematics},
	publisher = {Springer-Verlag, Berlin},
	year = {2001},
	note = {Reprint of the 1998 edition}}

@book{Hormander2000,
  title = {An introduction to complex analysis in several variables},
  author = {Hörmander, Lars},
  date = {2000},
  series = {North-Holland mathematical library},
  edition = {3},
  number = {7},
  publisher = {North-Holland},
  location = {Amsterdam},
  isbn = {978-0-444-88446-6},
  langid = {ger eng},
  pagetotal = {254}
}

@article{KriMin07,
	author = {Kristensen, J. and Mingione, G.},
	title = {The singular set of {Lipschitzian} minima of multiple integrals},
	journal = {Arch. Ration. Mech. Anal.},
	number = {2},
	volume = {184},
	pages = {341-369},
	year = {2007},
	doi = {10.1007/s00205-006-0036-2}}

@article{KriRin10a,
	author = {Kristensen, J. and Rindler, F.},
	title = {Relaxation of signed integral functionals in {$BV$}},
	journal = {Calc. Var. Partial Differential Equations},
	number = {1-2},
	volume = {37},
	pages = {29-62},
	year = {2010},
	doi = {10.1007/s00526-009-0250-5}}

@misc{LiRaita24,
    author ={Li, Z. and Rai\cb{t}\u{a}, B.},
    title = {Partial regularity and higher integrability for $\mathscr{A}$-quasiconvex variational problems},
      year={2024},
      eprint={2412.10363v2},
      archivePrefix={arXiv},
      primaryClass={math.AP},
%      url={https://arxiv.org/abs/2412.10363}
}

@article{Marcellini1996,
  title = {Everywhere Regularity for a Class of Elliptic Systems without Growth Conditions},
  author = {Marcellini, Paolo},
  date = {1996},
  journaltitle = {Annali della Scuola Normale Superiore di Pisa - Classe di Scienze},
  volume = {23},
  number = {1},
  pages = {1--25},
  publisher = {Scuola normale superiore},
  issn = {0391-173X},
  url = {https://eudml.org/doc/urn:eudml:doc:84224},
  urldate = {2022-08-04},
  langid = {english}
}

@article{Morrey52,
	author = {Morrey, C. B., Jr.},
	title = {Quasi-convexity and the lower semicontinuity of multiple integrals},
	journal = {Pacific J. Math},
	volume = {2},
	pages = {25-53},
	year = {1952},
	doi = {10.2140/pjm.1952.2.25}}

@article{Murat81,
	title={Compacit\'{e} par compensation: condition n\'{e}cessaire et suffisante de continuit\'{e} faible sous une hypoth\`{e}se de rang constant},
	author={Murat, F.},
	journal={Ann. Scuola Norm. Sup. Pisa CI. Sci. (4)},
	language = {French},
	volume={8},
	number = {1},
	pages={69-102},
	year={1981},
	zbl = {0437.35004}}

@article{Raita19,
	author = {Rai\cb{t}\u{a}, B.},
	title = {Potentials for ${\mathscr{A}}$-quasiconvexity},
	journal = {Calc. Var. PDE},
	volume = {58},
	number = {105},
	year = {2019},
	doi = {10.1007/s00526-019-1544-x}}

@book{Simon1983,
  title = {Lectures on Geometric Measure Theory},
  author = {Simon, Leon},
  date = {1983},
  pages = {272},
  publisher = {Centre for Mathematical Analysis, Australian National University},
  url = {https://catalogue.nla.gov.au/Record/2755868},
  isbn = {0-86784-429-9}
}

@book{Simon1996,
  title = {Theorems on Regularity and Singularity of Energy Minimizing Maps},
  author = {Simon, Leon},
  date = {1996},
  publisher = {Birkhäuser Basel},
  location = {Basel},
  doi = {10.1007/978-3-0348-9193-6},
  url = {http://link.springer.com/10.1007/978-3-0348-9193-6},
  urldate = {2021-11-09},
  isbn = {978-3-7643-5397-1},
  langid = {english}
}

@online{Stephan2024,
  title = {Partial Regularity for $\mathbb A$-Quasiconvex Functionals with {{Orlicz}} Growth},
  author = {Stephan, Paul},
  date = {2024-12-12},
  eprint = {2412.09478},
  eprinttype = {arXiv},
  eprintclass = {math}
}

@incollection{Tartar83,
	title={The compensated compactness method applied to systems of conservation laws},
	author={Tartar, L.},
	booktitle={Systems of nonlinear partial differential equations},
	pages={263-285},
	year={1983},
	publisher={Springer},
	address = {Dordrecht}}

@article{Triebel73,
	author = {Triebel, H.},
	title = {Spaces of distributions of Besov type on Euclidean $n$-space. {D}uality, interpolation},
	journal = {Ark. Mat.},
	volume = {11},
	pages = {13-64},
	year = {1973},
	doi = {10.1007/BF02388506}}

@article{VSC13,
	author = {Van Schaftingen, J.},
	title = {Limiting {S}obolev inequalities for vector fields and canceling linear differential operators},
	journal = {J. Eur. Math. Soc.},
	volume = {15},
	number = {3},
	pages = {877-921},
	year = {2013},
	doi = {10.4171/JEMS/380}}

@article{Dacorogna_JEMS,
  title={Calculus of variations with differential forms.},
  author={Bandyopadhyay, Saugata and Dacorogna, Bernard and Sil, Swarnendu},
  journal={Journal of the European Mathematical Society (EMS Publishing)},
  volume={17},
  number={4},
  year={2015}
}

@article{ARDPR,
  title={Lower semicontinuity and relaxation of linear-growth integral functionals under PDE constraints},
  author={Arroyo-Rabasa, Adolfo and De Philippis, Guido and Rindler, Filip},
  journal={Advances in calculus of variations},
  volume={13},
  number={3},
  pages={219--255},
  year={2020},
  publisher={De Gruyter}
}

@article{KriRai22,
  title={Oscillation and concentration in sequences of PDE constrained measures},
  author={Kristensen, Jan and Rai\cb{t}{\u{a}}, Bogdan},
  journal={Archive for Rational Mechanics and Analysis},
  volume={246},
  number={2},
  pages={823--875},
  year={2022},
  publisher={Springer}
}

@article{GmeKri_BV,
  title={Partial regularity for BV minimizers},
  author={Gmeineder, Franz and Kristensen, Jan},
  journal={Archive for Rational Mechanics and Analysis},
  volume={232},
  number={3},
  pages={1429--1473},
  year={2019},
  publisher={Springer}
}

@article{GmeinederKristensen2024,
  title = {Quasiconvex {{Functionals}} of (p, q)-{{Growth}} and the {{Partial Regularity}} of {{Relaxed Minimizers}}},
  author = {Gmeineder, Franz and Kristensen, Jan},
  date = {2024-10},
  journaltitle = {Archive for Rational Mechanics and Analysis},
  shortjournal = {Arch Rational Mech Anal},
  volume = {248},
  number = {5},
  pages = {80},
  issn = {0003-9527, 1432-0673},
  doi = {10.1007/s00205-024-02013-8},
  url = {https://link.springer.com/10.1007/s00205-024-02013-8},
  urldate = {2026-03-23}
}

@article{CG22,
  title={A-quasiconvexity and partial regularity},
  author={Conti, Sergio and Gmeineder, Franz},
  journal={Calculus of Variations and Partial Differential Equations},
  volume={61},
  number={6},
  pages={215},
  year={2022},
  publisher={Springer}
}

@article{BK22,
  title={Partial Regularity for ${\mathbb {A}}$-quasiconvex Functionals},
  author={B{\"a}rlin, Matthias and Ke{\ss}ler, Konrad},
  journal={arXiv preprint arXiv:2203.00153},
  year={2022}
}

@article{Schiffer24,
  title={$\mathscr{A}$-free truncation and higher integrability of minimisers},
  author={Schiffer, Stefan},
  journal={Calculus of Variations and Partial Differential Equations},
  volume={64},
  number={1},
  pages={1--17},
  year={2025},
  publisher={Springer}
}

@article{Raita_pot_new,
  title={A simple construction of potential operators for compensated compactness},
  author={Rai\cb{t}{\u{a}}, Bogdan},
  journal={The Quarterly Journal of Mathematics},
  year={2024},
  publisher={Oxford University Press UK}
}

@article{KK16,
  title={On rank one convex functions that are homogeneous of degree one},
  author={Kirchheim, Bernd and Kristensen, Jan},
  journal={Archive for rational mechanics and analysis},
  volume={221},
  pages={527--558},
  year={2016},
  publisher={Springer}
}

@article{SIL3,
  title={Calculus of variations: a differential form approach},
  author={Sil, Swarnendu},
  journal={Advances in Calculus of Variations},
  volume={12},
  number={1},
  pages={57--84},
  year={2019},
  publisher={De Gruyter}
}

@article{Schmidt2008,
  title = {Regularity of Minimizers of $W^{1,p}$-Quasiconvex Variational Integrals with \$(p,q)\$-Growth},
  author = {Schmidt, Thomas},
  date = {2008-05-01},
  journaltitle = {Calculus of Variations and Partial Differential Equations},
  shortjournal = {Calc. Var.},
  volume = {32},
  number = {1},
  pages = {1--24},
  issn = {1432-0835},
  doi = {10.1007/s00526-007-0126-5},
  url = {https://doi.org/10.1007/s00526-007-0126-5},
  urldate = {2021-11-05}
}

@article{schmidt2009regularity,
  title={Regularity of relaxed minimizers of quasiconvex variational integrals with (p, q)-growth},
  author={Schmidt, Thomas},
  journal={Archive for rational mechanics and analysis},
  volume={193},
  number={2},
  pages={311--337},
  year={2009},
  publisher={Springer-Verlag, Berlin/Heidelberg}
}

@article{kuusi2016partial,
  title={Partial regularity and potentials},
  author={Kuusi, Tuomo and Mingione, Giuseppe},
  journal={Journal de l’{\'E}cole polytechnique—Math{\'e}matiques},
  volume={3},
  pages={309--363},
  year={2016}
}

@article{marcellini1989regularity,
  title={Regularity of minimizers of integrals of the calculus of variations with non standard growth conditions},
  author={Marcellini, Paolo},
  journal={Archive for Rational Mechanics and Analysis},
  volume={105},
  pages={267--284},
  year={1989},
  publisher={Citeseer}
}

@article{defilippis2022quasiconvexity,
  title={Quasiconvexity and partial regularity via nonlinear potentials},
  author={De Filippis, Cristiana},
  journal={Journal de Math{\'e}matiques Pures et Appliqu{\'e}es},
  volume={163},
  pages={11--82},
  year={2022},
  publisher={Elsevier}
}

@article{defilippis2023singular,
  title={Singular multiple integrals and nonlinear potentials},
  author={De Filippis, Cristiana and Stroffolini, Bianca},
  journal={Journal of Functional Analysis},
  volume={285},
  number={2},
  pages={109952},
  year={2023},
  publisher={Elsevier}
}

@article{dephilippis2016structure,
  title={On the structure of $\mathscr{A}$-free measures and applications},
  author={De Philippis, Guido and Rindler, Filip},
  journal={Annals of Mathematics},
  pages={1017--1039},
  year={2016},
  publisher={JSTOR}
}

@article{guerra2022compensated,
  title={Compensated compactness: continuity in optimal weak topologies},
  author={Guerra, Andr{\'e} and Rai{\cb{t}}{\u{a}}, Bogdan and Schrecker, Matthew RI},
  journal={Journal of Functional Analysis},
  volume={283},
  number={7},
  pages={109596},
  year={2022},
  publisher={Elsevier}
}

@article{kazaniecki2017anisotropic,
  title={Anisotropic ornstein noninequalities},
  author={Kazaniecki, Krystian and Stolyarov, Dmitriy M and Wojciechowski, Micha{\l}},
  journal={Analysis \& PDE},
  volume={10},
  number={2},
  pages={351--366},
  year={2017},
  publisher={Mathematical Sciences Publishers}
}

@article{van2022injective,
  title={Injective Ellipticity, Cancelling Operators, and Endpoint},
  author={Van Schaftingen, Jean},
  journal={Geometric and Analytic Aspects of Functional Variational Principles: Cetraro, Italy},    
year={2022},
  pages={259},
  publisher={Springer Nature}
}

@article{hamburger1992regularity,
  title={Regularity of differential forms minimizing degenerate elliptic functionals*},
  author={Hamburger, C},
  journal={J. Reine Angew. Math.},
  volume={431},
  pages={7--64},
  year={1992}
}

@article{uhlenbeck1977regularity,
  title={Regularity for a class of non-linear elliptic systems},
  author={Uhlenbeck, Karen},
  journal={Acta Mathematica},
  volume={138},
  pages={219--240},
  year={1977},
  publisher={Institut Mittag-Leffler}
}

@article{beck2013regularity,
  title={Regularity results for differential forms solving degenerate elliptic systems},
  author={Beck, Lisa and Stroffolini, Bianca},
  journal={Calculus of Variations and Partial Differential Equations},
  volume={46},
  number={3},
  pages={769--808},
  year={2013},
  publisher={Springer}
}

@book{Hormander1,
  title = {The Analysis of Linear Partial Differential Operators {I}},
  author = {H{\"o}rmander, Lars},
  year = {2003},
  series = {Classics in {{Mathematics}}},
  publisher = {Springer Berlin Heidelberg},
  address = {Berlin, Heidelberg},
  doi = {10.1007/978-3-642-61497-2},
  urldate = {2023-07-17},
  isbn = {978-3-540-00662-6}
}

@book{Hormander2,
  title = {The {{Analysis}} of {{Linear Partial Differential Operators II}}},
  author = {Hörmander, Lars},
  date = {2005},
  series = {Classics in {{Mathematics}}},
  publisher = {Springer Berlin Heidelberg},
  location = {Berlin, Heidelberg},
  doi = {10.1007/b138375},
  url = {http://link.springer.com/10.1007/b138375},
  urldate = {2023-07-17},
  isbn = {978-3-540-22516-4}
}

@incollection{Malgrange1962,
  title = {Sur Les Systèmes Différentiels à Coefficients Constants},
  booktitle = {Les {{Équations}} Aux {{Dérivées Partielles}} ({{Paris}}, 1962)},
  author = {Malgrange, Bernard},
  date = {1963},
  series = {Colloq. {{Internat}}. {{CNRS}}, No. 117},
  pages = {113--122},
  publisher = {CNRS, Paris}
}

@article{breit2020trace,
  title={On the trace operator for functions of bounded A-variation},
  author={Breit, Dominic and Diening, Lars and Gmeineder, Franz},
  journal={Analysis \& PDE},
  volume={13},
  number={2},
  pages={559--594},
  year={2020},
  publisher={Mathematical Sciences Publishers}
}

@article{gmeineder2019embeddings,
  title={Embeddings for A-weakly differentiable functions on domains},
  author={Gmeineder, Franz and Rai{\c{t}}{\u{a}}, Bogdan},
  journal={Journal of Functional Analysis},
  volume={277},
  number={12},
  pages={108278},
  year={2019},
  publisher={Elsevier}
}

@article{gmeineder2021limiting,
  title={On limiting trace inequalities for vectorial differential operators},
  author={Gmeineder, Franz and Rai{\c{t}}{\u{a}}, Bogdan and Van Schaftingen, Jean},
  journal={Indiana University Mathematics Journal},
  volume={70},
  number={5},
  pages={2133--2176},
  year={2021},
  publisher={JSTOR}
}

@article{gmeineder2024boundary,
  title={Boundary ellipticity and limiting L1-estimates on halfspaces},
  author={Gmeineder, Franz and Rai{\c{t}}{\u{a}}, Bogdan and Van Schaftingen, Jean},
  journal={Advances in Mathematics},
  volume={439},
  pages={109490},
  year={2024},
  publisher={Elsevier}
}

@article{raictua2019critical,
  title={Critical $L^p$-differentiability of $BV^A$-maps and canceling operators},
  author={Rai{\cb{t}}{\u{a}}, Bogdan},
  journal={Transactions of the American Mathematical Society},
  volume={372},
  number={10},
  pages={7297--7326},
  year={2019}
}

@article{raita2020continuity,
  title={Continuity and canceling operators of order n on R n},
  author={Raiț{\u{a}}, Bogdan and Skorobogatova, Anna},
  journal={Calculus of Variations and Partial Differential Equations},
  volume={59},
  number={2},
  pages={85},
  year={2020},
  publisher={Springer}
}

@article{diening2012partial,
  title={Partial regularity for minimizers of quasi-convex functionals with general growth},
  author={Diening, Lars and Lengeler, Daniel and Stroffolini, Bianca and Verde, Anna},
  journal={SIAM Journal on Mathematical Analysis},
  volume={44},
  number={5},
  pages={3594--3616},
  year={2012},
  publisher={SIAM}
}

@article{Ehrenpreis1960,
  title = {A Fundamental Principle for Systems of Linear Partial Differential Equations with Constant Coefficients and Some of Its Applications},
  author = {Ehrenpreis, Leon},
  date = {1960},
  journaltitle = {Proceedings Intern. Symp. on Linear Spaces},
  pages = {161--174.}
}

@book{Palamodov1970,
  title = {Linear {{Differential Operators}} with {{Constant Coefficients}}},
  author = {Palamodov, Victor P.},
  date = {1970},
  series = {Grundlehren Der Mathematischen {{Wissenschaften}}},
  number = {168},
  publisher = {Springer Berlin Heidelberg},
  location = {Berlin, Heidelberg},
  doi = {10.1007/978-3-642-46219-1},
  url = {http://link.springer.com/10.1007/978-3-642-46219-1},
  urldate = {2026-02-16},
  isbn = {978-3-642-46221-4}
}

@article{BhattacharyaLeonetti1991,
  title = {A New {{Poincaré}} Inequality and Its Application to the Regularity of Minimizers of Integral Functionals with Nonstandard Growth},
  author = {Bhattacharya, Tilak and Leonetti, Francesco},
  date = {1991},
  journaltitle = {Nonlinear Analysis: Theory, Methods \& Applications},
  shortjournal = {Nonlinear Analysis: Theory, Methods \& Applications},
  volume = {17},
  number = {9},
  pages = {833--839},
  issn = {0362546X},
  doi = {10.1016/0362-546X(91)90157-V},
  langid = {english}
}

@article{Decell1965,
  title = {An {{Application}} of the {{Cayley-Hamilton Theorem}} to {{Generalized Matrix Inversion}}},
  author = {Decell, Henry P.},
  date = {1965},
  journaltitle = {SIAM Review},
  volume = {7},
  number = {4},
  eprint = {2027771},
  eprinttype = {jstor},
  pages = {526--528},
  publisher = {{Society for Industrial and Applied Mathematics}},
  issn = {0036-1445},
  url = {https://www.jstor.org/stable/2027771},
  urldate = {2026-03-03}
}

@book{Maligranda1989,
  title = {Orlicz Spaces and Interpolation},
  author = {Maligranda, Lech},
  date = {1989},
  series = {Seminars in {{Mathematics}}},
  number = {5},
  publisher = {Departamento de Matemática, Universidade Estadual de Campinas},
  langid = {english},
  pagetotal = {206}
}

@article{Zygmund1956,
  title = {On a Theorem of {{Marcinkiewicz}} Concerning Interpolation of Operators},
  author = {Zygmund, A},
  date = {1956},
  journaltitle = {J. Math. Pures Appl.},
  volume = {35},
  pages = {223--248},
  url = {https://cir.nii.ac.jp/crid/1570572699787513344},
  urldate = {2022-07-12}
}

@article{ornstein1962non,
  title={A non-inequality for differential operators in the $L_1$ norm},
  author={Ornstein, Donald},
  journal={Archive for Rational Mechanics and Analysis},
  volume={11},
  number={1},
  pages={40--49},
  year={1962},
  publisher={Springer}
}

@article{conti2005new,
  title={A new approach to counterexamples to L 1 estimates: Korn’s inequality, geometric rigidity, and regularity for gradients of separately convex functions},
  author={Conti, Sergio and Faraco, Daniel and Maggi, Francesco},
  journal={Archive for rational mechanics and analysis},
  volume={175},
  number={2},
  pages={287--300},
  year={2005},
  publisher={Springer}
}

@article{beck2024gradient,
  title={Gradient integrability for bounded $\mathrm{BD}$-minimizers},
  author={Beck, Lisa and Eitler, Ferdinand and Gmeineder, Franz},
  journal={arXiv preprint arXiv:2412.16131},
  year={2024}
}

@article{guerra2020necessity,
  title={On the necessity of the constant rank condition for $ L^{p} $ estimates},
  author={Guerra, Andr{\'e} and Rai{\c{t}}{\u{a}}, Bogdan},
  journal={Comptes Rendus. Math{\'e}matique},
  volume={358},
  number={9-10},
  pages={1091--1095},
  year={2020}
}

@article{arroyo2021new,
  title={New projection and Korn estimates for a class of constant-rank operators on domains},
  author={Arroyo-Rabasa, Adolfo},
  journal={arXiv preprint arXiv:2109.14602},
  year={2021}
}

@article{iwaniec1999nonlinear,
  title={Nonlinear Hodge theory on manifolds with boundary},
  author={Iwaniec, Tadeusz and Scott, Chad and Stroffolini, Bianca},
  journal={Annali di Matematica pura ed applicata},
  volume={177},
  number={1},
  pages={37--115},
  year={1999},
  publisher={Springer}
}

@article{harkonen2024syzygies,
  title={Syzygies, constant rank, and beyond},
  author={H{\"a}rk{\"o}nen, Marc and Nicklasson, Lisa and Rai{\cb{t}}{\u{a}}, Bogdan},
  journal={Journal of Symbolic Computation},
  volume={123},
  pages={102274},
  year={2024},
  publisher={Elsevier}
}
    
\end{document}